\documentclass[a4paper]{article}

\usepackage{natbib}
\usepackage{bbm}
\usepackage{comment}
\usepackage[all]{xy}\usepackage[latin1]{inputenc}        %accents
\usepackage[dvips]{graphics,graphicx}
\usepackage{amsfonts,amssymb,amsmath,xcolor,mathrsfs, amstext}
\usepackage{amsbsy, amsopn, amscd, amsxtra, amsthm,authblk, enumerate,dsfont}
\usepackage{upref}
\usepackage{geometry}
\usepackage[displaymath]{lineno}
\usepackage{float}

\usepackage[colorlinks,
            linkcolor=blue,
            anchorcolor=green,
            citecolor=blue
            ]{hyperref}

\numberwithin{equation}{section}

\def\epsilon{\varepsilon}
\def\eps{\varepsilon}

\newcommand{\ol}{\overline}
\newcommand{\wt}{\widetilde}
\def\alb#1\ale{\begin{align*}#1\end{align*}}
\newcommand{\eqb}{\begin{equation}}
\newcommand{\eqe}{\end{equation}}

\newcommand{\bbC}{\mathbb{C}}
\newcommand{\bbD}{\mathbb{D}}
\newcommand{\D}{\mathbb{D}}
\newcommand{\clD}{\overline{\mathbb{D}}}

\newcommand{\E}{\mathbb{E}}

\newcommand{\R}{\mathbb{R}}

\newcommand{\N}{\mathbb{N}}
\newcommand{\Z}{\mathbb{Z}}

\renewcommand{\P}{\mathbb{P}}
\newcommand{\bbR}{\mathbb{R}}

\newcommand{\cA}{\mathcal{A}}
\newcommand{\cB}{\mathcal{B}}
\newcommand{\cC}{\mathcal{C}}
\newcommand{\cD}{\mathcal{D}}

\newcommand{\cL}{\mathcal{L}}

\newcommand{\cN}{\mathcal{N}}

\newcommand{\cS}{\mathcal{S}}

\newcommand{\qduration}{\mathcal{S}}

\newcommand{\QD}{\mathrm{QD}}
\newcommand{\GQD}{\mathrm{GQD}}
\newcommand{\QT}{\mathrm{QT}}
\newcommand{\LF}{\mathrm{LF}}
\newcommand{\SLE}{\mathrm{SLE}}
\newcommand{\QLE}{\mathrm{QLE}}

\newtheorem{theorem}{Theorem}[section]
\newtheorem{lemma}[theorem]{Lemma}
\newtheorem{proposition}[theorem]{Proposition}
\newtheorem*{proposition*}{Proposition}
\newtheorem{corollary}[theorem]{Corollary}
\newtheorem*{corollary*}{Corollary}
\newtheorem{definition}[theorem]{Definition}
\newtheorem*{definitions*}{Definitions}

\newtheorem*{example*}{Example}

\newtheorem{question}[theorem]{Question}

\theoremstyle{remark}
\newtheorem{remark}[theorem]{Remark}

\numberwithin{equation}{section}

\title{Quantum Loewner evolution in quantum natural time: phases and Markov properties}
\author{Morris Ang\thanks{University of California San Diego} \qquad Deven Mithal\thanks{University of Chicago}}
\date{}
\begin{document}

\maketitle

\begin{abstract}
Quantum Loewner evolution (QLE)$(\gamma^2, \eta)$ is a family of growth processes in random environments, introduced by Miller and Sheffield as candidate scaling limits of growth processes (such as diffusion-limited aggregation) on random planar maps. The random environments are Liouville quantum gravity (LQG) surfaces with parameter $\gamma$, and the parameter $\eta$ plays a role analogous to that in dielectric breakdown models. Their construction applies to pairs $(\gamma^2, \eta)$ lying on a curve in parameter space, and the associated time parametrization is independent of the underlying LQG surface. In later work, they defined a \emph{quantum natural time} variant of QLE$(8/3, 0)$ whose time parametrization encodes a notion of distance in the LQG geometry, leading to the identification of $\sqrt{8/3}$-LQG with the Brownian map.

In this paper we construct quantum natural time QLE$(\gamma^2, \eta)$ for a different but overlapping subset of the same parameter curve. Its time parametrization conjecturally corresponds to the scaling limit of time parametrizations of discrete growth processes on random planar maps. We prove that it exhibits three phases, mirroring those of Schramm--Loewner evolution (SLE); this answers a question of Miller and Sheffield for quantum natural time QLE. Moreover, we establish stationarity of the unexplored surface and, in the relevant phases, identify the random surfaces cut out or swallowed by the process as quantum disks. Our construction builds on recent radial LQG--SLE coupling results of Ang and Yu.
\end{abstract}

\tableofcontents

\section{Introduction}\label{sec-intro}
A growth model is a random increasing sequence of clusters on an underlying graph $G$, which is frequently taken to be a lattice such as $\Z^2$. The study of growth models is a central topic in mathematical physics, as they model a wide variety of rich physical phenomena. For example, diffusion-limited aggregation (DLA) \cite{dla-1, dla-2} models the growth of clusters formed by particles undergoing diffusion and attaching upon first contact. Such processes arise, for instance, in electrodeposition and the formation of coagulated aerosols. Another classical example is the Eden model \cite{eden}, in which particles are added uniformly at random to sites adjacent to the existing cluster, and which serves as a model for biological growth. 
In this paper we are primarily concerned with the dielectric breakdown model (DBM) \cite{dbm}, in which growth occurs at boundary sites with probability proportional to a power $\eta$ of harmonic measure. The special cases $\eta = 1$ and $\eta = 0$ correspond to DLA and the Eden model respectively. 

It is also natural to study growth processes in random environments, where the underlying graph $G$ is itself random. In particular, we will take $G$ to be a random planar map. This provides an ideal setting to study the fractal geometry of growth processes, since many kinds of random planar maps enjoy Markov properties when explored by a compatible growth process, endowing the coupling with an exact self-similar structure. For instance, a distinguished unexplored region may satisfy a stationarity property, and if there are multiple unexplored regions, they may be conditionally independent given their boundary lengths. This is the case for the Eden model on uniform random planar maps, and DLA on uniform-spanning-tree-weighted random planar maps \cite{ms-qle}.

The breakthrough work of Miller and Sheffield \cite{ms-qle} proposed a scaling limit of the $\eta$-DBM growth model on random planar maps. Random planar maps are expected (and in some cases proven) to converge to Liouville quantum gravity (LQG), a family of random continuum geometries indexed by a parameter $\gamma \in (0,2]$, each of which carries intrinsic notions of area and length. \cite{ms-qle} constructed a continuum growth process called Quantum Loewner Evolution (QLE)$(\gamma^2, \eta)$ on $\gamma$-LQG, and conjectured this to be the scaling limit of $\eta$-DBM on random planar maps in the $\gamma$-LQG universality class. Their construction only applies to some pairs $(\gamma^2, \eta)$ which lie on a curve in parameter space, see Figure~\ref{fig-quantum-vs-capacity} (right); this is due to a continuum analog of requiring the growth process to be compatible with the random environment. The random environment is a $\gamma$-LQG disk with an interior target point. For each $\delta > 0$, they define an approximation of QLE$(\gamma^2, \eta)$ by repeatedly cutting the $\gamma$-LQG surface with a Schramm-Loewner evolution (SLE) curve grown from a random boundary point for time $\delta$, and finally construct QLE$(\gamma^2, \eta)$ via a subsequential limit as $\delta \to 0$. Because each SLE curve is parametrized in a way that does not depend on the underlying $\gamma$-LQG surface (``capacity time''), the limiting growth process is also parametrized independently of the surface. We will refer to this growth process as \emph{capacity time QLE}.

In this paper we construct a growth process called \emph{quantum natural time QLE$(\gamma^2, \eta)$} for a different (but overlapping) subset of pairs $(\gamma^2, \eta)$ lying on the same curve as for capacity time QLE; see Figure~\ref{fig-quantum-vs-capacity} (left). 
This construction is novel for all parameters except $(\gamma^2, \eta) = (8/3, 0)$, for which quantum natural time QLE was constructed in \cite{ms-equivalence}. Our construction is analogous to that of capacity time QLE, with a key difference being that each SLE curve is parametrized according to {quantum natural time}, a notion of length which depends on the underlying $\gamma$-LQG surface. 
We conjecture that quantum natural time QLE$(\gamma^2, \eta)$ is the scaling limit of $\eta$-DBM on random planar maps, with the time parametrization of quantum natural time QLE$(\gamma^2, \eta)$ being the scaling limit of that of the discrete growth model.

In Theorem~\ref{thm-phase}, we prove that quantum natural time QLE exhibits three phases. Consider the $\gamma$-LQG area of the QLE process as a function of time. In the \emph{dilute} phase, the area function is zero for all time. In the \emph{swallowing} phase, the area function is a strictly increasing pure jump process; each jump corresponds to a time when the QLE process separates some region from the target point and immediately absorbs that region. In the \emph{space-filling phase}, the area function is strictly increasing and continuous, and the process eventually covers the whole surface. These three phases mirror the phases of SLE. Theorem~\ref{thm-phase} answers a question of \cite[Section 8.2]{ms-qle} for quantum natural time QLE; the question remains open for capacity time QLE. 

Finally, we show in Theorems~\ref{thm-stationarity}-\ref{thm-swallowing-markov} that quantum natural time QLE enjoys certain Markov properties. First, the unexplored region containing the target point has a stationarity property (Theorem~\ref{thm-stationarity}). Moreover, in the dilute phase the complement of the whole QLE process is a $\gamma$-LQG disk (Theorem~\ref{thm-simple-markov}), and in the swallowing phase the swallowed regions are conditionally independent $\gamma$-LQG disks given their boundary lengths (Theorem~\ref{thm-swallowing-markov}). In contrast, capacity time QLE only satisfies a stationarity property which is analogous to Theorem~\ref{thm-stationarity} but weaker in that one cannot keep track of natural geometric quantities such as $\gamma$-LQG area and length. 

We expect that quantum natural time QLE has substantial advantages over capacity time QLE in the study of scaling limits of growth processes on random planar maps. Indeed, its time parametrization should be the scaling limit of the discrete time parametrization, and its Markov properties mirror those of the Eden model on uniform random planar maps and DLA on uniform-spanning-tree-weighted random planar maps; see Section~\ref{subsec-intro-discrete}.

As a key component in our argument, we use radial variants of Sheffield's \emph{length quantum zipper} \cite[Theorem 1.8]{shef-zipper}: for parameters $\gamma \in (0,2)$ and $\kappa \in \{\gamma^2, 16/\gamma^2\}$, when one samples a certain $\gamma$-LQG surface with an independent radial $\SLE_\kappa$ curve, under the dynamics of cutting along the curve according to quantum natural time, the surface exhibits a certain stationarity in law. For $\kappa \in (0,8) \backslash \{4\}$ we construct this radial quantum zipper using recent results from \cite{ay-radial}, whereas for $\kappa > 8$ the quantum zipper already appears in \cite{ay-reversibility}.

In Section~\ref{subsec-intro-construction} we outline our construction of quantum natural time QLE. In Section~\ref{subsec-intro-comparison} we compare quantum natural time QLE and the capacity time QLE of \cite{ms-qle}. In Section~\ref{subsec-intro-phases} we discuss the phases of quantum natural time QLE, and in Section~\ref{subsec-intro-markov} state its Markov properties. Finally, in Section~\ref{subsec-intro-continuum} we discuss the quantum zipper results underlying our construction.

\subsection{Construction of quantum natural time QLE}\label{subsec-intro-construction}
Liouville quantum gravity (LQG) is a model of random surfaces originating from Polyakov's seminal work on bosonic string theory \cite{polyakov-qg1}. Heuristically, it is the random geometry described by the Riemannian metric tensor $e^{\gamma \phi} (dx^2 + dy^2)$ where $\gamma \in (0,2]$ and $\phi$ is a variant of the free boundary Gaussian free field (GFF) on the unit disk. This does not make literal sense because $\phi$ is a distribution-valued random variable which is too rough to admit pointwise values. Despite that, one can still make sense of certain geometrical quantities by regularization and renormalization, such as the $\gamma$-LQG area measure $\cA^\gamma_\phi$ on $\D$ and the $\gamma$-LQG boundary length measure $\cL^\gamma_\phi$ on $\partial \D$; see Section~\ref{subsec-lqg-gff} for details. 
It is widely believed that $\gamma$-LQG describes the scaling limits of various kinds of random planar maps, and this has been proved in several cases for different notions of convergence. For instance, uniform-spanning-tree-decorated random planar maps converge in the peanosphere topology to SLE$_\kappa$-loop-decorated $\gamma$-LQG where $(\gamma, \kappa) = (\sqrt2, 8)$ \cite{mullin-maps, shef-burger, DMS14}.

Our first main contribution is the construction of a growth process in a $\gamma$-LQG random environment, called \emph{quantum natural time QLE$(\gamma^2, \eta)$}, for parameters $(\gamma, \eta)$ in 
\eqb\label{eq-gamma-eta}
\{(\gamma, \eta) \: : \: \gamma \in (\sqrt3 -1 , 2), \, \eta = \frac{3}{\gamma^2} - \frac12\} \cup \{(\gamma, \eta) \: : \: \gamma \in (0, 4/3) \cup (2\sqrt3 - 2, 2), \, \eta = \frac{3\gamma^2}{16} - \frac12\}.
\eqe
As we later discuss, our construction is a variant of the \emph{capacity time QLE$(\gamma^2, \eta)$} constructed in \cite{ms-qle}. In this paper, the term QLE without a modifier will always refer to quantum natural time QLE.

A QLE$(\gamma^2, \eta)$ process is a random pair $(\phi, (K_\cdot)_{[0,\qduration]})$, where $\phi$ is a field on $\D$ describing a $\gamma$-LQG surface, and $(K_\cdot)_{[0,\qduration]}$ is a growth process targeting the origin whose evolution depends on the random environment. The process has a random duration $\qduration$. For each time $s < \qduration$ the set $K_s$ is a hull (compact subset of $\ol \D$ whose complement is simply connected and contains 0), the final state $K_\qduration$ is a compact subset of $\ol \D$ containing the origin, and $K_s \subset K_{s'}$ for $s \leq s'$. In our construction, the field $\phi$ will be a free boundary Gaussian free field on $\D$ with an $\alpha$-log singularity added to the origin, weighted by its boundary Gaussian multiplicative chaos with parameter $\beta/2$, and conditioned on having boundary length 1; the parameters $(\alpha, \beta)$ are given by 
        \begin{equation} \label{eq-alpha-beta} (\alpha, \beta) = 
        \begin{cases} 
          (Q-\frac{1}{\gamma},\gamma - \frac2\gamma) & \gamma \in (\sqrt3 - 1,2), \, \, \eta = \frac3{\gamma^2} - \frac12 \\
            (Q-\frac{\gamma}{4},\frac4\gamma - \frac\gamma2) & \gamma \in (2\sqrt3 - 2, 2), \, \, \eta = \frac{3\gamma^2}{16} - \frac12 \\
          (Q-\frac{\gamma}{4},\frac{3\gamma}{2}) & \gamma \in (0, 4/3), \, \,  \eta = \frac{3\gamma^2}{16} - \frac12.
       \end{cases}
    \end{equation} 
The law $P_{\alpha, \beta, 1}$ of $\phi$ is precisely described in Definition~\ref{def-P-weighted}.

First, we construct a $\delta$-approximation of QLE via a \emph{quantum zipper} result wherein a $\gamma$-LQG surface is cut by an independent radial SLE$_\kappa$ curve, with parameters coupled via $\kappa \in \{ \gamma^2, 16/\gamma^2\}$; see Section~\ref{subsec-intro-continuum} for more information. Let $\delta > 0$. For each pair $(\gamma, \eta)$ as in~\eqref{eq-gamma-eta}, we set $\kappa = \frac{6}{2\eta + 1}$ and define the $\delta$-QLE$(\gamma^2, \eta)$ process as follows, see Figure~\ref{fig-delta-qle}.
\begin{itemize}
    \item Sample $\phi^\delta \sim P_{\alpha, \beta, 1}$ as in Definition~\ref{def-P-weighted}, and let $K_0^\delta = \partial \D$. 
    \item Suppose the growth process $(K^\delta_\cdot)$ has been defined for an interval of time $[0,n \delta]$. Let $D^\delta_{n \delta} = \D \backslash K^\delta_{n\delta}$. We first sample a point $p_n^\delta \in \partial D^\delta_{n\delta}$ according to a certain boundary measure defined by $\phi^\delta$, then grow a radial $\SLE_\kappa$ curve in $(D^\delta_{n\delta}, p_n^\delta, 0)$ independent of everything else. This curve is parametrized by its quantum natural time with respect to $\phi^\delta$. The process $(K^\delta_\cdot)$ is then defined on $[n\delta, (n+1)\delta]$ by setting $K^\delta_{n\delta + s}$ to be the union of $K^\delta_{n\delta}$, the curve run until time $s$, and all complementary connected components of the curve not containing 0. 
    \item The previous step is iterated until the process hits $0$, at which time the process terminates. We call this hitting time $\qduration^\delta$ the duration of the process. 
\end{itemize}
We say $(\phi^\delta, (K^\delta_\cdot)_{[0,\qduration^\delta]})$ is a $\delta$-QLE$(\gamma^2, \eta)$ process. 
Precise details of this construction are given in Section~\ref{sec-qle-approximates}. Our analysis of $\delta$-QLE$(\gamma^2, \eta)$ depends on a radial quantum zipper which we discuss in Section~\ref{subsec-intro-continuum}.

\begin{remark}
Each point $p_n^\delta \in \partial D^\delta_{n\delta}$ is sampled from a measure that, at least heuristically, corresponds to $(\frac{d\nu}{d\mu})^\eta \mu$ where $\mu$ is the $\gamma$-LQG boundary length measure on $\partial D^\delta_{n \delta}$ and $\nu$ is the harmonic measure on $\partial D^\delta_{n \delta}$ viewed from $0$; see Section~\ref{sec-scaling-rels}. Therefore, $\delta$-QLE$(\gamma^2, \eta)$ can heuristically be viewed as an $\eta$-DBM process where at each step, a boundary point is sampled from $(\frac{d\nu}{d\mu})^\eta \mu$, and a size-$\delta$ particle (the SLE curve segment) is attached at that point; we emphasize that the size of the particle is measured with respect to the $\gamma$-LQG environment since the SLE curve is parametrized by quantum natural time. 
\end{remark}

\begin{figure}[ht]
	\centering
\includegraphics[scale=0.48]{figures/delta-qle.pdf}
	\caption{For simplicity we illustrate the $\kappa < 4$ case; the others are similar. The $\delta$-QLE$(\gamma^2, \eta)$ process $(\phi^\delta, (K^\delta_\cdot)_{[0,\qduration^\delta]})$ is depicted at $\delta$ increments of time, with each $K^\delta_\cdot$ shown in red. Starting with $K_0 = \partial \D$, each $K^\delta_{(n+1)\delta}$
    is obtained by iteratively sampling a boundary point $p_{n\delta}^\delta$ of $\D \backslash K_{n\delta}^\delta$ according to a length measure determined by $\phi^\delta$, then growing a radial SLE$_\kappa$ in $\D \backslash K_{n\delta}^\delta$ from $p_{n\delta}^\delta$ targeting 0 for quantum natural time $\delta$. The process terminates when it hits 0, at time $\qduration^\delta$.
    } \label{fig-delta-qle}
\end{figure}

Our first main result is that one can indeed construct a growth process by sending $\delta \to 0$:

\begin{theorem}
There exist subsequential limits of $\delta$-QLE$(\gamma^2, \eta)$ as $\delta \to 0$. 
\end{theorem}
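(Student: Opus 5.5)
The plan is a tightness argument in a suitable topology, followed by Prokhorov's theorem. The field component causes no trouble: for every $\delta$ we have $\phi^\delta \sim P_{\alpha,\beta,1}$, so the law of $\phi^\delta$ does not depend on $\delta$ and is trivially tight, for instance in $H^{-1}_{\mathrm{loc}}$ or any Sobolev space of negative index in which the GFF lives.

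For the growth process, I would encode $(K^\delta_s)_{s\ge0}$ through the conformal maps $g^\delta_s : \D\setminus K^\delta_s \to \D$ normalized at $0$. After $\qduration^\delta$ the process is frozen, or equivalently we record $K^\delta_s$ for $s\ge \qduration^\delta$ as the terminal compact set. The most robust choice of topology is to view $s\mapsto \overline{K^\delta_s}$ as an increasing family of compact subsets of $\overline{\D}$. We equip this family with the topology of Hausdorff convergence at all rational times, or better, of Carath\'eodory convergence of the complementary domains viewed from $0$ at each time.

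Since $\overline{\D}$ is compact, the space of compact subsets with the Hausdorff metric is compact. A diagonal argument over rational times then gives tightness of the family $\{K^\delta_s\}_{s\in\mathbb Q_{\ge0}}$ for free, and monotonicity lets us extend any limit to all times via right-continuous regularization. The duration $\qduration^\delta$ also needs to be tight in $[0,\infty]$, and this holds trivially because $[0,\infty]$ is compact. Combining these, Prokhorov gives a subsequential limit in law of the triple $(\phi^\delta, (K^\delta_s)_{s\in\mathbb Q}, \qduration^\delta)$, jointly, on the product space. From the limit we define $(\phi,(K_\cdot)_{[0,\qduration]})$, with $K_s$ taken as the limiting increasing family regularized from the right.

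The main obstacle is not existence of a limit in this weak sense, which is essentially free by compactness. The difficulty is ensuring the limit is non-degenerate and is a genuine growth process with $K_s$ a hull and $\qduration$ finite. For finiteness I would use the radial quantum zipper stationarity: it says the unexplored surface at each time $n\delta$ is again distributed as $P_{\alpha,\beta,L}$ with a boundary length $L$ that evolves as a Markov process, presumably a time-changed stable-type process independent of $\delta$ at the level of $\delta$-increments. This gives uniform-in-$\delta$ control of $\qduration^\delta$, which is the first hitting of $0$ by the length process, and so tightness of $\qduration^\delta$ in $(0,\infty)$.

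If instead one wants Carath\'eodory tightness of the hulls with continuity in $s$, I would bound the conformal radius $\log \mathrm{CR}(0,\D\setminus K^\delta_s)$ using the same length process together with moment bounds for $P_{\alpha,\beta,L}$. If this route becomes hard, I would retreat to the compactness argument above, which suffices for the theorem as stated.
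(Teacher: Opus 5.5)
Your argument is correct and is essentially the paper's: $\phi^\delta$ has a $\delta$-independent law (and by Lemma~\ref{lem-duration-indep} so do $\qduration^\delta$, the area measure and the boundary length process), the hull data lives in a compact space, and Prokhorov gives subsequential limits. The only differences are these: the paper encodes the hulls by the capacity-time driving measure $\nu^\delta\in\mathcal N$ together with the time change $t^\delta\in\mathcal I$ rather than by Hausdorff limits at rational times, so limits are automatically hulls, and it handles your conformal-radius concern in Lemma~\ref{lem-tight-t}; also, in the dilute phase $L_s=1+2s$ is increasing, so $\qduration^\delta$ is not a hitting time of $0$, and its $\delta$-independence comes instead from matching $(\qduration^\delta,L^\delta_\cdot)$ in law with the SLE exploration.
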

\begin{proof}
    This is immediate from the tightness of the laws of $\delta$-QLE$(\gamma^2, \eta)$ shown in Lemma~\ref{lem-tight-sf}. 
\end{proof}

We define QLE$(\gamma^2, \eta)$ by taking such a subsequential limit in Section~\ref{sec-phases}. As we explain in Section~\ref{subsec-intro-comparison}, QLE$(\gamma^2, \eta)$ exhibits three different phases as $(\gamma, \eta)$ vary in~\eqref{eq-gamma-eta}; our choice of subsequence depends slightly on the parameter range (Definitions~\ref{def-qle-sf},~\ref{def-qle-simple},~\ref{def-qle-swallowing}). It is natural to conjecture, however, that the convergence holds without passing to a subsequence.

The construction of quantum natural time QLE$(\gamma^2, \eta)$ was previously only carried out for $(\gamma^2, \eta) = (8/3, 0)$ \cite{ms-equivalence}; see Appendix~\ref{sec-other-QLEs} for details. For this choice of parameters, it is known that the limit does not depend on the choice of subsequence.

\subsection{Comparison with capacity time QLE}\label{subsec-intro-comparison}

Capacity time QLE$(\gamma^2, \eta)$ was introduced in \cite{ms-qle} as a subsequential limit of a variant of $\delta$-QLE for which each SLE segment is grown until it has a log conformal radius of $\delta$ when viewed from 0 (rather than quantum natural time $\delta$). It is very similar to our quantum natural time QLE$(\gamma^2, \eta)$ in that the parameter $\eta$ must lie in $\{\frac{3}{\gamma^2}-\frac12, \frac{3\gamma^2}{16} - \frac12\}$, and the size $\alpha$ of the log singularity at the origin is the same. 
It is an open problem as to whether capacity time QLE and quantum natural time QLE agree up to time parametrization, see Question~\ref{question-equiv} and  subsequent discussion.

We now discuss the parameter ranges for which quantum natural time and capacity time QLE have been nontrivially constructed\footnote{\cite{ms-equivalence} also defines capacity time QLE$(\gamma^2, \eta)$ for $\gamma \in [0,2]$ and $\eta = -1$ as the process where the hulls are deterministic concentric annuli with no dependence on $\phi$; we omit this regime in our discussion.}, see Figure~\ref{fig-quantum-vs-capacity}. For capacity time QLE, this parameter range is 
\eqb \label{eq-capacity-parameters}
\{(\gamma, \eta) \: : \: \gamma \in (1,2), \, \eta = \frac{3}{\gamma^2} - \frac12\} \cup \{(\gamma, \eta) \: : \: \gamma \in (0, 2], \, \eta = \frac{3\gamma^2}{16} - \frac12\}.
\eqe
In particular,~\eqref{eq-capacity-parameters} omits the values $\{ (\gamma, \eta) \:: \:  \gamma \in (0 , 1], \eta = \frac{3}{\gamma^2} - \frac12\}$; for these parameters the construction of \cite{ms-qle} becomes trivial\footnote{The construction of \cite{ms-qle} in this range gives a simple radial SLE independent of $\phi$. See \cite[Section 1.5]{ms-equivalence} for discussion on whether this trivial construction is related to scaling limits of discrete models.}. In contrast, the parameter range~\eqref{eq-gamma-eta} for which we construct quantum natural time QLE$(\gamma^2, \eta)$ includes a subset of these omitted values, namely $\{ (\gamma, \eta) \:: \:  \gamma \in (\sqrt3-1 , 1], \eta = \frac{3}{\gamma^2} - \frac12\}$. On the other hand, quantum natural time QLE cannot be constructed for the parameters $\{ (\gamma^2, \eta) \: : \: \gamma \in [4/3, 2\sqrt3 - 2], \eta = \frac{3\gamma^2}{16} - \frac12\}$; the obstruction is the same as that of capacity time QLE for the previously-discussed parameters. Finally, our construction does not address the case $(\gamma^2, \eta) = (4, 1/4)$ as it would require a generalization of the input from \cite{ay-radial} to the critical parameter $\gamma = 2$. It is currently unknown whether QLE agrees with capacity time QLE for the parameters $(\gamma^2, \eta)$ for which both are defined (Question~\ref{question-equiv}). 

\begin{figure}[ht]
	\centering
\includegraphics[scale=0.8]{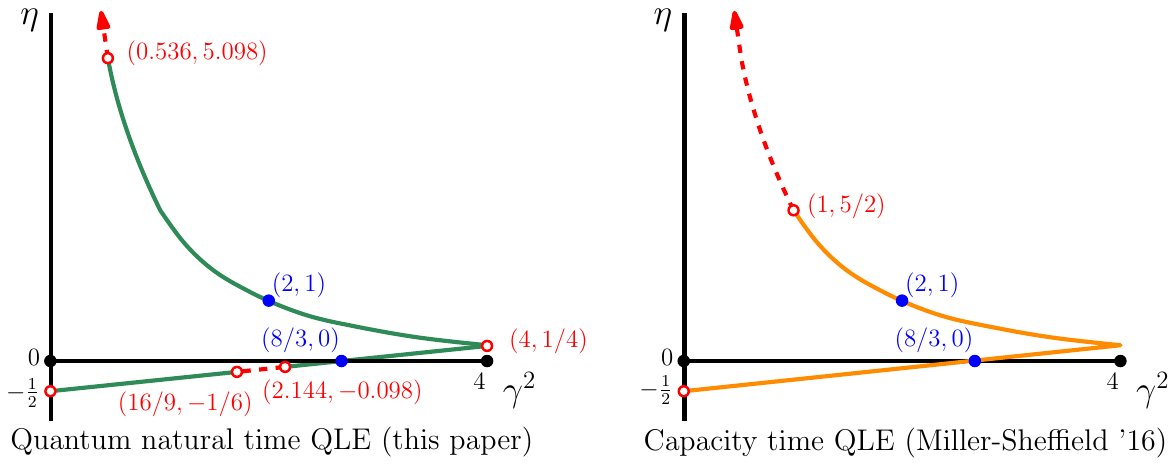}
	\caption{Two graphs depicting the ranges of parameters $(\gamma^2, \eta)$ for which quantum natural time and capacity time QLE$(\gamma^2, \eta)$ have been nontrivially constructed. For both graphs, the top curve is $\eta = \frac{3}{\gamma^2} - \frac12$, and the bottom curve is $\eta = \frac{3\gamma^2}{16} - \frac12$. 
    The red regions on the curves represent parameter values for which the process is not constructed. Capacity time QLE was constructed in~\cite{ms-qle}, and prior to the present work, quantum natural time QLE had only been constructed for the parameters $(\gamma^2, \eta) = (8/3,0)$ \cite{ms-equivalence}.
    Figure adapted from \cite{ms-qle}.
    } \label{fig-quantum-vs-capacity}
\end{figure}

A main result of \cite{ms-qle} is that capacity time QLE solves a stochastic differential equation called the QLE dynamics. Proving this for quantum natural time QLE seems challenging: the SDE describes a growth process evolving in capacity time, whereas our process is parametrized by quantum natural time. 

On the other hand,  quantum natural time QLE is more natural than capacity time QLE in terms of conjectural connections to random planar maps decorated by growth models. Indeed, the quantum natural time parametrization should correspond to the scaling limit of the time parametrization of the discrete model. For instance, a uniform random planar map decorated by metric ball growth should converge, as a time-parametrized growth process, to $(\gamma = \sqrt{8/3})$-LQG decorated by quantum natural time QLE$(8/3,0)$; this is known for many kinds of uniform random planar maps\footnote{Many kinds of (rooted) uniform random planar maps are known to converge as metric spaces to the (pointed) Brownian map with respect to the Gromov-Hausdorff metric; see for instance \cite{legall-uniqueness,miermont-brownian-map}. As an easy consequence, if we decorate each metric space by a metric ball growth process, the decorated metric spaces converge in law with respect to a minor variant of the Gromov-Hausdorff metric where, when we embed two growth-process-decorated metric spaces in a common metric space so their Hausdorff distance is small, we further require the two processes to be uniformly close in Hausdorff distance for all times. One might, more strongly, be able to prove convergence of conformally embedded random planar maps decorated by metric ball growth; see \cite{hs-cardy-embedding}.}. 
Similarly a uniform-spanning-tree-weighted random planar map decorated by a DLA process should converge to $(\gamma =\sqrt2)$-LQG decorated by quantum natural time QLE$(2, 1)$; see Question~\ref{question-scaling}. 

\subsection{Phases}\label{subsec-intro-phases}
In Theorem~\ref{thm-phase} below we prove that QLE$(\gamma^2, \eta)$ has three phases mirroring those of SLE$_\kappa$. 

We first recall the phases of $\SLE_\kappa$; see for instance \cite{lawler2008conformally, rohde-schramm-basic-sle}. For the simple phase $0 < \kappa \leq 4$,  SLE$_\kappa$ is a simple curve. For the swallowing phase $4 < \kappa < 8$, $\SLE_\kappa$ is a curve that intersects itself but has zero Lebesgue measure. For the space-filling phase $\kappa \geq 8$, $\SLE_\kappa$ is  a space-filling curve. 
These three properties are formulated in terms of curves; we now state a variant better adapted to the setting of growth processes. For a radial SLE$_\kappa$ curve in $\D$ from 1 to 0, its hull at time $t$ is the union of the curve at time $t$ and all its complementary connected components not containing 0. Denote the hull process by $(K^{\mathrm{SLE}_\kappa}_\cdot)_{[0,\infty)}$, and denote the two-dimensional Lebesgue measure by $\lambda$.
\begin{itemize}
    \item (Simple phase) For $0 < \kappa \leq 4$, we have $\lambda(K^{\mathrm{SLE}_\kappa}_t) = 0$  for all $t$ a.s. 
    \item (Swallowing phase) For $4 < \kappa < 8$, a.s.\ the function $t \mapsto \lambda(K_t)$ is strictly increasing, has derivative zero almost everywhere, and $K_t$ increases to $\D \backslash \{0\}$ as $t \to \infty$. 
    \item (Space-filling phase) For $\kappa \geq 8$, a.s.\ the function $t \mapsto \lambda(K_t)$ is continuous and strictly increasing, and $K_t$ increases to $\D \backslash \{0\}$ as $t \to \infty$.
\end{itemize}
Note that in the swallowing phase, the jump times of $t \mapsto \lambda(K_t)$ correspond to times where the curve hits itself, hence separating some region from 0.

As explained in \cite[Section 8.2]{ms-qle}, it is natural to conjecture that  QLE$(\gamma^2, \eta)$ also has three phases, mirroring those of the $\SLE_\kappa$ processes used in the definition of QLE$(\gamma^2, \eta)$. Our next theorem verifies that this is indeed the case for quantum natural time QLE$(\gamma^2, \eta)$. Let $\cA^\gamma_\phi$ be the $\gamma$-LQG area measure associated to $\phi$. 

\begin{theorem}\label{thm-phase} QLE$(\gamma^2, \eta)$ exhibits three phases:
\begin{itemize}
    \item (Dilute phase) For $\gamma \in (\sqrt3 - 1,2)$ and $\eta = \frac3{\gamma^2} - \frac12$, we have $\cA^\gamma_\phi(K_\qduration) = 0$ a.s. 
    \item (Swallowing phase) For $\gamma \in (2\sqrt3-2,2)$ and $\eta = \frac{3\gamma^2}{16} - \frac12$, a.s.\ the function $s \mapsto \cA^\gamma_\phi(K_s)$ is strictly increasing, has derivative zero almost everywhere, and is continuous at $\qduration$. Moreover $K_\qduration = \ol \D$.
    \item (Space-filling phase) For $\gamma \in (0,4/3)$ and $\eta = \frac{3\gamma^2}{16} - \frac12$, a.s.\ we have $\cS = \cA^\gamma_\phi(\D)$ and $\cA^\gamma_\phi(K_s) = s$ for all $s \in [0,\qduration]$, and $K_\qduration = \ol \D$.
\end{itemize}
\end{theorem}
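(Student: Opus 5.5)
Each phase statement is first proved for the $\delta$-approximation, using the radial quantum zipper; it is then passed to the subsequential limit, using the particular subsequence fixed in Definitions~\ref{def-qle-sf}, \ref{def-qle-simple}, \ref{def-qle-swallowing}.

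The key input is stationarity. At each step $n\delta$ the field restricted to $D^\delta_{n\delta}$, pulled back to $\D$ and with the new boundary point $p^\delta_n$ marked, has law $P_{\alpha,\beta,\ell}$ for the current boundary length $\ell$. The curve is then an independent radial $\SLE_\kappa$ run for quantum natural time $\delta$. By the quantum zipper (from \cite{ay-radial} for $\kappa<8$ and \cite{ay-reversibility} for $\kappa>8$), the following hold.

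\emph{Dilute case.} Here $\kappa=\gamma^2<4$ and the curve is simple. So $K^\delta_s$ is a union of finitely many $\SLE_\kappa$ segments, and $\cA^\gamma_\phi(K^\delta_s)=0$ a.s. Moreover, the unexplored surface has boundary length evolving as a Lévy-type process, and the law of $(\phi^\delta, K^\delta)$ restricted to $D^\delta_s$ is explicit. The danger is that in the limit the hulls $K_s$ fill positive area. I would rule this out by computing $\E[\cA^\gamma_\phi(\D\setminus K^\delta_s)]$ via stationarity: the remaining region is a $\gamma$-LQG surface whose expected area is a function of its boundary length, so the unexplored area equals total area minus zero. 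Hausdorff convergence of hulls, together with lower semicontinuity of $\cA^\gamma_\phi$ on open complements, should then give $\cA^\gamma_\phi(K_\qduration)\le \liminf$ of the corresponding $\delta$-quantities, which is $0$. A concern is that points accumulating near $K_\qduration$ lie in the unexplored regions. To handle this, I would show that the unexplored disk at time $s$ converges, as a quantum surface, to one of the same law, so that no area is lost.

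\emph{Space-filling case.} Here $\kappa=16/\gamma^2>8$. By the quantum zipper for space-filling $\SLE$ with $\gamma$-LQG, the quantum natural time of the curve equals the quantum area it covers. Hence $\cA^\gamma_\phi(K^\delta_s)=s$ exactly, $\qduration^\delta=\cA^\gamma_\phi(\D)$, and $K^\delta_{\qduration^\delta}=\ol\D$. These identities are closed under the limit, since the area of the hulls is tight and continuous in the Hausdorff topology in this setting. The identity $\cA(K_s)=s$ holds for all rational $s$ simultaneously and then for all $s$ by monotonicity.

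\emph{Swallowing case.} Here $4<\kappa<8$. Quantum natural time of $\SLE_\kappa$ counts swallowed quantum disks: the swallowed components are independent quantum disks, and their boundary lengths are the jumps of a $\kappa'/4$-stable-type boundary-length process. Consequently $s\mapsto\cA^\gamma_\phi(K^\delta_s)$ is the sum of the areas of the disks swallowed by time $s$. Since the curve itself has zero area, this function is pure jump, and it is strictly increasing because jumps are dense in time. I would show that these jump structures converge as $\delta\to0$. The boundary-length process converges to a Lévy process with dense jumps, and each jump carries a swallowed quantum disk of positive area. This gives strict monotonicity and zero derivative almost everywhere. The remaining claims are $K_\qduration=\ol\D$ and continuity at $\qduration$. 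For these I would check that, as the target is approached, the boundary length of the unexplored region tends to $0$; this forces its area to $0$ as well.

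The main obstacle is the passage to the limit. Area is not continuous under Hausdorff convergence of hulls, so the statements must be carried by quantities that survive the limit. For this reason I would work with joint convergence of the field, the hulls, and the boundary-length and swallowed-disk processes, using stationarity to identify the limits.
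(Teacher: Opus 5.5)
Your architecture (exact statements for $\delta$-QLE from the zippers, then a limit) is the paper's. But the limit step, which is the real content of the theorem, is wrong or missing in each phase.

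\textbf{Space-filling and dilute phases.} Weak convergence of $\cA^\gamma_{\phi^{\delta_k}}$, together with Carath\'eodory or Hausdorff convergence of hulls, gives only a \emph{one-sided} bound: $\cA^\gamma_\phi(\D\setminus K)\le\liminf_k\cA^\gamma_{\phi^{\delta_k}}(\D\setminus K^{\delta_k})$. So limiting hulls can gain area but not lose it.
\begin{itemize}
\item In the space-filling case, hull area is not continuous, as your last paragraph concedes. The bound gives only $\cA^\gamma_\phi(K_s)\ge s$. Also, $K_\qduration$ is $\ol{\bigcup_{s<\qduration}K_s}$, not a limit of $K^\delta_{\qduration^\delta}=\clD$.
\item In the dilute case your semicontinuity step points the wrong way. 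It yields $\cA^\gamma_\phi(K_\qduration)\ge 0$, not $\le 0$.
\end{itemize}
The missing idea is to upgrade these a.s.\ inequalities to equalities via equality in law with $\delta$-QLE, which is what stationarity provides.
\begin{itemize}
\item Space-filling case. Condition on survival past $s$. By Proposition~\ref{prop-bd-length-consistent-reals}, $\cA^\gamma_\phi(\D\setminus K_s)=\cA^\gamma_{\phi_s}(\D)$ has the law of $\cA^\gamma_{\phi^{\delta_k}_s}(\D)$. Also $\qduration=\cA^\gamma_\phi(\D)$ has the law of $\qduration^{\delta_k}=s+\cA^\gamma_{\phi^{\delta_k}_s}(\D)$. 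So $\cA^\gamma_\phi(\D)\ge s+\cA^\gamma_{\phi_s}(\D)$ with both sides equal in law, which forces $\cA^\gamma_\phi(K_s)=s$. The paper does this by comparing Laplace transforms.
\item Dilute case. Your fallback (``no area is lost'') is the right idea, but it must target the \emph{final} complement $\D\setminus K^{\delta}_{\qduration^{\delta}}$, which is a $\QD(1+2\qduration^\delta)^\#$ by Lemma~\ref{lem-simple-unexplored-qle}. The time-$s$ region is not a quantum disk: its field has law $P_{\alpha,\beta,L_s}$, and its boundary length is exactly $1+2s$, not L\'evy-type.
\item Carrying out the dilute argument:
\begin{itemize}
\item Embed the final complement by maps $f^k:\D\to\D\setminus K^{\delta_k}_{\qduration^{\delta_k}}$ centered at an area-sampled point, composed with an independent uniform rotation, so the field's law does not depend on $k$. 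Extract a limit $f^k\to f$.
\item Then $\cA^\gamma_\phi(f(\D))\le\cA^\gamma_\phi(\D)$ with equal laws, so equality holds a.s.
\item Lemma~\ref{lem-hausdorff-cara-conv} plus positivity of $\cA^\gamma_\phi$ on open sets gives $f(\D)=\D\setminus K_\qduration$.
\end{itemize}
\item Argue with laws, not expectations. For $\alpha=Q-\frac1\gamma$ we have $\gamma\alpha=1+\frac{\gamma^2}2\ge 2$ when $\gamma\ge\sqrt2$, so the area under $P_{\alpha,\beta,\ell}$ has infinite mean there.
\end{itemize}

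\textbf{Swallowing phase.} Dense jumps of positive area give strict monotonicity only once you know one thing. The limit $U_s$ of the domains swallowed by $\delta_k$-QLE at times $s^k\to s$ must be swallowed by the limiting process exactly at time $s$. Carath\'eodory convergence at rational times only gives $U_s\subset K_{s'}\setminus K_{s'-}$ for some $s'\le s$. The paper excludes $s'<s$ by computing $\E[e^{-\cA^\gamma_\phi(\D)}\mid q<\qduration]$ in two ways at each rational $q$, using Lemma~\ref{lem-swallowing-markov}.

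More importantly, zero derivative a.e., continuity at $\qduration$ and $K_\qduration=\clD$ all require that the limiting hulls carry no area outside the swallowed disks, i.e.\ $\cA^\gamma_\phi(\D)=\sum_s\cA^\gamma_\phi(U_s)$. Dense positive jumps are compatible with an additional continuous increasing component of $s\mapsto\cA^\gamma_\phi(K_s)$. This identity is again an a.s.\ inequality (the $U_s$ are disjoint) upgraded by equality in law. For $\delta_k$-QLE the sum equals the total area, and given $L$ the swallowed disks have the same joint law for every $k$ (Lemma~\ref{lem-swallowed-delta-qle}).

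Your substitute, that $L_s\to0$ forces the unexplored area to $0$, has two problems. It needs control of the unexplored area at the random time $\qduration$ using only fixed-time conditional laws, which you do not supply. Even granted, it would not give zero derivative a.e.

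Finally, $L$ is not known to be a L\'evy process in this phase. Its law also does not depend on $\delta$, so nothing needs to converge to a L\'evy process.
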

The corresponding result has not yet been proven for capacity time QLE$(\gamma^2, \eta)$. Our proof depends on Markov properties of QLE$(\gamma^2, \eta)$; a related Markov property for capacity time QLE$(\gamma^2, \eta)$ exists but is inherently more limited. 

\subsection{Markov properties}\label{subsec-intro-markov}
We now give some Markov properties for QLE, starting with a stationarity property for a sample $(\phi, (K_\cdot)_{[0,\qduration]})$ of QLE$(\gamma^2, \eta)$. Their statements require the notion of $\gamma$-LQG surface, which we now introduce briefly; a more detailed treatment is given in Section~\ref{subsec-lqg-gff}.

Consider two planar domains $D, \wt D \subset \bbC$. Given a conformal map $g: D \to \tilde D$ and a distribution $h$ on $D$, we define a distribution $ g \bullet_\gamma h$ on $\tilde D$ by
\[g \bullet_\gamma h := h \circ g^{-1} + Q \log |(g^{-1})'|, \qquad Q = \frac\gamma2 + \frac2\gamma.\]
A $\gamma$-LQG surface is a $\sim_\gamma$-equivalence class of pairs $(D, h)$, where $(D, h) \sim_\gamma (\tilde D, \tilde h)$ if $\tilde h = g \bullet_\gamma h$ for some conformal map $g$. The $\gamma$-LQG area measure $\cA^\gamma_h$ is compatible with this definition in that $\tilde h = g \bullet_\gamma h$ implies $\cA^\gamma_{\tilde h} = g_* \cA_h$, and an analogous statement holds for the $\gamma$-LQG boundary length measure $\cL^\gamma_h$.

Recall that the law of $\phi$ is $P_{\alpha, \beta, 1}$ where $(\alpha, \beta)$ are as in~\eqref{eq-alpha-beta} and $P_{\alpha,\beta,1}$ is as in Definition~\ref{def-P-weighted}. For each time $s < \qduration$, we can define the field $\phi_s$ on $\D$ by $\phi_s = g \bullet_\gamma \phi$  where $g: \D \backslash K_s \to \D$ is the conformal map fixing $0$ with $g'(0) >0$.

We can define a boundary length process $(L_\cdot)_{[0,\qduration]}$ associated to $(\phi, (K_\cdot)_{[0,\qduration]})$ by measuring the $\gamma$-LQG boundary length $\cL^\gamma_{\phi_s}(\partial \D)$ at rational times, then extending to all times (Corollary~\ref{cor-length-process}). For $s>0$, on $\{ s < \qduration\}$ we identify the conditional law of $\phi_s$ given the boundary length process $(L_\cdot)_{[0,s]}$. 

\begin{theorem}\label{thm-stationarity}
    Suppose $(\gamma, \eta)$ lies in~\eqref{eq-gamma-eta},  let $(\phi, (K_\cdot)_{[0, \qduration]})$ be a QLE$(\gamma^2, \eta)$ process, and let $(L_\cdot)_{[0, \qduration]}$ be its associated boundary length process. 
    For $s>0$, conditioned on $\{s < \qduration\}$ and on $(L_\cdot)_{[0,s]}$, the conditional law of $\phi_s$ is $P_{\alpha, \beta, L_s}$, where $(\alpha, \beta)$ are as in~\eqref{eq-alpha-beta}.
\end{theorem}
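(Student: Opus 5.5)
We will first prove the statement for the $\delta$-QLE$(\gamma^2,\eta)$ process and then pass to the subsequential limit. The input is the radial quantum zipper described in Section~\ref{subsec-intro-continuum}. Take $\phi \sim P_{\alpha,\beta,\ell}$ and sample the boundary point $p$ from the measure associated with $\phi$ (the heuristic $(d\nu/d\mu)^\eta\mu$). Grow an independent radial $\SLE_\kappa$ from $p$ to $0$ for quantum natural time $t$. Map out the hull by the conformal map fixing $0$ with positive derivative. The zipper should then say: conditionally on the boundary length process of the curve up to time $t$, the new field has law $P_{\alpha,\beta,L_t}$, on the event that the curve has not reached $0$.

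For $\kappa\in(0,8)\setminus\{4\}$ we would derive this from the radial LQG--SLE coupling of \cite{ay-radial}. For $\kappa>8$ we would take it from \cite{ay-reversibility}. The scaling $P_{\alpha,\beta,\ell}$ versus $P_{\alpha,\beta,1}$ follows by adding a constant to the field.

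\textbf{Step 1 ($\delta$-QLE).} Induct on $n$ to show: conditionally on $\{n\delta<\qduration^\delta\}$ and $(L^\delta_\cdot)_{[0,n\delta]}$, the field $\phi^\delta_{n\delta}$ has law $P_{\alpha,\beta,L^\delta_{n\delta}}$.
\begin{itemize}
\item The inductive step is a direct application of the zipper at time $\delta$.
\item Conformal coordinate change is compatible with $\bullet_\gamma$, so the map to $\D$ composes correctly.
\item Sampling $p^\delta_n$ and the fresh SLE depends only on $\phi^\delta_{n\delta}$ and independent randomness. The Markov structure therefore propagates.
\end{itemize}
Applying the zipper for intermediate times $t\in[0,\delta]$ within a step extends the statement to all times $s$, not just multiples of $\delta$. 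This gives the theorem for each $\delta$-QLE.

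\textbf{Step 2 (limit).} Take the subsequence $\delta_k\to0$ defining QLE, along which $(\phi^{\delta_k},K^{\delta_k}_\cdot,L^{\delta_k}_\cdot)$ converges in law. Via Skorokhod embedding we may assume the convergence is almost sure. We need three things.
\begin{enumerate}
\item The boundary lengths $L^{\delta_k}_s$ at rational times converge to $L_s$. This should be part of the construction in Corollary~\ref{cor-length-process}, or of the tightness in Lemma~\ref{lem-tight-sf}.
\item $\phi^{\delta_k}_s\to\phi_s$ in distribution sense. This follows from Carathéodory convergence of $\D\setminus K^{\delta_k}_s$ to $\D\setminus K_s$, which comes from Hausdorff convergence of the hulls.
\item The events $\{s<\qduration^{\delta_k}\}$ converge appropriately.
\end{enumerate}
We would then test against bounded continuous functionals $F(\phi_s)\,G((L_r)_{r\le s,\,r\in\mathbb Q})\,1_{s<\qduration}$. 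The identity
\[
\E[F(\phi_s)G(L)]=\E\bigl[\textstyle\int F\,dP_{\alpha,\beta,L_s}\,G(L)\bigr]
\]
holds for each $\delta_k$ by Step~1, and it passes to the limit because $\ell\mapsto P_{\alpha,\beta,\ell}$ is weakly continuous. That continuity holds since $P_{\alpha,\beta,\ell}$ is the pushforward of $P_{\alpha,\beta,1}$ under adding $\frac{2}{\gamma}\log\ell$.

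\textbf{Main obstacle.} The hard step is the limit, specifically the joint convergence of $L^{\delta_k}$ and of the conformally mapped fields, and handling the indicator $1_{s<\qduration}$ at the boundary of the event. For the indicator, first restrict to $\{L_s>\epsilon\}$, or to $\{s+\epsilon<\qduration\}$, and send $\epsilon\to0$. This uses that the boundary length is positive before the duration ends. For continuity of $\phi_s$ and $L_s$ in $K$, use that $L_s$ is a measurable function of $\phi_s$, namely $\cL^\gamma_{\phi_s}(\partial\D)$. Together with the known conditional law, this gives uniform integrability: under $P_{\alpha,\beta,\ell}$, the boundary measure is a deterministic rescaling of a fixed law. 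That should upgrade convergence of fields to convergence of lengths.
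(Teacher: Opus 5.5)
The first half of your plan is the paper's argument. Stationarity for $\delta$-QLE is proved by induction over $\delta$-steps (Lemma~\ref{lem-duration-indep}). The step you fold into ``the zipper should say'' is made precise there using Lemma~\ref{lem-sample-gmc-2}, with the tip sampled from $(\cL^\beta_\phi)^\#$, together with the fact that the rotation between the tip-normalized field and the $g'(0)>0$-normalized field is uniform and independent.

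The limit step has a genuine gap. You say $\phi^{\delta_k}_s\to\phi_s$ because Carath\'eodory convergence of $\D\setminus K^{\delta_k}_s$ ``comes from Hausdorff convergence of the hulls''. That holds only if $0$ is not in the Hausdorff limit (cf.\ Lemma~\ref{lem-hausdorff-cara-conv}). Nothing in your argument stops the $\delta_k$-hulls at quantum time $s$ from approaching $0$, with $\log g'_{K^{\delta_k}_s}(0)\to\infty$, while $\qduration^{\delta_k}$ stays bounded away from $s$. Quantum natural time and capacity time are only linked in the limit. On such an event the limiting domain degenerates, so $\phi_s$ is undefined on part of $\{s<\qduration\}$ and no field convergence is possible. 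Restricting to $\{L_s>\eps\}$ or $\{s+\eps<\qduration\}$ does not help: positive boundary length does not control the conformal radius, and $t(s+\eps)\ge t(s)$.

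The missing ingredient is Lemma~\ref{lem-tight-t}: $\P[\log g'_{K^\delta_s}(0)>N\mid s<\qduration^\delta]\to0$ as $N\to\infty$, uniformly in $\delta$. Its proof uses your Step~1.
\begin{itemize}
\item The law of $\phi^\delta_s$ on $\{s<\qduration^\delta\}$ does not depend on $\delta$, so $\cA^\gamma_{\phi^\delta_s}(B_{1/2}(0))$ is bounded below with high probability, uniformly in $\delta$.
\item By Koebe distortion, capacity time $>N$ forces $g^{-1}_{K^\delta_s}(B_{1/2}(0))\subset B_{2e^{-N}}(0)$.
\item The mass $\cA^\gamma_{\phi^\delta}(B_{2e^{-N}}(0))$ is small with high probability, since $\phi^\delta\sim P_{\alpha,\beta,1}$ for every $\delta$.
\end{itemize}
This is why the paper keeps $t^\delta$ and $\nu^\delta$ in the tight tuple. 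It then gets $t(s)<\infty$ if and only if $s<\qduration$ (Lemma~\ref{lem-coupling}), and uses Proposition~\ref{prop-conv-loc-spacetime} for the local uniform convergence of $(g^{\delta_k}_{t^{\delta_k}(q)})^{-1}$ needed in Lemma~\ref{lem-field-estimate}.

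Two smaller points. First, fixed-time Hausdorff limits give you only countably many times along one subsequence. The paper handles rational $q$ first, then real $s$ via right-continuity of $t$ and $L$ applied to $\psi_s=\phi_s-\frac2\gamma\log L_s$. Second, your ``uniform integrability'' route to $L_s=\cL^\gamma_{\phi_s}(\partial\D)$ fails, because boundary GMC mass is not continuous under interior convergence of fields. Instead, keep $L^\delta$ in the tight tuple, show that the joint law of $(\phi_s,(L_\cdot)_{[0,s]})$ on $\{s<\qduration\}$ matches that for $\delta_k$, and read the identity off that (Corollary~\ref{cor-length-process}).
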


A similar stationarity property holds for capacity time QLE$(\gamma^2, \eta)$: at each fixed capacity time $t$, their field at time $t$ agrees in law with their initial field as \emph{distributions modulo additive constant} (an equivalence class of distributions where $\phi_1 \sim \phi_2$ if $\phi_1 = \phi_2+c$ for some constant $c$). Crucially, $\gamma$-LQG areas or lengths do not make sense for distributions modulo additive constant since $\cA^\gamma_{\phi + c} = e^{\gamma c} \cA^\gamma_\phi$ and $\cL^\gamma_{\phi + c} = e^{\gamma c/2} \cL^\gamma_\phi$. Our proof of Theorem~\ref{thm-phase} requires us to keep track of such quantities, and hence does not transfer to the setting of capacity time QLE.

Finally, we identify the laws of $\gamma$-LQG surfaces that are cut out or swallowed by QLE$(\gamma^2, \eta)$ in the dilute and swallowing phases; in the space-filling phase QLE$(\gamma^2, \eta)$ fills the entire disk hence there is no analogous statement. For each $\gamma \in (0,2)$ and $\ell > 0$, there exists a probability measure $\QD(\ell)^\#$ describing the law of the canonical $\gamma$-LQG quantum disk with boundary length $\ell$; see Definition~\ref{def-QD} for details.

\begin{theorem}\label{thm-simple-markov}
    In the dilute phase where $\gamma \in (\sqrt3 - 1,2)$ and $\eta = \frac3{\gamma^2} - \frac12$, conditioned on the boundary length process $(L_\cdot)_{[0,\qduration]}$, the conditional law of the quantum surface $(\D \backslash K_\qduration, \phi)/{\sim_\gamma}$ is $\QD(L_\qduration)^\#$.
\end{theorem}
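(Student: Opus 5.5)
We prove Theorem~\ref{thm-simple-markov} at the level of the $\delta$-approximations and then pass to the limit along the subsequence used in Definition~\ref{def-qle-simple}.

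\textbf{Step 1: the $\delta$-QLE statement.} In the dilute phase $\kappa = \frac{6}{2\eta+1} = \gamma^2 < 4$, so each SLE segment is simple and cuts out no region. The radial quantum zipper says the following. Sample a field from $P_{\alpha,\beta,\ell}$ and an independent radial $\SLE_\kappa$ from a boundary point sampled as in the construction, and run it for quantum natural time $\delta$. Then the mapped-out field is again $P_{\alpha,\beta,\ell'}$ given the boundary lengths. Iterating this gives Theorem~\ref{thm-stationarity} for $\delta$-QLE.

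For the terminal statement I would use the zipper's description of the process at the hitting time of $0$. The radial $\SLE_{\gamma^2}$ run on a field with the given $\alpha$-singularity corresponds to the welding of a quantum disk, whose total boundary length equals the current length, onto the curve. Equivalently, the surface remaining when the curve reaches $0$ is a quantum disk with boundary length equal to the terminal boundary length. I expect this to be derivable from the radial conformal welding results of \cite{ay-radial}. There, a weight-$(2-\gamma^2/2)$-type radial welding decomposes the $P_{\alpha,\beta}$ surface cut by the full radial SLE into a quantum disk (the complement of the curve, which is simply connected since $\kappa \le 4$ and $0$ is on the curve) glued along the curve.

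Combining this with the Markov property at the last grid time $n\delta$ before $\qduration^\delta$ shows the following. Conditioned on $(L^\delta_\cdot)_{[0,\qduration^\delta]}$, the surface $(\D\backslash K^\delta_{\qduration^\delta}, \phi^\delta)/{\sim_\gamma}$ has law $\QD(L^\delta_{\qduration^\delta})^\#$. Because the curve is simple, the swallowed regions are empty and the complement is exactly the cut-out disk.

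\textbf{Step 2: passage to the limit.} Along the subsequence, $(\phi^\delta, K^\delta_\cdot, L^\delta_\cdot)$ converges jointly in law, with $K$ in the Hausdorff topology and the field as a distribution. I would couple the processes so that convergence is a.s. (Skorokhod). Then I would show two things:
\begin{enumerate}[(i)]
\item $L^\delta_{\qduration^\delta}\to L_\qduration$ and $\qduration^\delta \to \qduration$;
\item the conformal maps from $\D\backslash K^\delta_{\qduration^\delta}$ (uniformized with a suitable normalization, e.g.\ fixing a marked interior point of positive distance from $K$) converge, so that the embedded fields converge.
\end{enumerate}
Since $\QD(\ell)^\#$ depends continuously on $\ell$, the conditional law statement passes to the limit by testing against bounded continuous functionals of the form $F((L_\cdot))G(\text{surface})$.

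\textbf{Main obstacle.} The hardest part is (ii), together with identifying $\D\backslash K_\qduration$ with the limit of $\D\backslash K^\delta_{\qduration^\delta}$. Hausdorff convergence of hulls does not imply Carathéodory convergence of the complements. Moreover, $K_\qduration$ is not a hull, since it contains $0$, so $\D\backslash K_\qduration$ may have many components.

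I would first use Theorem~\ref{thm-phase} (dilute phase, $\cA^\gamma_\phi(K_\qduration)=0$), together with the fact that the quantum disks have full area $\cA^\gamma_\phi(\D\backslash K^\delta_{\qduration^\delta})$ converging to $\cA^\gamma_\phi(\D)$. This prevents mass from escaping into thin regions and gives tightness of the embedded quantum disks, for example via tightness of their areas and boundary lengths and a pinch-point argument. If direct Carathéodory convergence fails, the fallback is to encode the limiting surface by its quantum area measure and boundary length, which do converge. Quantum disks are determined in law by a suitable family of such observables, for instance the field modulo conformal maps, identified via a uniformly sampled interior point.
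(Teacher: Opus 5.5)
Your $\delta$-level step is essentially the paper's Lemma~\ref{lem-simple-unexplored-qle}: the radial welding of $\QD_3(a,\ell,a)$ (weight-$2$ quantum triangles rather than the weight you name) plus the Markov iteration, where the welded disk has boundary length $\ell+2\qduration$. The genuine gap is in the passage to the limit. You correctly flag it as the main obstacle, but you do not resolve it.

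Your proposed resolution invokes Theorem~\ref{thm-phase}, i.e.\ $\cA^\gamma_\phi(K_\qduration)=0$. In the paper that statement is a \emph{consequence} of the identification you are trying to make; both come out of Proposition~\ref{prop-simple-markov}. So using it here is circular, and it cannot be obtained cheaply, since zero area is not preserved under Hausdorff limits. Nor does $\cA^\gamma_{\phi^{\delta_k}}(\D\backslash K^{\delta_k}_{\qduration^{\delta_k}})=\cA^\gamma_{\phi^{\delta_k}}(\D)\to\cA^\gamma_\phi(\D)$ stop mass from escaping. A priori the Carath\'eodory limit of the complements could be degenerate or a proper subdomain, with the remaining area in other components of $\D\backslash K_\qduration$ or in $K_\qduration$ itself. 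Your fallback of encoding the limit by its area measure and boundary length would at best show that \emph{some} limiting surface is a quantum disk. It would not show that this surface is $(\D\backslash K_\qduration,\phi)/{\sim_\gamma}$.

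The missing idea is to choose the embedding so that it has a $k$-independent law.
\begin{itemize}
\item Sample $z^k$ from $(\cA^\gamma_{\phi^{\delta_k}})^\#$. It lies off $K^{\delta_k}_{\qduration^{\delta_k}}$, since that set has zero area.
\item Let $f^k:\D\to\D\backslash K^{\delta_k}_{\qduration^{\delta_k}}$ be conformal with $f^k(0)=z^k$ and $(f^k)'(0)>0$. Set $\psi^k=(R_\theta\circ(f^k)^{-1})\bullet_\gamma\phi^{\delta_k}$, with $\theta$ an independent uniform rotation angle.
\item By the $\delta$-level lemma and the $k$-independence of the law of $\qduration^{\delta_k}$ (Lemma~\ref{lem-duration-indep}), the joint law of $(\qduration^{\delta_k},\psi^k)$ does not depend on $k$.
\item Pass to a subsequence with $f^k\to f$. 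The a.s.\ limit then satisfies $(\qduration,\psi)\stackrel d=(\qduration^{\delta_k},\psi^k)$. Nondegeneracy of $f$ follows because $\cA^\gamma_\phi$ has no atoms.
\end{itemize}
This gives the quantum disk law directly, with no need for continuity of $\ell\mapsto\QD(\ell)^\#$ or a topology on quantum surfaces. It also gives $\cA^\gamma_\phi(f(\D))=\cA^\gamma_\psi(\D)\stackrel d=\cA^\gamma_\phi(\D)$, so $f(\D)$ carries full area.

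Lemma~\ref{lem-hausdorff-cara-conv} then identifies $f(\D)$ as the component of $\D\backslash K_\qduration$ containing $f(0)$, and full area rules out any other component. In this argument $\cA^\gamma_\phi(K_\qduration)=0$ is an output, not an input.
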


Let $K_{s-} := \bigcup_{u < s} K_u$. 
We say that a domain $U \subset \D$ is \emph{swallowed} by $(K_\cdot)_{[0,\qduration]}$ if there is a time $s< \qduration$ such that $U$ is one of the connected components of the interior of $K_s \backslash \bigcup_{u < s} K_u$. In the swallowing phase, each jump of $s \mapsto \cA_\phi(K_s)$ represents a domain $U$ being swallowed at time $s$. 
We say a quantum surface is swallowed if it is of the form $(\phi, U)/{\sim_\gamma}$ for some domain $U$ swallowed by $(K_\cdot)_{[0,\qduration]}$. In the swallowing phase, we identify the quantum surfaces swallowed by QLE as quantum disks. 

\begin{theorem}\label{thm-swallowing-markov}
    In the swallowing phase where $\gamma \in (2\sqrt3-2,2)$ and $\eta = \frac{3\gamma^2}{16} - \frac12$, the jumps of $(L_\cdot)_{[0,\qduration]}$ are in bijection with the quantum surfaces swallowed by $(\phi, (K_\cdot)_{[0,\qduration]})$, wherein at each jump time the process $(\phi, (K_\cdot)_{[0,\qduration]})$ swallows a quantum surface whose $\gamma$-LQG boundary length equals the jump size. Moreover, conditioned on $(L_{\cdot})_{[0,\qduration]}$, the swallowed quantum surfaces are conditionally independent, and the conditional law of the quantum surface corresponding to a jump of size $\ell$ is $\QD(\ell)^\#$. 
\end{theorem}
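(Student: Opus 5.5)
We will first prove the analogous statement for the $\delta$-approximation $(\phi^\delta,(K^\delta_\cdot)_{[0,\qduration^\delta]})$ and then pass to the limit along the subsequence of Definition~\ref{def-qle-swallowing}. For fixed $\delta$, the key input is the radial quantum zipper for $\kappa = 16/\gamma^2 \in (4,8)$ (built from \cite{ay-radial}). On each interval $[n\delta,(n+1)\delta]$ it describes the SLE$_\kappa$ segment grown on the field $\phi^\delta_{n\delta}\sim P_{\alpha,\beta,L_{n\delta}}$ in quantum natural time. Concretely, the zipper gives the following picture: the boundary length process of the unexplored region is a spectrally negative $\frac{4}{\kappa}$-stable-type L\'evy process (suitably weighted/conditioned). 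Each downward jump corresponds to a time when the curve disconnects a region from $0$. The disconnected regions are, conditionally on the length process, independent quantum disks with boundary length equal to the jump size; this is the radial analogue of the forested-line/mating-of-trees description of SLE$_\kappa$ cutting LQG, $\kappa\in(4,8)$.

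Iterating over $n$ using the stationarity (Theorem~\ref{thm-stationarity} at level $\delta$: after each step the remaining field is again $P_{\alpha,\beta,L}$ given the lengths, independently of the swallowed disks) gives, for $\delta$-QLE, the exact statement: jumps of $L^\delta$ are in bijection with swallowed surfaces, and given $(L^\delta_\cdot)$ these are independent $\QD(\ell)^\#$. The resampling of $p^\delta_n$ does not affect this, since the boundary point is sampled from a measure depending only on the current field, which is independent of the swallowed disks given lengths.

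For the limit, we need joint convergence of $(\phi^\delta,K^\delta_\cdot,L^\delta_\cdot,\{\text{swallowed surfaces}\})$. The plan is as follows. First, note that the law of $(L^\delta_\cdot)$ does not depend on $\delta$ at all: it is the same Markov jump process, since the cutting procedure is length-stationary and the restart point does not change the length dynamics. Hence we may take $L_\cdot=\lim L^\delta_\cdot$ with the same law, and convergence of jump times and sizes holds in the Skorokhod sense. Second, for each jump of size $\ell$ above a threshold $\epsilon$, we show that the swallowed domains $U^\delta$ converge (in Carath\'eodory sense, viewed from an interior point) to a domain $U$ swallowed by the limit. We then show that the quantum surfaces converge, e.g.\ by embedding each disk with a uniform area-sampled interior point and using that $\QD(\ell)^\#$ laws are fixed, together with tightness of fields restricted to $U^\delta$.

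The main obstacle is this second step: ensuring that macroscopic jumps of $L^\delta$ correspond in the limit exactly to swallowed components of $K_s\setminus K_{s-}$, with no swallowed regions lost or spuriously merged, and no extra swallowing without a length jump. For this I would use area bookkeeping. By Theorem~\ref{thm-phase} (swallowing phase) the limit area process is pure jump with total area $\cA^\gamma_\phi(\D)$. In the $\delta$-process, the total area equals the sum of the areas of swallowed disks. Since the areas of $\QD(\ell)^\#$ are uniformly integrable in terms of $\ell$, the small jumps (below $\epsilon$) carry vanishing area as $\epsilon\to0$. Hence the area jumps of the limit coincide with the limits of the macroscopic swallowed disks, which gives both the bijection and the conditional independence (the latter is preserved under weak limits of conditional laws given $L$, since $L$'s law is constant).
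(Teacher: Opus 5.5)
\paragraph{The $\delta$-level step.} In substance this step is Lemmas~\ref{lem-swallowing-markov} and~\ref{lem-swallowed-delta-qle}, but not for the reason you give. The law of $L$ in this phase is not known to be a stable process (Question~\ref{question-length-process}); in any case the index would be $\kappa/4=4/\gamma^2$, not $4/\kappa$. The actual input is the forested zipper. There the generalized boundary length grows deterministically, and the swallowed surfaces are the loops of a generalized quantum disk (Lemma~\ref{lem-swallowed-sle}).

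\paragraph{The limit step: circularity.} The real problem is how you resolve the obstacle you correctly isolate. You appeal to the swallowing case of Theorem~\ref{thm-phase}, but that case is deduced from Proposition~\ref{prop-swallowed}, i.e.\ from the statement you are trying to prove. So this is circular. The total-area bookkeeping can be done without it. The limiting domains $U_s$ are disjoint, so $\sum_s \cA^\gamma_\phi(U_s)\le \cA^\gamma_\phi(\D)$. By the $\delta$-level description the two sides agree in law, hence they agree a.s. So the $U_s$ exhaust the area and no other swallowed regions exist.

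\paragraph{The missing idea: timing.} What is genuinely missing is control of \emph{when} each $U_s$ is swallowed, and area bookkeeping is blind to this. Carath\'eodory convergence of $K^{\delta_k}_q$ to $K_q$ gives $U_s\subset K_q$ for rational $q>s$. For $q<s$ it only gives that $U_s$ lies either in $K_q$ or in $\D\setminus K_q$. A region that, in every $\delta_k$-process, is still joined to the component of $0$ through a thin neck can be pinched off in the limit. Then $U_s$ is swallowed at some $s'<s$, and every total-area identity still holds. In that case the claimed correspondence ``the jump of $L$ at time $s$ is the surface swallowed at time $s$'' does not follow.

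\paragraph{How the paper closes this.} It uses two steps.
\begin{itemize}
\item First, it transfers the Markov property at rational $q$ to the limit, jointly with the surfaces indexed by jump times $s\le q$.
\item Second, it writes $\E[e^{-\cA^\gamma_\phi(\D)}\mid q<\qduration]$ in two ways. One uses $\sum_{s'\le q}\cA^\gamma_\phi(U_s)+\cA^\gamma_\phi(\D\setminus K_q)$, which is valid because the $U_s$ exhaust the area. The other uses $\sum_{s\le q}\cA^\gamma_\phi(U_s)+\cA^\gamma_\phi(\D\setminus K_q)$, whose law matches the $\delta$-process by the first step.
\end{itemize}
The first sum dominates the second and the Laplace transforms agree, so the sums coincide a.s. Hence no jump has $s'\le q<s$.

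\paragraph{A smaller point.} To pass conditional independence to the limit, you need embeddings whose joint law with $L^{\delta_k}$ does not depend on $k$. An area-sampled center fixes each surface only up to rotation. This is why the paper composes with independent uniform rotations (Lemma~\ref{lem-rotate-all}).
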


Analogs of the Markov properties in this section are known for certain growth models on random planar maps; see Section~\ref{subsec-intro-discrete} for further discussion.

\subsection{Quantum zippers}\label{subsec-intro-continuum}
The key result underpinning the construction of capacity time QLE in \cite{ms-qle} is a radial variant of the \emph{capacity quantum zipper} of \cite[Corollary 1.5]{shef-zipper}, which goes as follows. Let $\gamma \in (0,2]$ and $\kappa \in \{ \gamma^2, 16/\gamma^2\}$. Let $\phi$ be a certain variant of the free boundary GFF on $\D$ and let $\eta$ be an independent radial $\SLE_\kappa$ curve from $1$ to $0$. Assume $\eta$ is  parametrized according to capacity: if $g_t$ is the conformal map sending the origin-containing connected component of $\D \backslash \eta([0,t])$ to $\D$ such that $g_t(0) = 0$ and $g_t(\eta(t)) = 1$, then $|g_t'(0)| = e^{t}$. The stationarity property of the capacity quantum zipper is that, defining $\phi_t = g_t \bullet_\gamma \phi$, we have $\phi_t \stackrel d= \phi$ when viewed as distributions modulo additive constant.

In order to construct quantum natural time QLE, we instead need a radial variant of the \emph{length quantum zipper} of \cite[Theorem 1.8]{shef-zipper}, in which one cuts along the SLE curve for $t$ units of quantum natural time instead of capacity time, and obtains a stationarity property for the field $\phi_t$ where, conditioned on the $\gamma$-LQG boundary length $L = \cL^{\gamma}_{\phi_t}(\partial \D)$, the conditional law of $\phi_t$ is explicit and does not depend on $t$. We discuss these results in Section~\ref{sec-zippers}. The zipper for $\kappa > 8$ was shown in \cite{ay-reversibility}, and we prove the zippers for $\kappa$ in $(0,4)$ and $(4,8)$ using symmetries of special $\gamma$-LQG surfaces called \emph{quantum disks} and \emph{generalized quantum disks} together with inputs from \cite{ay-radial} which identify the conformal weldings of such objects to themselves. We note that the arguments are substantially different from those of \cite[Theorem 1.8]{shef-zipper}. Indeed, \cite{shef-zipper} ``zooms in'' on a boundary point of a $\gamma$-LQG surface to produce an infinite-volume $\gamma$-LQG surface;  the $\gamma$-LQG surfaces we study have finite volume and so cannot be obtained by this kind of limiting argument. 

The stationarity property of the capacity quantum zipper only identifies the law of the field up to additive constant. By contrast, our quantum natural time quantum zipper views the field as a distribution (not just as a distribution modulo additive constant), so we can study the evolution of geometric notions such as $\gamma$-LQG area or length that depend on the additive constant of the field. This enables us to establish the phases of QLE in Theorem~\ref{thm-phase}.

\subsection{Comparison with discrete models}\label{subsec-intro-discrete}
A main novelty of the present work in comparison to \cite{ms-qle} is the derivation of continuum Markov properties which should correspond to scaling limits of Markov properties of the discrete models. In this section, the random planar maps we consider are  Boltzmann-weighted.

First, when a uniform-spanning-tree (UST) weighted random planar map has a DLA ($1$-DBM) process on it grown until it reaches its target point, cutting the edges comprising the DLA gives a random planar map with the disk topology such that, conditioned on its boundary length, it has the law of a UST-weighted random planar map \cite[Proposition 2.3 (iii)]{ms-qle}. Since the scaling limit of UST-weighted random planar maps with the sphere and disk topologies is the quantum sphere and quantum disk with the LQG parameter $\gamma = \sqrt2$ \cite{mullin-maps, shef-burger, DMS14}, the scaling limit of DLA on UST-weighted planar maps should yield a quantum sphere with a QLE$(2,1)$ process, such that the complement of the DLA process should be a quantum disk given its boundary length. 
This continuum Markov property is precisely what we show in Theorem~\ref{thm-simple-markov} for QLE$(2,1)$, and more generally the analogous statement holds for all parameters in the dilute phase.

Second, when a uniform triangulation has an Eden model ($0$-DBM) grown on it for finitely many steps, with positive probability some regions are swallowed (separated from the target point) by the growth process; the joint law of these regions conditioned on their boundary lengths is that of conditionally independent uniform triangulations with the disk topology with specified boundary lengths. Results of this type are now standard for uniform random planar maps explored by peeling processes \cite{as-uipt, curien-legall-peeling}, but see
\cite[Proposition 2.2 (iii) and Figure 2.2]{ms-qle} for a justification for this particular setup. Arguing similarly as before, in the scaling limit this Markov property should correspond to Theorem~\ref{thm-swallowing-markov} for QLE$(8/3,0)$. We note that for $(\gamma^2, \eta) = (8/3,0)$ Theorem~\ref{thm-swallowing-markov}  was already shown in \cite{ms-equivalence}, but it is novel for other parameters.

In both settings of DLA on UST-weighted random planar maps and the Eden model on uniform triangulations, as explained in \cite[Proposition 2.3 (ii) and Proposition 2.2 (i)]{ms-qle}, as the growth process evolves, the unexplored region containing the target point has a stationarity property analogous to that of Theorem~\ref{thm-stationarity}.

In the rest of this section, we use the discrete models to motivate our construction of $\delta$-QLE via SLE, following the discussion of \cite[Section 2]{ms-qle}. 
For the values $\eta = 1$ and $\eta = 0$, the DBM model (in a compatible random environment) enjoys a simple relationship with a random curve. For example, the $0$-DBM process defined on a variant of random triangulation of the sphere can be viewed as a ``reshuffling'' of the so-called \textit{percolation interface} curve. Both a curve and growth process on a graph define an increasing sequence of vertices (or dual vertices) $C_t$ such that $|C_{t + 1} \setminus C_t| = 1$. In the $0$-DBM model, one lets $C_{t + 1} \setminus C_t$ be a dual vertex sampled according to the uniform measure on $\partial C_t$. For the percolation interface, an initial percolation configuration is chosen on the map, and $C_{t + 1} \setminus C_t$ is chosen on $\partial C_t$ according to the interface dictated by that configuration. Remarkably, forgetting the location of the tip of $C_t$, the law of the unexplored region is identical for the $0$-DBM model and percolation interface. Stronger yet, the laws of the full processes of complementary connected components coincide, so the Markov properties are identical. 

A key insight of \cite{ms-qle} is that one can also define a growth process in the continuum by reshuffling a curve. Recall that SLE is a family of random fractal curves which arise as the scaling limit of discrete random curves arising in critical statistical mechanical systems; see \cite{lawler2008conformally} for an overview. LQG with independent SLE is believed (and in some cases proved) to be the scaling limit for various discrete random curves in discrete random environments. The quantum zipper (Section~\ref{subsec-intro-continuum}) gives the continuum analog of the stationarity property discussed earlier. Thus, the continuum analogue of an $\eta$-DBM model on a planar map should heuristically be an object defined by a random curve (SLE) on a random environment (LQG) through a \textit{continuum} reshuffling procedure, and this object should share the Markov properties of the associated curve. The reshuffling relationship between an $\eta$-DBM model and a random curve is currently limited to $\eta = 0, 1$; see \cite[Question 9.13]{ms-qle}.

\medskip 
    \noindent 
    \textbf{Outline.} In Section~\ref{sec-prelim} we review the relevant random curves and surfaces. In Section~\ref{sec-zippers} we state and prove several radial quantum natural time quantum zippers. In Section~\ref{sec-qle-approximates} we use the quantum zippers to define the $\delta$-QLE$(\gamma^2, \eta)$ processes. In Section~\ref{sec-qle-properties} we establish tightness of these processes, and study their subsequential limits  as $\delta \to 0$; in particular we establish stationarity in law of the  subsequential limits (Theorem~\ref{thm-stationarity}). In Section~\ref{sec-phases} we define QLE via such subsequential limits, and establish its phases (Theorem~\ref{thm-phase}) and some Markov properties (Theorems~\ref{thm-simple-markov} and~\ref{thm-swallowing-markov}). In Section~\ref{sec-open} we list some open problems. 
    In Appendix~\ref{sec-other-QLEs} we explain the equivalence of the growth process constructed in \cite{ms-equivalence} and our QLE$(8/3,0)$. 

\medskip 
    \noindent \textbf{Acknowledgments.} We thank Ewain Gwynne for helpful discussions. M.A. was partially supported by NSF grant DMS-2348201 and a start-up grant from the University of California San Diego. D.M. was partially supported by NSF grant DMS-2245832.
\section{Preliminaries}\label{sec-prelim}

\subsection{Notation}
We will typically use $s$ to represent quantum natural time, and $t$ for capacity (negative log conformal radius) time. Given a finite measure $M$, we denote the total mass by $|M|$, and let $M^{\#}$ denote the probability measure $M^\# = M/|M|$. Throughout this paper, we denote by $(f, g)$ the integral $\int_{\D}(f \cdot g) dx$ provided that the integral is finite, but regardless of whether or not $f, g \in L^2(\D)$.

\subsection{Schramm-Loewner evolution and measure-driven Loewner evolution}\label{subsec-sle}
Schramm-Loewner evolution is a family of random fractal curves that arise as scaling limits of interfaces in critical lattice models \cite{schramm0, smirnov-cardy, lsw-lerw-ust, smirnov-ising}. It is indexed by a parameter $\kappa >0$ that describes the roughness of the curve. We give a brief introduction; see \cite{lawler2008conformally} for more details.  

In this paper we will be exclusively concerned with the radial variant of SLE$_\kappa$. This is a random non-self-crossing curve $\eta:[0,\infty) \to \clD$ with $\eta(0) = 1$ and $\lim_{t\to\infty} \eta(t) = 0$. Let $K_t$ be the compact subset of $\clD$ such that $\clD \backslash K_t$ is the connected component of $\clD \backslash \eta([0,t])$ which contains the origin, and let $g_t: \D \backslash K_t \to \D$ be the conformal map with $g_t(0) =0$ and $g_t'(0) > 0$. The curve $\eta$ is parametrized by capacity, meaning that $g_t'(0) = e^{t}$ for all $t \geq 0$. 

Let
\[\Phi(u, z) = z\frac{u + z}{u - z}.\]
By the construction of radial SLE$_\kappa$, the family of conformal maps $(g_t)_{t \geq0}$ solves the  radial Loewner equation
\[
    \dot g_t(z) = \Phi(U_t, g_t(z)) \quad \text{ for all }t > 0 \text{ and }z \in \D \backslash K_t,
    \]
where $\dot g_t(z)$ indicates the time-derivative at time $t$ for fixed $z$, and the random function $U_t$ is such that $U_t = \exp(i\sqrt{\kappa}B_t)$ with $B_t$ a standard Brownian motion. 
The function $U_t$ taking values in $\partial \D$ is called the {driving function}. It is the tip of the curve under the map $g_t$, and describes the point from which the process will grow. 
For more details on radial SLE and other variants, see \cite{lawler2008conformally}.

The Loewner equation describes growth processes which, roughly speaking, grow from a single point prescribed by the driving function. We next discuss how to generalize this to the setting where the growth process may grow from more than one point at a time. The growth process will consist of a sequence of (random) sets $(K_t)_{t \geq 0}$ such that $K_t$ is a compact subset of $\ol \D$ and $\D \setminus K_t$ is simply connected, for all $t \geq 0$. Such sets are called \textit{compact hulls}, but we refer to them interchangeably as \textit{hulls} in this paper.
For $T>0$, let $\cN_T$ be the collection of measures $\nu$ on $[0,T] \times \partial \D$ with Lebesgue marginal in the first coordinate, and let $d_T$ be the L\'evy-Prokhorov metric. Let $\mathcal{N}$ be the collection of measures on $[0, \infty) \times \partial \D$ having Lebesgue marginal in the first coordinate, and equip $\mathcal N$ with the metric $d(\nu, \nu') = \sum_{T \in \mathbb N} 2^{-T} d_T(\nu|_{[0,T] \times \partial \D}, \nu'|_{[0,T] \times \partial \D})$. Then $(\cN, d)$ is a compact metric space, and convergence in $d$ is equivalent to convergence in the L\'evy-Prokhorov metric on $[0,T] \times \partial \D$ for every $T>0$.

Given a driving measure $\nu \in \cN$, the \textit{Loewner-Kufarev evolution} (or \emph{measure-driven Loewner evolution}) reads \eqb \label{eq-loewner-kufarev}    \dot{g_t}(z) = \int_{\partial \D}\Phi(u, g_t(z))d\nu_t(u)\eqe
where $(\nu_t)_{t \geq 0}$ is the family of probability measures obtained by disintegrating $\nu$ according to the first coordinate. 
Its solution $(g_t)_{t \geq 0}$ is a family of conformal maps $\D \setminus K_t \to \D$ where $K_t$ is a compact hull,  $g_t(0) = 0$ and $g_t'(0) = e^{t}$.

Proposition~\ref{prop-loewner-bijection} below states that one can pass bijectively between families of growing hulls and conformal maps, so the driving measure is an appropriate encoding for our growth processes. A version of this result where $\cN$ is replaced by $\cN_T$ is given by \cite[Theorem 1.1]{ms-qle}, and the version for $\cN$ is \cite[Propositions 6.1--6.3]{ms-qle}.

\begin{proposition} %[Hull growth encoding bijection]
\label{prop-loewner-bijection}
There is a bijection between elements $\nu \in \mathcal N$ and families of hulls $(K_t)_{t \geq 0}$ parametrized by negative log-conformal radius. Additionally, the measures $(\nu^n)_{n \in \N}$ converge weakly to $\nu$ if and only if for each time $t$, the functions $((g_t^n)^{-1})_{n \in \mathbb{N}}$ converge locally uniformly to $(g_t)^{-1}$,  where $(g_t^n)_{t \geq 0}$ is the family of mapping-out functions encoded by $\nu^n$ for each $n$, and similarly $(g_t)_{t \geq 0}$ corresponds to $\nu$.
\end{proposition}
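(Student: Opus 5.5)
The statement to prove is Proposition~\ref{prop-loewner-bijection}. It is cited from \cite[Theorem 1.1, Propositions 6.1--6.3]{ms-qle}. My plan is to reduce the $\cN$ version to the $\cN_T$ version for each $T \in \N$ and then patch the finite horizons together. This uses that the metric $d$ is a weighted sum of the $d_T$.

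\textbf{Step 1 (the bijection on a finite horizon).} For $\nu \in \cN_T$, disintegrate $\nu$ as $(\nu_t)_{t\in[0,T]}$. For fixed $z$, the vector field $w \mapsto \int \Phi(u,w)\,d\nu_t(u)$ is holomorphic and locally Lipschitz in $w \in \D$, and it is measurable in $t$. So the Carath\'eodory ODE theory gives a unique solution $g_t(z)$ up to the first time it exits $\D$. Let $K_t$ be the set of points whose solution has exited $\D$ by time $t$. Then:
\begin{itemize}
\item $g_t$ is a conformal map $\D\setminus K_t \to \D$.
\item Since $\Phi(u,0)=0$ and $\partial_w\Phi(u,0)=1$, we get $g_t(0)=0$ and $g_t'(0)=e^t$.
\end{itemize}
Conversely, take an increasing family of hulls parametrized by negative log conformal radius. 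Its mapping-out functions $f_t = g_t^{-1}$ are differentiable in $t$ almost everywhere, with $\dot f_t(z) = -f_t'(z)\,p_t(z)$ where $p_t$ has positive real part and $p_t(0)=1$. The Herglotz representation writes $p_t(z)=\int \frac{u+z}{u-z}\,d\nu_t(u)$ for a probability measure $\nu_t$ on $\partial\D$. Uniqueness of the Herglotz measure gives injectivity. Uniqueness of the ODE solution shows the two constructions are inverse to each other.

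\textbf{Step 2 (consistency across horizons).} The restriction $\nu|_{[0,T]\times\partial\D}$ determines $(K_t)_{t\le T}$. So the finite-horizon bijections are compatible under restriction, and they glue into a bijection between $\cN$ and hull families on $[0,\infty)$.

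\textbf{Step 3 (continuity, the main step).} Convergence in $d$ is equivalent to $d_T$-convergence for every $T$.

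\emph{Forward direction.} Suppose $\nu^n\to\nu$. Write the time-reversed equation for $f_t^n = (g_t^n)^{-1}$ on a compact set of $z$. Its integrand $\Phi(u,\cdot)$ is continuous in $u$ and uniformly Lipschitz on compacts away from $\partial\D$. Weak convergence then passes to the limit in the integrated ODE, and a Gr\"onwall estimate gives locally uniform convergence of $f_t^n$ to $f_t$.

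\emph{Backward direction.} Suppose $f_t^n\to f_t$ locally uniformly for every $t$. The space $\cN_T$ is compact (it is a Prokhorov-tight family with fixed first marginal), so every subsequence of $\nu^n$ has a further convergent subsequence, with some limit $\nu'$. By the forward direction, the maps associated to $\nu'$ equal the $f_t$. By injectivity, $\nu'=\nu$, so the whole sequence converges.

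I expect the main obstacle to be making the forward direction uniform enough. Points $z$ near $\partial\D$ can be swallowed, so I would restrict to compact subsets of $\D$ on which $f_t$ is defined for all $t$ in the horizon. The Koebe distortion theorem, together with the normalization $f_t'(0)=e^{-t}$, gives a normal family and keeps $f_t^n(z)$ bounded away from $\partial\D$. That is exactly what is needed for the Lipschitz bound in the Gr\"onwall step.
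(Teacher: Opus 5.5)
The paper gives no argument of its own for this statement. It imports the $\cN_T$ version from \cite[Theorem 1.1]{ms-qle} and the $\cN$ version from \cite[Propositions 6.1--6.3]{ms-qle}. Your proposal replaces the citation with a self-contained proof, and its structure is sound:
\begin{itemize}
\item Carath\'eodory ODE theory plus the Herglotz representation (Pommerenke's theorem) give the bijection on each finite horizon.
\item Continuity of $\nu \mapsto ((g_t)^{-1})_{t \le T}$, combined with compactness of $\cN_T$ and injectivity, gives the converse by the usual subsequence argument.
\item The $\cN$ statement follows by gluing horizons through the definition of $d$, where the paper instead cites separate propositions.
\end{itemize}
The citation buys brevity. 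Your route makes explicit that the only structural inputs are the Lebesgue time marginal and the compactness of $\cN_T$.

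A few details need repair when you write it out.
\begin{itemize}
\item The Loewner PDE is $\dot f_t(z) = -z f_t'(z) p_t(z)$. Without the factor $z$ it is incompatible with $f_t(0)=0$.
\item The a.e.\ differentiability of $t \mapsto f_t$ in the converse half of Step 1 is not automatic. It follows from Schwarz--Pick applied to $g_s \circ f_t$, which is a self-map of $\D$ fixing $0$ with derivative $e^{-(t-s)}$. This gives $|f_t(z) - f_s(z)| \le C_r(t-s)$ on $|z| \le r$, i.e.\ $t \mapsto f_t$ is locally Lipschitz.
\item In Step 3, apply Gr\"onwall to the backward ODE $\partial_r h_r(w) = -\int \Phi(u, h_r(w))\, d\nu_{t-r}(u)$ with $h_0(w) = w$, whose time-$t$ value is $f_t(w)$. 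Do not apply it to the PDE for $f_t$, which involves $f_t'$.
\item Along this backward flow $|h_r(w)|$ is nonincreasing, since $\mathrm{Re}\frac{u+h}{u-h} > 0$. So everything stays in $\{|\zeta| \le |w|\}$, and neither the Koebe estimate nor the swallowing concern is needed; $f_t$ is defined on all of $\D$.
\item Passing the weak limit inside $\int_0^s$ is justified because $\{s\} \times \partial\D$ is null for the limit measure. The Lebesgue time marginal makes the error terms uniformly Lipschitz in $s$, so convergence is uniform in $s$ before you apply Gr\"onwall.
\end{itemize}
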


We say that a family of compact hulls $(K_n)_{n \in \N}$ converge in the \textit{Carath\'eodory topology} to $K$ if $(g^n)^{-1}$ converges locally uniformly to $g^{-1}$, where $g^n: \D \backslash K_n \to \D$ is the conformal map such that $g^n(0) = 0$ and $(g^n)'(0)>0$, and $g$ is the corresponding map for $K$.
Proposition~\ref{prop-loewner-bijection} therefore equates convergence in $\cN$ with Carath\'eodory convergence of the sequence of hulls at each time $t$.

 The following stronger variant of Proposition~\ref{prop-loewner-bijection} was proven in \cite[Proposition 6.1]{ms-qle}\footnote{The space $\mathcal G$ in that proposition statement is defined in their Section 6.1.}.
 
\begin{proposition}\label{prop-conv-loc-spacetime}
Consider a convergent sequence of driving measures $(\nu^n)_{n \in \N}$ with limit $\nu$, and let $(g^n_t)_{t \geq 0}$ and $(g_t)_{t \geq 0}$ be the corresponding mapping out functions. Define $(g^n_\cdot)^{-1}: \D \times [0,\infty) \to \R$ via $(g^n_\cdot)^{-1}(z, t) := (g^n_t)^{-1}(z)$, and likewise define  $(g_\cdot)^{-1}$. As $n \to \infty$, we have $(g^n_\cdot)^{-1}\to (g_\cdot)^{-1}$ in the topology of uniform convergence on compact subsets of $\D \times [0,\infty)$. 
\end{proposition}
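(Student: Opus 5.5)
The plan is to reduce the statement to Proposition~\ref{prop-loewner-bijection} together with compactness of $\cN$, and then upgrade the pointwise-in-time convergence of the inverse maps to local uniform convergence in space and time by an equicontinuity argument. Throughout, write $f^n_t := (g^n_t)^{-1}$ and $f_t := (g_t)^{-1}$, viewed as maps $\D \to \D$ with $f^n_t(0) = 0$ and $(f^n_t)'(0) = e^{-t}$.

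\textbf{Step 1: pointwise in $t$.} By Proposition~\ref{prop-loewner-bijection}, since $\nu^n \to \nu$ in $\cN$, for each fixed $t \ge 0$ we have $f^n_t \to f_t$ locally uniformly on $\D$.

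\textbf{Step 2: equicontinuity in space.} All $f^n_t$ are univalent self-maps of $\D$ fixing $0$, so the family is uniformly bounded. By Cauchy estimates $|(f^n_t)'|$ is then bounded on $\{|z| \le r\}$ uniformly in $n$ and $t$. Hence the family $\{f^n_t\}$ is equi-Lipschitz in $z$ on each compact set.

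\textbf{Step 3: equicontinuity in time.} This is the main obstacle. The key identity is the semigroup property: for $s<t$ we have $f^n_t = f^n_s \circ h^n_{s,t}$, where $h^n_{s,t} = g^n_s \circ (g^n_t)^{-1}$. The map $h^n_{s,t}$ is a univalent self-map of $\D$ fixing $0$ with derivative $e^{-(t-s)}$ at $0$.

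A univalent self-map $h$ of $\D$ fixing $0$ with $h'(0) = e^{-\epsilon}$ is uniformly close to the identity on $\{|z|\le r\}$ as $\epsilon \to 0$. To see this, apply Schwarz's lemma to $h(z)/z$: it is holomorphic on $\D$, bounded by $1$, and has value $e^{-\epsilon}$ at $0$. Harnack/Schwarz–Pick then gives $|h(z)/z - 1| \le C_r \epsilon$ for small $\epsilon$ and $|z|\le r$, which bounds $|h(z) - z|$.

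Alternatively, one can integrate the Loewner–Kufarev equation~\eqref{eq-loewner-kufarev} for $g^n$ and use $|\Phi(u,w)| \le 2/(1-|w|)$. The Schwarz route avoids having to control where $g^n$ maps compacts, so I would use it. Combining it with Step 2, we get
\[
|f^n_t(z) - f^n_s(z)| \le \sup_{|w|\le r'} |(f^n_s)'(w)| \cdot C_r |t-s|,
\]
uniformly in $n$, for $|z| \le r$. Here $r'$ depends only on $r$, since $|h^n_{s,t}(z)| \le |z|$ by Schwarz.

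\textbf{Step 4: conclusion.} The family $\{(z,t) \mapsto f^n_t(z)\}$ is therefore equicontinuous and uniformly bounded on each compact subset of $\D \times [0,\infty)$. By Arzelà–Ascoli every subsequence has a further subsequence converging locally uniformly. By Step 1, any such limit must equal $f_\cdot$ at each $t$. Hence the whole sequence converges to $f_\cdot$ uniformly on compact subsets of $\D \times [0,\infty)$.

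Alternatively, uniform convergence on $\{|z|\le r\} \times [0,T]$ follows directly from pointwise convergence at finitely many times $t_j$ on a fine grid, together with the uniform time-modulus from Step 3, via a standard $3\epsilon$ argument.
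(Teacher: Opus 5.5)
The paper gives no proof of its own here; it imports the statement from \cite[Proposition 6.1]{ms-qle}. Your argument is a correct, self-contained alternative.

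You take the fixed-time convergence from Proposition~\ref{prop-loewner-bijection} and supply the one missing ingredient: a modulus of continuity in $t$ that is uniform over all driving measures. The factorization $f^n_t = f^n_s\circ h^n_{s,t}$, together with Schwarz--Pick applied to $h^n_{s,t}(z)/z$, gives $|h^n_{s,t}(z)-z|\le r\frac{1+r}{1-r}(t-s)$ on $\{|z|\le r\}$. This holds for every $t\ge s$, not only for small $t-s$, and univalence is not even needed. Since $|h^n_{s,t}(z)|\le|z|$, you can take $r'=r$. Combined with $|(f^n_s)'(w)|\le(1-|w|^2)^{-1}$, this yields the $\nu$-independent bound $|f^n_t(z)-f^n_s(z)|\le\frac{r}{(1-r)^2}(t-s)$.

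The same constant comes out of the other natural route, the Loewner--Kufarev equation for the inverse maps, $\partial_t f_t(z)=-zf_t'(z)\int_{\partial\D}\frac{u+z}{u-z}\,d\nu_t(u)$ for a.e.\ $t$. Like your Schwarz argument, it avoids controlling where $g^n_t$ sends compacts.

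Two small points:
\begin{itemize}
\item In your grid/$3\epsilon$ variant you also need this modulus for the limit chain $f_\cdot$. It holds because $\nu\in\cN$ generates a Loewner chain with the same normalization. The Arzel\`a--Ascoli version does not need it.
\item The $\cN$-version of Proposition~\ref{prop-loewner-bijection} is itself attributed partly to \cite[Proposition 6.1]{ms-qle}. To avoid any appearance of circularity, say that Step~1 only uses the finite-horizon statement \cite[Theorem 1.1]{ms-qle}. Apply it to the restrictions of $\nu^n$ to $[0,T]\times\partial\D$, which converge by the definition of the metric on $\cN$.
\end{itemize}

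The citation buys the paper brevity. Your argument makes explicit that upgrading fixed-time convergence to locally uniform space-time convergence follows deterministically from the normalization $(g^n_t)'(0)=e^t$ and Schwarz's lemma.
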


Finally, we will need the following elementary relationship between Carath\'eodory and Hausdorff convergence of compact hulls. The result is the radial analogue of Lemma 2.3 in \cite{peltola2023large}, and its proof is identical.

\begin{lemma}\label{lem-hausdorff-cara-conv}
Let $(K_n)_{n \in \N}$ be a sequence of compact hulls such that $K_n \to C$ in the Carath\'eodory topology and $K_n \to H$ in the Hausdorff topology. Then $\D \setminus C$ is the connected component of $\D \setminus H$ containing $0$.
\end{lemma}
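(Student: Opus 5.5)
We must prove Lemma~\ref{lem-hausdorff-cara-conv}: if $K_n \to C$ in the Carath\'eodory topology and $K_n \to H$ in the Hausdorff topology, then $\D\setminus C$ is the component of $\D\setminus H$ containing $0$. Write $D_n = \D\setminus K_n$, $D=\D\setminus C$, and let $U$ be the component of $\D \setminus H$ containing $0$. The plan is to use the kernel characterization of Carath\'eodory convergence: locally uniform convergence $(g^n)^{-1}\to g^{-1}$ with the normalization $g^n(0)=0$, $(g^n)'(0)>0$ is equivalent to $D_n \to D$ in the sense of kernels with respect to $0$. In that characterization, $D$ is the largest domain containing $0$ such that every compact subset of $D$ lies in $D_n$ for all large $n$. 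It also says every subsequence of $(D_n)$ has the same kernel. We then prove the inclusions $U\subset D$ and $D\subset U$ separately.

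\emph{Step 1: $U \subset D$.} Take a compact set $F\subset U$. Its distance $\epsilon$ to $H \cup \partial\D$ is positive. Since $K_n\to H$ in Hausdorff distance, $K_n$ is eventually contained in the $\epsilon/2$-neighborhood of $H$, so $F\cap K_n=\emptyset$ for large $n$. Exhaust $U$ by connected compact sets containing $0$. Each is eventually contained in $\D\setminus K_n$, and because it is connected and contains $0$, it lies in the component $D_n$. Hence $U$ is a domain containing $0$ whose compact subsets are eventually in $D_n$. By maximality of the kernel, $U\subset D$.

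\emph{Step 2: $D\subset U$.} Take $z\in D$, and join $z$ to $0$ by a path $\Gamma \subset D$; $\Gamma$ is compact. By the kernel property, $\Gamma\subset D_n$ for large $n$, so the distance from $\Gamma$ to $K_n$ is positive for each large $n$. We need this distance to be bounded below uniformly in $n$. To get that, thicken $\Gamma$: choose $r>0$ with the closed $r$-neighborhood $\Gamma_r$ of $\Gamma$ still a compact subset of $D$. By the kernel property $\Gamma_r\subset D_n$ for large $n$, so $\mathrm{dist}(\Gamma,K_n)\ge r$. Passing to the Hausdorff limit, every point of $H$ is a limit of points of $K_n$, so $\mathrm{dist}(\Gamma,H)\ge r>0$. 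Thus $\Gamma\subset \D\setminus H$ is a connected set containing $0$, hence $\Gamma\subset U$ and $z\in U$.

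Combining the two steps gives $D=U$. The main point requiring care is the kernel characterization of Carath\'eodory convergence in the radial normalization, in the form we use: convergence of inverse maps implies that compact subsets of $D$ are eventually in $D_n$, together with the maximality property. This is the standard Carath\'eodory kernel theorem, and we would cite it rather than reprove it; the remaining arguments are elementary, as in the chordal version of \cite{peltola2023large}. One also needs $\partial\D \subset K_n$-type boundary conventions to be consistent, i.e.\ that hulls may touch $\partial \D$. That causes no issue because we only work inside $\D$.
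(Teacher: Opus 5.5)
Your proof is correct and follows essentially the route the paper intends: the paper gives no separate argument, deferring instead to the chordal Lemma~2.3 of \cite{peltola2023large}, which is this same two-inclusion argument via the Carath\'eodory kernel theorem, here moved to the radial normalization at $0$. One small point of ordering: $U$ is only well defined once you know $0\notin H$, which your Step~2 already establishes (take $z=0$), so it is cleaner to run Step~2 before Step~1.
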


\subsection{The Gaussian free field and Liouville quantum gravity}\label{subsec-lqg-gff}
We now introduce the Gaussian free field (GFF) and Liouville quantum gravity; see \cite{berestycki-lqg-notes} for a more detailed introduction.

Letting $m_{\D}$ be the uniform measure on $\partial \D$ and $\lambda_{\D}$ the Lebesgue measure restricted to $\D$, we define the \textit{Dirichlet inner product} $(f, g )_{\nabla} := \int_{\D}\nabla f \cdot \nabla g \, \lambda_{\D}$ on the function space $\{f \in C^{\infty}(\D): (f, f)_{\nabla} < \infty,\, \int_{\D}f\, m_{\D} = 0\}$. Let $H(\D)$ denote the Hilbert space completion of this function space with respect to the Dirichlet inner product. Fix an orthonormal basis $(f_n)_{n \in \N}$ of $H(\D)$ and let $(\alpha_n)_{n \in \N}$ be independent standard Gaussians. The random Fourier series
\begin{equation}\label{eq-gff}
    h := \sum_{n = 1}^{\infty}\alpha_n f_n
\end{equation} converges a.s.\ in the space of distributions on $\D$ \cite{shef-gff}. We call $h$ the free boundary GFF normalized such that $\int_{\D}h \, m_{\D} = 0$, and denote its law by $P_\D$. The GFF $h$ is the centered Gaussian field on $\D$ with covariance structure $\E[h(z)h(w)] = G_\D(z,w)$, where 
\[G_\D(z,w):= -\log|z-w| - \log|1-z\ol w| \qquad \text{ for }z,w \in \D.\]
Note that since $h$ is too rough to admit pointwise values, the expression $\E[h(z)h(w)]$ does not make literal sense, and should be interpreted via  
\begin{equation*}
    \E[(h, f_1) (h, f_2)] = \int_\D \int_\D f_1(z_1) G_\D(z_1, z_2) f_2(z_2) \, \lambda_\D(dz_1) \, \lambda_\D(dz_2) \quad \text{ for smooth compactly supported }f_1, f_2.
\end{equation*}

Next, given a conformal map $g: D \to \wt D$ and a distribution $h$ on $D$, with $\gamma \in (0, 2)$ and $Q = 2/\gamma + \gamma/2$ we define the \textit{$\gamma$-LQG coordinate change} of $h$ as
\begin{equation}\label{eq-coord-change}
    g\bullet_\gamma h:=h\circ g^{-1}+Q\log|(g^{-1})'|
\end{equation}
We then define an equivalence relation $\sim_{\gamma}$ between domain-field pairs $(D, h)$ in which $(D, h) \sim_{\gamma} (\wt D, \wt h)$ if there exists a conformal map $g: D \to \wt D$ such that $\wt h = g \bullet_{\gamma} h$. Such an equivalence class is called a \textit{$\gamma$-LQG surface} or \textit{quantum surface}, and a choice of representative $(D,h)$ is called an \textit{embedding} of the $\gamma$-LQG surface \cite{shef-kpz}.

We now introduce the Gaussian multiplicative chaos (GMC) measures \cite{kahane} associated to the GFF; see for instance \cite{shef-kpz, berestycki-gmt-elementary} for more details.
The \emph{$\gamma$-LQG  area measure} $\cA_h^{\gamma}$ is the measure on $\D$ defined as the a.s.\ weak limit of $\epsilon^{\gamma^2/2}e^{\gamma h_{\epsilon}}(z)dz$ as $\epsilon \to 0$ where $h_{\epsilon}(z)$ is the average of $h$ on $\partial B(z, \epsilon) \cap \D$ and $dz$ is Lebesgue measure on $\D$. The \emph{$\gamma$-LQG boundary length measure} $\cL_h^{\gamma}$ is the a.s.\ weak limit of the measure $ \epsilon^{\gamma^2/4}e^{\gamma h_{\epsilon}/2}(x)dx$ on $\partial \D$ as $\epsilon \to 0$ where $h_{\epsilon}(x)$ is the average of $h$ on $\partial B(x, \epsilon) \cap \D$ and $dx$ is the length measure on $\partial \D$. Note that these measures also make sense if $h$ is replaced by $h + f$ where $f$ is a bounded continuous function, and satisfy $\cA^\gamma_{h+f} = e^{\gamma f} \cA^\gamma_h$ and $\cL^\gamma_{h+f} = e^{\gamma f/2} \cL^\gamma_h$. 
If $g$ is a conformal automorphism of $\D$, then $g_* \cA^\gamma_h = \cA^\gamma_{g \bullet_\gamma h}$ and $g_* \cL^\gamma_h = \cL^\gamma_{g \bullet_\gamma h}$ \cite{shef-kpz}. In this sense, the $\gamma$-LQG area and boundary length measures are intrinsic to $\gamma$-LQG surfaces (do not depend on the choice of embedding), and in particular this allows us to define $\gamma$-LQG boundary lengths for all simply-connected domains by conformally mapping to $\D$. Note that for $\beta \neq \gamma$ the GMC measure $\cL^\beta_h$ is not intrinsic to the $\gamma$-LQG surface; we will always embed in $\D$ when working with $\cL^\beta_h$.

\subsection{The Liouville field and quantum disk}\label{subsec-liouville-field}
Recall that $P_\D$ is the law of the free boundary GFF on $\D$ normalized to have mean zero on $\partial \D$. 

\begin{definition}[Liouville field on $\mathbb{D}$]\label{def-lf}
Let $\alpha,\beta\in\bbR$ and $s\in\partial\bbD$. 
Let $(h,\mathbf{c})$ be sampled from $P_\bbD\times [e^{(\alpha+\frac{\beta}{2}-Q)c}dc] $ and set 
\[\phi(z) = h(z)+\alpha G_\bbD(z,0)+\frac{\beta}{2}G_\bbD(z,s)+\mathbf c = h(z) - \alpha \log|z| - \beta \log |z - s| + \mathbf c.\] We call $\phi$ the Liouville field on $\bbD$ with insertions $(\alpha,0),(\beta,s)$ and denote the law of $\phi$ by  $\LF_\bbD^{(\alpha, 0), (\beta, s)}$. 
\end{definition}
If $\beta = 0$, we will simply write $\LF_\D^{(\alpha, 0)}$.
We next define a fixed boundary length version of the Liouville field in Definition~\ref{def-lf}.

\begin{definition}[Liouville field on $\mathbb{D}$ with fixed boundary length]\label{def-lf-conditioned}
Let $\alpha \in \R$, $\beta < Q$, and $s \in \partial \D$. Sample $h$ from $P_\D$ and set
\begin{equation*}
    \tilde h(z) = h(z) + \alpha G_\bbD(z,0)+\frac{\beta}{2}G_\bbD(z,s)
\end{equation*}
Fix $\ell > 0$ and let $L = \cL^\gamma_{\tilde h}(\partial \D)$, and define the law $\LF_{\D, \ell}^{(\alpha, 0), (\beta, s)}$ to be that of $\tilde h + \frac{2}{\gamma}\log \frac{\ell}{L}$ under the reweighted law $\frac{2}{\gamma} \ell^{\frac1\gamma(2\alpha + \beta - 2Q) - 1} L^{- \frac1\gamma(2\alpha + \beta - 2Q)} \P_{\D}(dh)$.
\end{definition}

The Liouville field possesses the following explicit disintegration \cite[Lemma 2.7]{ay-reversibility}.

\begin{lemma}[Liouville field disintegration]\label{lem-disint-explicit}
    Given $\alpha \in \R$ and $\beta < Q$, the collection $(\LF_{\D, \ell}^{(\alpha, 0), (\beta, 1)})_{\ell > 0}$ is a disintegration of $\LF_{\D}^{(\alpha, 0), (\beta, 1)}$. In particular, a sample $\phi \sim \LF_{\D, \ell}^{(\alpha, 0), (\beta, 1)}$ satisfies $\cL_{\phi}^{\gamma}(\partial \D) = \ell$ almost surely, and the laws satisfy
    \begin{equation*}\label{eq-disint}
        \LF_{\D}^{(\alpha, 0), (\beta, 1)} = \int_{0}^{\infty}\LF_{\D, \ell}^{(\alpha, 0), (\beta, 1)}d\ell
    \end{equation*}
\end{lemma}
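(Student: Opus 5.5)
The plan is to compute the law of $L$ under $\LF_\D^{(\alpha,0),(\beta,1)}$ directly from Definition~\ref{def-lf}, and to show that conditioning on $L=\ell$ produces exactly the reweighted and shifted law of Definition~\ref{def-lf-conditioned}. Write $\phi = \tilde h + \mathbf c$, where $\tilde h = h - \alpha\log|z| - \beta\log|z-1|$ with $h\sim P_\D$, and $\mathbf c$ carries the infinite measure $e^{(\alpha+\frac\beta2-Q)c}\,dc$. Set $L_0 = \cL^\gamma_{\tilde h}(\partial\D)$.

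The first step is to check that $L_0\in(0,\infty)$ almost surely. This uses the assumption $\beta<Q$: the boundary insertion $-\beta\log|z-1|$ must be integrable against the boundary GMC near $1$. The standard criterion for a boundary log-singularity of weight $\beta$ is $\beta<Q$. The origin insertion is interior and only changes the boundary field by a bounded smooth function, so it poses no problem.

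Since $\cL^\gamma_{\tilde h+c} = e^{\gamma c/2}\cL^\gamma_{\tilde h}$, the boundary length of $\phi$ is $\ell = e^{\gamma \mathbf c/2}L_0$. For fixed $h$ I change variables from $c$ to $\ell$. This gives $c = \frac2\gamma\log(\ell/L_0)$ and $dc = \frac{2}{\gamma}\ell^{-1}\,d\ell$. The density becomes
\[
e^{(\alpha+\frac\beta2-Q)c} = (\ell/L_0)^{\frac{1}{\gamma}(2\alpha+\beta-2Q)},
\]
and under this change of variables $\phi$ becomes $\tilde h + \frac2\gamma\log\frac{\ell}{L_0}$. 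Hence for any nonnegative measurable $F$, Fubini/Tonelli gives
\[
\int F(\phi)\,d\LF_\D^{(\alpha,0),(\beta,1)} = \int_0^\infty \int F\Bigl(\tilde h+\tfrac2\gamma\log\tfrac{\ell}{L_0}\Bigr)\tfrac2\gamma \ell^{\frac1\gamma(2\alpha+\beta-2Q)-1}L_0^{-\frac1\gamma(2\alpha+\beta-2Q)}\,P_\D(dh)\,d\ell.
\]
The inner integral is exactly $\int F\,d\LF_{\D,\ell}^{(\alpha,0),(\beta,1)}$, which proves the identity. Each field in the inner integral satisfies $\cL^\gamma(\partial\D) = e^{\log(\ell/L_0)}L_0 = \ell$ identically, so $\LF_{\D,\ell}$ is supported on $\{L=\ell\}$, and the family is a genuine disintegration.

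The main obstacle is the integrability step: justifying $0<L_0<\infty$ almost surely via the $\beta<Q$ criterion. This also ensures that the measurability of $\ell\mapsto \LF_{\D,\ell}$ is clear. Note that the measures $\LF_{\D,\ell}$ may be infinite or finite depending on moments of $L_0$, but the disintegration identity itself does not require finiteness, since Tonelli applies to nonnegative integrands.
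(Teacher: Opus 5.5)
Your change-of-variables argument is correct: substituting $\ell = e^{\gamma \mathbf c/2}L_0$ for fixed $h$, applying Tonelli, and using the standard fact that $0<L_0<\infty$ a.s.\ when $\beta<Q$ is the standard proof of this disintegration. The paper does not prove the lemma itself; it cites \cite[Lemma 2.7]{ay-reversibility}, whose argument is this same computation.
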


In the following sense, sampling a point from a boundary GMC measure corresponds to adding a log singularity to the Liouville field. Let $\mathrm{Leb}_{\partial \D}$ denote the length measure on $\partial \D$, so $|\mathrm{Leb}_{\partial \D}| = 2\pi$.
\begin{lemma}\label{lem-sample-gmc}
    Let $\alpha \in \R$ and $\beta \in (-2,2)$, then 
    \[ \cL_\phi^\beta(du)\, \LF_\D^{(\alpha,0)}(d\phi) = \LF_\D^{(\alpha, 0),(\beta,u)} (d\phi) \, \mathrm{Leb}_{\partial \D} (du).\]
\end{lemma}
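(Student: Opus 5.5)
The approach is a Girsanov-type shift of the Gaussian part of the field, carried out on both sides for a fixed test functional and compared. Fix a nonnegative measurable functional $F$ of the pair $(\phi,u)$. By definition, $\LF_\D^{(\alpha,0)}$ is the law of $\phi = h - \alpha\log|z| + \mathbf c$ with $(h,\mathbf c)\sim P_\D\times e^{(\alpha-Q)c}dc$. The left side is therefore
\[
\int\!\!\int \int_{\partial\D} F(\phi,u)\,\cL^\beta_\phi(du)\, e^{(\alpha-Q)c}\,dc\, P_\D(dh).
\]

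The first step is to write the boundary GMC in regularized form. Since $\phi$ is $h+\mathbf c$ plus a term smooth near $\partial\D$ (namely $-\alpha\log|z|$, which vanishes on $\partial\D$), we have
\[
\cL^\beta_\phi(du) = \lim_{\eps\to0} \eps^{\beta^2/4} e^{\frac\beta2(h_\eps(u)+c)}\,du .
\]
Swap the $u$-integral outside, using Fubini for the approximations and passing to the limit as in the standard construction. For fixed $u$ and $\eps$, apply Girsanov/Cameron–Martin under $P_\D$. Weighting by
\[
e^{\frac\beta2 h_\eps(u)}\big/\E\big[e^{\frac\beta2 h_\eps(u)}\big]
\]
shifts $h$ by $\frac\beta2\,\mathrm{Cov}(h(\cdot),h_\eps(u))$. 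As $\eps\to0$ this shift tends to $\frac\beta2 G_\D(\cdot,u) = -\beta\log|\cdot-u|$. Here one uses that for $u\in\partial\D$ we have $G_\D(z,u) = -2\log|z-u|$, and for the mean-zero-on-$\partial\D$ normalization the additional constant terms vanish.

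The next step is the variance. By the boundary behaviour of $G_\D$, $\mathrm{Var}(h_\eps(u)) = -2\log\eps + o(1)$. Hence $\eps^{\beta^2/4}\,\E\big[e^{\frac\beta2 h_\eps(u)}\big] \to 1$ exactly, with no extra constant. This is the step requiring care, since the normalization $\int h\, m_\D = 0$ must produce exactly the constant $1$ and not some $e^{C}$. To check it, I would compute $\mathrm{Var}(h_\eps(u))$ directly from $G_\D$ by averaging over the half-circle $\partial B(u,\eps)\cap\D$. This gives $-2\log\eps$ plus a correction that vanishes as $\eps\to 0$.

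After the shift, the integrand becomes $F(h-\alpha\log|\cdot| -\beta\log|\cdot-u| + c,\,u)$ against the measure
\[
e^{\frac\beta2 c}\, e^{(\alpha-Q)c}\,dc\, P_\D(dh)\,du = e^{(\alpha+\frac\beta2-Q)c}\,dc\, P_\D(dh)\,du .
\]
By Definition~\ref{def-lf}, this is exactly $\LF_\D^{(\alpha,0),(\beta,u)}(d\phi)\,\mathrm{Leb}_{\partial\D}(du)$.

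It remains to justify the limit interchange. The condition $\beta\in(-2,2)$ ensures that boundary GMC is nondegenerate and that the approximations converge in $L^1$. I would therefore first prove the identity for $F$ continuous and bounded depending on $\phi$ through finitely many test-function pairings, and on $u$, with compact support in $c$. Uniform integrability then passes the $\eps$-identity to the limit, and a monotone class argument extends the identity to all nonnegative $F$.
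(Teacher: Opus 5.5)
Your proposal is correct and is essentially the paper's proof. The paper omits the argument and defers to the Girsanov (Cameron--Martin) proof of \cite[Lemma 2.31]{ahs-integrability}, which is the same regularize--shift--pass-to-the-limit scheme you give. Your two checks also go through: $\eps^{\beta^2/4}\E[e^{\frac\beta2 h_\eps(u)}]\to 1$ (using $G_\D(z,u)=-2\log|z-u|$ for $u\in\partial\D$), and $e^{\frac\beta2 c}e^{(\alpha-Q)c}=e^{(\alpha+\frac\beta2-Q)c}$.
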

\begin{proof}
    The proof of Lemma~\ref{lem-sample-gmc} is essentially identical to that of \cite[Lemma 2.31]{ahs-integrability}, so we omit it.
\end{proof}

We now introduce a variant of the GFF. 

\begin{definition}\label{def-P-weighted}
Let $\alpha \in \R, \beta \in (-2,2)$ and $\ell > 0$. 
    Let $P_{\alpha, \beta, \ell}$ be the probability measure defined by
    \[\frac{dP_{\alpha, \beta, \ell}}{d \LF^{(\alpha,0)}_{\D, \ell}} (\phi) = Z_{\alpha,\beta,\ell}^{-1} \cL^\beta_\phi(\partial \D), \qquad Z_{\alpha,\beta,\ell} := \LF^{(\alpha,0)}_{\D, \ell}[\cL^\beta_\phi(\partial \D)].\]
    In other words, $P_{\alpha, \beta, \ell}$ is obtained from $\LF^{(\alpha, 0)}_{\D, \ell}$ by weighting by the boundary $\beta/2$-GMC measure and    normalizing to be a probability measure. 
\end{definition}

\begin{lemma}\label{lem-sample-gmc-2}
    We have 
    \[(\cL^\beta_\phi)^\#(du) \,  P_{\alpha, \beta, \ell}(d\phi) = (\LF_{\D, \ell}^{(\alpha, 0), (\beta, u)})^\#(d\phi) \, \mathrm{Leb}_{\partial \D}^\#(du). \]
\end{lemma}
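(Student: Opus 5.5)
The plan is to deduce the identity from Lemma~\ref{lem-sample-gmc} by disintegrating both sides according to the $\gamma$-LQG boundary length, and then normalizing.

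\textbf{Step 1: disintegrate both sides of Lemma~\ref{lem-sample-gmc} in $\ell$.} Start from
\[\cL^\beta_\phi(du)\,\LF^{(\alpha,0)}_\D(d\phi)=\LF^{(\alpha,0),(\beta,u)}_\D(d\phi)\,\mathrm{Leb}_{\partial\D}(du).\]
Both sides are measures on pairs $(\phi,u)$. Disintegrate each side according to the quantity $\cL^\gamma_\phi(\partial\D)$.
\begin{itemize}
\item On the left, the Liouville field disintegration of Lemma~\ref{lem-disint-explicit} (with $\beta=0$) gives $\int_0^\infty \cL^\beta_\phi(du)\,\LF^{(\alpha,0)}_{\D,\ell}(d\phi)\,d\ell$.
\item On the right, Lemma~\ref{lem-disint-explicit} applies at $u=1$, since $\beta<2<Q$.
\end{itemize}

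\textbf{Step 2: move from $u=1$ to general $u$ by rotation.} The rotation $z\mapsto uz$ fixes $0$ and has $|(\cdot)'|=1$, so the coordinate change is just composition with the rotation. The laws $P_\D$, $G_\D(\cdot,0)$ and the map $\phi\mapsto\cL^\gamma_\phi(\partial\D)$ are all rotation invariant. Hence $\LF^{(\alpha,0),(\beta,u)}_{\D,\ell}$ is the pushforward of the $u=1$ measure, and the disintegration $\LF^{(\alpha,0),(\beta,u)}_\D=\int_0^\infty \LF^{(\alpha,0),(\beta,u)}_{\D,\ell}\,d\ell$ holds for every $u$. The right side therefore becomes $\int_0^\infty \LF^{(\alpha,0),(\beta,u)}_{\D,\ell}(d\phi)\,\mathrm{Leb}_{\partial\D}(du)\,d\ell$.

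\textbf{Step 3: identify the fibers for each fixed $\ell$.} Both sides are now disintegrations of the same measure over the same functional $\cL^\gamma_\phi(\partial\D)$. By uniqueness of disintegration, we get for a.e.\ $\ell$
\[\cL^\beta_\phi(du)\,\LF^{(\alpha,0)}_{\D,\ell}(d\phi)=\LF^{(\alpha,0),(\beta,u)}_{\D,\ell}(d\phi)\,\mathrm{Leb}_{\partial\D}(du).\]

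\textbf{Step 4: upgrade from a.e.\ $\ell$ to every $\ell$.} This is the main subtlety. I would use the explicit formula of Definition~\ref{def-lf-conditioned}: $\LF_{\D,\ell}$ is the image under adding $\frac2\gamma\log(\ell/L)$ of an explicitly reweighted $P_\D$. Adding the constant $c=\frac2\gamma\log(\ell'/\ell)$ scales $\cL^\beta$ by $e^{\beta c/2}$. Both sides transform under $\phi\mapsto\phi+c$ by explicit powers of $\ell'/\ell$, with the exponents differing by exactly the $\beta$ shift in $2\alpha+\beta-2Q$. So if the identity holds for one $\ell$, it holds for all $\ell$ up to a multiplicative constant; one checks these constants agree. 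Equivalently, one can verify the identity directly at fixed $\ell$ by running the Girsanov argument of Lemma~\ref{lem-sample-gmc} inside Definition~\ref{def-lf-conditioned}.

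\textbf{Step 5: normalize.} By Step 4 both sides have the same total mass, and the left side's mass is $Z_{\alpha,\beta,\ell}$. Dividing by it:
\begin{itemize}
\item the left side becomes $(\cL^\beta_\phi)^\#(du)\,P_{\alpha,\beta,\ell}(d\phi)$, using Definition~\ref{def-P-weighted};
\item the right side becomes $(\LF^{(\alpha,0),(\beta,u)}_{\D,\ell})^\#(d\phi)\,\mathrm{Leb}^\#_{\partial\D}(du)$.
\end{itemize}
The second identification uses that, by rotation invariance, $|\LF^{(\alpha,0),(\beta,u)}_{\D,\ell}|$ does not depend on $u$. So the $u$-marginal is uniform and the right side factors as the product of the normalized measures.
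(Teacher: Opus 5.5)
Your proposal is correct and is essentially the paper's argument. The paper simply declares the lemma immediate from Definition~\ref{def-P-weighted} and Lemma~\ref{lem-sample-gmc}, and your steps (disintegrating in $\ell$ via Lemma~\ref{lem-disint-explicit}, rotating the insertion, the scaling upgrade in Step 4, where both sides pick up the same factor $(\ell/\ell')^{\frac1\gamma(2\alpha+\beta-2Q)-1}$ so the constants match exactly, and the final normalization) are precisely the details that ``immediate'' suppresses.
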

\begin{proof}
    This is immediate from Definition~\ref{def-P-weighted} and Lemma~\ref{lem-sample-gmc}. 
\end{proof}

Finally, let $\gamma \in (0,2)$. The \emph{$\gamma$-LQG disk} or  \emph{quantum disk} is a $\gamma$-LQG surface with the disk topology first introduced in \cite{DMS14}. It is the conjectural scaling limit of random planar maps with the disk topology in the $\gamma$-LQG universality class. As shown in \cite{cercle-disk} and subsequently \cite{ahs-integrability, ars-fzz}, this object can be equivalently defined via  Liouville CFT on the disk. We will not need the explicit definition of this object, and only provide it for concreteness. To simplify the presentation we use an equivalent definition that follows from~\cite[Theorem 3.4]{ars-fzz}. 

\begin{definition}\label{def-QD}
    Let $\ell > 0$. Let $\QD(\ell)$ be the law of $(\D, \phi)/{\sim_\gamma}$ where the field is sampled from the weighted measure $\frac{\gamma}{2\pi(Q-\gamma)^2}\cA_\phi(\D)^{-1}\LF_{\D, \ell}^{(\gamma, 0)}(d\phi)$. A sample from $\QD(\ell)$ is called a $\gamma$-LQG disk with boundary length $\ell$.
\end{definition}
Note that $\QD(\ell)$ is a finite measure for all $\ell> 0$, but for generic $\ell$ it is a non-probability measure.

\subsection{The quantum natural time parametrization of SLE}\label{subsec-quantum-parametrization-SLE}
In Section~\ref{subsec-sle} we encountered the capacity (log conformal radius) parametrization. In this section we introduce the \emph{quantum natural time} parametrization of SLE$_\kappa$, which describes the length of an SLE$_\kappa$ curve with respect to an underlying $\gamma$-LQG geometry. We will always couple the parameters via $\gamma = \min(\sqrt\kappa, 4/\sqrt\kappa)$.

Recall that SLE$_\kappa$ has three phases: when $\kappa \in (0,4]$ the curve is simple, in the swallowing phase $\kappa \in (4,8)$ the curve bounces off of itself and the domain boundary but has Lebesgue measure zero, and when $\kappa \geq 8$ the curve is space-filling. We treat each of these cases separately, starting with the simple phase. Recall the free boundary GFF $h$ on $\D$ normalized to have average zero on $\partial \D$, as defined in Section~\ref{subsec-lqg-gff}.

\begin{definition}\label{def-quantum-time-simple}
    Let $\kappa < 4$ and $\gamma = \sqrt \kappa$. Let $h$ be a free boundary GFF on $\D$ and let $\eta: [0,\infty) \to \clD$ be an independent radial $\SLE_\kappa$ curve. For any $0 < t_1 < t_2$,
    the curve segment $\eta([t_1, t_2])$ corresponds to two boundary arcs of the simply-connected domain $\D \backslash \eta([0,\infty))$. 
    The \emph{quantum natural time} of $\eta$ elapsed between $t_1$ and $t_2$ is the $\gamma$-LQG boundary length in $(\D \backslash \eta([0,\infty)), h)/{\sim_\gamma}$ of either of these boundary arcs.
\end{definition}
Implicit in the above definition is the fact that the $\gamma$-LQG boundary lengths in $\D \backslash \eta([0,\infty))$ of the two boundary arcs corresponding to $\eta([t_1, t_2])$ exist and agree a.s.; this was proved in \cite{shef-zipper} for a specific variant of the GFF $h$ and for chordal SLE, and it holds in our setting by local absolute continuity of the field and curve. A similar result holds for $\kappa = 4$ and $\gamma = 2$ \cite{hp-welding}, but we do not treat this case in our paper. 
See also \cite{ps-quantum-length} for an elementary and unified proof for all $\kappa \in (0,4]$. 

Next, we address the swallowing phase. Consider a radial SLE$_\kappa$ curve with the capacity time-parametrization. Let $U$ be one of the connected components of $\D \backslash \eta([0,\infty))$. We say $U$ is \emph{swallowed} in the time interval $(t_1, t_2]$ if $\partial U \not \subset \eta([0,t_1])$ but $\partial U \subset \eta([0,t_2])$. 
\begin{definition}\label{def-quantum-time-swallowing}
    Let $\kappa \in (4,8)$ and $\gamma = 4/\sqrt \kappa$. Let $h$ be a free boundary GFF on $\D$ and let $\eta: [0,\infty) \to \clD$ be an independent radial $\SLE_\kappa$ curve. There is a constant $\mathfrak c_0$ such that, for any $0 < t_1 < t_2$,
     the \emph{quantum natural time} of $\eta$ elapsed between $t_1$ and $t_2$ is $\mathfrak c_0 \lim_{\eps \to 0}  \eps^{4/\gamma^2} N_\eps(t_1, t_2)$ where $N_\eps(t_1, t_2)$ is the number of regions $U$ swallowed in the time interval $(t_1, t_2]$ having $\gamma$-LQG boundary length between $\eps$ and $2\eps$. 
\end{definition}
This definition via $N_\eps(t_1, t_2)$ is briefly mentioned after \cite[Proposition 1.4.5]{DMS14}, but is not the standard definition. In Remark~\ref{remark-quantum-time-swallowing} right below we explain the equivalence.
\begin{remark}\label{remark-quantum-time-swallowing}
    \cite[Theorem 6.5.7]{DMS14} shows that for a specific variant of the GFF $h$ and for chordal SLE, the countable collection of lengths of swallowed regions, ordered by the time each region is swallowed, agrees in law with the ordered collection of jumps of a totally asymmetric stable L\'evy process with index $\frac4{\gamma^2}$. Note that the totally asymmetric stable L\'evy processes with index $\frac4{\gamma^2}$ form a one-parameter family, which differ only by linear reparametrization of time. Fixing a choice of the L\'evy process,
    \cite[Definition 6.5.8]{DMS14} defines the quantum natural time to be the time-parametrization of this L\'evy process. The equivalence with Definition~\ref{def-quantum-time-swallowing} via $N_\eps$ is immediate from the fact that the L\'evy process's  collection of (jump, time) pairs is Poissonian with intensity $[1_{x > 0} c x^{-4/\gamma^2-1} \, dx] \times [1_{t > 0} \, dt]$, where the constant $c>0$ depends on the choice of L\'evy process. Our constant $\mathfrak c_0$ is an explicit function of $c$. 
\end{remark}

Finally, we address the space-filling phase. 

\begin{definition}\label{def-quantum-time-sf}
    Let $\kappa >8$ and $\gamma = 4/\sqrt \kappa$. Let $h$ be a free boundary GFF on $\D$ and let $\eta: [0,\infty) \to \clD$ be an independent radial $\SLE_\kappa$ curve. For any $0 < t_1 < t_2$,
     the \emph{quantum natural time} of $\eta$ elapsed between $t_1$ and $t_2$ is $\cA^\gamma_h(\eta([t_1, t_2]))$.
\end{definition}

Note that by local absolute continuity, Definitions~\ref{def-quantum-time-simple}-\ref{def-quantum-time-sf} also make sense when the GFF $h$ is replaced by a sample from $\LF^{(\alpha,0), (\beta,1)}_{\D, \ell}$ or $P_{\alpha, \beta, \ell}$ for any parameters $\alpha, \beta, \ell$.

\subsection{Topologies and metric spaces}\label{subsec-topologies} 
In our argument, we will take subsequential limits of random variables taking values in the following metric spaces. 
\begin{enumerate}
    \item ($\mathcal{D}$) \textbf{A function space on $\mathbb{D}$:} 
    Let $a>0$, and consider the Hilbert space $(-\Delta)^a L^2(\D)$ which is the completion of $C_0^{\infty}(\D)$ with respect to the inner product $(f, g)_{a} :=  ((-\Delta)^{-a}f, (-\Delta)^{-a}g)$ where $(\cdot, \cdot)$ is the $L^2$ inner product and $(-\Delta)^{a} f := \sum_{n} (f, f_n)  (-\lambda_n)^a f_n$ where $\lambda_n$ and $f_n$ are eigenvalues and eigenvectors of the Laplacian on $\D$ with Dirichlet boundary conditions.  The Gaussian free field with Dirichlet boundary conditions is an element of $(-\Delta)^a L^2(\D)$ for any $a>0$; see \cite{shef-gff} for details.

    In this paper we consider the free boundary GFF, which we denote here by $\wt h$. By the domain Markov property, $\wt h$ on $\D$ restricted to $(1 - \epsilon)\D$ can be written as $\wt h|_{(1 - \eps)\D} = h + \mathfrak{h}$ where $h$ is a Dirichlet GFF on $(1 - \epsilon)\D$ and $\mathfrak{h}$ is a function harmonic on $(1 - \eps)\D$. The distribution $\wt h|_{(1 - \eps)\D}$ is therefore an element of $(-\Delta)^a L^2((1 - \epsilon)\D)$ which is defined in analogy to the preceding paragraph, and which we denote by $\mathcal{D}_a^{\epsilon}$. The product space $\prod_{n \in \N}\mathcal{D}_a^{1/n}$ equipped with the metric $d_a(\cdot, \cdot) := \sum_{n = 1}^{\infty}2^{-n}\min(d_{a, 1/n}(\cdot, \cdot), 1)$ is separable and complete, and the subset $\mathcal{D}_a := \{ (h_n)_{n \in \N} \in \prod_{n \in \N}\mathcal{D}_a^{1/n}: \, h_m|_{(1 - 1/n)\D} = h_n ,\, \forall m > n \}$ is closed, and is therefore separable and complete in the subspace metric.
    
    The free boundary GFF takes values in $\mathcal{D}_a$ almost surely. The constant $a > 0$ is inconsequential for our arguments, and so we fix some arbitrary $a > 0$ and write $\mathcal{D}_a$ as simply $\mathcal{D}$.

    \item ($\mathcal{N}$) \textbf{Infinite-time driving measures on $\partial \D$:} Recall that for each finite time $T >0$, we let $\mathcal{N}_T$ be the space of measures on $[0, T] \times \partial \D$ with Lebesgue marginal in the first coordinate, equipped with the L\'evy-Prokhorov metric. Each $\mathcal{N}_T$ thus defined is a compact metric space with metric denoted by $d_T$. Let $\mathcal{N}$ be the collection of measures on $[0, \infty) \times \partial \D$ having Lebesgue marginal in the first coordinate, and equip $\mathcal N$ with the metric $d(\nu, \nu') = \sum_{T \in \mathbb N} 2^{-T} d_T(\nu|_{[0,T] \times \partial \D}, \nu'|_{[0,T] \times \partial \D})$. Then $(\cN, d)$ is a compact metric space, and convergence in $d$ is equivalent to convergence in the L\'evy-Prokhorov metric on $[0,T] \times \partial \D$ for every $T>0$. 

    % https://encyclopediaofmath.org/wiki/Lévy-Prokhorov_metric
    \item ($\mathcal{B}$) \textbf{Finite Borel measures on $\overline{\D}$:} We equip the space of finite Borel measures on $\overline{\D}$ with the L\'evy-Prokhorov metric. This is a complete separable metric space, and we equip it with the Borel $\sigma$-algebra.

    \item ($\mathcal{J}$) \textbf{C\`adl\`ag functions from $[0,1]$ to $\R$:} Let $\mathcal J$ be the collection of functions from $[0,1]$ to $\R$ which are right-continuous and have left limits. For an increasing homeomorphism $\lambda$ from $[0,1]$ to itself, let $\|\lambda\|^\circ = \sup_{s < t} | \log \frac{\lambda(t) - \lambda(s)}{t-s}|$. When the space $\mathcal J$ is equipped with the  metric $d^\circ(f,g) = \inf_\lambda \max(\|\lambda\|^\circ, \| f - g\circ \lambda \|_\infty)$, 
    it is a complete separable metric space having the Skorokhod J$_1$ topology; see \cite[Section 12]{billingsley2013convergence} for details.

    \item ($\mathcal I$)  \textbf{Nondecreasing c\`adl\`ag functions from $[0,\infty) \to [0,\infty]$:} Let $(q_j)_{j \in \N}$ be an arbitrary ordering of the nonnegative rational numbers. We equip $\mathcal I$ with the metric $d(f,g) = \sum_{j=1}^\infty 2^{-j}\left| e^{-f(q_j)} - e^{-g(q_j)}\right|$.
\end{enumerate}

\section{Radial quantum natural time quantum zipper}\label{sec-zippers}
The key ingredients in our subsequent construction of QLE are stationarity results for the exploration of an LQG surface by an SLE curve for a fixed amount of quantum natural time. We state these as Propositions~\ref{prop-radial-mot-sf-finite}-\ref{prop-radial-mot-swallow-finite}, with one proposition for each phase of SLE. We also show that in the simple and swallowing phases, the regions cut out by the radial SLE curves are quantum disks (Lemmas~\ref{lem-simple-unexplored} and~\ref{lem-swallowed-sle}). Note that the results in this section cover all $\kappa \in (0,\infty) \backslash \{4, 8\}$ and $\gamma = \min (\sqrt\kappa, 4/\sqrt\kappa)$; the only missing pairs are $(\gamma, \kappa) = (2, 4)$ and $(\gamma, \kappa) = (\sqrt2, 8)$ (corresponding to the critical values of $\kappa$ at the boundaries of the $\SLE_\kappa$ phase transitions).

Recall the Liouville field defined in Section~\ref{subsec-liouville-field}, and let $\mathrm{raSLE}_\kappa$ denote the law of radial SLE$_\kappa$ in $\D$ from $1$ to $0$. The first result, for the space-filling regime, was essentially proved in \cite{ay-reversibility}. See Figure~\ref{fig-cut} (left). 

\begin{figure}[ht]
	\centering
\includegraphics[scale=0.43]{figures/cut.pdf}
	\caption{\textbf{Left:} Let $\kappa > 8$ and $\gamma = 4/\sqrt\kappa$. For a certain random field $\phi_0$ and independent space-filling radial $\SLE_\kappa$ $\eta$ (with time re-parametrized according to $\cA^\gamma_{\phi_0}$-area covered), for each $s< \cA^\gamma_{\phi_0}(\D)$ let $\phi_s$ be the field obtained from $\phi_0$ by mapping out the complement of $\eta([0,s])$. Proposition~\ref{prop-radial-mot-sf-finite} identifies the boundary length process $(\cL^\gamma_{\phi_s}(\partial \D))_{s \in [0, \cA^\gamma_{\phi_0}(\D))}$ as Brownian motion, and moreover gives a stationarity property for the conditional law of $\phi_s$ given the boundary length process up until time $s$. \textbf{Middle:} Let $\kappa \in (0,4)$ and $\gamma = \sqrt\kappa$. In this setting, the boundary length process is affine with gradient 2 (as the green and red boundary segments each have quantum length $s$), and the law of the field at time $s$ satisfies a stationarity property (Proposition~\ref{prop-radial-mot-simple-finite}).
    \textbf{Right:} Figure for $\kappa \in (4,8)$ and $\gamma = 4/\sqrt\kappa$  (Proposition~\ref{prop-radial-mot-swallow-finite}).
    } \label{fig-cut}
\end{figure}

\begin{proposition}\label{prop-radial-mot-sf-finite}
Suppose $\kappa > 8$ and $\gamma = 4/\sqrt\kappa \in (0, \sqrt2)$. Let $\ell > 0$. Sample $(\phi_0,\eta)$ from the measure 
    \[
    (\LF_{\bbD,\ell}^{(Q-\frac{\gamma}{4},0),(\frac{3\gamma}{2},1)})^\#\times \mathrm{raSLE}_\kappa
    \]
    and parametrize $\eta$ by its $\cA^\gamma_{\phi_0}$-quantum area. Let $\qduration = \cA^\gamma_{\phi_0}(\D)$. For $s \in [0, \qduration)$, let $g_s:\bbD\backslash \eta([0,s])\to\bbD$ be the conformal map such that $g_s(0) = 0$ and $g_s(\eta(s))=1$. Let $\phi_s = g_s \bullet_\gamma \phi_0$ and $L_s = \cL_{\phi_s}^\gamma(\partial \D)$. The process $(L_s)_{[0,\qduration]}$ has the law of Brownian motion with initial value $\ell$ and variance $4 \tan (2\pi/\kappa) s$ run until the time it hits 0; in particular $\qduration < \infty$ a.s. Moreover, for fixed $s$, conditioned on $\{s < \qduration\}$ and on  $(L_\cdot)_{[0,s]}$, the conditional law of $\phi_s$ is  $(\LF_{\bbD, L_s}^{(Q-\frac{\gamma}{4},0),(\frac{3\gamma}{2},1)})^\#$.
\end{proposition}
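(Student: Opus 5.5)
The plan is to deduce the statement from the radial mating-of-trees / quantum zipper result of \cite{ay-reversibility} for space-filling radial $\SLE_\kappa$, which is stated for the infinite measure $\LF_{\D}^{(Q-\frac\gamma4,0),(\frac{3\gamma}2,1)}\times \mathrm{raSLE}_\kappa$ and not for the fixed-length probability measure. First I will recall the statement from \cite{ay-reversibility}. Under $\LF_{\D}^{(Q-\frac\gamma4,0),(\frac{3\gamma}2,1)}\times \mathrm{raSLE}_\kappa$ with $\eta$ parametrized by quantum area, the boundary length process $(L_s)$ evolves as a Brownian motion until it hits $0$. Moreover, for each fixed $s$, on $\{s<\qduration\}$ the joint law of $(\phi_s,(L_u)_{u\le s})$ factorizes: $(L_u)_{u\le s}$ is Brownian motion started from the ``initial length'' measure, and $\phi_s$ given $L_s$ is $\LF_{\D,L_s}^{(Q-\frac\gamma4,0),(\frac{3\gamma}{2},1)}$ up to the appropriate normalization. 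The variance $4\tan(2\pi/\kappa)$ should be exactly the constant appearing there, from the mating-of-trees variance for the left/right boundary lengths.

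Second, I will disintegrate in the initial boundary length $\ell$ using Lemma~\ref{lem-disint-explicit}. Write the infinite-measure statement as an identity of measures on $(\ell,(L_u)_{u\le s},\phi_s)$. Because $L_0=\ell$ is a measurable function of $\phi_0$, conditioning on $L_0=\ell$ is the same as replacing $\LF_{\D}$ by $\LF_{\D,\ell}$. This gives, for a.e.\ $\ell$, that under $\LF_{\D,\ell}^{(Q-\frac\gamma4,0),(\frac{3\gamma}2,1)}\times\mathrm{raSLE}_\kappa$ the process $L$ is Brownian motion from $\ell$ killed at $0$. It also gives that the conditional law of $\phi_s$ given $(L_u)_{u\le s}$ is $\LF_{\D,L_s}$ times a factor depending only on $L_s$, so that after normalization it is $(\LF_{\D,L_s})^\#$. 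To track the normalizations, I need the total mass $|\LF_{\D,\ell}^{(\alpha,0),(\beta,1)}|=C\ell^{\frac1\gamma(2\alpha+\beta-2Q)-1}$ from Definition~\ref{def-lf-conditioned}. Here $2\alpha+\beta-2Q=\gamma$, so the mass is a constant independent of $\ell$, which makes the Brownian motion law come out without a Doob transform. Normalizing then gives the statement with $(\cdot)^\#$.

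Third, I will upgrade ``a.e.\ $\ell$'' to every $\ell$. The plan is to use continuity in $\ell$: the map $\ell\mapsto \LF_{\D,\ell}^\#$ is weakly continuous, since it is an explicit reweighting and shift by $\frac2\gamma\log(\ell/L)$. Alternatively, scaling invariance gives the same conclusion, since adding a constant to $\phi_0$ multiplies lengths by $e^{\gamma c/2}$ and areas by $e^{\gamma c}$, consistent with Brownian scaling $s\mapsto e^{\gamma c}s$. Using scaling, the a.e.\ statement transfers to all $\ell$ directly. Finiteness of $\qduration$ then follows because Brownian motion hits $0$ a.s.

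The main obstacle is the first step: matching precisely what \cite{ay-reversibility} proves, including the parametrization by quantum area, the normalization $g_s(\eta(s))=1$ versus capacity normalization, and the variance constant, with the present formulation. If their result is stated in a different embedding or for a sphere/disk with marked bulk point, I would need to apply Lemma~\ref{lem-sample-gmc} to convert the boundary insertion $(\frac{3\gamma}2,1)$. This boundary insertion is the tip marking, and rotation invariance is needed to place the tip at $1$.
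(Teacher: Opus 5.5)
There is a genuine gap in your first step. You take from \cite{ay-reversibility} that, for every fixed $s$, the conditional law of $\phi_s$ given $\{s<\qduration\}$ and $(L_u)_{u\le s}$ is $(\LF_{\D,L_s}^{(Q-\frac\gamma4,0),(\frac{3\gamma}2,1)})^\#$. The result actually available, \cite[Corollary 3.13]{ay-reversibility}, gives this only after further conditioning on the event $E_1$ that $\eta([0,s])$ is simply connected, i.e.\ that $\eta|_{[0,s]}$ has not closed a loop around the origin. This is a real restriction, since the space-filling curve can cut $0$ off from $\partial\D$ before time $s$. It is not among the mismatches you anticipate (area parametrization, normalization of $g_s$, variance constant, embedding). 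Once the input is stated correctly, the part of the proposition that needs an argument is missing from your plan. Your Steps 2--3 are correct but do not address it: disintegration via Lemma~\ref{lem-disint-explicit}, the computation $2\alpha+\beta-2Q=\gamma$ giving $\ell$-independent mass, and scaling from a.e.\ $\ell$ to all $\ell$. The Brownian motion law itself is taken directly from \cite[Theorem 3.1]{ay-reversibility} in the paper.

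The missing idea is a short bootstrap.

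\begin{enumerate}
\item Strengthen the claim to the pair $(\phi_s,\eta_s)$ with $\eta_s:=g_s\circ\eta(s+\cdot)$, whose conditional law should be $(\LF_{\D,L_s}^{(Q-\frac\gamma4,0),(\frac{3\gamma}2,1)})^\#\times\mathrm{raSLE}_\kappa$. Tracking only $\phi_s$, as you do, is not enough: keeping the future curve makes the output again of the input form, with initial length $L_{s/n}$, so the restricted result can be reapplied.
\item Apply the restricted result $n$ times on intervals of length $s/n$. This yields the claim conditionally on the event $E_n$ that none of the $n$ successive segments closes a loop around $0$ in its current domain.
\item Show $\P[E_n]\to 1$. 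On $\{s<\qduration\}$, $\eta|_{[0,s]}$ stays at positive distance $d$ from $0$ and is uniformly continuous. A segment that separates $0$ from the current boundary must meet every ray from $0$, so it has diameter at least $2d$.
\item Remove the conditioning on $E_n$ in the limit. This gives the stated conditional law given only $\{s<\qduration\}$ and $(L_\cdot)_{[0,s]}$.
\end{enumerate}
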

\begin{proof}
For the first claim, the law of $(L_s)_{s \geq 0}$ was shown to be that of the aforementioned Brownian motion in \cite[Theorem 3.1]{ay-reversibility}.  
    Next, for $s < \qduration$ define the curve $\eta_s := g_s \circ \eta(\cdot - s)$ in $\D$ from $1$ to $0$. We will show that for fixed $s>0$, the conditional law of $(\phi_s, \eta_s)$ conditioned on $\{s < \cA^\gamma_{\phi_0}(\D)\}$ and on  $(L_\cdot)_{[0,s]}$ is
    \eqb\label{eq-law-curve-field}
(\LF_{\bbD, L_s}^{(Q-\frac{\gamma}{4},0),(\frac{3\gamma}{2},1)})^\# \times \mathrm{raSLE}_\kappa.
    \eqe
    By \cite[Corollary 3.13]{ay-reversibility}, if we further condition on the event $E_1$ that $\eta([0,s])$ is simply connected (i.e., $\eta|_{[0,s]}$ has not wrapped around), then the conditional law of $(\phi_s, \eta_s)$ is~\eqref{eq-law-curve-field}. Iteratively applying this $n>1$ times on time intervals of length $s/n$ rather than $s$, if we instead further condition on the event $E_n$ that $\eta([is/n, (i+1)s/n])$ is simply connected for $i = 0, \dots, n-1$, the conditional law of $(\phi_s, \eta_s)$ is~\eqref{eq-law-curve-field}. Since $\eta$ is continuous we have $\lim_{n\to\infty}\P[E_n] = 1$. Thus, we conclude that the conditional law of $(\phi_s, \eta_s)$ conditioned on $\{s < \cA^\gamma_{\phi_0}(\D)\}$ and on  $(L_\cdot)_{[0,s]}$ is~\eqref{eq-law-curve-field}; this implies the second claim. 
\end{proof}

We will prove the following two variants of Proposition~\ref{prop-radial-mot-sf-finite}, see Figure~\ref{fig-cut} (middle, right). Their proofs are completely different from that of Proposition~\ref{prop-radial-mot-sf-finite}, and depend fundamentally on the radial conformal weldings of LQG developed in \cite{ay-radial}.

\begin{proposition}\label{prop-radial-mot-simple-finite}
Suppose $\kappa \in (0,4)$ and $\gamma = \sqrt\kappa \in (0,2)$. Let $\ell > 0$. Sample $(\phi_0, \eta)$ from the measure 
\[
    (\LF_{\bbD,\ell}^{(Q-\frac{1}{\gamma},0),(\gamma - \frac2\gamma,1)})^\#\times \mathrm{raSLE}_\kappa\]
and parametrize $\eta$ by its quantum length  measure; let $\qduration$ be its duration. For $s \in [0,\qduration)$, let $g_s: \D \backslash \eta([0,s]) \to \D$ be the conformal map such that $g_s(0) = 0$ and $g_s(\eta(s)) = 1$. Let $\phi_s = g_s \bullet_\gamma \phi_0$ and $L_s = \cL_{\phi_s}^\gamma(\partial \D)$. Almost surely $\qduration < \infty$ and $L_s = \ell+2s$ for all $s \in [0,\qduration)$. Moreover, for fixed $s>0$,  conditioned on $\{s < \qduration\}$ the conditional law of $\phi_s$ is $(\LF_{\D, L_s}^{(Q - \frac1\gamma, 0), (\gamma - \frac2\gamma,1)})^\#$.
\end{proposition}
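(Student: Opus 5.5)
The plan is to derive Proposition~\ref{prop-radial-mot-simple-finite} from a radial conformal welding identity of \cite{ay-radial}, combined with the rerooting symmetry of quantum disks. The key input is the following. Take a quantum disk $\QD$ with one interior and one boundary marked point (interior insertion $\gamma$ and boundary insertion $\gamma$). Cut along an independent radial $\SLE_\kappa$ from the boundary point to the interior point, stopped at quantum length $s$. This should be equivalent to conformally welding two boundary arcs of length $s$ of a single quantum surface to themselves. The description I would use from \cite{ay-radial} is the following: the Liouville field $\LF_\D^{(Q-\frac1\gamma,0),(\gamma-\frac2\gamma,1)}$ (with its natural boundary-length disintegration) decorated by an independent radial $\SLE_\kappa$ is obtained by welding a quantum disk along a segment of its boundary to a second segment of its own boundary, where the weld point becomes the tip. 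The insertion $\alpha=Q-\frac1\gamma$ at $0$ has the property that the radial SLE is the welding interface. The insertion $\beta=\gamma-\frac2\gamma$ at $1$ corresponds to the tip of the curve, which is a weight-$(2-\frac{\gamma^2}{2})$-type boundary point.

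\textbf{Step 1: the quantum length of the full curve.} I would first show that $\qduration<\infty$ a.s. and that $L_s=\ell+2s$. The identity $L_s=\ell+2s$ is deterministic given the definitions. Cutting along $\eta([0,s])$ removes no boundary of $\partial\D$. It adds two boundary arcs, each of quantum length $s$ by Definition~\ref{def-quantum-time-simple}, and these lie in the boundary of $\D\backslash\eta([0,s])$; the arcs are the same as in $\D\backslash\eta([0,\infty))$ locally near the curve, by locality of boundary length. For finiteness of $\qduration$, I would use the welding description: the total quantum length of $\eta$ equals half of the boundary length of the welded disk minus $\ell$. That quantity is a.s. finite, since the full cut-out surface $(\D\backslash\eta,\phi_0)$ is a finite-volume surface, identified via \cite{ay-radial} as a quantum disk.

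\textbf{Step 2: the Markov property at a fixed time $s$.} The strategy is to establish an infinite-measure statement and then disintegrate over $\ell$. Consider the pair $(\phi_0,\eta)$ sampled from $\LF_\D^{(Q-\frac1\gamma,0),(\gamma-\frac2\gamma,1)}\times\mathrm{raSLE}_\kappa$, restricted to $\{s<\qduration\}$. The goal is to show that $(\phi_s,\eta_s)$, with $\eta_s=g_s\circ\eta(\cdot+s)$, has law $1_{L\ge 2s}\,\LF_\D^{(Q-\frac1\gamma,0),(\gamma-\frac2\gamma,1)}\times\mathrm{raSLE}_\kappa$, i.e.\ the same measure with boundary length shifted by $2s$. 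By the welding identity, $(\phi_0,\eta)$ corresponds to a cut-out quantum disk $\mathcal{D}$ with two marked boundary points and a boundary arc of length $\ell$. Zipping up by quantum length $s$ amounts to moving the root of the welding by $s$ along each side. Quantum disks are invariant under rerooting along boundary length, a symmetry inherited from the $\gamma$ insertions and the uniform boundary point sampling. Hence the unzipped picture again has the welding law, with the free arc length increased by $2s$.

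One must check that the Lebesgue factor $d\ell$ transforms correctly, so that the shift $\ell\mapsto\ell+2s$ preserves the measure restricted to $\{\ell\ge 2s\}$ up to the explicit weight. This should follow from the disintegration in Lemma~\ref{lem-disint-explicit}, where the dependence on $\ell$ is a power law, together with the welding constant from \cite{ay-radial}. Disintegrating over $\ell$ (equivalently over $L_s$) and normalizing then gives the conditional law $(\LF_{\D,L_s}^{\dots})^\#$. Since $L_s$ is a deterministic function of $\ell$ and $s$, conditioning on $\{s<\qduration\}$ alone suffices.

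\textbf{Main obstacle.} The hard part is the exact form of the welding input: identifying which surface (with which $\ell$-dependent weighting) welds to give $\LF^{(Q-\frac1\gamma,0),(\gamma-\frac2\gamma,1)}\times\mathrm{raSLE}_\kappa$, and verifying that the rerooting or zip-shift is measure-preserving on the welded side. I would try first to write the welded object as $\int \QD_{1,1}(\ell+2x)\,dx$-type data, with the curve length $x$ integrated over $(0,\infty)$. Zipping by $s$ then corresponds to the substitution $x\mapsto x-s$, which manifestly preserves Lebesgue measure on $\{x>s\}$.
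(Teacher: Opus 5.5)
Your route is the paper's: a radial welding identity from \cite{ay-radial}, the observation that cutting along the first $s$ units of quantum length shifts the welded arc length, and translation invariance of Lebesgue measure in that length. The gap is the decisive input, which you leave open and then guess incorrectly. The field $\LF_{\D}^{(Q-\frac1\gamma,0),(\gamma-\frac2\gamma,1)}$ is not a marked quantum disk; your opening description, with $\gamma$-insertions at $0$ and $1$, is the field of $\QD_{1,1}$. Moreover, the surface being welded carries no bulk marked point. It is the three-pointed disk $\QD_3(a,\ell,a)$ of Definition~\ref{def-qd3}. Welding the two length-$a$ arcs adjacent to $x_1$ sends $x_1$ to the bulk point $0$ and sends $x_2=x_3$ to $1$. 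By $W=2+\gamma^2-\gamma\beta$, the insertion $\gamma-\frac2\gamma$ at $1$ has weight $4=2+2$. It is not $2-\frac{\gamma^2}{2}$, which corresponds to the $\beta=\frac{3\gamma}{2}$ of the space-filling case. The identity of Proposition~\ref{prop-radial-weld-simple} follows from $\QD_3=c\,\QT(2,2,2)$ (by \cite[Proposition 2.18]{ahs-integrability} and \cite[Definition 2.17]{ASY22}) together with \cite[Theorem 1.1]{ay-radial} for $W_1=W_2=W_3=2$. Your candidate $\int\QD_{1,1}(\ell+2x)\,dx$ already fails at the level of total mass. Since $|\QD_{1,1}(L)|\propto L^{1-\frac4{\gamma^2}}$, the candidate scales like $\ell^{2-\frac4{\gamma^2}}$ and diverges for $\gamma\ge\sqrt2$. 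By contrast, $|\LF_{\D,\ell}^{(Q-\frac1\gamma,0),(\gamma-\frac2\gamma,1)}|$ and $\int_0^\infty(2a+\ell)\,|\QD(2a+\ell)|\,da$ both scale like $\ell^{-\frac4{\gamma^2}}$.

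With the correct input the rest is short and works at each fixed $\ell$:
\begin{itemize}
\item The welded arc length $a$ equals $\qduration$, so $\qduration<\infty$, and conditioning on $\{s<\qduration\}$ restricts to $a>s$.
\item Cutting along $\eta([0,s])$ moves $x_2$ and $x_3$ inward by $s$. By Definition~\ref{def-qd3} this sends $\QD_3(a,\ell,a)$ to $\QD_3(a-s,\ell+2s,a-s)$: the underlying disk and $x_1$ are unchanged, and the other two points are determined by arc lengths. No separate rerooting argument is needed.
\item Substituting $a'=a-s$ and reapplying the identity at boundary length $\ell+2s$ gives the claim.
\end{itemize}
Your infinite-measure target $1_{L\ge 2s}\LF\times\mathrm{raSLE}_\kappa$ is in fact correct, because the ratio of the two sides of the welding identity is $\ell$-independent by the scaling above. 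However, disintegrating it only yields the statement for Lebesgue-a.e.\ $\ell$, so the fixed-$\ell$ argument used in the paper is the cleaner one.
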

Note that the final claim of Proposition~\ref{prop-radial-mot-simple-finite} also holds if we further condition on the process $(L_\cdot)_{[0,s]}$, since this process is deterministically given by $u \mapsto \ell + 2u$. 

\begin{proposition}\label{prop-radial-mot-swallow-finite}
Suppose $\kappa \in (4, 8)$ and $\gamma = 4/\sqrt\kappa \in (\sqrt2,2)$. Let $\ell > 0$. Sample $(\phi_0,\eta)$
    from the measure 
    \[
    (\LF_{\bbD, \ell}^{(Q-\frac{\gamma}{4},0),(\frac4\gamma - \frac\gamma2,1)})^\#\times \mathrm{raSLE}_\kappa
    \]
    and parametrize {$\eta$} by its quantum natural time; let $\qduration$ be its duration. For $s \in [0, \qduration)$, let $D_s$ be the connected component of $\D \backslash \eta([0,s])$ containing $0$, and let $g_s: D_s \to \D$ be the conformal map such that $g_s(0) = 0$ and $g_s(\eta(s)) = 1$. Let $\phi_s = g_s \bullet_\gamma \phi_0$ and $L_s = \cL_{\phi_s}^\gamma(\partial \D)$. Almost surely we have $\qduration < \infty$. Moreover, for fixed $s$, conditioned on $\{s < \cS\}$, let $\mathcal U$ be the set of quantum surfaces $(U, \phi_0)/{\sim_\gamma}$ where $U$ ranges over the connected components of $\D\backslash \eta([0,s])$ not containing 0. 
    If we further condition on $\mathcal U$ and $(L_\cdot)_{[0,s]}$, the conditional law of $\phi_s$ is $(\LF_{\D, L_s}^{(Q - \frac\gamma4, 0), (\frac4\gamma - \frac\gamma2, 1)})^\#$.
\end{proposition}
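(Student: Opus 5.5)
We follow the strategy the paper indicates for the non-space-filling zippers: use the radial conformal welding results of \cite{ay-radial} to realize the sampled pair $(\phi_0,\eta)$ as a welding of simpler surfaces. Stationarity in quantum natural time then reduces to a Markov property of a L\'evy process together with the conditional independence of the welded pieces.

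\textbf{Step 1: the welding input.} For $\kappa\in(4,8)$, $\gamma=4/\sqrt\kappa$, we expect \cite{ay-radial} to give the following identity, up to a multiplicative constant and after disintegrating over boundary length. The surface $\LF_{\D}^{(Q-\frac\gamma4,0),(\frac4\gamma-\frac\gamma2,1)}$ decorated by an independent $\mathrm{raSLE}_\kappa$ equals the radial welding of a forested (generalized) quantum disk with an $\alpha$-insertion. More concretely, we want a Poissonian description: the curve cuts out quantum disks, and in quantum natural time the swallowed lengths are the jumps of a L\'evy-type process. Concretely we aim for the following statement. Exploring for quantum natural time $s$ produces three pieces: the unexplored surface $(D_s,\phi_0)/{\sim_\gamma}$ with its marked tip and the interior point $0$, the swallowed disks $\mathcal U$, and the process $(L_u)_{u\le s}$. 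Their joint measure factorizes as
\[
\Big(\LF_{\D,L_s}^{(Q-\frac\gamma4,0),(\frac4\gamma-\frac\gamma2,1)}\Big)\times\prod_{U\in\mathcal U}\QD(\ell_U)^\#\times(\text{law of }L \text{ on }[0,s]).
\]
To derive this, we cut the welding at quantum natural time $s$ along the forested boundary arc of length corresponding to time $s$. We then use that a forested line segment of generalized quantum length $s$ is a Poissonian collection of looptrees of quantum disks (from \cite{DMS14} / \cite{ay-radial}). The $\beta$-insertion at $1$ is where the tip is re-marked after mapping out, and Lemma~\ref{lem-sample-gmc} converts it into a sampled boundary point, making the re-marking legitimate.

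\textbf{Step 2: non-probability to probability.} Lemma~\ref{lem-disint-explicit} disintegrates the identity in the initial length $\ell$. We then normalize and condition on $\{s<\qduration\}$, on $\mathcal U$ and on $(L_\cdot)_{[0,s]}$. The factorization shows the conditional law of $\phi_s$ depends only on $L_s$ and is $(\LF_{\D,L_s}^{\cdots})^\#$. This normalization must be consistent with the constants: $2\alpha+\beta-2Q$ determines the power of $\ell$ in Definition~\ref{def-lf-conditioned}. The resulting length process must be a Markov process whose transition kernel matches this power. We will check this by a scaling argument ($\phi\mapsto\phi+c$ scales lengths by $e^{\gamma c/2}$ and quantum natural time by the $\frac{4}{\gamma^2}$-stable scaling).

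\textbf{Step 3: $\qduration<\infty$.} Under the welding, $L$ should be a time change of a spectrally positive-type $\frac4{\gamma^2}$-stable process conditioned appropriately, for instance a Doob transform, which hits $0$ in finite time. Alternatively, the total quantum natural time is finite because the total boundary length of the forested arc is finite a.s. Finally, we must relate Definition~\ref{def-quantum-time-swallowing} (counting swallowed disks of size $\approx\eps$) to the generalized-length parametrization of the forested segment. This follows via Remark~\ref{remark-quantum-time-swallowing} and the Poissonian intensity $x^{-4/\gamma^2-1}dx$.

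\textbf{Main obstacle.} The main obstacle is Step 1, extracting an exact radial statement at a \emph{fixed quantum natural time} from \cite{ay-radial}'s welding identities. These are naturally phrased for the whole curve and total lengths, not for cutting at a stopping time. The first approach we would try is to condition on the event that $\eta([0,s])$ has not yet wrapped around $0$. There the exploration coincides locally with a chordal-type welding, and we can use the forested-segment decomposition directly. We then iterate over $n$ subintervals of length $s/n$ and let $n\to\infty$, exactly as in the proof of Proposition~\ref{prop-radial-mot-sf-finite}. The no-wrapping probability tends to $1$ by continuity of $\eta$.
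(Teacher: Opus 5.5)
\textbf{There is a genuine gap, at exactly the step you flag as the main obstacle, and your workaround does not fill it.} The no-wrapping/iteration argument in Proposition~\ref{prop-radial-mot-sf-finite} works only because \cite[Corollary 3.13]{ay-reversibility} already supplies the fixed-time law on the no-wrap event. For $\kappa\in(4,8)$ you appeal to an unspecified ``chordal-type welding'', which is not among the available inputs.

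The missing idea is that no such input is needed. Proposition~\ref{prop-radial-weld-ns} is a statement about the \emph{forested} field:
$\bigl(\int_0^\infty \mathrm{Weld}(\GQD_3(a,\ell_0,a))\,da\bigr)^\# = (\LF_{\D,f,\ell_0}^{(Q-\frac\gamma4,0),(\frac4\gamma-\frac\gamma2,1)})^\#\times\mathrm{raSLE}_\kappa$.
Here $\ell_0$ is the generalized boundary length, and the quantum natural time duration of the curve is the welded length $a$. Conditioning on $\{s<\qduration\}$ restricts to $a>s$. The marked points of $\GQD_3$ are sampled from generalized boundary length, so moving the second and third marked points $s$ units toward the first turns $\GQD_3(a,\ell_0,a)$ into $\GQD_3(a-s,\ell_0+2s,a-s)$. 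Re-applying the identity gives the fixed-time forested zipper (Proposition~\ref{prop-radial-forested-zipper}), and $\qduration=a<\infty$ a.s.\ comes for free.

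Two smaller points on the other steps:
\begin{itemize}
\item Your stable-process route to $\qduration<\infty$ is not available: that description of $L$ is open (Question~\ref{question-length-process}).
\item Lemma~\ref{lem-sample-gmc} plays no role in the zipper. The tip is sent to $1$ by the choice of $g_s$, and the $\beta$-insertion at $1$ comes out of the welding.
\end{itemize}

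\textbf{Second, your Step 1 factorization is asserted rather than derived, and it carries the real content.} You also conflate $\LF_{\D}$ with its forested version, and you never explain how to pass from a statement about $\cD_s$ to the conditional law of $\phi_s$ given the \emph{past} $(L_\cdot)_{[0,s]}$ and $\mathcal U$. The paper does this in three moves, after which the tower property finishes the proof.
\begin{enumerate}
\item[(i)] By Definition~\ref{def-LFf}, given $L_s$, the field $\phi_s$ of the marked component and the foresting $\mathfrak F_s$ of $\cD_s$ are conditionally independent, with $\phi_s\sim(\LF_{\D,L_s}^{(Q-\frac\gamma4,0),(\frac4\gamma-\frac\gamma2,1)})^\#$.
\item[(ii)] A \emph{deterministic} argument shows that $((L_\cdot)_{[0,s]},\mathcal U)$ is a measurable function of $\mathfrak F_s$. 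If $\eta|_{[s',s]}$ does not loop around $0$, the disks swallowed in $[s',s)$ are those in the looptrees within generalized length $s-s'$ of the tip on either side. Moreover $L_{s'}=\ell_{\mathrm{ccw}}+(x_{\mathrm{cw}}-x_{\mathrm{ccw}})+\ell_{\mathrm{cw}}$ can be read off directly. One then iterates over the finitely many loops around $0$. So the no-loop condition enters as a measurability device, not a probabilistic limit.
\item[(iii)] Under the forested law, $\phi_0$ given $L_0$ is $(\LF_{\D,L_0}^{(Q-\frac\gamma4,0),(\frac4\gamma-\frac\gamma2,1)})^\#$. Hence conditioning on $L_0=\ell$, which is part of the history, recovers the non-forested initial law in the statement.
\end{enumerate}
Your Step 2 scaling and Markov-kernel check cannot substitute for (i)--(iii). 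Scaling constrains the law of $L$, not the conditional law of $\phi_s$ given the history.
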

\begin{remark}[{Regularity of boundary length processes}]\label{rem-bdy-reg}
For Proposition~\ref{prop-radial-mot-sf-finite}, the boundary length process $(L_\cdot)_{[0,\qduration]}$ is a.s.\ continuous. For Proposition~\ref{prop-radial-mot-simple-finite}, we have $(L_\cdot)_{[0,\qduration)} = (\ell + 2s)_{0 \leq s < \qduration}$ a.s., thus can continuously extend it to $L_\qduration = \ell + 2 \qduration$.
In the setting of Proposition~\ref{prop-radial-mot-swallow-finite}, by local absolute continuity with respect to the non-simple quantum zipper \cite[Theorem 1.18, Corollary 1.19]{DMS14}, the process $(L_\cdot)_{[0,\qduration)}$ is c\'adl\'ag (right-continuous with left limits) and has jumps at a dense collection of times. Moreover, since the log-singularity at the origin has size $Q - \frac\gamma4 < Q$ we have $\lim_{s \to \qduration} L_s = 0$ a.s. The process should admit an exact description in terms of totally asymmetric stable L\'evy processes similar to that of \cite[Theorem 1.2]{ms-finite-mating}, but this has not been proved (Question~\ref{question-length-process}). However, see Proposition~\ref{prop-radial-forested-zipper} below for a variant in terms of \emph{forested quantum surfaces} for which the (generalized) boundary length process is simpler. 
\end{remark}

The special case of Proposition~\ref{prop-radial-mot-swallow-finite} for $(\gamma, \kappa) = (\sqrt{8/3}, 6)$ was already shown in 
\cite[Theorem 1.2]{ms-finite-mating}, except that the fields were described in terms of quantum disks rather than Liouville fields; the equivalence of the descriptions follows from \cite[Theorem 3.4]{ars-fzz}.

Finally, we describe the quantum surfaces cut out by the SLE processes in Propositions~\ref{prop-radial-mot-simple-finite} and~\ref{prop-radial-mot-swallow-finite}. Recall that $\QD(\ell)^\#$ (Definition~\ref{def-QD}) is the law of the quantum disk conditioned to have boundary length $\ell$.
\begin{lemma}\label{lem-simple-unexplored}
In the setting of Proposition~\ref{prop-radial-mot-simple-finite}, the conditional law of $(\D \backslash \eta([0,\qduration]), \phi_0)/{\sim_\gamma}$ given $\qduration$ is $\QD(\ell+2\qduration)^\#$.
\end{lemma}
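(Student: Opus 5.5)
The plan is to obtain the lemma from the same radial conformal welding input from \cite{ay-radial} that will be used to prove Proposition~\ref{prop-radial-mot-simple-finite}, rather than from the stationarity statement alone. The relevant welding says, roughly: take a Liouville field $\LF_\D^{(Q-\frac1\gamma,0),(\gamma-\frac2\gamma,1)}$ decorated by an independent radial $\SLE_\kappa$ run to completion, and cut along the whole curve $\eta([0,\infty))$. The result decomposes as a quantum disk with one marked boundary point (the complement $\D\backslash\eta([0,\qduration])$), welded to itself along two boundary arcs of equal quantum length $\qduration$, starting from the marked point. The decomposition comes with an explicit density in the lengths.

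\textbf{Step 1: disintegrate the welding identity.} Write the welding as a measure identity of the form
\[
\LF_{\D}^{(Q-\frac1\gamma,0),(\gamma-\frac2\gamma,1)} \times \mathrm{raSLE}_\kappa = C\int_0^\infty\!\!\int_0^\infty \mathrm{Weld}\big(\QD_{1,1}(\ell+2x)\big)\, f(\ell,x)\, dx\, d\ell .
\]
Here $\QD_{1,1}(L)$ is a quantum disk of length $L$ with a bulk point (which becomes $0$) and a boundary point. The weld glues the two arcs of length $x$ adjacent to the boundary point. The factor $f(\ell,x)$ is an explicit weight, and I expect it to be constant or a power of $\ell+2x$.

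\textbf{Step 2: identify the left side.} By Lemma~\ref{lem-disint-explicit}, restricting the left side to outer boundary length $\ell$ gives $\LF_{\D,\ell}^{(\cdots)}\times\mathrm{raSLE}_\kappa$. Under this restriction the quantity $x$ is exactly the quantum natural time $\qduration$ of Definition~\ref{def-quantum-time-simple}. The boundary of the complement has length $\ell+2\qduration$, which is consistent with $L_s=\ell+2s$.

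\textbf{Step 3: read off the conditional law.} Given $(\ell,\qduration)$, the surface $(\D\backslash\eta([0,\qduration]),\phi_0)/{\sim_\gamma}$ has law proportional to $\QD_{1,1}(\ell+2\qduration)$. Forgetting the marked bulk and boundary points then turns this into $\QD(\ell+2\qduration)^\#$. The justification is that for quantum disks, marking a bulk point by area and a boundary point by length only reweights by $\cA\cdot L$. Since $L$ is fixed, the weight is proportional to $\cA$.

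\textbf{Main obstacle.} The step I expect to be hardest is checking that no residual reweighting survives from the marked points. In Step 3, forgetting the bulk point removes a factor $\cA$ from the law of the disk. I must verify that this factor is exactly compensated, so that the bulk point $0$ is uniformly distributed according to area given the disk. Otherwise the conditional law would be area-biased rather than $\QD^\#$.

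To settle this, I would first compare insertion exponents: $Q-\frac1\gamma$ should match a $\gamma$-insertion after the welding, via the $\Delta_\alpha$ bookkeeping in \cite{ay-radial}. If that comparison is unclear, I would instead use the resampling property of the welded picture. Conditionally on the disk, I would show the location of the origin is distributed as $\cA^\#$, by checking that the law of the full configuration is invariant under moving the bulk marked point. The factor $1/\cA$ in Definition~\ref{def-QD} should then cancel exactly with this area-sampling.
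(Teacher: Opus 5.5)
Your overall strategy (use the radial welding of \cite{ay-radial}, condition on $\qduration$, then forget marked points) is the right one, but Step~1 gets the geometry wrong, and that error is what creates your ``main obstacle''. In Proposition~\ref{prop-radial-mot-simple-finite} the curve is run to completion, so $\eta(\qduration)=0$. The origin is therefore the tip of the slit $\eta([0,\qduration])$, hence a \emph{boundary} point (prime end) of $\D\backslash\eta([0,\qduration])$, not an interior point. Your opening description is correct if its marked boundary point is the origin: a disk with one marked boundary point, welded to itself along the two arcs of length $\qduration$ adjacent to that point. But Step~1 then switches to $\QD_{1,1}(\ell+2x)$ with a bulk point that ``becomes $0$'', and this is inconsistent. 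Welding the two arcs adjacent to the boundary point turns that boundary point into the interior endpoint of the interface, so the curve would end there and not at your bulk point. Consequently the area bias in Step~3 does not exist, and neither proposed fix can work: there is no factor $\cA$ to cancel, and $0$ cannot be $\cA^\#$-distributed in a disk that does not contain it. The exponent comparison would also point the other way. In \cite{ay-radial} the bulk insertion $Q-\frac1\gamma$ at $0$ comes from self-welding at a boundary $\gamma$-insertion, i.e.\ at a length-typical boundary vertex.

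The repair, which is the paper's proof, is to view the cut-out surface as a disk with \emph{three} boundary marked points: the origin and the two prime ends of $\D\backslash\eta([0,\qduration])$ at $1$. Proposition~\ref{prop-radial-weld-simple} says exactly that this three-pointed surface has law $\bigl(\int_0^\infty \QD_3(a,\ell,a)\,da\bigr)^\#$ with $a=\qduration$. So conditionally on $\qduration$ it is $\QD_3(\qduration,\ell,\qduration)^\#$. There is no unknown density $f(\ell,x)$ to guess, and any density in $x$ would disappear after conditioning on $\qduration$ anyway. In Definition~\ref{def-qd3} all three points are sampled from $(\cL^\gamma_\phi)^\#$, so fixing the arc lengths $(a,b,c)$ and forgetting the points leaves a law proportional to $\QD(a+b+c)$. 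The only reweighting is by a function of the total boundary length $\ell+2\qduration$, which is fixed given $\qduration$. Hence the conditional law is $\QD(\ell+2\qduration)^\#$.
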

Recall that a domain $U \subset \D$ is swallowed by $(K_\cdot)_{[0,\qduration]}$ if $U$ is one of the connected components of the interior of $K_s \backslash \bigcup_{u < s} K_u$ for some time $s < \qduration$; in this case, we also say the quantum surface $(U, \phi_0)/{\sim_\gamma}$ is swallowed at time $s$. 
\begin{lemma}\label{lem-swallowed-sle}
In the setting of Proposition~\ref{prop-radial-mot-swallow-finite}, the jumps of $(L_\cdot)_{[0,\qduration]}$ are in bijection with the quantum surfaces swallowed by the radial $\SLE_\kappa$ process, wherein at each jump time the process swallows a quantum surface whose $\gamma$-LQG boundary length is the jump size. 
Moreover, conditioned on $(L_{\cdot})_{[0,\qduration]}$, the swallowed quantum surfaces are conditionally independent, and the conditional law of the quantum surface corresponding to a jump of size $\ell$ is $\QD(\ell)^\#$.
\end{lemma}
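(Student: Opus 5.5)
The plan is to derive Lemma~\ref{lem-swallowed-sle} from the radial conformal welding input of \cite{ay-radial} that also underlies Proposition~\ref{prop-radial-mot-swallow-finite}, combined with the non-simple chordal quantum zipper of \cite{DMS14} through local absolute continuity. I would proceed in three steps.

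\textbf{Step 1 (jumps versus swallowed regions).} This part is deterministic given the local structure. Whenever $\eta$ disconnects a region $U$ from $0$ at time $s$, the boundary of $D_s$ loses exactly the arc that is identified with $\partial U$. Consequently $L_{s-}-L_s$ equals the $\gamma$-LQG boundary length of $(U,\phi_0)/{\sim_\gamma}$. Quantum lengths are additive along SLE$_\kappa$ boundary arcs, and the curve segments traced in $(s-\epsilon,s)$ have quantum length tending to $0$, so the jump is exactly this length.

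Conversely, between swallowing times the process $L$ has no jumps. This follows from local absolute continuity with the chordal non-simple zipper \cite[Theorem 1.18, Corollary 1.19]{DMS14}, in which the boundary length process is a stable L\'evy process whose jumps are exactly the swallowing events. Applying this on countably many small time windows, away from the time the curve wraps around $0$, gives the bijection.

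\textbf{Step 2 (identifying the law).} For the conditional independence and the quantum disk law, I would use Proposition~\ref{prop-radial-mot-swallow-finite} together with its stronger version, in which the pair $(\phi_s,\eta_s)$ is conditionally distributed as $(\LF_{\D,L_s})^\#\times\mathrm{raSLE}_\kappa$ given $\mathcal U$ and $(L_\cdot)_{[0,s]}$. This Markov property reduces the problem to a short time window $[0,s]$, which can then be iterated. On a short window the curve is locally a chordal SLE$_\kappa$ on a field that is locally absolutely continuous to a quantum wedge of weight $2-\gamma^2/2$. In that chordal setting, \cite{DMS14} shows that the swallowed surfaces are, given their boundary lengths, independent quantum disks.

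\textbf{Step 3 (transfer and assembly).} The harder part is making the Radon--Nikodym derivative from Step 2 harmless. My approach is to rely on the fact, from \cite{ay-radial}, that the radial welding itself produces independent forested disks. Concretely, I would use Proposition~\ref{prop-radial-forested-zipper} (stated later but resting on the same input): the generalized boundary length process there is a L\'evy process, and the loops are independent forested disks. Each swallowed region is then the quantum disk encircled by a forested loop, and forested disks are built from $\QD$ samples conditioned on their lengths. Conditioning on the lengths therefore gives independent $\QD(\ell)^\#$.

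Finally, I would iterate the Markov property over the windows $[ks/n,(k+1)s/n]$, letting $s$ range over the rationals, to obtain conditional independence jointly over all jumps up to $\qduration$. The main obstacle is the careful extraction of this conditional independence from the welding statements, in particular ensuring that conditioning on the entire process $L$, including its future, does not bias the swallowed disks. That is handled by the Markov property, since the future evolution of $L$ depends only on $\phi_s$ and is independent of $\mathcal U$ given $(L_\cdot)_{[0,s]}$.
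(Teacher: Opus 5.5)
Your Step 1 agrees with the paper, which obtains the bijection by local absolute continuity with the non-simple chordal zipper of \cite{DMS14}. You are also right that the forested welding of Proposition~\ref{prop-radial-weld-ns} is the tool for the conditional law. Step 2, however, gives nothing exact: local absolute continuity with the chordal wedge changes the law by a Radon--Nikodym derivative that may depend on the swallowed surfaces, so it cannot produce a conditional law. The window iteration built on it is therefore superfluous. Step 3 also misdescribes Proposition~\ref{prop-radial-forested-zipper}. There the generalized boundary length is the deterministic quantity $\ell_0+2s$, not a L\'evy process; the looptree structure lives in the foresting, and its loops are quantum disks, not forested disks.

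The genuine gap is at the point you flag as the crux. You conclude by ``conditioning on the lengths'', but the lemma conditions on the whole path $(L_\cdot)_{[0,\qduration]}$, which also records the jump times and the behaviour between jumps. Your Markov-property argument removes only the future of $L$. It does not explain why conditioning on $(L_\cdot)_{[0,s]}$ leaves the surfaces swallowed before time $s$ unbiased. Shrinking the windows does not help, since every window contains infinitely many swallowings.

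The missing idea is that $L$, together with the identification of which loops are swallowed surfaces, is a measurable function of the looptree alone (tree structure, loop lengths, attachment positions), not of the fields on the loops. Given the looptree, the loops are independent quantum disks, so conditioning on the coarser $L$ gives the claim.

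With this observation the paper's proof is a single step. By Proposition~\ref{prop-radial-weld-ns}, cutting $\cD_0$ along all of $\eta$ yields a sample of $\GQD_3(\qduration,\ell,\qduration)$. This is one generalized quantum disk whose looptree determines $(L_\cdot)_{[0,\qduration]}$ and contains the swallowed surfaces as loops, so no Markov iteration is needed. You also omit the reduction from the forested surface to $\phi_0\sim(\LF_{\D,\ell}^{(Q-\frac\gamma4,0),(\frac4\gamma-\frac\gamma2,1)})^\#$. This goes by conditioning on $L_0$ and using that the core field and its foresting are conditionally independent, as in the proof of Proposition~\ref{prop-radial-mot-swallow-finite}.
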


In Section~\ref{sec-zip-simple} we prove Proposition~\ref{prop-radial-mot-simple-finite} and Lemma~\ref{lem-simple-unexplored}, and in Section~\ref{sec-zip-nonsimple} we prove Proposition~\ref{prop-radial-mot-swallow-finite} and Lemma~\ref{lem-swallowed-sle}.  

\subsection{The $\kappa < 4$ case}\label{sec-zip-simple}

The key input in the proofs of Proposition~\ref{prop-radial-mot-simple-finite} and Lemma~\ref{lem-simple-unexplored} is a conformal welding result (Proposition~\ref{prop-radial-weld-simple}) for the quantum disk with three boundary points, whose definition is via adding marked points to $\QD$ according to $\gamma$-LQG boundary length:

\begin{definition}\label{def-qd3}
    Let $(\D, \phi)$ be an embedding of a sample from $\int_0^\infty \frac12 \ell^3 \QD(\ell) \, d\ell$. Independently sample three marked points $x_1, x_2, x_3 \in \partial \D$ from $(\cL^\gamma_\phi)^\#$ conditioned on the event that the three points lie in counterclockwise order. Let $\QD_3$ denote the law of $(\D, \phi, x_1, x_2, x_3)/{\sim_\gamma}$.
\end{definition}
Let $\{ \QD_3(a,b,c)\}_{a,b,c >0}$ be the disintegration of $ \QD_3$ with respect to the generalized boundary lengths of the three boundary arcs in counterclockwise order from the first marked point. That is, $ \QD_3 = \iiint_0^\infty  \QD_3(a,b,c) \, da \, db\, dc$, and for a sample from $\QD_3(a,b,c)$ the boundary arcs counterclockwise from the first marked point have quantum lengths $a,b,c$. 

It was first shown in \cite{shef-zipper} that $\gamma$-LQG surfaces can be \emph{conformally welded} according to the $\gamma$-LQG boundary length measure to obtain an SLE curve. In Proposition~\ref{prop-radial-weld-simple} we state a variant due to \cite{ay-radial}. For $a,\ell > 0$, consider a sample from $ \QD_3(a,\ell,a)$. The two boundary arcs adjacent to the first marked point both have quantum length $a$; conformally welding them according to quantum length gives a quantum surface with the disk topology having two marked points (one in the bulk and one on the boundary) and a curve joining the two marked points. Denote its law by $\mathrm{Weld}( \QD_3(a,\ell, a))$. 

\begin{proposition}\label{prop-radial-weld-simple}
Let $\gamma \in (0,2)$ and $\kappa = \gamma^2$. We have 
\eqb\label{eq-radial-weld-simple}
\left(\int_0^\infty \mathrm{Weld}(\QD_{3}(a, \ell, a)) \, da \right)^\# = (\LF_{\mathbb D, \ell}^{(Q-\frac1\gamma, 0), (\gamma - \frac2\gamma, 1)})^\# \times \mathrm{raSLE}_\kappa
\eqe
    in the sense that, for a  curve-decorated quantum surface sampled from the left hand side of~\eqref{eq-radial-weld-simple}, embedding it in $(\bbD, 0, 1)$ yields a field and curve whose law is the right hand side of~\eqref{eq-radial-weld-simple}.  
\end{proposition}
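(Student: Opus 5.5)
The plan is to derive \eqref{eq-radial-weld-simple} from the radial conformal welding theorem of \cite{ay-radial}, which is stated for non-normalized measures, and then disintegrate and normalize. The form of that result I expect to use is an identity of the type
\[
\int_0^\infty\!\!\int_0^\infty \mathrm{Weld}(\QD_3(a,\ell,a))\, da\, d\ell \;=\; C\, \LF_{\D}^{(Q-\frac1\gamma,0),(\gamma-\frac2\gamma,1)} \times \mathrm{raSLE}_\kappa
\]
for some constant $C\in(0,\infty)$ depending only on $\gamma$, with $\kappa=\gamma^2$. On the left, a three-pointed quantum disk has the two arcs adjacent to its first marked point welded together. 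This produces an interior point at the first marked point and a boundary point $1$ at the junction of the remaining arc.

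The first step is to match the formulation in \cite{ay-radial} to this one. Their theorem may be phrased with a weight-$2$ quantum disk carrying two boundary points and an extra boundary-typical point, or with $\QD_{1,1}$-type objects. So I will check that $\QD_3$ of Definition~\ref{def-qd3}, with its $\frac12\ell^3$ weighting, agrees with their object up to a multiplicative constant. This should amount to bookkeeping, since marking boundary points by quantum length multiplies by total length, which accounts for the $\ell^3$ factor. I will also check that the insertion sizes come out as $Q-\frac1\gamma$ in the bulk and $\gamma-\frac2\gamma$ at the boundary point.

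Next comes the identification of the boundary length. After welding, the boundary of the welded surface is exactly the unwelded arc, which has quantum length $\ell$. Hence on the right-hand side, $\cL^\gamma_\phi(\partial\D)=\ell$. I then disintegrate both sides with respect to this quantity. On the left, the disintegration is the family $\ell\mapsto \int_0^\infty \mathrm{Weld}(\QD_3(a,\ell,a))\,da$. On the right, Lemma~\ref{lem-disint-explicit} gives $\LF_{\D,\ell}^{(Q-\frac1\gamma,0),(\gamma-\frac2\gamma,1)}\times\mathrm{raSLE}_\kappa$, where $\gamma-\frac2\gamma<Q$ as required. Because the curve is independent of the field on the right, disintegration only acts on the field.

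Disintegration only gives equality for Lebesgue-a.e.\ $\ell$. To upgrade to every $\ell>0$, I will use scaling. Adding a constant $c$ to the field maps $\QD_3(a,\ell,a)$ to a constant multiple of $\QD_3(e^{\gamma c/2}a,e^{\gamma c/2}\ell,e^{\gamma c/2}a)$, and does the same to $\LF_{\D,\ell}$, with explicit power-law factors; the curve is unchanged. Both families are therefore determined by their value at a single $\ell$ via the same scaling rule, so a.e.\ equality propagates to all $\ell$.

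Finally I normalize both sides. This requires the total mass at fixed $\ell$ to be finite and positive. For the right-hand side, this follows from Definition~\ref{def-lf-conditioned}, since the mass is a negative moment of boundary GMC, which is finite. Once the masses are finite, the constant $C$ disappears under normalization.

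I expect the main obstacle to be the first step: aligning the precise statement in \cite{ay-radial} with Definition~\ref{def-qd3}. That means confirming the marked-point conventions, the constant, and the insertion sizes, and in particular that the boundary insertion is $\gamma-\frac2\gamma$ rather than a quantity shifted by the weighting.
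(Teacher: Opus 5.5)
This is essentially the paper's argument: the paper identifies $\QD_3$ with a constant multiple of the quantum triangle $\QT(2,2,2)$, since both are, up to constants, the Liouville field with three $\gamma$ boundary insertions by \cite[Proposition 2.18]{ahs-integrability}, and then applies \cite[Theorem 1.1]{ay-radial} with $W_1=W_2=W_3=2$ directly in the fixed-$\ell$ normalized form, so your disintegration-plus-scaling layer is harmless but not needed. One correction to your last step: for $\alpha=Q-\frac1\gamma$, $\beta=\gamma-\frac2\gamma$ the weight in Definition~\ref{def-lf-conditioned} is $L^{\frac4{\gamma^2}-1}$, a \emph{positive} moment of boundary GMC, so finiteness of $|\LF_{\D,\ell}^{(Q-\frac1\gamma,0),(\gamma-\frac2\gamma,1)}|$ is not automatic; it holds because $\frac4{\gamma^2}-1$ lies below both the GMC moment threshold $\frac4{\gamma^2}$ and the insertion threshold $\frac2\gamma(Q-\beta)=\frac8{\gamma^2}-1$.
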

\begin{proof}
    By \cite[Proposition 2.18, Remark 2.19]{ahs-integrability} $\QD_3$ is, up to multiplicative constant, the law of the three-pointed quantum surface arising from the Liouville field with three $\gamma$-insertions on the boundary. This exactly matches the definition of the \emph{quantum triangle} with weights $W_1 = W_2 = W_3 = 2$ introduced in \cite[Definition 2.17]{ASY22}. Thus, denoting the law of the latter by $\QT(2,2,2)$, there is  a constant $c$ such that $\QD_3 = c \QT(2,2,2)$. 
    On the other hand, \cite[Theorem 1.1]{ay-radial} with $W_1 = W_2 = W_3 = 2$ gives the variant of~\eqref{eq-radial-weld-simple} where $\QD_3$ is replaced by $\QT(2,2,2)$. These two results combined give~\eqref{eq-radial-weld-simple}.
\end{proof}

\begin{proof}[{Proof of Proposition~\ref{prop-radial-mot-simple-finite}}]
The first claim that $\cL_{\phi_s}^\gamma(\partial \D) = \ell + 2s$ holds tautologically, since the quantum length of a curve is defined via the quantum boundary length when the complement of the curve is uniformized into $\D$ (Definition~\ref{def-quantum-time-simple}). 

    For the second claim, by Proposition~\ref{prop-radial-weld-simple} the conditional law of $(\bbD, \phi_0, \eta, 0, 1)/{\sim_\gamma}$ given $\{ s < S\}$ is $(\int_s^\infty \mathrm{Weld}( {\QD}_3(a, \ell, a)) \, da )^\#$. By Definition~\ref{def-qd3}, for $a > s$, if we sample from $ \QD_3(a,\ell, a)$ and replace the second and third marked points by the points $s$ units of quantum length closer to the first marked point, the resulting quantum surface has law $ \QD_3(a-s, \ell+2s, a-s)$.
    Therefore, if we cut along the initial segment of $\eta$ of quantum length $s$, the resulting curve-decorated quantum surface has law $(\int_s^\infty \mathrm{Weld}( \QD_3(a - s, \ell+2s, a - s)) \, da )^\#$, hence by Proposition~\ref{prop-radial-weld-simple} the law of $\phi_s$ is $(\LF_{\mathbb D, \ell+2s}^{(Q-\frac1\gamma, 0), (\gamma - \frac2\gamma, 1)})^\#$.
\end{proof}

\begin{proof}[{Proof of Lemma~\ref{lem-simple-unexplored}}]
Consider the three-pointed quantum surface $\cD$ given by $(\D \backslash \eta([0,\qduration]), \phi_0)/{\sim_\gamma}$ with the three boundary points being $0$ and the two prime ends of $\D \backslash \eta([0,\qduration])$ corresponding to 1, with the boundary points given in counterclockwise order starting from 0. 
    By Proposition~\ref{prop-radial-weld-simple}, the law of $\cD$ is $\left(\int_0^\infty \mathrm{Weld}(\QD_{3}(a, \ell, a)) \, da \right)^\#$, so 
    the conditional law of $\cD$ given $\qduration$ is $\QD_3(\cS, \ell, \cS)$. By Definition~\ref{def-qd3}, the quantum surface obtained by forgetting the three points is $\QD(\ell + 2\cS)^\#$, as needed. 
\end{proof}

\subsection{The $\kappa \in (4,8)$ case}\label{sec-zip-nonsimple}
In this section we fix $\kappa \in (4,8)$ and $\gamma = 4/\sqrt\kappa \in (\sqrt2,2)$.
We will prove Proposition~\ref{prop-radial-mot-swallow-finite} and Lemma~\ref{lem-swallowed-sle}. 

We will work with the \emph{forested quantum surfaces} first constructed by \cite{DMS14}; our presentation will mostly follow \cite{ahsy-nonsimple}. We first prove Proposition~\ref{prop-radial-forested-zipper} (Figure~\ref{fig-nonsimple}, right), which is a forested analog of Proposition~\ref{prop-radial-mot-simple-finite} having essentially the same proof, and then use it to deduce Proposition~\ref{prop-radial-mot-swallow-finite} and Lemma~\ref{lem-swallowed-sle}.

A \emph{beaded quantum surface} is an equivalence class of pairs $(D,h)$, where $D \subset \mathbb C$ is a closed set such that each component of its interior together with its prime-end boundary is homeomorphic to the closed disk, and $h$ is defined as a distribution on each of these components. The equivalence relation is $(D, h) \sim_\gamma (\wt D, \wt h)$ if $\wt h = g \bullet_\gamma h$ for some homeomorphism $g: D \to \wt D$ that is conformal on each component of the interior of $D$. As in the case of quantum surfaces, we can decorate beaded quantum surfaces with marked points and curves.

\begin{figure}[ht]
	\centering
\includegraphics[scale=0.55]{figures/cut-forested-1.pdf}
	\caption{\textbf{Left:} A sample from $\GQD_1$ is a rooted looptree of countably many quantum disks. No two of the quantum disks have a common point; this is similar to the excursion decomposition of standard Brownian motion, wherein no two excursions share an endpoint. Similarly, the root does not lie on the boundary of any of the quantum disks, but rather at the end of an infinite chain of small quantum disks. \textbf{Right:} Illustration for the proof of Proposition~\ref{prop-radial-mot-swallow-finite} at the end of this section. In Proposition~\ref{prop-radial-forested-zipper}, cutting $\cD_0 \sim (\LF_{\bbD, f, \ell_0}^{(Q-\frac{\gamma}{4},0),(\frac4\gamma - \frac\gamma2,1)})^\#$ by an independent radial $\SLE_\kappa$ $\eta$ for time $s$ gives $\cD_s \sim (\LF_{\bbD, f, \ell_0+2s}^{(Q-\frac{\gamma}{4},0),(\frac4\gamma - \frac\gamma2,1)})^\#$. Let $\phi_s$ be the field obtained from $\cD_s$ by forgetting the boundary foresting and embedding in $(\D, 0, 1)$, let $L_s = \cL^\gamma_{\phi_{s}}(\partial \D)$, and let $\mathfrak F_s$ be the boundary foresting of $\cD_s$. For $s' < s$, let $\mathcal U_{[s', s)}$ be the set of quantum surfaces swallowed by $\eta$ in the time interval $[s', s)$. 
    Suppose $s'$ is such that the curve $\eta|_{[s',s]}$ does not close a loop around $0$.      
    The set $\mathcal U_{[s', s)}$ is depicted by blue and pink disks,  $L_{s'}$ is the sum of the red, purple and blue lengths (which are respectively $\ell_\mathrm{ccw}, x_\mathrm{cw} - x_\mathrm{ccw}$ and $\ell_\mathrm{cw}$), and $\mathfrak F_{s'}$ is the collection of yellow looptrees, so $(L_{s'},\mathcal U_{[s',s)}, \mathfrak F_{s'})$ is measurable with respect to $\mathfrak F_s$.} \label{fig-nonsimple}
\end{figure}

There is an infinite measure called $\GQD_1$ on the space of beaded quantum surfaces with one boundary point, a sample of which is a rooted looptree of quantum disks; see Figure~\ref{fig-nonsimple} (left). Roughly speaking, one constructs a looptree from an excursion of a stable L\'evy process with index $4/\gamma^2$ \cite{curien-kortchemski-looptree-def}, then independently samples for each loop of length $\ell$ a quantum disk with boundary length $\ell$. 
We call a sample $\mathcal D$ from $\GQD_1$ a \emph{generalized quantum disk}. This object was first defined in \cite{DMS14}; we will follow the presentation of \cite[Sections 3.1, 3.2]{ahsy-nonsimple}, but see \cite{msw-nonsimple,HL22CLE} for alternative presentations.

The boundary of $\mathcal D$ is equipped with a \emph{generalized boundary length} measure which we denote by $\mathcal{GL}_{\mathcal D}$. The $\mathcal{GL}_{\mathcal D}$-length of a boundary arc $\mathcal C$ is $\lim_{\eps \to 0} \mathfrak c_0 \eps^{4/\gamma^2} N_\eps(\mathcal C)$ where $N_\eps(\mathcal C)$ is the number of quantum disks with boundary length between $\eps$ and $2\eps$ whose boundary is fully traced by $\mathcal C$, where $\mathfrak c_0$ is the constant from Definition~\ref{def-quantum-time-swallowing}. The measure $\GQD_1$ admits a disintegration $\int_0^\infty \GQD_1(\ell)\,d\ell$ according to generalized boundary length (so a sample from $\GQD_1(\ell)$ a.s.\ has generalized boundary length $\ell$), and $|\GQD_1(\ell)|<\infty$ for all $\ell>0$. 

The root of the generalized quantum disk is a typical point with respect to the generalized boundary length measure, in the sense that for a sample $\cD$ from $\GQD_1$, the conditional law of the marked point given the (unmarked) beaded quantum surface is $(\mathcal{GL}_{\mathcal D})^\#$. This allows us to define an analog of Definition~\ref{def-qd3} as follows.

\begin{definition}\label{def-forested-LF}
    Given a sample $\mathcal D$ from $\int_0^\infty \frac12 \ell^2 \GQD_1(\ell) \, d\ell$, independently sample second and third marked points from $(\mathcal{GL}_{\mathcal D})^\#$ conditioned on the event that the three points lie in counterclockwise order on the boundary. Let $\GQD_3$ be the law of the resulting beaded quantum surface with three marked points. 
\end{definition}
Let $\{\GQD_3(a,b,c)\}_{a,b,c >0}$ be the disintegration of $\GQD_3$ with respect to the generalized boundary lengths of the three boundary arcs in counterclockwise order from the first marked point. 

The other kind of beaded quantum surface we need comes from ``foresting'' the boundary of a Liouville field by attaching a L\'evy collection of generalized quantum disks to its boundary. Let $\mathfrak c > 0$ be the constant from \cite[Proposition 3.11]{ahsy-nonsimple}; we will not need its value.

\begin{definition}\label{def-LFf}
    Let $\alpha, \beta \in \R$. Sample $\phi \sim \LF_\bbD^{(\alpha, 0), (\beta, 1)}$, and conditioned on $\phi$ sample a Poisson point process with intensity measure $\mathfrak c \GQD_1 \times \cL_\phi^\gamma$. For each element $(\cD_u, u)$ of the point process, 
    root $\cD_u$ at the boundary point $u\in \partial \D$ of $(\D, \phi, 0, 1)$; this gives a beaded quantum surface $\cD$ with a bulk marked point 0 and a boundary marked point 1.  
    Let $\LF_{\bbD,f}^{(\alpha, 0),(\beta, 1)}$ denote the law of $\cD$.
\end{definition}
The boundary of $\cD$ inherits a  generalized boundary length measure $\mathcal{GL}_{\cD}$  from the corresponding generalized boundary length measures of the attached generalized quantum disks. The total generalized boundary length $|\mathcal{GL}_{\cD}|$ is finite when $|\cL_\phi^\gamma(\partial \D)| < \infty$ (more strongly, if we condition on $\cL_\phi^\gamma(\partial \D)$ then  $|\mathcal{GL}_{\cD}|$ has conditional moments of order $p < \frac{\gamma^2}4$ \cite[Lemma 3.2]{ahsy-nonsimple}). Thus, when $\beta < Q$ the generalized boundary length is finite a.s., and we denote the disintegration of $\LF_{\bbD,f}^{(\alpha, 0),(\beta, 1)}$ with respect to generalized boundary length by $\{\LF_{\bbD,f, \ell}^{(\alpha, 0),(\beta, 1)}\}_{\ell > 0}$.

Finally, as explained in \cite[Theorem 1.4.8]{DMS14}, there is a way to conformally weld certain beaded quantum surfaces called \emph{forested quantum wedges} with respect to generalized boundary length, to obtain a beaded quantum surface decorated by an independent $\SLE_{\kappa}$ curve. Moreover, the welded curve-decorated beaded quantum surface is a measurable function of the two original forested quantum wedges. This gives a solution for the conformal welding problem for forested quantum surfaces; this solution is unique among all curves satisfying certain regularity conditions \cite{mmq-welding}, but it is known that for $\kappa$ close to $8$ there exist other solutions \cite{lz-sierpinski}. In what follows, whenever we refer to the conformal welding of forested quantum surfaces, we mean the conformal welding that comes from \cite{DMS14}; see the discussion immediately above \cite[Theorem 1.4]{ahsy-nonsimple} for more details. 

For $a, \ell > 0$, consider a sample from $\GQD_3(a,\ell,a)$. Conformally welding the two boundary arcs adjacent to the first marked point according to generalized boundary length gives a beaded quantum surface having a marked bulk point and a marked boundary point, and a curve joining the two points. Denote its law by $\mathrm{Weld}(\GQD_{3}(a, \ell, a))$.

\begin{proposition}\label{prop-radial-weld-ns}
Let $\kappa \in (4,8)$ and $\gamma = 4/\sqrt\kappa \in (\sqrt2,2)$. We have 
\eqb\label{eq-radial-weld-ns}
\left(\int_0^\infty \mathrm{Weld}(\GQD_{3}(a, \ell, a)) \, da \right)^\# = (\LF_{\mathbb D, f, \ell}^{(Q-\frac\gamma4, 0), (\frac4\gamma - \frac\gamma2, 1)})^\# \times \mathrm{raSLE}_\kappa.
\eqe 
\end{proposition}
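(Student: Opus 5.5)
Our plan follows the proof of Proposition~\ref{prop-radial-weld-simple}: identify $\GQD_3$ with a (forested) quantum triangle, up to a constant, and then invoke the radial welding theorem of \cite{ay-radial} for the relevant weights. In the non-simple regime $\gamma \in (\sqrt2,2)$, the relevant objects are the thin quantum triangles of \cite{ASY22} together with their forested versions from \cite{ahsy-nonsimple}.

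\textbf{Step 1: identify $\GQD_3$.} We would show that $\GQD_3$ is, up to a multiplicative constant, the forested quantum triangle $\QT_f(W,W,W)$ with weight $W = \gamma^2 - 2$. This is the weight of a generalized quantum disk, since $\GQD_2$ corresponds to the forested weight $\gamma^2-2$ two-pointed surface. The input is the \cite{ahsy-nonsimple} description: $\GQD_2$ equals, up to a constant, the forested thin quantum disk $\cM_{2,f}^{\mathrm{disk}}(\gamma^2-2)$. Definition~\ref{def-forested-LF} adds marked points by generalized boundary length with the weighting $\frac12\ell^2$. That weighting is exactly what turns $\GQD_1$ into a three-pointed object in which each marked point is a typical point of $\mathcal{GL}$.

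By the analogue of \cite[Proposition 2.18]{ahs-integrability} in the forested setting, adding a typical generalized-length boundary point to a forested surface corresponds to inserting a forested weight-$(\gamma^2-2)$ vertex. Applying this to each of the three points identifies $\GQD_3 = c\,\QT_f(\gamma^2-2,\gamma^2-2,\gamma^2-2)$. Equivalently, we can write the triangle as its spine, a thick or thin quantum triangle of weights $\gamma^2-2$, with Lévy forests of generalized quantum disks attached along the boundary arcs. We would check that the forest intensities match those of Definition~\ref{def-LFf}, with the constant $\mathfrak c$ from \cite[Proposition 3.11]{ahsy-nonsimple}.

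\textbf{Step 2: weld.} Next we apply the forested version of \cite[Theorem 1.1]{ay-radial}, in its non-simple radial welding form. It states that $\int_0^\infty \mathrm{Weld}(\QT_f(W,W,W;a,\ell,a))\,da$ equals a constant times $\LF_{\D,f,\ell}^{(\alpha,0),(\beta,1)}\times \mathrm{raSLE}_\kappa$. For $W=\gamma^2-2$, the insertion sizes are $\alpha = Q-\frac{\gamma}{4}$ and $\beta = \frac4\gamma-\frac\gamma2$.

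As a consistency check on these exponents, the boundary insertion for a forested weight-$W$ vertex has $\beta_W = Q+\frac{\gamma}{2}-\frac{W}{\gamma}$. Gluing two vertices of weight $\gamma^2-2$ yields one remaining vertex of weight $\gamma^2-2$. This gives $\beta = Q+\frac\gamma2-\gamma+\frac2\gamma = \frac4\gamma-\frac\gamma2$, which matches the statement. We would also confirm $\alpha$ through the bulk insertion formula used in \cite{ay-radial}.

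If \cite{ay-radial} is stated only for unforested thin triangles, we would instead derive the forested statement as follows. Weld the spine triangle radially, then attach the forests. By \cite[Theorem 1.4.8]{DMS14}, welding the forests along the two glued arcs turns the $\SLE_{\underline\kappa}$ interface into $\SLE_\kappa$ with $\kappa = 16/\gamma^2$, while the forest along the unglued arc yields the boundary foresting of Definition~\ref{def-LFf}.

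\textbf{Step 3: normalize.} Finally we normalize both sides to probability measures, which removes all constants and gives~\eqref{eq-radial-weld-ns}.

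The main obstacle is Step 1 together with the forested transfer in Step 2. Two points need care: checking that the $\frac12\ell^2$ weighting combined with typical marked points really produces the triangle with the correct Lévy forest intensities, and checking that radial welding commutes with foresting. That commutation is the point where measurability and uniqueness of the \cite{DMS14} welding must be used.
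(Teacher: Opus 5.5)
Your main line is the paper's proof. The paper gets $\GQD_3 = C\,\QT^f(\gamma^2-2,\gamma^2-2,\gamma^2-2)$ from \cite[Lemma 6.4]{ay-radial} and \cite[Definition 3.9]{ahsy-nonsimple}, applies \cite[Theorem 1.3]{ay-radial}, and normalizes; that theorem is already the forested radial welding, so your Step 2 fallback is never needed.

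Your exponent ``consistency check'' is wrong, however. First, $Q+\frac\gamma2-\gamma+\frac2\gamma=\frac4\gamma$, not $\frac4\gamma-\frac\gamma2$. Second, since $\gamma^2-2<\frac{\gamma^2}{2}$, that vertex is thin, so $\beta_W$ is not an actual insertion there.

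The correct heuristic is that forested welding combines weights as $W_1+W_2+2-\frac{\gamma^2}{2}$. The extra $2-\frac{\gamma^2}{2}$ is the region formed by the welded forests, which sits between the spines. Hence the merged boundary vertex has weight $\frac{3\gamma^2}{2}-2$, which gives $\beta=\frac4\gamma-\frac\gamma2$. The self-welded first vertex becomes a cone of weight $\frac{\gamma^2}{2}$, which gives $\alpha=Q-\frac\gamma4$.

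For the same reason, your fallback (weld the spines radially, then attach forests) would yield the wrong insertions. In the forested welding the spines are not welded to each other.
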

\begin{proof}
Let $\mathrm{QT}^f(\gamma^2-2, \gamma^2-2, \gamma^2-2)$ denote the law of the \emph{forested quantum triangle} with weights $W_1 = W_2 = W_3 = \gamma^2-2$; this corresponds to the law of a quantum  triangle with forested boundary (similarly as in Definition~\ref{def-LFf}). By \cite[Lemma 6.4]{ay-radial} and \cite[Definition 3.9]{ahsy-nonsimple}, there is a constant $C$ such that $\GQD_3 = C \QT^f(\gamma^2-2, \gamma^2-2, \gamma^2-2)$. On the other hand, \cite[Theorem 1.3]{ay-radial} states that~\eqref{eq-radial-weld-ns} holds when $\GQD_3$ is replaced by $\QT^f(\gamma^2-2, \gamma^2-2, \gamma^2-2)$. Combining these two results gives the claim. 
\end{proof}

The following is the forested analog of Proposition~\ref{prop-radial-mot-simple-finite}, see Figure~\ref{fig-nonsimple} (right). For a sample $(\Phi_0,\eta)$ from $(\LF_{\bbD, f, \ell}^{(Q-\frac{\gamma}{4},0),(\frac4\gamma - \frac\gamma2,1)})^\#\times \mathrm{raSLE}_\kappa$, we view $\eta$ as a curve on $\Phi_0$ by embedding the connected component of $\Phi_0$ containing the bulk and boundary marked point in $(\D, 0, 1)$. 

\begin{proposition}\label{prop-radial-forested-zipper}
Suppose $\kappa \in (4, 8)$ and $\gamma = \frac4{\sqrt\kappa} \in (\sqrt2,2)$. Let $\ell_0 > 0$. Sample $(\cD_0,\eta)$
    from the measure 
    \[
    (\LF_{\bbD, f, \ell_0}^{(Q-\frac{\gamma}{4},0),(\frac4\gamma - \frac\gamma2,1)})^\#\times \mathrm{raSLE}_\kappa
    \]
    and parametrize {$\eta$} by its quantum natural time; let $\qduration$ be its duration. For $s \in [0, \qduration)$, let $\cD_s$ be the forested quantum surface obtained from $\cD_0$ by cutting along $\eta$ for quantum natural time $s$. Almost surely $\qduration < \infty$. Moreover, for all $s \in [0,\qduration)$ the generalized boundary length of $\cD_s$ is $\ell_0 + 2s$, and for fixed $s > 0$, conditioned on $\{s < \qduration\}$ the conditional law of $\cD_s$ is $(\LF_{\bbD, f, \ell_0+2s}^{(Q-\frac{\gamma}{4},0),(\frac4\gamma - \frac\gamma2,1)})^\#$.
\end{proposition}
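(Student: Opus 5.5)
The plan is to copy the proof of Proposition~\ref{prop-radial-mot-simple-finite}, with Proposition~\ref{prop-radial-weld-ns} in place of Proposition~\ref{prop-radial-weld-simple} and generalized boundary length in place of quantum length. By Proposition~\ref{prop-radial-weld-ns}, a sample $(\cD_0,\eta)$ from $(\LF_{\bbD, f, \ell_0}^{(Q-\frac{\gamma}{4},0),(\frac4\gamma - \frac\gamma2,1)})^\#\times \mathrm{raSLE}_\kappa$ can be realized as the conformal welding of a sample from $(\int_0^\infty \GQD_3(a,\ell_0,a)\,da)^\#$. Here the two arcs adjacent to the first marked point, each of generalized length $a$, are glued according to generalized boundary length, and the welding interface is $\eta$. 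The welding is a measurable function of the beaded surface, so we can read off the curve's quantum natural time from the welding.

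The first step is to check that the quantum natural time of $\eta$ agrees with generalized boundary length along the welding interface. Concretely, the quantum natural time elapsed when $\eta$ has welded $u$ units of generalized length from each side should equal $u$. To verify this, compare Definition~\ref{def-quantum-time-swallowing}, which counts swallowed regions of quantum length in $[\eps,2\eps]$ with the normalization $\mathfrak c_0\eps^{4/\gamma^2}$, with the definition of $\mathcal{GL}$, which counts quantum disks fully traced by an arc with the same constant $\mathfrak c_0$. Regions swallowed by $\eta$ during a time interval correspond exactly to the disks of the generalized quantum disks (looptree beads) lying along the welded boundary arcs on both sides. This needs some care, since both sides contribute disks. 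The resulting normalization should give that quantum natural time $s$ consumes generalized length $s$ on each side, consistent with the growth $\ell_0+2s$ claimed in the statement. Granting this, the total duration is $\qduration=a<\infty$ a.s., and after cutting for time $s$ the generalized boundary length of $\cD_s$ is $\ell_0+2s$. This holds because the two welded arcs, now of remaining length $a-s$ each, have their zipped-up parts of length $s$ reopened into the boundary.

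For the law at fixed $s$, condition on $\{s<\qduration\}=\{a>s\}$, so the law of the welded object becomes $(\int_s^\infty \mathrm{Weld}(\GQD_3(a,\ell_0,a))\,da)^\#$. Cutting along $\eta$ for time $s$ amounts to moving the second and third marked points $s$ units of generalized length toward the first marked point. Since in Definition~\ref{def-forested-LF} the marked points are sampled from $(\mathcal{GL}_{\cD})^\#$ with weight $\frac12\ell^2$, the measure $\GQD_3(a,\ell_0,a)$ is carried to $\GQD_3(a-s,\ell_0+2s,a-s)$. The justification is the same translation-invariance argument as for $\QD_3$: the disintegrated density in $(a,b,c)$ depends only on $a+b+c$ and the unmarked surface. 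We also need that cutting the welding along the initial part of $\eta$ is the same as ungluing, which uses the measurability of the welding from \cite{DMS14}. After the change of variables $a\mapsto a-s$ the law is $(\int_0^\infty\mathrm{Weld}(\GQD_3(a',\ell_0+2s,a'))\,da')^\#$. Applying Proposition~\ref{prop-radial-weld-ns} again then identifies $\cD_s$ as $(\LF_{\bbD,f,\ell_0+2s}^{(Q-\frac\gamma4,0),(\frac4\gamma-\frac\gamma2,1)})^\#$.

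I expect the main obstacle to be the identification, in the first step, of quantum natural time with generalized boundary length along the interface. In the simple case this was tautological. Here it requires matching the swallowed-region counting in Definition~\ref{def-quantum-time-swallowing} with the disk counting defining $\mathcal{GL}$ on both sides of the weld, and tracking the constant. I would handle it by local absolute continuity with the forested wedge welding of \cite[Theorem 1.4.8]{DMS14}, where the quantum natural time is by construction the generalized length parametrization.
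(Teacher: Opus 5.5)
Your proposal is the paper's proof. The paper reruns the argument of Proposition~\ref{prop-radial-mot-simple-finite} with Proposition~\ref{prop-radial-weld-ns} in place of Proposition~\ref{prop-radial-weld-simple}, shifting the second and third marked points of $\GQD_3(a,\ell_0,a)$ by $s$ units of generalized length and reapplying the welding identity, and it treats the matching of quantum natural time with one-sided generalized length as immediate, just as in the simple case. One caution on the step you single out: the swallowed regions counted in Definition~\ref{def-quantum-time-swallowing} come from both sides of $\eta$, while $\mathcal{GL}$ counts disks traced by a single arc, so comparing the two counts with the same constant $\mathfrak c_0$ would give generalized length $s/2$ per side (hence $\ell_0+s$), and the $\ell_0+2s$ form instead rests on normalizing the constants as in \cite{DMS14}, which changes nothing else in the argument.
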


\begin{proof}
The proof is identical to that of Proposition~\ref{prop-radial-mot-simple-finite}, except we use Proposition~\ref{prop-radial-weld-ns} in place of Proposition~\ref{prop-radial-weld-simple}.  
\end{proof}

\begin{proof}[{Proof of Proposition~\ref{prop-radial-mot-swallow-finite}}]
Fix $s >0$. Sample $(\cD_0, \eta)$ from $ (\LF_{\bbD, f, \ell_0}^{(Q-\frac{\gamma}{4},0),(\frac4\gamma - \frac\gamma2,1)})^\#\times \mathrm{raSLE}_\kappa$.
By Proposition~\ref{prop-radial-forested-zipper}, conditioned on $s < \qduration$, cutting along $\eta$ for quantum natural time $s$ yields a forested quantum surface $\cD_s$ with law $(\LF_{\bbD, f, \ell_0+2s}^{(Q-\frac{\gamma}{4},0),(\frac4\gamma - \frac\gamma2,1)})^\#$.

For $s \in [0,\qduration)$, let $\phi_s$ be the field in $\D$ such that $(\D, \phi_s, 0, 1)/{\sim_\gamma}$ is the connected component of $\cD_s$ containing the marked bulk and boundary points (i.e., the field obtained by discarding the foresting of $\cD_s$ and embedding in $(\D, 0, 1)$). By Definition~\ref{def-forested-LF} $\cD_s$ is obtained from $(\D, \phi_s, 0, 1)$ by attaching generalized quantum disks to $\partial \D$; let $\mathfrak F_s$ be the set of pairs $(\cD, x)$ where $\cD$ is an attached generalized quantum disk and $x$ is the quantum length of the counterclockwise boundary arc of $\partial \D$ from $1$ to the attachment point of $\cD$. 

Fix $s$. For any $0 \leq s' \leq s$, let $\mathcal U_{[s',s)}$ be the collection of quantum surfaces swallowed by $\eta$ in the time interval $[s', s)$. 
We now show that if $s'$ is a time such that $\eta|_{[s', s]}$ does not close a loop around the origin (i.e., there is a path from $0$ to $\partial \D$ that is disjoint from $\eta([s', s])$), then $((L_\cdot)_{[s', s]}, \mathcal U_{[s', s)}, \mathfrak F_{s'})$ is measurable with respect to $\mathfrak F_s$; see Figure~\ref{fig-nonsimple} (right). First, $\mathcal U_{[s', s)}$ is the set of quantum disks of the forested boundary of $\cD_s$ which touch the clockwise and counterclockwise boundary arcs from 1 with generalized boundary length $(s-s')$, hence is measurable with respect to $\mathfrak F_s$. Second, let $p_\mathrm{ccw}$ be the boundary point at generalized boundary length $s - s'$ counterclockwise from $1$. Let $(\cD_\mathrm{ccw}, x_\mathrm{ccw}) \in \mathfrak F_s$ correspond to the generalized quantum disk attached to $\partial \D$ containing $p_\mathrm{ccw}$, and for the chain of quantum disks in $\cD_\mathrm{ccw}$ between $p_\mathrm{ccw}$ and the root $o_\mathrm{ccw}$ of $\cD_\mathrm{ccw}$, let $\ell_\mathrm{ccw}$ be the quantum length of the counterclockwise boundary arc $\cC_\mathrm{ccw}$ from $p_\mathrm{ccw}$ to $o_\mathrm{ccw}$. Similarly, define $p_\mathrm{cw}$, $(\cD_\mathrm{cw}, x_\mathrm{cw}), \cC_\mathrm{cw}$ and $\ell_\mathrm{cw}$ by replacing every instance of ``counterclockwise'' with ``clockwise'' in the preceding. Then $L_{s'} = \ell_\mathrm{ccw} + (x_\mathrm{cw} - x_\mathrm{ccw}) + \ell_\mathrm{cw}$, so $L_{s'}$ is also measurable with respect to $\mathfrak F_s$; the same argument with $s'$ replaced by any time in $(s', s)$ yields that $(L_\cdot)_{[s', s]}$ is measurable with respect to $\mathfrak F_s$. Third, consider the curve $\cC_{s'}$ from $p_\mathrm{ccw}$ to $p_\mathrm{cw}$ obtained by concatenating $\cC_\mathrm{ccw}$, the counterclockwise arc of $\partial \D$ from $o_\mathrm{ccw}$ to $o_\mathrm{cw}$, and the time-reversal of $\cC_\mathrm{cw}$. Then $\mathfrak F_{s'}$ is the collection of pairs $(\cD, u)$ where $\cD$ is a looptree of quantum disks attached to $\cC_{s'}$ and $u$ is the quantum length along $\cC_{s'}$ from $p_\mathrm{ccw}$ to the attachment point of $\cD$. 

Thus, on the event that $\eta|_{[s',s]}$ does not close a loop around the origin, the tuple $((L_\cdot)_{[s', s]}, \mathcal U_{[s', s)}, \mathfrak F_{s'})$ is measurable with respect to $\mathfrak F_s$. As $\eta$ is a continuous curve that is bounded away from $0$, it makes at most finitely many loops around $0$, so inductively applying the above measurability claim a finite number of times, we conclude that $((L_\cdot)_{[0,s]}, \mathcal U)$ is measurable with respect to $\mathfrak F_s$, where $\mathcal U = \mathcal U_{[0,s)}$ is the collection of all quantum surfaces swallowed.

By Definition~\ref{def-forested-LF}, the conditional law of $\phi_0$ given $L_0$ is $(\LF_{\D, L_0}^{(Q-\frac{\gamma}{4},0),(\frac4\gamma - \frac\gamma2,1)})^\#$, so conditioning on $L_0 = \ell$ puts us in the setting of Proposition~\ref{prop-radial-mot-swallow-finite}. Thus, we must show that conditioned on $((L_\cdot)_{[0,s]}, \mathcal U)$, the conditional law of $\phi_s$ is $(\LF_{\D, L_s}^{(Q-\frac{\gamma}{4},0),(\frac4\gamma - \frac\gamma2,1)})^\#$. 
Since $(L_\cdot)_{[0,s]}$ and $\mathcal U$ are measurable with respect to $\mathfrak F_s$, it suffices to show that conditioned on $(L_s, \mathfrak F_s)$, the conditional law of $\phi_s$ is $(\LF_{\D, L_s}^{(Q-\frac{\gamma}{4},0),(\frac4\gamma - \frac\gamma2,1)})^\#$. 

By Proposition~\ref{prop-radial-forested-zipper} the law of $\cD_s$ is  $(\LF_{\bbD, f, \ell_0+2s}^{(Q-\frac{\gamma}{4},0),(\frac4\gamma - \frac\gamma2,1)})^\#$, so by Definition~\ref{def-forested-LF}, conditioned on $L_s$, the field $\phi_s$ and boundary foresting $\mathfrak F_s$ are conditionally independent, with $\phi_s \sim (\LF_{\D, L_s}^{(Q-\frac{\gamma}{4},0),(\frac4\gamma - \frac\gamma2,1)})^\#$, and $\mathfrak F_s$ a Poisson point process with intensity measure $\mathfrak c \GQD \times \mathrm{Leb}_{[0,L_s]}$ conditioned to have generalized boundary length $\ell_0 + 2s$. Thus, conditioned on $(L_s, \mathfrak F_s)$ the conditional law of $\phi_s$ is $(\LF_{\D, L_s}^{(Q-\frac{\gamma}{4},0),(\frac4\gamma - \frac\gamma2,1)})^\#$. This completes the proof. 
\end{proof}

\begin{proof}[{Proof of Lemma~\ref{lem-swallowed-sle}}]
The first claim that the jumps of $(L_\cdot)_{[0,\qduration]}$ are in bijection with the quantum surfaces swallowed follows from local absolute continuity of the field and curve with respect to \cite[Theorem 1.4.7]{DMS14}. For the second claim, similarly as in the proof of Proposition~\ref{prop-radial-mot-swallow-finite}, it suffices to prove the claim when $(\phi_0, \eta)$ is replaced by a sample $(\cD_0, \eta)$ from the right hand side of $\left(\int_0^\infty \mathrm{Weld}(\GQD_{3}(a, \ell, a)) \, da \right)^\# = (\LF_{\mathbb D, f, \ell}^{(Q-\frac\gamma4, 0), (\frac4\gamma - \frac\gamma2, 1)})^\# \times \mathrm{raSLE}_\kappa$. By Proposition~\ref{prop-radial-weld-ns}, cutting by $\eta$ yields a generalized quantum disk $\cD$ with generalized boundary length $\ell + 2\qduration$. To sample a generalized quantum disk, one first samples a random looptree, then independently samples for each loop of length $\ell$ a quantum disk of boundary length $\ell$. Since $(L_\cdot)_{[0,\qduration]}$ is measurable with respect to the looptree of $\cD$, when we condition on $(L_\cdot)_{[0,\qduration]}$ the swallowed surfaces are conditionally independent quantum disks with specified boundary lengths, as needed. 
\end{proof}

\section{The $\delta$-$\QLE$ process}\label{sec-qle-approximates}
In this section we define a $\delta$-approximation of $\QLE$, where we iteratively explore an LQG surface by a series of SLE curves, reshuffling the point from which the SLE curve grows every $\delta$ units of quantum natural time. 

Recall that the probability measure $P_{\alpha, \beta, \ell}$ from Definition~\ref{def-P-weighted} is the law of a GFF variant with $\gamma$-LQG boundary length $\ell$. For any hull $K$ let $g_K: \D \backslash K \to \D$ be the conformal map satisfying $g_K(0) = 0$ and $g_K'(0) > 0$. If $D\subset \D$ is a simply-connected domain containing 0, we also define $g_D : D \to \D$ by $g_D = g_K$ where $K = \D \backslash D$.

\begin{definition}[$\delta$-QLE$(\gamma^2, \eta)$ process] \label{def-delta-QLE-finite}
Consider either $\gamma \in (\sqrt3 -1,2)$ and $\eta = \frac3{\gamma^2} - \frac12$, or $\gamma \in (0, 4/3) \cup (2\sqrt3 - 2,2)$ and $\eta = \frac{3\gamma^2}{16} - \frac12$. In the former case set $\kappa = \gamma^2$, whereas in the latter case set $\kappa = 16/\gamma^2$.
Let $(\alpha,\beta)$ be defined as follows:
\begin{equation*} (\alpha, \beta) = 
        \begin{cases} 
          (Q-\frac{1}{\gamma},\gamma - \frac2\gamma) & \gamma \in (\sqrt3 - 1,2), \, \, \eta = \frac3{\gamma^2} - \frac12 \\
            (Q-\frac{\gamma}{4},\frac4\gamma - \frac\gamma2) & \gamma \in (2\sqrt3 - 2, 2), \, \, \eta = \frac{3\gamma^2}{16} - \frac12 \\
          (Q-\frac{\gamma}{4},\frac{3\gamma}{2}) & \gamma \in (0, 4/3), \, \,  \eta = \frac{3\gamma^2}{16} - \frac12 
       \end{cases}
    \end{equation*}
Sample $\phi^\delta \sim P_{\alpha, \beta, 1}$. We will define a shrinking process of origin-containing domains $D_s^\delta \subset \D$ beginning with $D_0^\delta = \D$. 
Iteratively for $n \in \mathbb Z_{\geq 0}$ we carry out the following:
    \begin{itemize}
        \item (\textbf{Resample the curve tip}): 
        Let $\phi^\delta_{n\delta} = g_{D^\delta_{n\delta}} \bullet_\gamma \phi^\delta$. 
        Sample a point from the probability measure $(\cL_{\phi_{n\delta}^\delta}^{\beta})^\#$ on $\partial \D$ and let $p^\delta_n \in \partial D_{n\delta}^\delta$ be its image under $(g_{D_{n\delta}^\delta})^{-1}$. Note that $p^\delta_n$ is understood as a prime end\footnote{If $\kappa \leq 4$, each point on the interior of an SLE$_\kappa$ segment corresponds to two prime ends in $\partial D^\delta_{n\delta}$, namely the boundary points on the left and right of the curve segment.} of $D_{n\delta}^\delta$. 
        
        \item (\textbf{Grow an $\SLE_\kappa$ curve}): Let $\eta^\delta_n$ be radial SLE$_\kappa$ in $D^\delta_{n\delta}$ from $p^\delta_n$ to $0$. 
        Parametrize $\eta_n^\delta$ by quantum natural time as in Definitions~
        \ref{def-quantum-time-simple}-\ref{def-quantum-time-sf}, and let $\qduration^\delta_n$ be its duration. 
        For $s \in (n \delta, n\delta + \min(\delta, \qduration^\delta_n)]$ let $D^\delta_s$ be the connected component of $D^\delta_{n\delta} \backslash \eta^\delta_n([0,s-n\delta])$ which contains 0. If  $\qduration^\delta_n \leq \delta$, set $\qduration^\delta = n\delta + \qduration^\delta_n$ and terminate the process at time  $\qduration^\delta$. 
    \end{itemize}
    If the process does not terminate, set $\qduration^\delta = \infty$. Let $K^\delta_s = \ol\D \backslash D^\delta_s$, so $(K^\delta_\cdot)_{[0,\qduration^\delta]}$ is a growing family of hulls. 
    We call $(\phi^\delta, (K^\delta_\cdot)_{[0,  \qduration^\delta]})$ a $\delta$-$\mathrm{QLE}(\gamma^2, \eta)$ process.    
\end{definition}

\begin{remark}
For $\eta = \frac{3\gamma^2}{16} - \frac12$, the above definition only applies to the restricted parameter range $\gamma \in (0, 4/3) \cup (2\sqrt3 - 2,2)$. The reason for this is that when $\gamma \in [4/3, 2\sqrt3-2]$ we have $\beta \geq 2$, so the GMC measure $\cL^\beta_{\phi^\delta_{n\delta}}$ is the zero measure. Consequently, we cannot resample the curve tip from $(\cL^\beta_{\phi^\delta_{n\delta}})^\#$. For the same reason, for $\eta = \frac{3}{\gamma^2} - \frac12$ we only consider $\gamma \in (\sqrt3 - 1, 2)$, since $\gamma \in (0,\sqrt3-1]$ gives $\beta \leq -2$.
\end{remark}

In Section~\ref{subsec-properties-delta-qle}, we discuss some basic properties of $\delta$-QLE which can be derived from the stationarity properties of the corresponding LQG/SLE couplings from Section~\ref{sec-zippers}. In Section~\ref{sec-scaling-rels} we explain how Definition~\ref{def-delta-QLE-finite} describes, at least heuristically, a version of $\eta$-DBM on $\gamma$-LQG where the particles are $\delta$-length segments of SLE; in particular, it explains the relationship between $\gamma, \eta$ and $\beta$.

\subsection{Properties of $\delta$-QLE}\label{subsec-properties-delta-qle}
In Lemmas~\ref{lem-duration-indep}--\ref{lem-swallowed-delta-qle} we will show that many key observables of $\delta$-QLE agree in law with the corresponding explorations of LQG via SLE described in Propositions~\ref{prop-radial-mot-sf-finite}-\ref{prop-radial-mot-swallow-finite}, and in Lemma~\ref{lem-tight-t} we obtain a tightness result for the capacity time at a fixed quantum natural time $s$. 
If $\kappa \in (0,4)$, let $(\phi_0^\SLE, \eta^\SLE)$, $\qduration^\SLE$, $(\phi_s^\SLE)_{[0,\qduration^\SLE)}$ and $(L_s^\SLE)_{[0,\qduration^\SLE)}$ be the quantities defined in Proposition~\ref{prop-radial-mot-simple-finite} with initial boundary length $\ell = 1$ (we add the ``$\SLE$'' superscript to emphasize the SLE setting and avoid later notational conflict). Similarly, if $\kappa \in (4,8)$ or $\kappa \in (8,\infty)$, define these quantities as in Proposition~\ref{prop-radial-mot-swallow-finite} or Proposition~\ref{prop-radial-mot-sf-finite} respectively with $\ell = 1$. 

The first claim is that the $\delta$-QLE duration and boundary length process agree in law with those of the corresponding SLE exploration of LQG; moreover, the field $\phi^\delta_s$ satisfies a stationarity property inherited from the SLE exploration of LQG. 

\begin{lemma}\label{lem-duration-indep} 
Defining $\phi_s^\delta := g_{K^\delta_s} \bullet_\gamma \phi^\delta$ and  $L_s^\delta := \cL^\gamma_{\phi_s^\delta}(\partial \D)$ for $s <\qduration^{\delta}$, we have $(\qduration^\delta,(L_\cdot^\delta)_{[0,\qduration^\delta)}) \stackrel d= (\qduration^\SLE, (L_\cdot^\SLE)_{[0,\qduration^\SLE)})$. In particular $\qduration^\delta$ is a.s.\ finite. Moreover, for any $s>0$, conditioned on $\{s < \qduration^{\delta}\}$  and on $(L^\delta_\cdot)_{[0,s]}$, the conditional law of $\phi_s^\delta$ is $P_{\alpha, \beta, L_s^\delta}$.
\end{lemma}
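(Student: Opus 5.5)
The plan is to argue by induction on the number $n$ of completed $\delta$-steps. The key is that at each resampling time the pair (field, freshly sampled tip) has exactly the law of the SLE-exploration initial data, after rotating the tip to $1$. The basic identity is Lemma~\ref{lem-sample-gmc-2}. If $\phi\sim P_{\alpha,\beta,\ell}$ and $u\sim(\cL^\beta_\phi)^\#$, then $u$ is uniform on $\partial\D$. Moreover, conditionally on $u$, $\phi$ has law $(\LF_{\D,\ell}^{(\alpha,0),(\beta,u)})^\#$. Let $\rho_u$ be the rotation sending $u$ to $1$. Since rotations preserve $0$ and have $|\rho_u'|=1$, the field $\rho_u\bullet_\gamma\phi=\phi\circ\rho_u^{-1}$ has law $(\LF_{\D,\ell}^{(\alpha,0),(\beta,1)})^\#$. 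This uses rotation invariance of $P_\D$ and the fact that $G_\D(\cdot,u)\circ\rho_u^{-1}=G_\D(\cdot,1)$. This law is independent of $u$. Also $\rho_u\bullet_\gamma\phi$ and $\phi$ have the same $\gamma$-boundary length.

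First I handle a single step. Suppose that at time $n\delta$ (on $\{n\delta<\qduration^\delta\}$) the field $\phi^\delta_{n\delta}$ has conditional law $P_{\alpha,\beta,L^\delta_{n\delta}}$ given $(L^\delta_\cdot)_{[0,n\delta]}$. The tip is sampled, and the curve $\eta^\delta_n$ is mapped by $g_{D^\delta_{n\delta}}$ and then $\rho_u$. By conformal invariance of radial SLE, the image is a radial $\SLE_\kappa$ from $1$ to $0$ in $\D$, independent of everything else. The quantum natural time is intrinsic to the quantum surface, so it is preserved. Hence, conditionally on the past, the rotated field and image curve have law $(\LF_{\D,L^\delta_{n\delta}}^{(\alpha,0),(\beta,1)})^\#\times\mathrm{raSLE}_\kappa$. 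This is precisely the initial law in Propositions~\ref{prop-radial-mot-sf-finite}, \ref{prop-radial-mot-simple-finite} or \ref{prop-radial-mot-swallow-finite}, depending on the case. For $\kappa>8$ one checks that $\beta=3\gamma/2$ matches.

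Next I apply these propositions with $\ell=L^\delta_{n\delta}$. They identify the law of the boundary length increment process on $[n\delta,n\delta+\min(\delta,\qduration^\delta_n))$ and of the step duration $\qduration^\delta_n$. They also give the conditional law of the field at time $n\delta+r$, for $0<r\le\delta$, given that increment: it is $(\LF_{\D,L}^{(\alpha,0),(\beta,1)})^\#$ with $L$ the current length. Here the map $g_r$ normalized by $g_r(\eta(r))=1$ differs from $g_{K^\delta_{n\delta+r}}$ (composed with the earlier maps) only by a rotation. I now want the conditional law of the unrotated field $\phi^\delta_{n\delta+r}$ to be $P_{\alpha,\beta,L}$. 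To see this, I use the previous step in reverse. By Definition~\ref{def-P-weighted} and Lemma~\ref{lem-sample-gmc-2}, composing an independent uniform rotation with $(\LF^{(\alpha,0),(\beta,1)}_{\D,L})^\#$ gives $P_{\alpha,\beta,L}$ as the marginal. The precise rotation angle is an additional random variable. It is independent of the field-conditional structure because radial SLE is rotation invariant. To make this rigorous I would append an independent uniform rotation to the whole picture, using the uniformity of $u$. The field at the new time is then again conditionally $P_{\alpha,\beta,L}$, and the induction continues. In particular, at $(n+1)\delta$ the hypothesis is restored.

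Finally I concatenate the steps. The length processes and durations obey the Markov structure given by the propositions: the conditional law of the future depends only on the current length $L$, through a fresh SLE exploration started at length $L$. Applying the same propositions with $\delta$-splitting on the SLE side shows that $(\qduration^\SLE,L^\SLE)$ has the same Markov structure. For $\kappa\le 8$ this uses restarting the SLE at time $n\delta$, which is justified by the field-conditional statement plus the domain Markov property of radial SLE. For $\kappa>8$ I use equation~\eqref{eq-law-curve-field} in the proof of Proposition~\ref{prop-radial-mot-sf-finite}. Hence the two processes agree in law, and in particular $\qduration^\delta<\infty$ a.s.

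The main obstacle is the rotation bookkeeping. One must show that conditioning on the past lengths (and, for $\kappa\in(4,8)$, not on $\mathcal U$) leaves the field law $P_{\alpha,\beta,L}$ rather than the $1$-rooted Liouville law. This relies on the independence of the tip angle, and for the SLE side on a fixed-time Markov restart.
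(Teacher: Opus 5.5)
Your proposal is correct and follows essentially the paper's route: Lemma~\ref{lem-sample-gmc-2} converts between $P_{\alpha,\beta,L}$ and the tip-at-$1$ Liouville field at each resampling, the zipper propositions are iterated over the $\delta$-steps, and a uniform rotation independent of the tip-at-$1$ field identifies the law of the positively normalized field $\phi^\delta_s$. The only difference is the source of that rotation. You take it from the current step's tip $u$: the correcting rotation is multiplication by $u\,e^{-i\arg g_r'(0)}$, which is uniform and independent because $u$ is independent of the rotated field and curve (not because of rotation invariance of SLE). The paper instead rotates once by the initial tip $p^\delta_0$ and uses its independence from the whole rotated-frame process.
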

\begin{proof}
    Recall from Definition~\ref{def-delta-QLE-finite} that $p^\delta_0 \sim ( \cL^\beta_{\phi^\delta_0})^\#$, and let $R_{p^\delta_0}: \D \to \D$ denote the rotation that sends $p^\delta_0$ to $1$. Let $\tilde \phi^\delta_0 = R_{p^\delta_0} \bullet_\gamma \phi^\delta$, for each $s < \qduration^\delta$ let $\tilde D^\delta_s = R_{p^\delta_0}( D^\delta_s)$, and for each $n$ let $\tilde p^\delta_{n} = R_{p^\delta_0}(p^\delta_n)$ and $\tilde \eta^\delta_n = R_{p^\delta_0} \circ \eta^\delta_n$. In other words, under the rotation $R_{p^\delta_0}$, the objects $\phi^\delta,  D^\delta_s, p^\delta_{n}, \eta^\delta_n$  become $\tilde \phi^\delta_0, \tilde D^\delta_s, \tilde p^\delta_{n}, \tilde \eta^\delta_n$. 

    For each $s < \qduration^\delta$, let $g^\delta_s: \tilde D^\delta_s \to \D$ be the conformal map sending $0$ to $0$ and sending the curve tip $\tilde \eta^\delta_{\lfloor s/\delta\rfloor}(s - \lfloor s/\delta \rfloor \delta)$ to $1$, and let $\tilde \phi^\delta_s = g^\delta_s \bullet_\gamma \tilde \phi^\delta_0
    $. Note that the fields $\tilde \phi^\delta_s$ and $R_{p^\delta_0} \bullet_\gamma \phi^\delta_s$ differ by a random rotation.

    By Lemma~\ref{lem-sample-gmc-2} the joint law of $(\tilde\phi^\delta_0, \tilde \eta^\delta_0, p^\delta_0)$ is $(\LF_{\D, 1}^{(\alpha, 0), (\beta, 1)})^\#\times \mathrm{raSLE}_\kappa \times \mathrm{Leb}_{\partial \D}^\#$. 
    Since $(\tilde \phi^\delta_0, \tilde \eta^\delta_0) \stackrel d= (\phi^\SLE_0, \eta^\SLE)$ (where this latter pair is defined immediately above the lemma statement), we have 
    \eqb\label{eq-cond-field}
    \begin{gathered}
    (\qduration^\delta \wedge \delta, (L_\cdot^\delta)_{[0, \qduration^\delta \wedge \delta)}) \stackrel d= (\qduration^\SLE \wedge \delta,(L_\cdot^\SLE)_{[0,\qduration^\SLE \wedge \delta)});
        \\
        \text{For }s \in [0,\delta), \text{ conditioned on } s< \qduration^\delta \text{ and } (L_\cdot^\delta)_{[0, s]}, \text{ the conditional law of }\tilde \phi^\delta_s \text{ is } (\LF_{\D, L_s^\delta}^{(\alpha,0), (\beta, 1)})^\#.
    \end{gathered}
    \eqe
    Let $\tilde \phi^\delta_{\delta-} = g^\delta_{\delta-} \bullet_\gamma \tilde \phi^\delta_0$ where $g^\delta_{\delta-}: \tilde D^\delta_s \to \D$ is the conformal map sending $(0, \tilde \eta^\delta_0(\delta))$ to $(0,1)$; that is, $\tilde \phi^\delta_{\delta-}$ is the field at time $\delta$ before reshuffling the marked boundary point. By the argument of~\eqref{eq-cond-field}, conditioned on $\delta < \qduration^\delta$ and $(L_\cdot^\delta)_{[0, \delta]}$, the conditional law of $\tilde \phi^\delta_{\delta-}$ is $(\LF_{\D, L_\delta^\delta}^{(\alpha,0), (\beta, 1)})^\#$. Thus, by Lemma~\ref{lem-sample-gmc-2}, conditioned on $\delta < \qduration^\delta$ and $(L_\cdot^\delta)_{[0,\delta]}$, the conditional law of $(\tilde \phi^\delta_\delta, g^\delta_\delta \circ \tilde \eta^\delta_1)$ is $(\LF_{\D, L^\delta_\delta}^{(\alpha,0), (\beta, 1)})^\# \times \mathrm{raSLE}_\kappa$. Again, we are in the setting of Propositions~\ref{prop-radial-mot-sf-finite}-\ref{prop-radial-mot-swallow-finite} (this time with initial boundary length $L_\delta^\delta$), hence iterating the argument yields for $k=2$ 
    \eqb\label{eq-cond-field-full}
       \begin{gathered}
            (\qduration^\delta \wedge k\delta,(L_\cdot^\delta)_{[0, \qduration^\delta \wedge k\delta)}) \stackrel d= (\qduration^\SLE \wedge k\delta,(L_\cdot^\SLE)_{[0,\qduration^\SLE \wedge k\delta)});\\
           \text{For }s \in [0,k\delta), \text{ conditioned on } s< \qduration^\delta \text{ and } (L_\cdot^\delta)_{[0, s]}, \text{ the conditional law of }\tilde \phi^\delta_s \text{ is } (\LF_{\D, L_s^\delta}^{(\alpha,0), (\beta, 1)})^\#.
       \end{gathered}
    \eqe
    Proceeding iteratively as before yields~\eqref{eq-cond-field-full} for all $k$. We conclude that $(\qduration^\delta,(L^\delta_\cdot)_{[0,\qduration^\delta)}) \stackrel d= (\qduration^\SLE,(L_\cdot^\SLE)_{[0,\qduration^\SLE)})$. 
    
    Finally, fix $s > 0$, and condition on $s< \qduration^\delta$ and $(L_\cdot^\delta)_{[0, s]}$. We will identify the law of $\phi^\delta_s$ by showing it agrees with $\tilde \phi_s^\delta$ after applying a uniform random rotation of $\D$ whose angle is independent of $\tilde \phi_s^\delta$. Let $\theta = \arg((g^\delta_s)'(0)) \in [0,2\pi)$ and let $R_{e^{i\theta}}: \D \to \D$ be the rotation sending $e^{i\theta}$ to 1, so $g = R_{e^{i\theta}} \circ g^\delta_s$ is the  conformal map from $\tilde D^\delta_s$ to $\D$ with $g(0) = 0$ and $g'(0) > 0$. Then $R_{e^{i\theta}} \circ g^\delta_s \circ R_{p^\delta_0} =  R_{p^\delta_0} \circ g_{D^\delta_s}$; indeed, the left and right hand sides are conformal maps from $D^\delta_s$ to $\D$ fixing $0$ and having the same argument of derivative at 0. Since $\tilde \phi^\delta_s = (g^\delta_s \circ R_{p^\delta_0}) \bullet_\gamma \phi^\delta_0$
 and $\phi^\delta_s = g_{D^\delta_s} \bullet_\gamma \phi^\delta$, this yields  $ R_{e^{i\theta}} \bullet_\gamma \tilde \phi^\delta_s = R_{p^\delta_0} \bullet_\gamma \phi^\delta_s$, hence $\phi^\delta_s = (R_{p_0^\delta}^{-1} \circ R_{e^{i\theta}}) \bullet_\gamma \tilde \phi^\delta_s$. Since $p^\delta_0$ is independent of $(\tilde \phi^\delta_s, \theta)$ and is uniformly distributed on $\partial \D$, we conclude that $R_{p_0^\delta}^{-1} \circ R_{e^{i\theta}}$ is a uniformly random rotation of $\partial \D$ which is independent of $\tilde \phi^\delta_s$. By~\eqref{eq-cond-field-full} the law of $\tilde \phi^\delta_s$ is $(\LF_{\D, L_s^\delta}^{(\alpha,0), (\beta, 1)})^\#$, hence by Lemma~\ref{lem-sample-gmc-2} the law of $\phi^\delta_s$ is $P_{\alpha, \beta, L^\delta_s}$.
\end{proof}

The same proof generalizes in the swallowing regime of $\delta$-QLE$(\gamma^2,\eta)$ to give the same Markov property when we further condition on the swallowed quantum surfaces.

\begin{lemma}\label{lem-swallowing-markov}
Let $s>0$. In the setting of Definition~\ref{def-delta-QLE-finite} and Lemma~\ref{lem-duration-indep}  with $\gamma \in (2\sqrt3 - 2, 2)$ and $\eta = \frac{3\gamma^2}{16} - \frac12$, 
conditioned on $\{s < \qduration^\delta\}$, let $\mathcal U$ be the set of quantum surfaces swallowed by time $s$. If we further condition on $\mathcal U$ and on $(L^\delta_\cdot)_{[0,s]}$, the conditional law of $\phi_s^\delta$ is $P_{\alpha, \beta, L_s^\delta}$.
\end{lemma}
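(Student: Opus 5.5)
We follow the proof of Lemma~\ref{lem-duration-indep}, carrying the swallowed surfaces along. We use the rotated objects $\tilde\phi^\delta_s$, $g^\delta_s$, $\tilde\eta^\delta_n$ from that proof. For each $n$ let $\mathcal U^{(n)}$ be the set of quantum surfaces swallowed during $[n\delta,(n+1)\delta)$; these are exactly the surfaces swallowed by the radial $\SLE_\kappa$ segment $\tilde\eta^\delta_n$ in the domain $\tilde D^\delta_{n\delta}$. Since swallowed regions are bounded by $\phi^\delta$-quantum surfaces and the LQG coordinate change is intrinsic, $\mathcal U^{(n)}$ equals the set of surfaces $(U,\tilde\phi^\delta_{n\delta})/{\sim_\gamma}$ cut off by the curve $g^\delta_{n\delta}\circ\tilde\eta^\delta_n$ in $\D$ during its first $\delta$ units of quantum natural time. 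The same holds if we use the field embedded with the post-resampling marked point.

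We prove by induction on $k$ the following strengthening of \eqref{eq-cond-field-full}: for $s\in[0,k\delta)$, conditioned on $s<\qduration^\delta$, on $(L^\delta_\cdot)_{[0,s]}$, and on the surfaces swallowed by time $s$, the conditional law of $\tilde\phi^\delta_s$ is $(\LF_{\D,L^\delta_s}^{(\alpha,0),(\beta,1)})^\#$. The same statement also holds for the pre-resampling field $\tilde\phi^\delta_{k\delta-}$.

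For $k=1$ this is exactly Proposition~\ref{prop-radial-mot-swallow-finite} applied with $\ell=1$. Here $(\tilde\phi^\delta_0,\tilde\eta^\delta_0)$ has law $(\LF_{\D,1}^{(\alpha,0),(\beta,1)})^\#\times\mathrm{raSLE}_\kappa$ by Lemma~\ref{lem-sample-gmc-2}.

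For the inductive step, condition on $\mathcal F_k$, the $\sigma$-algebra generated by $(L^\delta_\cdot)_{[0,k\delta]}$ and $\mathcal U^{(0)},\dots,\mathcal U^{(k-1)}$. By the inductive hypothesis, given $\mathcal F_k$ the field $\tilde\phi^\delta_{k\delta-}$ has law $(\LF^{(\alpha,0),(\beta,1)}_{\D,L^\delta_{k\delta}})^\#$. The new tip is sampled from the $\beta$-GMC of this field, and the new curve is independent. By Lemma~\ref{lem-sample-gmc-2}, after rerooting, the pair (field, new curve) is conditionally independent of $\mathcal F_k$ with law $(\LF_{\D,L^\delta_{k\delta}}^{(\alpha,0),(\beta,1)})^\#\times\mathrm{raSLE}_\kappa$. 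The reweighting by $\cL^\beta$ followed by uniform rotation returns the unrooted $P_{\alpha,\beta,L}$ and then the rooted LF law; this step uses only the field, so conditioning on $\mathcal F_k$ passes through.

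Applying Proposition~\ref{prop-radial-mot-swallow-finite} with $\ell=L^\delta_{k\delta}$ to this fresh pair, for $s\in[k\delta,(k+1)\delta)$ we get the following. Conditioned additionally on $(L^\delta_\cdot)_{[k\delta,s]}$ and on the surfaces swallowed in $[k\delta,s)$, the field $\tilde\phi^\delta_s$ has law $(\LF^{(\alpha,0),(\beta,1)}_{\D,L^\delta_s})^\#$. Because the fresh pair is conditionally independent of $\mathcal F_k$, this conditional law is unchanged by also conditioning on $\mathcal F_k$, which closes the induction.

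The subtle point is making this independence precise. Every variable in $\mathcal F_k$ is a function of the past fields, curves and tips. The new exploration is a function of the fresh pair. The conditional law of the fresh pair given the entire past depends only on $L^\delta_{k\delta}$, which has already been shown for the unconditioned version in Lemma~\ref{lem-duration-indep}. The new ingredient is only that the swallowed surfaces are among the past variables covered by Proposition~\ref{prop-radial-mot-swallow-finite}.

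Finally, we remove the rotation exactly as at the end of Lemma~\ref{lem-duration-indep}. We have $\phi^\delta_s=(R^{-1}_{p^\delta_0}\circ R_{e^{i\theta}})\bullet_\gamma\tilde\phi^\delta_s$, and the rotation is uniform and independent of $\tilde\phi^\delta_s$. The set $\mathcal U$ and the length process are rotation invariant, being quantum surfaces and lengths. Lemma~\ref{lem-sample-gmc-2} then converts the conditional law $(\LF_{\D,L^\delta_s}^{(\alpha,0),(\beta,1)})^\#$ into $P_{\alpha,\beta,L^\delta_s}$.

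One care point remains: $p^\delta_0$ must remain uniform and independent of $(\tilde\phi^\delta_s,\theta,\mathcal U,(L^\delta_\cdot)_{[0,s]})$. This follows because all of the latter are functions of the rotated system, which is independent of $p^\delta_0$.
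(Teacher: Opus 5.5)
Your proposal is correct and is essentially the paper's proof: the paper just says the argument of Lemma~\ref{lem-duration-indep} goes through verbatim, with the last claim of Proposition~\ref{prop-radial-mot-swallow-finite} used at each $\delta$-step to keep the swallowed surfaces in the conditioning, and that is exactly the induction you carry out. Your additional bookkeeping fills in details the paper leaves implicit: identifying the swallowed surfaces in the rotated frame, showing the fresh field--curve pair is conditionally independent of the past given $L^\delta_{k\delta}$, and checking that $p^\delta_0$ is independent of $\mathcal U$ and the length process when undoing the rotation.
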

\begin{proof}
    The proof is identical to that of Lemma~\ref{lem-duration-indep}, except at each step we use the last claim of Proposition~\ref{prop-radial-mot-swallow-finite} to get the conditional independence of $\phi_s$ and the quantum surfaces swallowed up until time $s$.
\end{proof}

Next, we give an explicit description of the ``unexplored'' regions of the growth process: in the dilute phase the complement of the QLE hull at the final time $\qduration^\delta$ is a quantum disk (Lemma~\ref{lem-simple-unexplored-qle}), and in the swallowing phase, given $(L^\delta_\cdot)_{[0,\qduration^\delta)}$ the quantum surfaces swallowed are conditionally independent quantum disks with boundary lengths equal to the sizes of the jumps of $(L^\delta_\cdot)_{[0,\qduration^\delta)}$ (Lemma~\ref{lem-swallowed-delta-qle}). 

\begin{lemma}\label{lem-simple-unexplored-qle}
In the setting of Definition~\ref{def-delta-QLE-finite} and Lemma~\ref{lem-duration-indep}  with $\gamma \in (\sqrt3 - 1, 2)$ and $\eta = \frac{3}{\gamma^2} - \frac12$, the conditional law of $(\D \backslash  K^\delta_\qduration, \phi^\delta)/{\sim_\gamma}$ given $\qduration^\delta$ is $\QD(1 + 2 \qduration^\delta)^\#$.
\end{lemma}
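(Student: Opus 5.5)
The plan is to run the same iterative argument as in the proof of Lemma~\ref{lem-duration-indep}, conditioning on the number of completed $\delta$-steps. Let $N$ be the index of the final step, so that $\qduration^\delta = N\delta + \qduration^\delta_N$ with $\qduration^\delta_N \le \delta$. On the event $\{N = n\}$, the region $\D \backslash K^\delta_\qduration$ coincides, as a quantum surface, with the complement of the final SLE curve $\eta^\delta_n$ in the surface $(D^\delta_{n\delta}, \phi^\delta)/{\sim_\gamma} = (\D, \phi^\delta_{n\delta})/{\sim_\gamma}$. The reason is that all earlier pieces of $K^\delta_{n\delta}$ are already removed from $D^\delta_{n\delta}$. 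So the task reduces to understanding the last step.

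First, I would identify the law of the field at the start of step $n$. By Lemma~\ref{lem-duration-indep}, conditioned on $\{n\delta < \qduration^\delta\}$ and on $L^\delta_{n\delta} = 1 + 2n\delta$ (which is deterministic here, since $L^\delta_s = 1+2s$), the field $\phi^\delta_{n\delta}$ has law $P_{\alpha,\beta,1+2n\delta}$. By Lemma~\ref{lem-sample-gmc-2}, after rotating the resampled tip $p^\delta_n$ to $1$, the pair consisting of the field and the curve $g_{D^\delta_{n\delta}}\circ\eta^\delta_n$ has law $(\LF_{\D,1+2n\delta}^{(\alpha,0),(\beta,1)})^\# \times \mathrm{raSLE}_\kappa$. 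This is exactly the setting of Proposition~\ref{prop-radial-mot-simple-finite} with $\ell = 1+2n\delta$. The rotation does not affect the quantum surface.

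Next, I would apply Lemma~\ref{lem-simple-unexplored} with $\ell = 1+2n\delta$. It gives that, conditioned on the duration $\qduration^\delta_n$ of this SLE exploration, the complementary surface has law $\QD(1+2n\delta+2\qduration^\delta_n)^\# = \QD(1+2\qduration^\delta)^\#$. I then need to condition further on $\{\qduration^\delta_n \le \delta\}$, which is the event $\{N = n\}$ given that we reached step $n$. Since this event is measurable with respect to $\qduration^\delta_n$, the conditional law given $\qduration^\delta_n$ on this event is unchanged. Moreover, the event $\{n\delta<\qduration^\delta\}$ and the earlier history enter only through the field at time $n\delta$. Summing over $n$ then gives that, conditionally on $\qduration^\delta$, the surface has law $\QD(1+2\qduration^\delta)^\#$.

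The main point needing care is the conditional independence in the second step. I need the last-step surface, given $\phi^\delta_{n\delta}$, to depend on the past only through $\phi^\delta_{n\delta}$, and the past to carry no further information beyond $\{n\delta<\qduration^\delta\}$. Both hold by construction, since the tip and the curve are sampled freshly and independently. A second point is the identification of $\D\backslash K^\delta_\qduration$ with the curve complement in $D^\delta_{n\delta}$, including the prime-end and boundary-length bookkeeping. This follows from the definition of $K^\delta$ and from the fact that $\kappa<4$ curves are simple, so no regions are swallowed.
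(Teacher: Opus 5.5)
Your proposal is correct and is essentially the argument the paper intends. The paper only says the proof follows the inductive scheme of Lemma~\ref{lem-duration-indep} with Lemma~\ref{lem-simple-unexplored} as input, and omits the details. Your route writes those details out cleanly: decompose on the index $n$ of the final step, use the stationarity of Lemma~\ref{lem-duration-indep} at time $n\delta$ together with Lemma~\ref{lem-sample-gmc-2} to land in the setting of Proposition~\ref{prop-radial-mot-simple-finite} with $\ell = 1+2n\delta$, apply Lemma~\ref{lem-simple-unexplored} on the event $\{\qduration^\delta_n \le \delta\}$, and sum over $n$.
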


\begin{lemma}\label{lem-swallowed-delta-qle}
In the setting of Definition~\ref{def-delta-QLE-finite} and Lemma~\ref{lem-duration-indep}  with $\gamma \in (2\sqrt3 - 2, 2)$ and $\eta = \frac{3\gamma^2}{16} - \frac12$, the jumps of $(L^\delta_\cdot)_{[0,\qduration^\delta)}$ are in bijection with the quantum surfaces swallowed by $(\phi^\delta, (K^\delta_\cdot)_{[0,\qduration^\delta]})$, wherein at each jump time the process swallows a quantum surface whose $\gamma$-LQG boundary length is the jump size. Moreover, conditioned on $(L^\delta_{\cdot})_{[0,\qduration^\delta)}$, the swallowed quantum surfaces are conditionally independent, and the conditional law of the quantum surface corresponding to a jump of size $\ell$ is $\QD(\ell)^\#$.
\end{lemma}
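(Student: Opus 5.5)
The plan is to transfer Lemma~\ref{lem-swallowed-sle} to $\delta$-QLE by the same iterative resampling argument as in the proof of Lemma~\ref{lem-duration-indep}, using the forested zipper (Proposition~\ref{prop-radial-forested-zipper}) to carry conditional independence across reshuffling steps. I will work in the rotated coordinates $\tilde\phi^\delta_s$ of that proof. By Lemma~\ref{lem-sample-gmc-2}, at each time $n\delta$ (on $\{n\delta<\qduration^\delta\}$ and given the past) the pair (field, next SLE segment) has law $(\LF_{\D,L^\delta_{n\delta}}^{(\alpha,0),(\beta,1)})^\#\times\mathrm{raSLE}_\kappa$. Rather than working with the unforested field, I will \emph{forest} it: I attach an independent Poisson collection of generalized quantum disks to $\partial\D$ as in Definition~\ref{def-LFf}. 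This is legitimate because reshuffling the boundary point via $(\cL^\beta)^\#$ only rotates the unforested field. The foresting can be carried along unchanged, since the forest attachment points are sampled from $\cL^\gamma_\phi$, which is rotation-equivariant and independent of the choice of marked point.

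For the first claim (the bijection between jumps and swallowed surfaces), on each interval $[n\delta,(n+1)\delta)$ the process is a radial SLE$_\kappa$ on a field with law as in Proposition~\ref{prop-radial-mot-swallow-finite}. There Lemma~\ref{lem-swallowed-sle} gives the bijection on that interval. The reshuffling times swallow nothing, and $L^\delta$ does not jump there: the field is only re-embedded, so the boundary length is unchanged. Concatenating over the a.s.\ finitely many intervals (Lemma~\ref{lem-duration-indep} gives $\qduration^\delta<\infty$) yields the bijection.

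For the conditional law, I will argue inductively on $k$ that, conditionally on $\{k\delta<\qduration^\delta\}$, the following two statements hold. First, the swallowed surfaces $\mathcal U_{[0,k\delta)}$ are conditionally independent given $(L^\delta_\cdot)_{[0,k\delta]}$, with laws $\QD(\ell)^\#$ indexed by the jump sizes. Second, $\tilde\phi^\delta_{k\delta}$ is conditionally independent of $\mathcal U_{[0,k\delta)}$ given $(L^\delta_\cdot)_{[0,k\delta]}$, with law $(\LF_{\D,L^\delta_{k\delta}}^{(\alpha,0),(\beta,1)})^\#$. The second statement is essentially Lemma~\ref{lem-swallowing-markov} applied at time $k\delta$. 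For the induction step I will apply Proposition~\ref{prop-radial-mot-swallow-finite} and Lemma~\ref{lem-swallowed-sle} (or their forested version) to the fresh segment started from $\tilde\phi^\delta_{k\delta}$. Given the future segment's boundary length process, this yields conditionally independent quantum disks for the newly swallowed surfaces, together with the conditional law of the field at time $(k+1)\delta$. Because the new segment depends on the past only through $L^\delta_{k\delta}$, the old swallowed surfaces stay conditionally independent of the new ones and of the new field. For the final interval, which ends at $\qduration^\delta$, I will use Lemma~\ref{lem-swallowed-sle} over the full duration, which is exactly the terminal statement.

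I expect the main obstacle to be the terminal segment. Here one must condition on the entire boundary length process up to $\qduration^\delta$ rather than up to a fixed time, while Lemma~\ref{lem-swallowed-sle} conditions on the full SLE boundary length process. I need the last SLE segment's boundary length process to coincide with $L^\delta$ after time $n\delta$, and the starting law of that segment must be exactly the one in Proposition~\ref{prop-radial-mot-swallow-finite}. The issue is that the event $\{\qduration^\delta_n\le\delta\}$ is measurable with respect to this segment's own length process. Conditioning on it is therefore harmless, since Lemma~\ref{lem-swallowed-sle} already conditions on the whole process. Beyond this point, the argument is bookkeeping of conditional independence across steps.
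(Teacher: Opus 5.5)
Your proposal is correct and is essentially the paper's (omitted) argument: run the segment-by-segment induction from the proof of Lemma~\ref{lem-duration-indep}, with Lemma~\ref{lem-swallowed-sle} as the input for each segment and Proposition~\ref{prop-radial-mot-swallow-finite} (equivalently Lemma~\ref{lem-swallowing-markov}) giving the conditional independence of the current field from the already-swallowed surfaces. The foresting layer is unnecessary, and one point should be made explicit: for non-terminal segments, Lemma~\ref{lem-swallowed-sle} conditions on the SLE's full length process rather than its restriction to $[0,\delta]$, and you remove this extra conditioning by combining Proposition~\ref{prop-radial-mot-swallow-finite} with the strong Markov property of the independent SLE at quantum natural time $\delta$.
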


The proofs of Lemmas~\ref{lem-simple-unexplored-qle} and~\ref{lem-swallowed-delta-qle} follow the same inductive argument as in the proof of the first claim of Lemma~\ref{lem-duration-indep}, using Lemmas~\ref{lem-simple-unexplored} and~\ref{lem-swallowed-sle} respectively as inputs. We omit the details.

Finally, we define the $\delta$-QLE capacity time parametrization  $t^\delta: [0,\infty) \to [0,\infty]$ as follows. 
\eqb \label{eq-t-delta}
t^\delta(s) = 
\left\{
	\begin{array}{ll}
		\log g'_{K_s^\delta}(0)  & \mbox{if } 0 \leq s < \qduration^\delta  \\
	   \infty & \mbox{if } s \geq \qduration^\delta
	\end{array}
\right.
\eqe
Here, as before, for a $\D$-hull $K$ the function $g_K: \D \backslash K \to \D$ is the conformal map with $g_K(0) = 0$ and $g_K'(0) > 0$.
Thus, $t^\delta|_{[0,\qduration^\delta)}$ is an increasing homeomorphism from $[0,\qduration^\delta)$ to $[0,\infty)$.  Unlike the other pieces of $\delta$-QLE data discussed, $t^\delta$ does not agree in law with the capacity time parametrization of the associated SLE exploration of LQG. We now show the law of $t^\delta$ is tight in the following sense, so we can take a subsequential limit in the next section.

\begin{lemma}\label{lem-tight-t}
    Fix $s>0$. As $N \to \infty$, the conditional probability $\P[t^\delta(s) > N \mid s < \qduration^\delta]$ converges to 0 at a rate that does not depend on $\delta$. 
    In particular, the conditional laws of $t^\delta(s)$ conditioned on $\{ s < \qduration^\delta\}$ are tight as $\delta \to 0$.
\end{lemma}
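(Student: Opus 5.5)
The plan is to turn the capacity time $t^\delta(s)$ into a statement about the $\gamma$-LQG area of a small ball around the origin for the \emph{original} field $\phi^\delta$. The law of $\phi^\delta$ is $P_{\alpha,\beta,1}$ for every $\delta$. By Lemma~\ref{lem-duration-indep}, the conditional law of the mapped-out field $\phi^\delta_s$ given $\{s<\qduration^\delta\}$ and $L^\delta_s$ also does not depend on $\delta$. So a large capacity time must force an unusually large area near $0$ under a $\delta$-independent law, and this has small probability uniformly in $\delta$.

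\textbf{Step 1 (distortion).} Write $t=t^\delta(s)$ and $f=g_{K^\delta_s}^{-1}:\D\to D^\delta_s$, so that $f(0)=0$ and $f'(0)=e^{-t}$. The Koebe growth theorem gives $|f(z)|\le e^{-t}|z|/(1-|z|)^2$, hence $f(B(0,1/2))\subset B(0,2e^{-t})$. Conformal covariance of the area measure under $\bullet_\gamma$ then gives
\[
\cA^\gamma_{\phi^\delta_s}(B(0,1/2)) = \cA^\gamma_{\phi^\delta}(f(B(0,1/2)))\le \cA^\gamma_{\phi^\delta}(B(0,2e^{-t})).
\]

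\textbf{Step 2 (lower bound on the left side).} By Lemma~\ref{lem-duration-indep}, conditionally on $\{s<\qduration^\delta\}$ and $L^\delta_s$, the field $\phi^\delta_s$ has law $P_{\alpha,\beta,L^\delta_s}$. Moreover, $(\qduration^\delta,L^\delta_\cdot)\stackrel d=(\qduration^\SLE,L^\SLE_\cdot)$, so the conditional law of $L^\delta_s$ given $s<\qduration^\delta$ is a fixed law on $(0,\infty)$. I expect $L_s>0$ on $\{s<\qduration\}$ in all three regimes: it equals $1+2s$ in the simple case, and in the other two cases it is a positive process before it hits $0$.

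From Definition~\ref{lem-disint-explicit}/Definition~\ref{def-lf-conditioned}, a sample of $\LF_{\D,\ell}$ has the form $\tilde h+\frac2\gamma\log\frac\ell L$. I would like to say that $P_{\alpha,\beta,\ell}$ is the pushforward of $P_{\alpha,\beta,1}$ under $\phi\mapsto\phi+\frac2\gamma\log\ell$, so that areas scale by $\ell^2$; this is immediate if the reweighting factor is homogeneous in $\ell$. Even without exact scaling, it suffices to show the following: for every compact $I\subset(0,\infty)$ and every $\epsilon>0$ there is $c>0$ with $P_{\alpha,\beta,\ell}[\cA^\gamma_\phi(B(0,1/2))<c]<\epsilon$ for all $\ell\in I$. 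This follows from the explicit form of the reweighting in Definition~\ref{def-P-weighted}, since $\cA^\gamma_\phi(B(0,1/2))>0$ a.s. Combining this with tightness of $L^\delta_s$ in $(0,\infty)$, which holds uniformly in $\delta$ because its law does not depend on $\delta$, gives: for each $\epsilon>0$ there is $c>0$, independent of $\delta$, such that
\[
\P\bigl[\cA^\gamma_{\phi^\delta_s}(B(0,1/2))<c \bigm| s<\qduration^\delta\bigr]<\epsilon.
\]

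\textbf{Step 3 (conclusion).} On the event $\{t^\delta(s)>N\}\cap\{\cA^\gamma_{\phi^\delta_s}(B(0,1/2))\ge c\}$, Step 1 gives $\cA^\gamma_{\phi^\delta}(B(0,2e^{-N}))\ge c$. The law of $\phi^\delta$ is $P_{\alpha,\beta,1}$ regardless of $\delta$. Since $\alpha<Q$, the area measure has no atom at $0$, so $\cA^\gamma_{\phi}(B(0,2e^{-N}))\to0$ a.s. as $N\to\infty$, and the probability of this event tends to $0$. Dividing by $\P[s<\qduration^\delta]=\P[s<\qduration^\SLE]>0$, which is again independent of $\delta$, we obtain
\[
\P\bigl[t^\delta(s)>N\bigm| s<\qduration^\delta\bigr]\le \epsilon+\frac{P_{\alpha,\beta,1}\bigl[\cA^\gamma_\phi(B(0,2e^{-N}))\ge c\bigr]}{\P[s<\qduration^\SLE]}.
\]
The right-hand side does not depend on $\delta$ and can be made arbitrarily small, which proves the lemma.

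The main obstacle is Step 2: obtaining a lower bound on the area near $0$ for $P_{\alpha,\beta,\ell}$ that is uniform over $\ell$ in compacts, and checking that $L^\delta_s$ stays away from $0$ with high probability on $\{s<\qduration^\delta\}$ in the swallowing and space-filling regimes. Both reduce to $\delta$-independent statements, because the relevant laws are identified in Lemma~\ref{lem-duration-indep}.
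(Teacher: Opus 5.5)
Your proposal is correct and follows the paper's proof essentially step for step: Koebe distortion gives $f(B_{1/2}(0))\subset B_{2e^{-N}}(0)$ on $\{t^\delta(s)>N\}$, you then use a $\delta$-uniform lower bound on $\cA^\gamma_{\phi^\delta_s}(B_{1/2}(0))$, and you divide by the $\delta$-independent probability $\P[s<\qduration^\delta]$. Your Step 2 detour through uniformity over $\ell$ in compacts and positivity of $L^\delta_s$ is unnecessary, since the conditional law of $\phi^\delta_s$ given $\{s<\qduration^\delta\}$ is already independent of $\delta$ (it mixes $P_{\alpha,\beta,\ell}$ against the $\delta$-independent law of $L^\delta_s$), so the paper simply chooses $m(\eps)$ from that single law using $\cA^\gamma_{\phi^\delta_s}(B_{1/2}(0))>0$ a.s.
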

\begin{proof}
    The conditional law of $\phi^{\delta}_s$ given $\{ s < \qduration^{\delta}\}$ does not depend on $\delta$ (Lemma~\ref{lem-duration-indep}). Thus, for any $\eps > 0$, we can choose $m(\eps) > 0$ sufficiently small such that 
   $\P[E^\delta_\eps \mid s < \qduration^{\delta}] > 1 - \eps$ for all $\delta$, where $E^\delta_\eps =\{\cA^\gamma_{\phi_s^{\delta}}(B_{1/2}(0)) > m(\eps)\}$. 
   
   Next, let $f =  g_{K_s^\delta}^{-1}$. On the event $\{t^\delta(s) > N\}$ we have $f'(0) < e^{-N}$, so the Koebe distortion theorem gives $|f(z)| \leq |z|(1-|z|)^{-2} f'(0) < 2 e^{-N}$ whenever $|z| \leq \frac12$. We conclude $\{ t^\delta(s) > N \} \subset \{f(B_{1/2}(0)) \subset B_{2e^{-N}}(0)\}$. On this latter event, we have $\cA^\gamma_{\phi_s^\delta}(B_{1/2}(0)) = \cA^\gamma_{\phi^\delta}(f(B_{1/2}(0))) \leq \cA^\gamma_{\phi^\delta}(B_{2e^{-N}}(0))$. Therefore 
   \begin{align*}
       \P[t^{\delta}(s) > N \mid s < \qduration^{\delta}] &\leq \P[(E^\delta_\eps)^c \mid s < \qduration^{\delta}] + \P[\{t^{\delta}(s) > N\} \cap E^\delta_\eps \mid s < \qduration^{\delta}] \\
       &\leq \eps + \mathbb{P}[\{\cA^\gamma_{\phi_s^{\delta}}(B_{1/2}(0)) \leq \cA^\gamma_{\phi^{\delta}}(B_{2e^{-N}}(0)) \} \cap E^\delta_\eps \mid s < \qduration^{\delta}] \\
       &\leq \eps + \P[m (\eps) < \cA^\gamma_{\phi^{\delta}}(B_{2e^{-N}}(0)) \mid s < \qduration^{\delta}] \\
       &\leq \eps + \frac{\P[m (\eps) < \cA^\gamma_{\phi^{\delta}}(B_{2e^{-N}}(0))]}{\P[ s < \qduration^{\delta}]}.
   \end{align*}
   For fixed $\eps$, as $N \to \infty$ the numerator shrinks to zero at a rate uniform in $\delta$ (indeed, the law of $\phi^{\delta}$ does not depend on $\delta$), and the denominator does not depend on $\delta$ (Lemma~\ref{lem-duration-indep}). We conclude that as $N \to \infty$, we have $\P[t^{\delta}(s) > N \mid s < \qduration^{\delta}] \to 0$ at a rate uniform in $\delta$. 
\end{proof}

\subsection{$\delta$-QLE$(\gamma^2, \eta)$ as an $\eta$-DBM growth process on $\gamma$-LQG }\label{sec-scaling-rels} 
In this section we explain that heuristically, the $\delta$-QLE$(\gamma^2, \eta)$ process constructed in Definition~\ref{def-delta-QLE-finite} can be viewed as an $\eta$-DBM growth process on a $\gamma$-LQG surface at discretization scale $\delta$. 

Recall that $\eta$-DBM is a discrete growth model where, at each step, a particle attaches to the boundary at a point sampled from the boundary measure $(\frac{d\nu}{d\mu})^{\eta} \mu$, where $\mu$ and $\nu$ are the uniform (counting) and harmonic measures respectively. At the continuum level, for a $\gamma$-LQG surface, the analog of the uniform measure is the $\gamma$-LQG boundary length measure $\cL^\gamma_\phi$. Thus, we would like to define a $\delta$-discretized version of QLE$(\gamma^2, \eta)$ where, at each step, we sample a boundary point from ``$(\frac{d\nu}{d\cL^\gamma_\phi})^\eta \cL^\gamma_\phi$'' (where $\nu$ is harmonic measure) and grow an SLE curve at that point for quantum natural time $\delta$ (corresponding to attaching a $\delta$-sized particle). The issue is that $(\frac{d\nu}{d\cL^\gamma_\phi})^\eta \cL^\gamma_\phi$ does not make literal sense since $\nu$ and $\cL^\gamma_\phi$ are mutually singular. In this section, we will explain that, at least at the level of discrete approximations,
we can interpret 
\eqb\label{eq-eta-beta}
\text{``\quad $((\frac{d\nu}{d\cL^\gamma_\phi})^\eta \cL^\gamma_\phi)^\# = (\cL^\beta_\phi)^\#$\quad ''}\qquad \text{ if }\beta \text{ satisfies } 
 \eta =  \frac14 \beta^2 - \frac12 \beta Q + 1 \text{ and } \beta < Q.
\eqe
For $(\gamma, \eta)$ in~\eqref{eq-gamma-eta} this corresponds to $\beta = \beta(\gamma, \eta)$ as in~\eqref{eq-alpha-beta}. In this way, the $\delta$-QLE$(\gamma^2, \eta)$ process introduced in Definition~\ref{def-delta-QLE-finite} can be heuristically understood as a $\eta$-DBM process on $\gamma$-LQG. 

\begin{remark}
    Using the formal relation~\eqref{eq-eta-beta} we can write down a definition for a $\delta$-QLE$(\gamma^2, \eta)$ process for \emph{all} $(\gamma, \eta)$ for which there exists $\beta$ satisfying~\eqref{eq-eta-beta}, simply by changing the value of $\beta$ in Definition~\ref{def-delta-QLE-finite}. In this paper, we restrict our attention to the values of $(\gamma, \eta)$ in~\eqref{eq-gamma-eta} because the stationarity properties inherited from the LQG/SLE couplings (see Section~\ref{subsec-properties-delta-qle}) make them more tractable. 
\end{remark}

We now give a formal derivation of~\eqref{eq-eta-beta}.
Fix some large integer $n$ and divide $\partial \D$ into $n$ intervals of equal $\cL^\gamma_h$-length; let $I$ be one such interval. Denoting by $|I|$ the $\mathrm{Leb}_{\partial \D}$ length of $I$, up to multiplicative constants we have
\[\cL^\gamma_h(I) \approx |I|^{\gamma Q/2} e^{\frac\gamma2 h_{I}}, \quad \cL^\beta_h(I) \approx |I|^{\beta \hat Q / 2} e^{\frac\beta2 h_{I}}, \]
where $h_{I}$ denotes the average of $h$ on $I$ and $\hat Q = \frac\beta2 + \frac2\beta$. 
Combining these gives
\[ \cL^\beta_h(I) \approx |I|^{\frac\beta2(\hat Q - Q)} \cL^\gamma_h(I)^{\beta/\gamma} \propto \nu(I)^{\frac\beta2(\hat Q - Q)},\]
where in the last step we use that $\cL^\gamma_h(I)$ does not depend on $I$, and $|I|$ is $2\pi$-times the harmonic measure $\nu(I)$ of $I$ viewed from $0$. 

Sampling one of these intervals $I$ uniformly at random is a discrete analog of sampling a point from $(\cL^\gamma_h)^\#$, since each of the intervals has the same $\cL^\gamma_h$ length. 
By contrast, sampling one of these intervals $I$ with probability proportional to $\cL^\beta_h(I)$ is analogous to sampling from $(\cL^\beta_h)^\#$. Thus, at the level of discrete approximations, sampling from $(\cL^\beta_h)^\#$ can be viewed as sampling from $(\cL^\gamma_h)^\#$ and reweighting by the $\eta$-th power of the harmonic measure, where $\eta = \frac\beta2(\hat Q - Q)$;  substituting $\hat Q = \frac\beta2 + \frac2\beta$ yields~\eqref{eq-eta-beta}.

\section{Subsequential limits of $\delta$-QLE}\label{sec-qle-properties}
Fix a pair of parameters $(\gamma, \eta)$ in~\eqref{eq-gamma-eta}. We will construct a  subsequential limit $(\phi, (K_\cdot)_{[0,\qduration)})$ of $\delta$-QLE$(\gamma^2, \eta)$ as $\delta \to 0$ (Lemma~\ref{lem-coupling} and following text), and in the rest of this section establish properties of $(\phi, (K_\cdot)_{[0,\qduration)})$. 
In Section~\ref{sec-phases} we will define QLE$(\gamma^2, \eta)$ by passing to a further subsequence.

Consider the $\delta$-QLE$(\gamma^2, \eta)$ process $(\phi^\delta, (K^\delta_\cdot)_{[0,\qduration^\delta)})$ from 
Definition~\ref{def-delta-QLE-finite}. Recall the boundary length process $(L^\delta_\cdot)_{[0,\qduration^\delta)}$ from Lemma~\ref{lem-duration-indep} and capacity parametrization $t^\delta$ from~\eqref{eq-t-delta}. Set $L^\delta_{\qduration^\delta} = \lim_{s \to \qduration^\delta} L^\delta_s$; this limit exists by Remark~\ref{rem-bdy-reg} and the first claim of Lemma~\ref{lem-duration-indep}.
Define the function $\tilde L^\delta:[0,1] \to [0,\infty)$ by $\tilde L^\delta(x) = L^\delta_{\qduration^\delta x}$. Let $(\tilde K^\delta_t)_{t \geq 0}$ be the process $(K_s^\delta)_{0\leq s < \qduration^\delta}$ reparametrized by capacity, i.e., $\tilde K^\delta_t := K^\delta_{s^\delta(t)}$ where $s^\delta = (t^\delta)^{-1}$. Let $\nu^\delta$ be the driving measure encoding $(\tilde K^\delta_t)_{0 \leq t < \infty}$ as in Proposition~\ref{prop-loewner-bijection}.

The tuple $(\phi^\delta, \nu^\delta, \qduration^\delta, \cA^\gamma_{\phi_0^\delta}, \tilde L^\delta,  t^\delta)$ is a random variable taking values in the complete separable metric space $\mathcal{D} \times \mathcal{N}\times \R \times \mathcal{B} \times \mathcal J \times \mathcal I$, where $\cD, \mathcal N$, $\cB$, $\mathcal J$ and $\mathcal I$ are defined in Section~\ref{subsec-topologies}. Since $\cD, \mathcal N$, $\R$,  $\cB$, $\mathcal J$ and $\mathcal I$ are complete separable metric spaces, so is $\mathcal{D} \times \mathcal{N}\times \R \times \mathcal{B} \times \mathcal J \times \mathcal I$.

\begin{lemma}\label{lem-tight-sf}
The family of random variables $(\phi^\delta, \nu^\delta, \qduration^\delta, \cA^\gamma_{\phi^\delta}, \tilde L^\delta,  t^\delta)_{\delta > 0}$ is tight.
\end{lemma}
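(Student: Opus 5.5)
Tightness of a product of random variables follows from tightness of each coordinate, since a product of compact sets is compact in the product topology. So I will check the six coordinates $\phi^\delta$, $\nu^\delta$, $\qduration^\delta$, $\cA^\gamma_{\phi^\delta}$, $\tilde L^\delta$, $t^\delta$ one at a time.

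\textbf{Coordinates whose law does not depend on $\delta$.} Several are immediate.
\begin{itemize}
\item The field $\phi^\delta$ has law $P_{\alpha,\beta,1}$ for every $\delta$. A single Borel probability measure on the complete separable space $\cD$ is tight.
\item The area measure $\cA^\gamma_{\phi^\delta}$ is a measurable function of $\phi^\delta$, so its law is also independent of $\delta$, and hence tight in $\cB$.
\item The driving measure $\nu^\delta$ lives in $\cN$, which is compact. Tightness is automatic.
\item By Lemma~\ref{lem-duration-indep}, the pair $(\qduration^\delta,(L^\delta_\cdot)_{[0,\qduration^\delta)})$ has the same law as $(\qduration^\SLE,(L^\SLE_\cdot)_{[0,\qduration^\SLE)})$, with $\qduration^\SLE<\infty$ a.s. Hence the laws of $\qduration^\delta$ do not depend on $\delta$.
\end{itemize}

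\textbf{The rescaled length process $\tilde L^\delta$.} The limit $L^\delta_{\qduration^\delta}$ is defined as a left limit, so it is a measurable function of the same path. Thus $\tilde L^\delta(x)=L^\delta_{\qduration^\delta x}$ is obtained from $(\qduration^\delta, L^\delta)$ by a fixed measurable map, and its law equals that of the analogous SLE object. That object lies in $\mathcal J$ a.s.: by Remark~\ref{rem-bdy-reg} the path is continuous, affine, or c\`adl\`ag with a limit at $\qduration$, depending on the case. So the law of $\tilde L^\delta$ is a fixed probability measure on the Polish space $\mathcal J$, hence tight. One point needs care here: the map into $\mathcal J$ must be measurable. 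I would verify it by expressing it through evaluations at rational times together with right-continuity.

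\textbf{The capacity-time coordinate $t^\delta$; the main obstacle.} This is the only coordinate whose law genuinely depends on $\delta$. The topology on $\mathcal I$ is determined by the values $e^{-f(q_j)}\in[0,1]$ at the rationals $q_j$. Since $[0,1]^{\N}$ is compact, $\mathcal I$ sits inside a compact-like space of nondecreasing functions. The only issue is whether the target space is closed under limits.

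I would either take the closure of $\mathcal I$, which is legitimate provided the metric completion of nondecreasing c\`adl\`ag functions under $d$ is compact, or identify each $f$ with the nondecreasing family $(e^{-f(q)})_{q}$. In the second formulation, limits are nonincreasing functions on the rationals, and their right-continuous regularizations are again elements of $\mathcal I$. By a Helly-type selection argument, every sequence then has a subsequence converging pointwise on the rationals, which is exactly convergence in $d$. So $\mathcal I$ is relatively compact, and tightness holds trivially.

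If this compactness fails, I would fall back on Lemma~\ref{lem-tight-t}. It gives, for each fixed $s$, uniform control of $t^\delta(s)$ on $\{s<\qduration^\delta\}$. On $\{s\ge\qduration^\delta\}$ we have $t^\delta(s)=\infty$, which corresponds to $e^{-\infty}=0$ and causes no problem. A diagonal argument over the rationals $q_j$, with the monotonicity of $t^\delta$, would then produce compact sets of high probability uniformly in $\delta$.

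With all six coordinates tight, the product is tight.
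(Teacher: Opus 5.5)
\textbf{The gap is in the $t^\delta$ coordinate.} Your decomposition is the paper's: tightness coordinatewise, with the laws of $\phi^\delta$, $\qduration^\delta$, $\cA^\gamma_{\phi^\delta}$ and $\tilde L^\delta$ independent of $\delta$, and $\cN$ compact. The paper disposes of $t^\delta$ in one line by asserting that $\mathcal I$ is a compact metric space. Your suspicion about that coordinate is justified, but your resolution does not work.

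The Helly step fails. A pointwise limit on the rationals of elements of $\mathcal I$ need not be right-continuous at a rational. Its right-continuous regularization then differs from it at that rational, so the regularization is not the $d$-limit. For rational $q$, the sequence $f_n=\mathbf 1_{[q+1/n,\infty)}$ is $d$-Cauchy. Yet a $d$-limit $f\in\mathcal I$ would satisfy $f(q)=0$ and $f(q')=1$ for all rational $q'>q$, contradicting right-continuity at $q$. So $(\mathcal I,d)$ is neither complete nor compact; the paper's one-line assertion is not correct as literally stated either. Relative compactness inside $[0,1]^{\mathbb Q_{\ge 0}}$ gives tightness only in the closure of $\mathcal I$.

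Your fallback addresses the wrong obstruction. Lemma~\ref{lem-tight-t} keeps $e^{-t^\delta(s)}$ away from $0$, but these coordinates already lie in $[0,1]$. By Dini's theorem every compact subset of $\mathcal I$ is uniformly right-continuous at each rational. So tightness in $\mathcal I$ is equivalent to
\[ \lim_{\eta\downarrow 0}\sup_{\delta>0}\P\big[e^{-t^\delta(q)}-e^{-t^\delta(q+\eta)}>\eps\big]=0 \qquad \text{for every rational } q \text{ and } \eps>0, \]
and nothing you cite provides this. Enlarging $\mathcal I$ to its compact closure makes the lemma trivial, but it pushes the same issue into the right-continuity of $t$ used in Lemma~\ref{lem-coupling} and Proposition~\ref{prop-bd-length-consistent-reals}.

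\textbf{How to close it.} The estimate does follow from the stationarity in Lemma~\ref{lem-duration-indep}. On $\{q+\eta<\qduration^\delta\}$ set $\Delta=t^\delta(q+\eta)-t^\delta(q)$, so that $e^{-t^\delta(q)}-e^{-t^\delta(q+\eta)}\le 1-e^{-\Delta}$. Let $G$ be the conformal map with $\phi^\delta_{q+\eta}=G\bullet_\gamma\phi^\delta_q$. Then $G^{-1}:\D\to\D$ fixes $0$, $(G^{-1})'(0)=e^{-\Delta}$, and $\cA^\gamma_{\phi^\delta_{q+\eta}}(B_r(0))=\cA^\gamma_{\phi^\delta_q}(G^{-1}(B_r(0)))$. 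The Schwarz lemma gives $G^{-1}(B_r(0))\subset B_r(0)$. Schwarz--Pick applied to $G^{-1}(z)/z$ gives $G^{-1}(B_r(0))\subset B_\rho(0)$ on $\{\Delta>\eps\}$, with $\rho=r\frac{r+e^{-\eps}}{1+re^{-\eps}}<r$. Hence, with $Y:=e^{-\cA^\gamma_{\phi^\delta_q}(B_\rho(0))}-e^{-\cA^\gamma_{\phi^\delta_q}(B_r(0))}$,
\[ e^{-\cA^\gamma_{\phi^\delta_{q+\eta}}(B_r(0))}-e^{-\cA^\gamma_{\phi^\delta_q}(B_r(0))}\ \ge\ Y\,\mathbf 1_{\{\Delta>\eps\}} . \]
Let $F(\ell)=\E_{P_{\alpha,\beta,1}}[e^{-\ell^2\cA^\gamma_\phi(B_r(0))}]$. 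By Lemma~\ref{lem-duration-indep}, the expectation of the left side on $\{q+\eta<\qduration^\delta\}$ is at most
\[ \E[F(L^\delta_{q+\eta})\mathbf 1_{\{q+\eta<\qduration^\delta\}}]-\E[F(L^\delta_q)\mathbf 1_{\{q<\qduration^\delta\}}]+\P[q<\qduration^\delta\le q+\eta]. \]
This bound does not depend on $\delta$, since the law of $(L^\delta,\qduration^\delta)$ does not. It tends to $0$ as $\eta\downarrow 0$ by right-continuity of $L$.

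Now $Y>0$ a.s., and its law on $\{q<\qduration^\delta\}$ is $\delta$-independent. The bound $\P[\Delta>\eps,\,q+\eta<\qduration^\delta]\le\P[Y<c,\,q<\qduration^\delta]+c^{-1}\E[Y\mathbf 1_{\{\Delta>\eps,\,q+\eta<\qduration^\delta\}}]$ then gives $\sup_\delta\P[\Delta>\eps,\,q+\eta<\qduration^\delta]\to0$. Together with $\P[q<\qduration^\delta\le q+\eta]\to 0$, this yields the displayed estimate.

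Finally, choose rational $\eta_{j,m}>0$ so that the exceptional probabilities are summable. The sets $\{f\in\mathcal I: e^{-f(q_j)}-e^{-f(q_j+\eta_{j,m})}\le 1/m\ \forall j,m\}$ are then compact in $(\mathcal I,d)$ and give tightness.
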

\begin{proof}
    It suffices to establish tightness of each of the marginals. By Lemma~\ref{lem-duration-indep}, the law of each of the random variables $\phi^\delta$, $\qduration^\delta$, $\cA^\gamma_{\phi^\delta}$ and $\wt L^\delta$ does not depend on $\delta$. Since these random variables take values in separable complete metric spaces, each of these marginals is tight as $\delta$ varies. Finally, since $\mathcal N$ and $\mathcal I$ are compact metric spaces, the families $(\nu^\delta)_{\delta > 0}$ and $(t^\delta)_{\delta>0}$ are also tight.
\end{proof}

\begin{lemma}\label{lem-coupling}
    There exists a sequence $(\delta_k)$ decreasing to 0 such that there is a coupling of the random variables
    $$(\phi^{\delta_k},  \nu^{\delta_k}, \qduration^{\delta_k}, \cA^\gamma_{\phi^{\delta_k}}, \tilde L^{\delta_k}, t^{\delta_k})_{k > 0}$$
    which converges almost surely as $k \to \infty$. Moreover, if $t = \lim_{k \to \infty} t^{\delta_k}$ and $\qduration = \lim_{k \to \infty} \qduration^{\delta_k}$, then $t(s) < \infty$ if and only if $s < \qduration$.
\end{lemma}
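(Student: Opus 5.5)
The plan has two parts: obtain the almost sure coupling from tightness, and then identify where the limiting capacity time is finite.

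\textbf{Step 1: the coupling.} By Lemma~\ref{lem-tight-sf}, the family of tuples is tight in the complete separable metric space $\mathcal{D} \times \mathcal{N}\times \R \times \mathcal{B} \times \mathcal J \times \mathcal I$. Prokhorov's theorem then gives a sequence $\delta_k \to 0$ along which the laws converge weakly. The Skorokhod representation theorem then gives a coupling on which the convergence holds almost surely. This settles the first claim.

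\textbf{Step 2: if $s < \qduration$, then $t(s) < \infty$.} Recall that $\mathcal I$ carries the topology of pointwise convergence of $e^{-f}$ at rational times. Since each $t^{\delta_k}$ is nondecreasing, so is the limit $t$ at rational times; I take $t$ to be its right-continuous nondecreasing extension. Fix a rational $s$. Lemma~\ref{lem-tight-t} gives
\[
\P[t^{\delta}(s) > N,\ s<\qduration^\delta] \le \epsilon_N,
\]
with $\epsilon_N \to 0$ uniformly in $\delta$. The event $\{t(s)=\infty\}\cap\{s<\qduration\}$ is contained in the set where $t^{\delta_k}(s) > N$ and $s < \qduration^{\delta_k}$ hold for infinitely many $k$; indeed, $s<\qduration$ and $\qduration^{\delta_k}\to\qduration$ imply $s<\qduration^{\delta_k}$ eventually. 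Fatou's lemma then bounds the probability of this event by $\epsilon_N$, so it is null. Taking the union over rational $s$, and using monotonicity of $t$ (for any $s<\qduration$ pick a rational $s'\in(s,\qduration)$), we get that a.s.\ $t(s)<\infty$ for all $s<\qduration$.

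\textbf{Step 3: if $s \ge \qduration$, then $t(s)=\infty$.} I would show that $t(s) \to \infty$ as $s \uparrow \qduration$ and then use monotonicity. The quantity to control is the conformal radius of $\D\backslash K^\delta_s$ seen from $0$, which is $e^{-t^\delta(s)}$. The key input is that $\cS^\delta$ is the hitting time of $0$ and that $L^\delta_s \to 0$ as $s\uparrow\qduration^\delta$ (Remark~\ref{rem-bdy-reg}, applied through Lemma~\ref{lem-duration-indep}). The idea is that if $e^{-t^\delta(s)}$ stayed bounded below near $\qduration^\delta$, then the boundary length $L^\delta_s$ would stay bounded below. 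Since the law of $\phi^\delta_s$ is $P_{\alpha,\beta,L^\delta_s}$, with probability close to one we should have that a quantum-length-small boundary forces $\cA^\gamma_{\phi_s^\delta}(B_{1/2}(0))$ to be small. By the Koebe argument of Lemma~\ref{lem-tight-t}, this area is at least the $\cA^\gamma_{\phi^\delta}$-mass of a ball of radius comparable to $e^{-t^\delta(s)}$. The upshot is the estimate: for each $r>0$,
\[
\P\bigl[t^\delta(s) < r \text{ and } s > \qduration^\delta - \epsilon\bigr] \to 0 \quad \text{as } \epsilon\to 0, \text{ uniformly in } \delta .
\]
This estimate passes to the limit along the coupling.

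The real obstacle is making Step 3 uniform in $\delta$. The simpler route is to use that $\qduration^\delta$ and $(L^\delta_\cdot)$ have $\delta$-independent laws, and that $\tilde L^{\delta_k}\to \tilde L$ in $\mathcal J$, so the limit also has length tending to $0$ at $\qduration$. I expect to need a quantitative lemma: for $\phi\sim P_{\alpha,\beta,\ell}$ with $\ell$ small, $\cA^\gamma_\phi(B_{1/2}(0))$ is small with high probability. This should follow from the scaling $\phi \stackrel d= \phi' + \frac2\gamma\log\ell$ (up to the weighting, which is controlled via Definition~\ref{def-lf-conditioned}), under which area scales like $\ell^2$.
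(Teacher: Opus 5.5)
Your Steps 1 and 2 are correct and follow the paper. Tightness plus Prokhorov and Skorokhod gives the coupling. Your Fatou argument for rational $s<\qduration$ spells out the paper's one-line appeal to Lemma~\ref{lem-tight-t}, using that $s<\qduration$ forces $s<\qduration^{\delta_k}$ eventually.

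The gap is in Step 3, and that step also aims at the wrong target. Its key input is that $L^\delta_s\to 0$ as $s\uparrow\qduration^\delta$, and this is false in the dilute phase. There $L^\delta_s = 1+2s$ for all $s<\qduration^\delta$, and the terminal unexplored region is a quantum disk of boundary length $1+2\qduration^\delta$ (Lemma~\ref{lem-simple-unexplored-qle}). Remark~\ref{rem-bdy-reg} asserts $L_s\to0$ only in the setting of Proposition~\ref{prop-radial-mot-swallow-finite}. So the mechanism ``small boundary length forces small area near $0$, hence small conformal radius'' cannot start in that phase. In the other phases it still needs a uniform-in-$\delta$ estimate at random times near $\qduration^\delta$, which you have not proved. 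More importantly, the claim $t(s)\to\infty$ as $s\uparrow\qduration$ is stronger than the lemma needs. It rules out a jump of the capacity at the terminal time, which is a special case of the continuity asked about in Question~\ref{question-continuous} and is never established in the paper.

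The missing idea is that the needed direction follows directly from the definitions. By \eqref{eq-t-delta}, $t^\delta(s)=\infty$ for all $s\ge\qduration^\delta$. For rational $q>\qduration$, the convergence $\qduration^{\delta_k}\to\qduration$ gives $q>\qduration^{\delta_k}$ for all large $k$. Hence $t^{\delta_k}(q)=\infty$ eventually, and so $t(q)=\infty$. Monotonicity extends this to every real $s>\qduration$. Right-continuity of $t$, which is built into $\mathcal I$ or into your right-continuous-extension convention, then gives $t(\qduration)=\lim_{q\downarrow\qduration}t(q)=\infty$. Replacing Step 3 by this argument gives exactly the paper's proof, with no estimate on boundary lengths or areas.
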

\begin{proof}
More strongly, for any sequence decreasing to $0$, we can find a subsequence $(\delta_k)$ such that the first claim holds. Indeed, denote the law of $(\phi^\delta, \nu^\delta, \qduration^\delta, \cA^\gamma_{\phi^\delta}, \tilde L^\delta, t^\delta)$ by $\mu_\delta$. Since each $\mu_\delta$ is a probability measure on the complete separable metric space $\mathcal{D} \times \mathcal{N}\times \R \times \mathcal{B} \times \mathcal J \times \mathcal I$, for any sequence decreasing to $0$ we can find a subsequence $(\delta_k)$ such that $\mu_{\delta_k}$ converges weakly as $k \to \infty$ by Prokhorov's theorem, and then the Skorokhod representation theorem gives the desired coupling. For the second claim, by definition $t(q) = \lim_{k \to \infty} t^{\delta_k}(q)$ for rational $q$. Thus, for rational $q_1 < \qduration$ we have $t(q_1) < \infty$ due to Lemma~\ref{lem-tight-t}, and for rational $q_2 > \qduration$ we have $t(q_2) = \infty$ since $q_2 > \qduration$ implies $q_2 > \qduration^{\delta_k}$ and hence $t^{\delta_k}(q_2) = \infty$ for sufficiently large $k$. The right-continuity of $t$ then yields the second claim.
\end{proof}

In the setting of Lemma~\ref{lem-coupling}, let 
\[(\phi, \qduration, \nu, \tilde L, t) := \lim_{k \to \infty}(\phi^{\delta_k}, \qduration^{\delta_k}, \nu^{\delta_k},  \tilde L^{\delta_k}, t^{\delta_k}),\]
and let $L_s = \tilde L_{s/\qduration}$ for $s \in [0,\qduration]$ (so in particular $L_\qduration = \lim_{s \to \qduration} L_s$). Let $(\tilde K_t, g_t)_{t >0}$ be the process of hulls and mapping-out functions corresponding to $\nu$ as in Proposition~\ref{prop-loewner-bijection} and define $K_s = \tilde K_{t(s)}$ for $0 \leq s < \qduration$. Note that we have not yet defined $K_\qduration$; this will be done in Section~\ref{sec-phases}.

The pair $(\phi, (K_\cdot)_{[0,\qduration)})$ induces the process of fields $\phi_s := g_{t(s)} \bullet_\gamma \phi$ for $0 \leq s < \qduration$ (so $\phi_s$ is obtained from $\phi$ by mapping out from $\D \backslash K_s$ to $\D$). 
We call $(L_\cdot)_{[0,\qduration]}$ the boundary length process of $(\phi, (K_\cdot)_{[0,\qduration)})$; we will see that it is a measurable function of $(\phi, (K_\cdot)_{[0,\qduration)})$ (Corollary~\ref{cor-length-process}) which describes the boundary length associated to $\phi_s$ (Proposition~\ref{prop-bd-length-consistent-reals}).

\begin{remark}
In Section~\ref{sec-phases} we will define QLE$(\gamma^2, \eta)$ to be the pair $(\phi, (K_\cdot)_{[0,\qduration]})$, where we extend the process to time $\qduration$, and take a further subsequence and apply the Skorokhod representation theorem to endow the process with desirable properties\footnote{We expect that these desirable properties should hold even without taking a further subsequence. This is related to the conjectural uniqueness of the subsequential limit (Question~\ref{question-unique}).}.
    Thus, all results obtained in this section also apply for $(\phi, (K_\cdot)_{[0,\qduration]})$ sampled from  QLE$(\gamma^2, \eta)$. 
\end{remark}

We now identify the laws of the field and the boundary length process of $(\phi, (K_\cdot)_{[0,\qduration)})$. 
\begin{lemma}\label{lem-qle-field-bdy-procress}
    The law of $\phi$ is $P_{\alpha, \beta, 1}$  where $(\alpha, \beta)$ is as in Definition~\ref{def-delta-QLE-finite} and $P_{\alpha, \beta, 1}$ is defined in Definition~\ref{def-P-weighted}. Also, $(L_\cdot)_{[0,\qduration)}$  agrees in law with the boundary length process of Proposition~\ref{prop-radial-mot-simple-finite} if $\gamma \in (\sqrt3 - 1,2)$ and $\eta = \frac3{\gamma^2} - \frac12$, Proposition~\ref{prop-radial-mot-swallow-finite} if $\gamma \in (2\sqrt3 - 2, 2)$ and $\eta = \frac{3\gamma^2}{16} - \frac12$, or  
    Proposition~\ref{prop-radial-mot-sf-finite} if $\gamma \in (0, 4/3)$ and $\eta = \frac{3\gamma^2}{16} - \frac12$ (where in all three cases, the initial boundary length is $\ell = 1$).
\end{lemma}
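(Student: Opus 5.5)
The plan is to pass to the limit in the distributional identities that hold at each fixed $\delta$. For the field, each $\phi^{\delta_k}$ has law $P_{\alpha,\beta,1}$ by construction (Definition~\ref{def-delta-QLE-finite}). Since $\phi^{\delta_k}\to\phi$ almost surely in $\mathcal D$, hence in law, the limit $\phi$ also has law $P_{\alpha,\beta,1}$. No further argument is needed here.

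For the boundary length process, I would work with the rescaled pair $(\qduration^\delta,\tilde L^\delta)$, which lives in $\R\times\mathcal J$. By Lemma~\ref{lem-duration-indep},
\[(\qduration^\delta,(L^\delta_\cdot)_{[0,\qduration^\delta)})\stackrel d=(\qduration^\SLE,(L^\SLE_\cdot)_{[0,\qduration^\SLE)}).\]
Setting $\tilde L^\SLE(x)=L^\SLE_{\qduration^\SLE x}$, with $L^\SLE_{\qduration^\SLE}$ defined as the left limit (Remark~\ref{rem-bdy-reg}), the pair $(\qduration^\delta,\tilde L^\delta)$ is the image of $(\qduration^\delta,(L^\delta_\cdot))$ under the same deterministic map. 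Hence $(\qduration^\delta,\tilde L^\delta)\stackrel d=(\qduration^\SLE,\tilde L^\SLE)$ for every $\delta$. Because of the a.s.\ convergence in Lemma~\ref{lem-coupling}, the limit $(\qduration,\tilde L)$ has this same law as a random element of $\R\times\mathcal J$.

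It remains to transfer this back to the unrescaled process $L_s=\tilde L_{s/\qduration}$. The map $(S,f)\mapsto (S,(f(s/S))_{s\in[0,S)})$ is measurable on $\{S>0\}$, and $\qduration^\SLE>0$ a.s. Therefore $(\qduration,(L_\cdot)_{[0,\qduration)})\stackrel d=(\qduration^\SLE,(L^\SLE_\cdot)_{[0,\qduration^\SLE)})$, and the law of $(L^\SLE_\cdot)$ is the one from the relevant Proposition~\ref{prop-radial-mot-sf-finite}--\ref{prop-radial-mot-swallow-finite} with $\ell=1$, depending on the phase.

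The only point needing care is the Skorokhod topology: we must check that the law of the limit in $\mathcal J$ is determined by its finite-dimensional distributions, so that equality of laws at each $\delta$ passes to the limit. This is immediate, since the law of each $\tilde L^{\delta_k}$ is the same fixed law and a.s.\ convergence forces the limit to have that law. One should also confirm that $\tilde L^\SLE$ is a.s.\ càdlàg on $[0,1]$, including at $x=1$. This follows from Remark~\ref{rem-bdy-reg}: the process is continuous in the space-filling and dilute cases, and càdlàg with left limit $0$ in the swallowing case. I do not expect a genuine obstacle; the proof is a bookkeeping exercise combining Lemma~\ref{lem-duration-indep} with Lemma~\ref{lem-coupling}.
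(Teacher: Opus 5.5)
Your proposal is correct and matches the paper's proof. The field's law passes to the a.s.\ limit, and the $k$-independent joint law of $(\qduration^{\delta_k},\tilde L^{\delta_k})$ from Lemma~\ref{lem-duration-indep} passes to $(\qduration,\tilde L)$, then to $(L_\cdot)_{[0,\qduration)}$ by undoing the time rescaling. Your aside about finite-dimensional distributions is unnecessary: an a.s.\ limit of identically distributed random elements of a metric space automatically has that common law, which is the reasoning you then give yourself.
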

\begin{proof}
The first claim follows from the fact that for all $k$ the law of $\phi^{\delta_k}$ is $P_{\alpha, \beta, 1}$, and $\phi = \lim_{k \to \infty} \phi^{\delta_k}$. For the second claim, by Lemma~\ref{lem-duration-indep} the law of $(\tilde L^{\delta_k},  \qduration^{\delta_k})$ does not depend on $k$, hence $(\tilde L, \qduration) \stackrel d= (\tilde L^{\delta_k},  \qduration^{\delta_k}) $ for any $k$; we conclude that $(L_\cdot)_{[0,\qduration)} \stackrel d= (L_\cdot^{\delta_k})_{[0,\qduration^{\delta_k})}$ for any $k$. Lemma~\ref{lem-duration-indep} then identifies the law of  $(L_\cdot^{\delta_k})_{[0,\qduration^{\delta_k})}$ for each of the three cases, as needed. 
\end{proof}

We next establish two types of results for $(\phi, (K_\cdot)_{[0,\qduration)})$. First, the subsequential limits of certain observables of $\delta$-QLE$(\gamma^2, \eta)$ a.s.\ agree with the corresponding observables for $(\phi, (K_\cdot)_{[0,\qduration)})$. This is shown for the LQG area measure in Lemma~\ref{lem-area-measure-consistent} and the boundary length process in Corollary~\ref{cor-length-process}. Second, $(\phi, (K_\cdot)_{[0,\qduration)})$ enjoys a stationarity property (Proposition~\ref{prop-bd-length-consistent-reals}) analogous to that of $\delta$-QLE (Lemma~\ref{lem-duration-indep}). 

\begin{lemma}\label{lem-area-measure-consistent}
    Almost surely $\cA^\gamma_{\phi} = \lim_{k \to \infty} \cA^\gamma_{\phi^{\delta_k}}$ as elements of $\mathcal B$.
\end{lemma}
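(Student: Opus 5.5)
The claim is that almost surely $\cA^\gamma_{\phi} = \lim_{k\to\infty}\cA^\gamma_{\phi^{\delta_k}}$ in $\mathcal B$. Write $\mu := \lim_k \cA^\gamma_{\phi^{\delta_k}}$ for the a.s.\ limit given by the coupling of Lemma~\ref{lem-coupling}. The difficulty is that $\phi \mapsto \cA^\gamma_\phi$ is only measurable, not continuous, from $\cD$ to $\mathcal B$. So I would not try to pass to the limit pathwise. Instead I would use a soft argument for joint laws: since each pair $(\phi^{\delta_k}, \cA^\gamma_{\phi^{\delta_k}})$ has the same law as $(\phi, \cA^\gamma_\phi)$ (all the $\phi^{\delta_k}$ have law $P_{\alpha,\beta,1}$), the limit $(\phi,\mu)$ is again the graph of the measurable map $F(\phi)=\cA^\gamma_\phi$.

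\textbf{Step 1.} Fix $k$. The pair $(\phi^{\delta_k}, \cA^\gamma_{\phi^{\delta_k}})$ is the pushforward of $P_{\alpha,\beta,1}$ under $x\mapsto (x,F(x))$, so its law $\rho$ on $\cD\times\mathcal B$ does not depend on $k$. Hence the a.s.\ limit $(\phi,\mu)$ also has law $\rho$. In particular $(\phi,\mu) \stackrel d= (\phi, F(\phi))$.

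\textbf{Step 2.} A random pair $(X,Y)$ with law $\rho$, the law of $(X,F(X))$, must satisfy $Y = F(X)$ a.s. Indeed, the set $G=\{(x,y): y = F(x)\}$ is Borel in $\cD\times\mathcal B$, because $F$ is Borel measurable between Polish spaces. Then $\rho(G)=1$, so $\P[(\phi,\mu)\in G] = 1$, which is exactly $\mu = \cA^\gamma_\phi$ a.s.

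To make this precise I need $F$ to be a Borel map on a full-measure set of $\cD$. I would take $F(x) = \lim_{n} \cA^{\gamma,\eps_n}_x$, the regularized measures along a fixed sequence $\eps_n = 2^{-n}$. Each regularized measure is a continuous function of $x$ restricted to compact subsets $(1-1/m)\D$, since circle averages are continuous functionals of $x\in\cD$ once mollified. The total mass near $\partial\D$ is handled by exhausting $\D$ by $(1-1/m)\D$, which is legitimate because $\cA^\gamma_\phi(\partial \D)=0$. Set $F$ equal to $0$ where the limit fails to exist. Then $F$ is Borel, and for $P_{\alpha,\beta,1}$-a.e.\ $x$ it agrees with the LQG area measure, using absolute continuity of $P_{\alpha,\beta,1}$ with respect to a GFF plus a log singularity and a constant.

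\textbf{Main obstacle.} I expect the main obstacle to be only this measurability bookkeeping, namely that $\cA^\gamma$ is defined as an a.s.\ limit. The essential point is that the same Borel version $F$ is used for every $k$, so that Step 1's identity in law is genuinely an identity for pushforwards under one fixed map.
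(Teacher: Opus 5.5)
Your argument is correct and is essentially the paper's proof. The law of $(\phi^{\delta_k}, \cA^\gamma_{\phi^{\delta_k}})$ does not depend on $k$, so the a.s.\ limit $(\phi, \lim_k \cA^\gamma_{\phi^{\delta_k}})$ has that same law, and since the second coordinate is a.s.\ the LQG area measure of the first, the same holds in the limit. Your Borel-graph formulation just makes explicit the measurability step that the paper leaves implicit; for that step you only need Borel measurability of the regularized measures, not continuity in $x$.
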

\begin{proof}
Let $\wt \cA := \lim_{k \to \infty} \cA^\gamma_{\phi^{\delta_k}}$. 
By definition the law of $\phi^{\delta_k}$ is $P_{\alpha, \beta, 1}$ for any $k$, so the law of  $(\phi^{\delta_k}, \cA^\gamma_{\phi^{\delta_k}})$ does not depend on $k$. Since $\lim_{k \to \infty}(\phi^{\delta_k}, \cA^\gamma_{\phi^{\delta_k}}) = (\phi, \wt \cA)$ a.s., we have $(\phi, \wt \cA) \stackrel d= (\phi^{\delta_k}, \cA^\gamma_{\phi^{\delta_k}})$ for any $k$. Since $\cA^\gamma_{\phi^{\delta_k}}$ is the $\gamma$-LQG area measure of $\phi^{\delta_k}$ a.s., we conclude that $\wt \cA$ is the $\gamma$-LQG area measure of $\phi$ a.s., completing the proof.
\end{proof}

Next, we give a technical lemma that we will use to show stationarity of $(\phi, (K_\cdot)_{[0,\qduration)})$. Namely, $\phi^{\delta_k}_{s_n}$ converges in a certain sense to $\phi_s$ where first $k \to \infty$ then $s_n \downarrow s$ along a sequence of rational times $s_n$; we work with rational times since  $t^{\delta_k}$ and $t$ lie in the metric space $\mathcal I$ of c\'adl\'ag nondecreasing functions, whose metric is defined in terms of pointwise convergence on the rationals (Section~\ref{subsec-topologies}). 

\begin{lemma}\label{lem-field-estimate}
Let $q>0$ be rational and let $f$ be a smooth compactly-supported function on $\D$. Then 
\[ \lim_{k \to \infty} (\phi_{q}^{\delta_k}, f) = (\phi_q, f)\quad \text{a.s.\ on the event }\{q < \qduration\}.\]
\end{lemma}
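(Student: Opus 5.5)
The plan is to write both pairings as integrals against the original fields and then pass to the limit. Fix rational $q$ and smooth compactly supported $f$, and work on the event $\{q<\qduration\}$. By Lemma~\ref{lem-coupling}, on this event $t(q)<\infty$, and $t^{\delta_k}(q)\to t(q)$ because the metric on $\mathcal I$ controls pointwise convergence at rationals. In particular $q<\qduration^{\delta_k}$ for all large $k$. Write $G^k := g_{K^{\delta_k}_q}^{-1} = (g^{\delta_k}_{t^{\delta_k}(q)})^{-1}$, where $g^{\delta_k}_\cdot$ are the mapping-out functions of $\nu^{\delta_k}$. Similarly write $G := g_{t(q)}^{-1}$. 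The coordinate change formula~\eqref{eq-coord-change} gives
\[(\phi^{\delta_k}_q, f) = \int_{G^k(\D)} \phi^{\delta_k}(w)\, f((G^k)^{-1}(w))\, |((G^k)^{-1})'(w)|^2\,dw + Q\int_\D f(z)\log|(G^k)'(z)|\,dz,\]
and the analogous identity holds for $(\phi_q,f)$ with $G$ in place of $G^k$.

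First I establish convergence of the conformal maps. Since $\nu^{\delta_k}\to\nu$ and $t^{\delta_k}(q)\to t(q)$, Proposition~\ref{prop-conv-loc-spacetime} yields $G^k\to G$ uniformly on compact subsets of $\D$. This uses the joint convergence in $(z,t)$, which is exactly why the spacetime version is needed rather than Proposition~\ref{prop-loewner-bijection}. By Cauchy estimates the derivatives also converge locally uniformly, and $\log|(G^k)'|\to\log|G'|$ uniformly on $\mathrm{supp} f$. It follows that the $Q\log|\cdot|$ term converges.

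For the main term, substitute $w=G^k(z)$. This rewrites it as $\int \phi^{\delta_k}(G^k(z)) f(z)\,dz$, or in the distributional sense $(\phi^{\delta_k}, F_k)$ with
\[F_k := \big(f\,|((G^k)^{-1})'|^2\big)\circ (G^k)^{-1}\ \text{extended by zero}.\]
The limiting term is $(\phi,F)$, with $F$ defined in the same way from $G$. Let $U\Subset\D$ be a neighborhood of $G(\mathrm{supp} f)$. For large $k$, each $F_k$ is supported in a fixed compact set $U'\subset\D$, and $F_k\to F$ in $C^m$ for every $m$, by uniform convergence of $G^k$ and its derivatives together with convergence of inverses on compacts (Hurwitz/Carath\'eodory). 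We also have $\phi^{\delta_k}\to\phi$ in $\mathcal D$, which means convergence in $(-\Delta)^aL^2((1-\epsilon)\D)$ for a fixed $\epsilon$ with $U'\subset(1-\epsilon)\D$. Pairing with a test function is then controlled by the dual (Sobolev-type) norm of the test function. This gives
\[|(\phi^{\delta_k},F_k)-(\phi,F)|\le |(\phi^{\delta_k}-\phi,F_k)|+|(\phi,F_k-F)|\to0.\]

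The main obstacle is this last step. I must check that the norm on $\mathcal D^\epsilon_a$ really controls pairings against smooth test functions uniformly in a $C^m$-bounded family. I expect this to follow since $(\phi,g)=((-\Delta)^{-a}\phi,(-\Delta)^{a}g)$ and $(-\Delta)^a g$ is bounded in $L^2$ for $g$ in a $C^m$-bounded, compactly supported family. I also must verify that the change of variables is legitimate for distributions, which holds because the maps involved are smooth diffeomorphisms on neighborhoods of the supports. A minor additional point is that all of this takes place on the almost sure event where the Skorokhod coupling converges.
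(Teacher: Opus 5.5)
Your proof is correct and follows the paper's argument essentially line by line. You use the same decomposition of $(\phi_q^{\delta_k},f)$ into a pairing of $\phi^{\delta_k}$ against a pulled-back test function plus a $Q\log|\cdot|$ term, the same appeal to Proposition~\ref{prop-conv-loc-spacetime} and Cauchy estimates, and the same bound $(\phi,F)=((-\Delta)^{-a}\phi,(-\Delta)^aF)$; the paper splits the difference as $(\phi^{\delta_k},F_k-F)+(\phi^{\delta_k}-\phi,F)$ instead of your split, which is immaterial. One small slip: the test function should be $F_k=(f\circ (G^k)^{-1})\,|((G^k)^{-1})'|^2$, as in your displayed integral, not $\big(f\,|((G^k)^{-1})'|^2\big)\circ (G^k)^{-1}$.
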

Note that the above limit makes sense since $q < \qduration$ implies $q < \qduration^{\delta_k}$ for all large $k$ so $\phi_{q}^{\delta_k}$ is defined.
\begin{proof}
Let $(g^{\delta_k}_t)_{t \geq 0}$ be the family of mapping out functions for $\nu^{\delta_k}$ and let $(g_t)_{t \geq 0}$ be the family of  mapping out functions for $\nu$ as in Proposition~\ref{prop-loewner-bijection}. 
To lighten notation, in this argument we write $g^k:= g^{\delta_k}_{t^{\delta_k}(q)}$ and $g:=g_{t(q)}$. 
    Since $\phi_q = g \bullet_\gamma \phi = \phi \circ g^{-1} + Q \log |(g^{-1})'|$, we have 
\eqb \nonumber
(\phi_q, f) = (\phi \circ g^{-1} + Q \log |(g^{-1})'|, f) = (\phi, |g'|^2 f\circ g) + Q (\log |(g^{-1})'|, f).
\eqe
where $|g'|^2 f\circ g$ is extended smoothly from $\D\backslash K_q$ to $\D$ by setting it equal to zero on $K_q$. Similarly,
\eqb \nonumber
(\phi_{q}^{\delta_k}, f) = (\phi^{\delta_k}, |(g^k)'|^2 f\circ g^k) + Q (\log |((g^k)^{-1})'|, f).
\eqe
Thus, to prove the claim, it suffices to show the almost sure limits
\begin{align} \label{eq-map-estimate}
    &\lim_{k \to \infty} (\log |((g^k)^{-1})'|, f) = (\log |(g^{-1})'|, f),\\
\label{eq-field-estimate}
&\lim_{k \to \infty} (\phi^{\delta_k}, |(g^k)'|^2 f\circ g^k) = (\phi, |g'|^2 f\circ g).
\end{align}
To prove these, we crucially need that $(g^k)^{-1} - g^{-1} \to 0$ uniformly on compact sets as $k \to \infty$; this follows from Proposition~\ref{prop-conv-loc-spacetime} since $\lim_{k \to \infty} \nu^{\delta_k} = \nu$ and $\lim_{k \to \infty} t^{\delta_k}(q) = t(q)$.
The first limit~\eqref{eq-map-estimate} is then immediate since the Cauchy integral formula gives $((g^k)^{-1})' - (g^{-1})' \to 0$ uniformly on compact sets.

We now turn to showing~\eqref{eq-field-estimate}. Fix some $\eps$ such that the support of $f\circ g$ lies in $(1-2\eps)\D$, then the support of $f \circ g^k$ lies in $(1-\eps) \D$ for all large $k$.  Let $\| \cdot \|_0$ denote the $L^2$-norm on $(1-\eps) \D$, and let $\|\cdot \|_a = \|(-\Delta)^{-a} \cdot\|_0$ where $a>0$ is the arbitrary constant in the definition of the function space $\cD$ in Section~\ref{subsec-topologies}. Since $(-\Delta)$ is self-adjoint, the Cauchy-Schwarz inequality yields $(f_1, f_2) = (((-\Delta)^{-a}f_1) ,((-\Delta)^a f_2)) \leq \| f_1\|_a \cdot \|(-\Delta)^a f_2\|_0$.  We thus have
\[
\begin{gathered}
|(\phi^{\delta_k}, |(g^k)'|^2 f \circ g^k) - (\phi, |g'|^2 f \circ g)| \leq |(\phi^{\delta_k}, |(g^k)'|^2 f \circ g^k - |g'|^2 f \circ g)| + |(\phi^{\delta_k} - \phi, |g'|^2 f\circ g)|  \\
\leq \| \phi^{\delta_k}\|_a \cdot \|(-\Delta)^a(|(g^k)'|^2 f \circ g^k - |g'|^2 f \circ g)\|_0 + \|\phi^{\delta_k} - \phi\|_a \cdot \|(-\Delta)^a (|g'|^2 f\circ g)\|_0.
\end{gathered}
\]
Since $\lim_{k \to \infty} \phi^{\delta_k} = \phi$ in $\cD$, we have $\lim_{k \to \infty} \|\phi^{\delta_k}\|_a = \|\phi\|_a<\infty$. Moreover, since the $g^k$ are holomorphic functions which converge uniformly on a neighborhood of $g^{-1} (\mathrm{supp}(f))$ to $g$, their derivatives converge uniformly on $g^{-1} (\mathrm{supp}(f))$ to the derivatives of $g$. As $f$ is smooth, we conclude that all partial derivatives of $|(g^k)'|^2 f \circ g^k$ converge uniformly on $\ol \D$ to those of $|g'|^2 f \circ g$. Thus $\lim_{k \to \infty} \|(-\Delta)^a (|(g^k)'|^2 f\circ g^k - |g'|^2 f \circ g)\|_0 = 0$. This implies the first term converges to zero. For the second term, $\lim_{k \to \infty} \phi^{\delta_k} = \phi$ in $\cD$ implies $\lim_{k \to \infty}\| \phi^{\delta_k} - \phi\|_a = 0$. Thus~\eqref{eq-field-estimate} holds, completing the proof. 
\end{proof}

We now employ Lemma~\ref{lem-field-estimate} to show that for each time $s$, the field at time $s$ and boundary length process up until time $s$ of $(\phi, (K_\cdot)_{[0,\qduration)})$ agree in law with those of $\delta$-QLE$(\gamma^2, \eta)$.

\begin{proposition}\label{prop-bd-length-consistent-reals}
For $s>0$ and $k \in \mathbb N$, the law of $(\phi_s, (L_\cdot)_{[0,s]})$ restricted to $\{ s < \qduration\}$ agrees with the law of $(\phi_s^{\delta_k}, (L_\cdot^{\delta_k})_{[0,s]})$ restricted to $\{ s < \qduration^{\delta_k}\}$. In particular, conditioned on $\{ s < \qduration\}$ and on $(L_\cdot)_{[0,s]}$, the conditional law of $\phi_s$ is $P_{\alpha, \beta, L_s}$ where $(\alpha, \beta)$ is as in Definition~\ref{def-delta-QLE-finite}.
\end{proposition}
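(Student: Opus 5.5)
The plan is to prove the statement first at rational times $q$ and then pass to general $s$ by approximating from the right. By Lemma~\ref{lem-duration-indep}, the law of $(\phi^{\delta_k}_q, (L^{\delta_k}_\cdot)_{[0,q]})$ restricted to $\{q<\qduration^{\delta_k}\}$ is the same for every $k$. Indeed, $(\qduration^{\delta_k}, L^{\delta_k})$ has the law of the SLE exploration, and conditionally on it $\phi^{\delta_k}_q \sim P_{\alpha,\beta,L^{\delta_k}_q}$.

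\textbf{Rational times.} Fix a rational $q$, and write $\mu$ for the common law above. Fix smooth compactly supported test functions $f_1,\dots,f_m$, and consider the vector $\big((\phi^{\delta_k}_q,f_i)_{i\le m},\ \tilde L^{\delta_k},\ \qduration^{\delta_k}\big)$. By Lemma~\ref{lem-field-estimate}, on $\{q<\qduration\}$ it converges a.s.\ to $\big((\phi_q,f_i)_{i\le m},\ \tilde L,\ \qduration\big)$. For the indicator of $\{q<\qduration\}$, note that $\qduration$ has an atomless law (it equals the SLE duration in law), so $1_{q<\qduration^{\delta_k}}\to 1_{q<\qduration}$ a.s.

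The subtle point is recovering $(L_\cdot)_{[0,q]}$ from the Skorokhod limit of $\tilde L^{\delta_k}$. Convergence in the $J_1$ topology gives convergence at continuity points of $\tilde L$, and $q/\qduration$ is a.s.\ a continuity point because the fixed-time jump probability vanishes (again by matching in law). Also $\qduration^{\delta_k}\to\qduration$. Together these show that the restricted path $L^{\delta_k}|_{[0,q]}$ converges in Skorokhod topology to $L|_{[0,q]}$. Taking bounded continuous test functions of these quantities, every finite-dimensional projection of the law of $(\phi_q,(L_\cdot)_{[0,q]})1_{q<\qduration}$ equals the corresponding projection of $\mu$. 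Finite-dimensional distributions of pairings determine the law of a distribution in $\cD$, so the claim follows at rational $q$.

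\textbf{General $s$.} Take rationals $q_n\downarrow s$. On $\{s<\qduration\}$ we have $q_n<\qduration$ eventually, and right-continuity of $t$ gives $t(q_n)\downarrow t(s)$. Proposition~\ref{prop-conv-loc-spacetime} gives $g_{t(q_n)}^{-1}\to g_{t(s)}^{-1}$ locally uniformly. The same computation as in Lemma~\ref{lem-field-estimate} then gives $(\phi_{q_n},f)\to(\phi_s,f)$ a.s. In addition, $L|_{[0,q_n]}\to L|_{[0,s]}$ by right-continuity.

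On the $\delta_k$ side, the laws of $(\phi^{\delta_k}_{q_n},L^{\delta_k}|_{[0,q_n]})$ (restricted) converge as $n\to\infty$ to that of $(\phi^{\delta_k}_s,L^{\delta_k}|_{[0,s]})$. This follows from the explicit description: $L^{\delta_k}$ is distributed as the SLE boundary length process, which is right-continuous with a fixed-time continuity point, and $P_{\alpha,\beta,\ell}$ depends continuously on $\ell$. Matching the two limits gives the first claim. The second claim, the conditional law $P_{\alpha,\beta,L_s}$, then follows immediately from Lemma~\ref{lem-duration-indep}.

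\textbf{Main obstacle.} The delicate step is this last matching. Continuity of $\ell\mapsto P_{\alpha,\beta,\ell}$ in law on $\cD$ should follow from Definition~\ref{def-lf-conditioned}, since the field is an explicit reweighting plus the constant shift $\frac2\gamma\log(\ell/L)$. Alternatively, one can bypass it by noting that both $(\phi_{q_n},L|_{[0,q_n]})$ and the $\delta_k$-analogue have equal laws for each $n$, so their limits in law must agree. That argument requires a.s.\ convergence of $\phi^{\delta_k}_{q_n}\to\phi^{\delta_k}_s$ for fixed $k$, which holds because the $\delta$-QLE hulls are right-continuous in Carath\'eodory sense.
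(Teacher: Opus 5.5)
Your proof is correct. The rational-time step is the paper's argument, and you are in fact more careful than the paper about $1_{q<\qduration^{\delta_k}}\to 1_{q<\qduration}$ and about $q/\qduration$ being a continuity point of $\tilde L$, both of which the paper takes for granted.

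The route for general $s$ differs. The paper never returns to the $\delta_k$-side at irrational times. It sets $\psi_s=\phi_s-\frac2\gamma\log L_s$ and notes from the rational case that, conditionally on $\{s_n<\qduration\}$ and the coarser $(L_\cdot)_{[0,s]}$, the field $\psi_{s_n}$ has the \emph{fixed} law $P_{\alpha,\beta,1}$. It then passes this independence statement through the a.s.\ limit $(\psi_{s_n},f)\to(\psi_s,f)$ using only the limiting process. The equality of joint laws at general $s$ is then read off from the conditional-law statement, $L\stackrel d= L^{\delta_k}$, and Lemma~\ref{lem-duration-indep}.

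You instead take limits on both sides and match them. This needs two extra inputs. First, weak continuity of $\ell\mapsto P_{\alpha,\beta,\ell}$ on the $\delta_k$-side. Second, convergence of $L|_{[0,q_n]}$ to $L|_{[0,s]}$, which in $J_1$ needs $s$ to be a.s.\ a continuity point of $L$, or else conditioning on the coarser $(L_\cdot)_{[0,s]}$ as the paper does.

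The continuity you flag as the main obstacle is immediate, and it is exactly the fact behind the paper's normalization. By Definitions~\ref{def-lf-conditioned} and~\ref{def-P-weighted}, $P_{\alpha,\beta,\ell}$ is the pushforward of $P_{\alpha,\beta,1}$ under $\phi\mapsto\phi+\frac2\gamma\log\ell$. This is because a constant shift changes the weight $\cL^\beta_\phi(\partial\D)$ only by a constant factor.

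So both proofs rest on the same ingredients. The paper's normalization is more economical: it needs no convergence statement on the $\delta_k$-side and no endpoint continuity of $L$. Your version makes the equality of joint laws explicit at each step. Your alternative bypass via right-continuity of the $\delta_k$-process is not needed.
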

\begin{proof}
By Lemma~\ref{lem-qle-field-bdy-procress} we have $(L_\cdot)_{[0,\qduration)} \stackrel d= (L^{\delta_k})_{[0,\qduration^{\delta_k})}$ for all $k$. It suffices to show that for any $s>0$, conditioned on $\{s < \qduration\}$ and on $(L_\cdot)_{[0,s]}$, the conditional law of $\phi_s$ is $P_{\alpha, \beta, L_s}$. Indeed, this gives the second claim, and hence the first claim by Lemma~\ref{lem-duration-indep}. We first prove this for rational $s$. 

Let $q > 0$ be rational and let $f$ be a smooth compactly-supported function on $\D$. By Lemma~\ref{lem-field-estimate}, 
\eqb\label{eq-phi-limit}
\lim_{k \to \infty}  (\phi_{q}^{\delta_k}, f) = (\phi_q, f)\quad \text{a.s.\ on the event }\{q < \qduration\}.
\eqe
Let $(f_{j})_{j\in \N}$ be a countable collection of test functions such that $(f_{j})_{j \in \N} \cap ((-\Delta)^a L^2((1 - 1/n)\D))$ is dense in $(-\Delta)^a L^2((1 - 1/n)\D)$ for all integers $n > 1$. Such a collection exists as $(-\Delta)^a L^2((1 - 1/n)\D)$ is separable for each $n$, and %the space of smooth compactly supported functions $H_s((1 - 1/n)\D)$ 
$C_c^\infty((1-1/n)\D)$
is dense in $((-\Delta)^a L^2((1 - 1/n)\D))$ (see Remark 2.8 of \cite{shef-gff}). %Indeed, endowed with the subspace metric $H_s((1 - 1/n)\D)$ is also separable, and admits a countable dense set $A$. As $A$ is dense in $H_s((1 - 1/n)\D)$, and $H_s((1 - 1/n)\D)$ is dense in $(-\Delta)^a L^2((1 - 1/n)\D)$, we have $A$ dense in $(-\Delta)^a L^2((1 - 1/n)\D)$ and the existence of the collection $(f_j)_{j \in \N}$ follows.

By Lemma~\ref{lem-duration-indep} the law of $(\phi_q^{\delta_k}, (L_\cdot^{\delta_k})_{[0,q]})$ restricted to $\{q < \qduration^{\delta_k}\}$ does not depend on $k$, hence the law of $(((\phi_q^{\delta_k}, f_j))_{j \in \N},  (L_\cdot^{\delta_k})_{[0,q]})$ restricted to $\{q < \qduration^{\delta_k}\}$ does not depend on $k$. Since $\lim_{k\to\infty} (L_\cdot^{\delta_k})_{[0,q]} =(L_\cdot)_{[0,q]}$ and by~\eqref{eq-phi-limit} $\lim_{k \to \infty} (\phi_q^{\delta_k}, f_j) = (\phi_q, f_j)$ for all $j$, we conclude that the law of $(((\phi_q, f_j))_{j \in \N},  (L_\cdot)_{[0,q]})$ restricted to $\{q < \qduration\}$ agrees with the law of $(((\phi_q^{\delta_k}, f_j))_{j \in \N},  (L_\cdot^{\delta_k})_{[0,q]})$ restricted to $\{q < \qduration^{\delta_k}\}$ for any $k$. 
Since any element of $\cD$ is determined by its pairings with $(f_{j})_{j\in \N}$, we conclude that the law of $(\phi_q, (L_\cdot)_{[0,q]})$ restricted to $\{q < \qduration\}$ agrees with the law of $(\phi_q^{\delta_k}, (L_\cdot^{\delta_k})_{[0,q]})$ restricted to $\{ q < \qduration^{\delta_k}\}$. Lemma~\ref{lem-duration-indep} thus implies that conditioned on $\{q < \qduration\}$ and on $(L_\cdot)_{[0,q]}$, the conditional law of $\phi_q$ is $P_{\alpha, \beta, L_q}$.

Now, consider arbitrary $s>0$, and let $s_n$ be a decreasing sequence of rationals with limit $s$. Let $E_s = \{ s < \qduration\}$ and $E_{s_n} = \{ s_n < \qduration\}$. On the event $E_s$ define $\psi_s = \phi_s - \frac2\gamma \log L_s$, and similarly, on $E_{s_n}$ define $\psi_{s_n} = \phi_{s_n} - \frac2\gamma \log L_{s_n}$. We claim that for any smooth compactly-supported function $f$ on $\D$, we have 
\eqb\label{eq-psi-limit}
\lim_{n \to \infty} (\psi_{s_n}, f) = (\psi_s, f) \quad \text{a.s.\ on }E_s.
\eqe
Indeed, on $E_s$ the right-continuity of $t$ yields $\lim_{n \to \infty} t(s_n) = t(s)$, hence if $(g_t)_{t \geq 0}$ is the family of  mapping out functions for $\nu$ as in Proposition~\ref{prop-loewner-bijection}, then $g_{t(s_n)}^{-1}$ converges uniformly to $g_{t(s)}^{-1}$ on compact subsets of $\D$; arguing exactly as in Lemma~\ref{lem-field-estimate} gives $\lim_{n \to \infty} (\phi_{s_n}, f) = (\phi_s, f)$. Since $\lim_{n \to \infty} L_{s_n} = L_s$ by the right-continuity of $L_\cdot$, we obtain~\eqref{eq-psi-limit}.

By the rational-time case just proved, conditioned on $E_{s_n}$ and $(L_\cdot)_{[0,s]}$, the conditional law of $\psi_{s_n}$ is $P_{\alpha, \beta, 1}$. 
Since $E_{s_n} \downarrow E_s$, and using~\eqref{eq-psi-limit}, we conclude that conditioned on $E_s$ and $(L_\cdot)_{[0,s]}$, the conditional law of $\psi_{s}$ is $P_{\alpha, \beta, 1}$. Thus, conditioned on $E_s$ and on $(L_\cdot)_{[0,s]}$, the conditional law of $\phi_s$ is $P_{\alpha, \beta, L_s}$ as needed. 
\end{proof}

As immediate consequences, the boundary length process of $(\phi, (K_\cdot)_{[0, \qduration)})$ is measurable with respect to $(\phi, (K_\cdot)_{[0, \qduration)})$, and the time-parametrization $t(s)$ is a.s.\ strictly increasing.  

\begin{corollary}\label{cor-length-process}
For each $s \geq 0$, on the event $\{s < \qduration\}$ we have $L_s = \cL^\gamma_{\phi_s}(\partial \D)$ a.s. In particular, $(L_\cdot)_{[0, \qduration]}$ can be equivalently defined using $(\phi, (K_\cdot)_{[0, \qduration)})$ by setting $L_s = \cL^\gamma_{\phi_s}(\partial \D)$ for rational $s \in [0,\qduration)$,  extending to all $s \in [0,\qduration)$ by right-continuity and setting $L_\qduration = \lim_{s\to \qduration} L_s$. 
\end{corollary}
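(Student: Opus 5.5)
The first claim is proved by comparing two random variables with the same law, using the identity $\cL^\gamma_{\phi_s^{\delta_k}}(\partial\D) = L^{\delta_k}_s$. The proof of Proposition~\ref{prop-bd-length-consistent-reals} shows more than its statement. For rational $q$, the joint law of the pair $(\phi_q, (L_\cdot)_{[0,q]})$ restricted to $\{q<\qduration\}$ equals the joint law of $(\phi^{\delta_k}_q, (L^{\delta_k}_\cdot)_{[0,q]})$ restricted to $\{q<\qduration^{\delta_k}\}$. Here $\phi_q$ is viewed as an element of $\cD$, which is determined by countably many pairings $(\phi_q,f_j)$. For the approximations we have $\cL^\gamma_{\phi^{\delta_k}_q}(\partial\D)=L^{\delta_k}_q$ a.s., by the definition in Lemma~\ref{lem-duration-indep}. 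The map $\psi\mapsto \cL^\gamma_\psi(\partial\D)$ is only defined as an a.s.\ limit, and is not continuous on $\cD$. I handle this by writing it as a fixed Borel-measurable function $F$ on $\cD$. For instance, let $F(\psi)$ be the $\limsup$ of the regularized boundary integrals $\int_{\partial\D}\eps^{\gamma^2/4}e^{\gamma\psi_\eps(x)/2}\,dx$ along $\eps=2^{-n}$. Each $\psi_\eps(x)$ is a measurable function of $\psi\in\cD$: it is a circle average near the boundary, which can be approximated by pairings with test functions after using harmonicity or the Markov decomposition. Since $F$ is measurable, equality of joint laws gives $(F(\phi_q),L_q)\stackrel d=(F(\phi^{\delta_k}_q),L^{\delta_k}_q)$ on the respective events. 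On the right the two coordinates agree a.s., so on the left they agree a.s. as well, that is, $F(\phi_q)=L_q$ a.s.

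A cleaner route avoids the measurability discussion. By Proposition~\ref{prop-bd-length-consistent-reals}, conditionally on $\{s<\qduration\}$ and $(L_\cdot)_{[0,s]}$, the field $\phi_s$ has law $P_{\alpha,\beta,L_s}$. By Definition~\ref{def-P-weighted} and Lemma~\ref{lem-disint-explicit}, a sample from $P_{\alpha,\beta,\ell}$ has $\gamma$-LQG boundary length $\ell$ a.s. Hence $\cL^\gamma_{\phi_s}(\partial\D)=L_s$ a.s.\ on $\{s<\qduration\}$ for every real $s$, not just rational ones. This only needs the statement "$\cL^\gamma_\psi(\partial\D)=\ell$ for $P_{\alpha,\beta,\ell}$-a.e.\ $\psi$" to be meaningful. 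It is meaningful once $\cL^\gamma_\psi$ is defined as the a.s.\ limit along a fixed sequence via $F$ as above. I will use this route. Its only subtle point is the same one, namely that $\phi_s\mapsto\cL^\gamma_{\phi_s}(\partial\D)$ is a measurable functional, so that conditional laws transfer.

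For the second claim, apply the first to all rationals at once, using a countable union of null sets. This gives $L_q=\cL^\gamma_{\phi_q}(\partial\D)$ for all rational $q<\qduration$ almost surely. The process $L_\cdot$ is c\`adl\`ag on $[0,\qduration)$, because it is the limit in $\mathcal J$ of rescaled processes and has the law identified in Lemma~\ref{lem-qle-field-bdy-procress}. A c\`adl\`ag function is determined by its values on a dense set through right limits. So $L_s=\lim_{q\downarrow s,\,q\in\mathbb Q}\cL^\gamma_{\phi_q}(\partial\D)$ for all $s<\qduration$. Finally, $L_\qduration=\lim_{s\to\qduration}L_s$ holds by definition ($L_\qduration=\tilde L_1$ together with the left limit built into the construction). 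Thus $(L_\cdot)_{[0,\qduration]}$ is a measurable function of $(\phi,(K_\cdot)_{[0,\qduration)})$.

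The main obstacle is the measurability or continuity issue in the first step. The boundary GMC mass is not a continuous function of the field in the topology of $\cD$, so it cannot simply be passed through the limit $k\to\infty$. I would handle this through the equality-in-law argument with a fixed measurable version $F$, rather than through any convergence of $\cL^\gamma_{\phi^{\delta_k}_q}$.
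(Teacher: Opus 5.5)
Your adopted route (the "cleaner" one) is the paper's proof: read off $\cL^\gamma_{\phi_s}(\partial\D)=L_s$ from the conditional law $P_{\alpha,\beta,L_s}$ given by Proposition~\ref{prop-bd-length-consistent-reals}, take a countable union over rational $s$, and extend to $[0,\qduration]$ using right-continuity of $L$ and the definition of $L_\qduration$. Your remarks on fixing a measurable version of $\psi\mapsto\cL^\gamma_\psi(\partial\D)$, and your alternative equality-in-law argument, add care that the paper leaves implicit and are consistent with it.
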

\begin{proof}
    By Proposition~\ref{prop-bd-length-consistent-reals}, for each $s > 0$ the conditional law of $\phi_s$ given $\{s < \qduration\}$ and $L_s$ is $P_{\alpha, \beta, L_s}$, so $L_s = \cL^\gamma_{\phi_s}(\partial \D)$. 
    Thus we have $L_s = \cL^\gamma_{\phi_s}(\partial \D)$ for all rational $s \in[0, \qduration)$ a.s., and the extension to all $s \in [0,\qduration]$ follows from the right-continuity of $(L_\cdot)_{[0,\qduration]}$ and the definition of $L_\qduration$.
\end{proof}

\begin{corollary}\label{cor-t-increasing}
The function $t: [0,\infty) \to [0,\infty]$ is strictly increasing on $[0,\qduration]$. 
\end{corollary}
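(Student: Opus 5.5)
The statement asks that $t$ be strictly increasing on $[0,\qduration]$. The plan is to argue by contradiction using the boundary length. Since $t$ is a limit of nondecreasing functions in $\mathcal I$, it is nondecreasing. On $[\qduration,\infty)$ we have $t=\infty$, and by Lemma~\ref{lem-coupling} $t(s)<\infty$ exactly when $s<\qduration$. So it suffices to rule out $t(s_1)=t(s_2)<\infty$ for some $s_1<s_2<\qduration$. If that happens, the hulls $K_{s_1}=\tilde K_{t(s_1)}$ and $K_{s_2}=\tilde K_{t(s_2)}$ coincide. Hence $\phi_{s_1}=\phi_{s_2}$, because both are obtained from $\phi$ by the same map $g_{t(s_1)}$. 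Since $t$ is nondecreasing, $t$ is then constant on $[s_1,s_2]$. Choosing rationals $q_1<q_2$ in $(s_1,s_2)$, it is enough to show that a.s., for all rational $q_1<q_2$ with $q_2<\qduration$, we do not have $\phi_{q_1}=\phi_{q_2}$.

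By Corollary~\ref{cor-length-process}, a.s.\ $L_q=\cL^\gamma_{\phi_q}(\partial\D)$ for all rational $q<\qduration$. So $\phi_{q_1}=\phi_{q_2}$ forces $L_{q_1}=L_{q_2}$, and more strongly forces $L$ to be constant on $[q_1,q_2]$ (apply the same argument to every rational in between). Lemma~\ref{lem-qle-field-bdy-procress} identifies the law of $(L_\cdot)_{[0,\qduration)}$ with the SLE boundary length process, and I now rule out constancy in each phase. In the dilute phase $L_s=1+2s$ is strictly increasing. In the space-filling phase $L$ is Brownian motion killed at $0$ (Proposition~\ref{prop-radial-mot-sf-finite}), which a.s.\ is constant on no interval. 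In the swallowing phase, Remark~\ref{rem-bdy-reg} says the process has jumps at a dense set of times, so it cannot be constant on any nondegenerate interval. Intersecting countably many a.s.\ events over rational pairs then gives the claim.

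The main obstacle is transferring the equality $t(s_1)=t(s_2)$ to the equality $\phi_{q_1}=\phi_{q_2}$ at rational times. This requires $K_s=\tilde K_{t(s)}$ to be defined consistently with how $\phi_q$ is defined, namely via $g_{t(q)}$. That holds by construction, since $\phi_s:=g_{t(s)}\bullet_\gamma\phi$. So the identification is immediate, and the only care needed is that $L$ is evaluated at rational times, where Corollary~\ref{cor-length-process} applies. The remaining case is strict monotonicity up to $\qduration$ itself, which holds because $t(s)<\infty=t(\qduration)$ for every $s<\qduration$.
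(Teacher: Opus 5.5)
Your proof is correct and is essentially the paper's argument: if $t$ were constant on a nondegenerate subinterval, then $\phi_s$ would be constant there. By Corollary~\ref{cor-length-process} the boundary length $L$ at rational times would then be constant there, contradicting the law of $L$ identified in Lemma~\ref{lem-qle-field-bdy-procress}. You also make explicit what the paper leaves implicit: the monotonicity of $t$, the endpoint $\qduration$ where $t=\infty$, and why $L$ is nonconstant on every interval in each of the three phases.
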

\begin{proof}
    If $t$ is constant on some subinterval of $[0,\qduration]$, then by Corollary~\ref{cor-length-process} the boundary length process $L$ is also constant on that subinterval. On the other hand, Lemma~\ref{lem-qle-field-bdy-procress} shows that the boundary length process is nonconstant on any interval of positive length. Thus $t$ is strictly increasing on $[0,\qduration]$.
\end{proof}

We conclude by noting that we have already proved Theorem~\ref{thm-stationarity}. 
\begin{proof}[{Proof of Theorem~\ref{thm-stationarity}}]
    In Section~\ref{sec-phases} we will define QLE$(\gamma^2, \eta)$ by taking a further subsequential limit in our definition of the process $(\phi, (K_\cdot)_{[0,\qduration)})$, so  Proposition~\ref{prop-bd-length-consistent-reals} also holds for QLE$(\gamma^2, \eta)$.
\end{proof}

\section{Construction and phases of quantum natural time QLE}
\label{sec-phases}
In this section we will prove Theorems~\ref{thm-phase},~\ref{thm-simple-markov} and~\ref{thm-swallowing-markov}. Let $(\gamma, \eta)$ be parameters coupled as in~\eqref{eq-gamma-eta}. We first recall the limiting objects constructed in Section~\ref{sec-qle-properties}.

By Lemma~\ref{lem-coupling}, we can pick some sequence $(\delta_k)$ decreasing to 0 for which we can couple the random variables    $$(\phi^{\delta_k},  \nu^{\delta_k}, \qduration^{\delta_k}, \cA^\gamma_{\phi^{\delta_k}}, \tilde L^{\delta_k}, t^{\delta_k})_{k > 0}$$
    to converge almost surely as $k \to \infty$. 
    Let $(\phi, \nu, \qduration, \tilde L, t) = \lim_{k \to \infty}(\phi^{\delta_k}, \nu^{\delta_k}, \qduration^{\delta_k}, \tilde L^{\delta_k}, t^{\delta_k})$, let $L_s = \tilde L_{s/\qduration}$ for $s \in [0,\qduration]$, and let $(\tilde K_t, g_t)_{t >0}$ be the process of hulls and mapping-out functions corresponding to $\nu$ as in Proposition~\ref{prop-loewner-bijection}. Define $K_s = \tilde K_{t(s)}$ for $0 \leq s < \qduration$. This gives $(\phi, (K_\cdot)_{[0,\qduration)})$ and its boundary length process $(L_\cdot)_{[0,\qduration]}$.

From each driving measure $\nu^{\delta_k}$ we define the corresponding process of hulls and mapping-out functions $(\tilde K_t^{\delta_k}, g_t^{\delta_k})_{t \geq 0}$ as in Proposition~\ref{prop-loewner-bijection}, and set $\phi^{\delta_k}_s = g^{\delta_k}_{t^{\delta_k}(s)} \bullet_\gamma \phi^{\delta_k}$ for each $s < \qduration^{\delta_k}$. Similarly, we write $\phi_s = g_{t(s)} \bullet_\gamma \phi$ for $s < \qduration$. 

In the following subsections, we will define $K_\qduration$ (possibly by passing to a further subsequence) and prove our main theorems. Each subsection will address a different phase. 

\subsection{Space-filling phase}

\begin{definition}\label{def-qle-sf}
    Let $\gamma \in (0,4/3)$ and $\eta = \frac{3\gamma^2}{16} - \frac12$, let $(\phi, (K_\cdot)_{[0,\qduration)})$ be defined as above, and let $K_\qduration = \ol{\bigcup_{s < \qduration} K_s}$. We call $(\phi, (K_\cdot)_{[0,\qduration]})$ a QLE$(\gamma^2, \eta)$ process. 
\end{definition}

We now show that in this regime QLE$(\gamma^2, \eta)$ is space-filling.

\begin{proof}[{Proof of Theorem~\ref{thm-phase} (space-filling phase)}]
We will prove that 
\eqb\label{eq-area-agreement}
\text{for rational }s, \text{ we have } \cA^\gamma_{\phi}(K_s) = s \text{ a.s.\ on }\{ s < \qduration\}.
\eqe
Given~\eqref{eq-area-agreement}, the monotonicity of $(K_s)_{[0,\qduration)}$ implies $\cA^\gamma_\phi(K_s) = s$ for all $s \in [0, \qduration)$ simultaneously a.s., and $\qduration = \cA^\gamma_\phi(\D) \geq \cA^\gamma_\phi(K_{\qduration}) \geq \lim_{s \uparrow \qduration} \cA^\gamma_{\phi}(K_s) = \qduration$ implies $\cA^\gamma_\phi(K_\qduration) = \qduration$ and $\cA^\gamma_\phi(\D \backslash K_\qduration) = 0$ a.s. Finally, since $\cA^\gamma_\phi$ a.s.\ assigns positive mass to nonempty open sets, $\cA^\gamma_\phi(\D \backslash K_\qduration) = 0$  implies $K_\qduration = \clD$, completing the proof.

We first prove that for rational $s$, on $\{ s < \qduration\}$ we have the a.s.\ lower bound   \begin{equation}\label{eq-area-lower}
     \cA^\gamma_{\phi}(K_s) \geq s.
\end{equation} 
Recall that $K_s = \tilde K_{t(s)}$ by definition. 
For any compact subset $C$ of $\D \backslash \tilde K_{t(s)}$, for sufficiently small $\eps>0$ we have $\mathrm{dist}(C, \partial (\D \backslash \tilde K_{t(s)}))>2\eps$ since $\D \backslash \tilde K_{t(s)}$ is open. Let $C_\eps$ be the set of points within distance $\eps$ of $C$. For sufficiently large $k$, we a.s.\ have 
\[\cA^\gamma_{\phi}(C) \leq \cA^\gamma_{\phi^{\delta_k}}(C_\eps) +\eps \leq \cA^\gamma_{\phi^{\delta_k}}(\D \backslash \tilde K^{\delta_k}_{t^{\delta_k}(s)}) +\eps = \cA^\gamma_{\phi^{\delta_k}}(\D) - s + \eps \leq \cA^\gamma_{\phi}(\D) - s + 2\eps.\]
Indeed, the first inequality follows from $\lim_{k \to \infty} \cA^\gamma_{\phi^{\delta_k}} = \cA^\gamma_{\phi}$ (Lemma~\ref{lem-area-measure-consistent}), the second from the inclusion $C_\eps \subset \D \backslash \tilde K^{\delta_k}_{t^{\delta_k}(s)}$ for large $k$ (since $\nu^{\delta_k} \to \nu$ and $t^{\delta_k}(s) \to t(s)$ and using Proposition~\ref{prop-conv-loc-spacetime}), and the last from $\lim_{k\to\infty}\cA^\gamma_{\phi^{\delta_k}}(\D)=\cA^\gamma_{\phi}(\D)$. Since $\eps>0$ was arbitrary, we conclude that $\cA^\gamma_{\phi}(C) \leq \cA^\gamma_{\phi}(\D) - s$ for all compact $C \subset \D \backslash \tilde K_{t(s)}$. The measure $\cA^\gamma_{\phi}$ is inner regular (more strongly, it is Radon~\cite{rhodes2013gaussianmultiplicativechaosapplications}), giving $\cA^\gamma_{\phi}(\D \setminus \tilde K_{t(s)}) \leq \cA^\gamma_{\phi}(\D) - s$ and hence~\eqref{eq-area-lower}.

    We will now bootstrap~\eqref{eq-area-lower} to obtain~\eqref{eq-area-agreement}.  Fix any $k > 0$. In our subsequent argument, we will not send $k \to \infty$ or use the coupling of $\delta_k$-QLE with QLE; we will only use that certain observables of $\delta_k$-QLE and QLE have the same law. Since by definition $\cA^\gamma_{\phi^{\delta_k}}(K^{\delta_k}_s) = s$, we have 
    \begin{align}\label{eq-area-decomposition-1}
        \cA^\gamma_{\phi}(K_{s}) + \cA^\gamma_{\phi}(\D \setminus K_s) &= \cA^\gamma_{\phi}(\D) \quad \text{ on the event } \{s < \qduration \} \\
        s +  \cA^\gamma_{\phi^{\delta_k}}(\D \setminus K_{s}^{\delta_k}) &= \cA^\gamma_{\phi^{\delta_k}}(\D) \quad \text{ on the event } \{s < \qduration^{\delta_k} \} \label{eq-area-decomposition-2}
    \end{align}
    Let $X,Y$ and $Z$ be the random variables in~\eqref{eq-area-decomposition-1} conditioned on $\{ s < \qduration\}$, and let $Y'$ and $Z'$ be the random variables in~\eqref{eq-area-decomposition-2} conditioned on $\{ s < \qduration^{\delta_k}\}$, so almost surely $X+Y=Z$ and $s + Y' = Z'$. 

    We have $Y = \cA^\gamma_{\phi}(\D \backslash K_{s}) = \cA^\gamma_{\phi_s}(\D)$ and $Y' = \cA^\gamma_{\phi^{\delta_k}}(\D \backslash K_{s}^{\delta_k}) = \cA^\gamma_{\phi_s^{\delta_k}}(\D)$.  
    By Proposition~\ref{prop-bd-length-consistent-reals} the law of $\cA^\gamma_{\phi_s}(\D)$ conditioned on $\{ s < \qduration\}$ agrees with the law of $\cA^\gamma_{\phi_s^{\delta_k}}(\D)$ conditioned on $\{ s < \qduration^{\delta_k}\}$, hence $Y \stackrel d= Y'$. 
    Next, Lemma~\ref{lem-area-measure-consistent} gives $\qduration = \lim_{k \to \infty} \qduration^{\delta_k} = \lim_{ k \to \infty} \cA^\gamma_{\phi^{\delta_k}}(\D) = \cA^\gamma_{\phi}(\D)$, and by definition $\qduration^{\delta_k} = \cA^\gamma_{\phi^{\delta_k}}(\D)$. Since the unconditioned laws of $\cA^\gamma_{\phi_0}(\D)$ and $\cA^\gamma_{\phi_0^{\delta_k}}(\D)$ agree (Lemma~\ref{lem-qle-field-bdy-procress}), we conclude that $Z \stackrel d= Z'$. 

    Finally, $Z \stackrel d= Z'$ implies $\E[e^{-Z}] = \E[e^{-Z'}]$, and hence $\E[e^{-X + s} \cdot e^{-Y}] = \E[e^{-Y'}]$. Since $Y \stackrel d= Y'$, this gives $\E[e^{-X + s} \cdot e^{-Y}] = \E[e^{-Y}]$. On the other hand,  we have $X \geq s$ a.s.\  by~\eqref{eq-area-lower}, so $e^{-X+s} \cdot e^{-Y} \leq e^{-Y}$ a.s.. We conclude that equality holds a.s., so $X = s$ a.s.; in other words,  $\cA^\gamma_{\phi}(K_s) = s$ a.s., so \eqref{eq-area-agreement} holds. 
\end{proof}

\subsection{Dilute phase}
We now prove that when $\gamma \in (\sqrt3 - 1,2)$ and $\eta = \frac{3}{\gamma^2} - \frac12$, QLE$(\gamma^2, \eta)$ is dilute in the sense that the growth process has zero LQG area. The corresponding statement for $\delta$-QLE$(\gamma^2, \eta)$ is immediate:

\begin{lemma}\label{lem-simple-delta-massless}
    For each $k$ we have $\cA^\gamma_{\phi^{\delta_k}}(K^{\delta_k}_{\qduration^{\delta_k}}) = 0$ a.s.
\end{lemma}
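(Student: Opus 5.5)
The plan is to write the hull $K^{\delta_k}_{\qduration^{\delta_k}}$ as a finite union of SLE$_\kappa$ traces with $\kappa=\gamma^2<4$, one for each step of Definition~\ref{def-delta-QLE-finite}. The claim then reduces to showing that each trace carries zero $\gamma$-LQG area.

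First we fix $k$ and write $\delta=\delta_k$. By Lemma~\ref{lem-duration-indep}, $\qduration^\delta<\infty$ a.s., so only finitely many steps $n=0,\dots,N$ occur, with $N=\lfloor \qduration^\delta/\delta\rfloor$. Because $\kappa<4$, each $\eta^\delta_n$ is a simple curve in $D^\delta_{n\delta}$ that does not hit the boundary except at its starting point. It therefore disconnects nothing from $0$, and $D^\delta_s$ is simply $D^\delta_{n\delta}\setminus\eta^\delta_n([0,s-n\delta])$. Inductively, this gives
\[K^\delta_{\qduration^\delta}\cap\D=\bigcup_{n=0}^{N}\eta^\delta_n\bigl([0,\min(\delta,\qduration^\delta_n)]\bigr)\cap\D.\]
Since the unit circle carries no $\cA^\gamma_{\phi^\delta}$ mass, it suffices to show that $\cA^\gamma_{\phi^\delta}(\eta^\delta_n)=0$ a.s. for each $n$.

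Next we treat a single step $n$. We map forward by $g_{D^\delta_{n\delta}}$ and then rotate so that the tip $p^\delta_n$ goes to $1$. Area is intrinsic to the quantum surface, so the area of $\eta^\delta_n$ under $\phi^\delta$ equals the $\cA^\gamma$-area of the image curve under the field $\tilde\phi^\delta_{n\delta}$. As shown in the proof of Lemma~\ref{lem-duration-indep}, after this rotation the pair (field, curve) has law $(\LF^{(\alpha,0),(\beta,1)}_{\D,L})^\#\times\mathrm{raSLE}_\kappa$, conditionally on the boundary length $L$. This is the setting of Proposition~\ref{prop-radial-mot-simple-finite}, so it is enough to prove that, for $(\phi_0,\eta)$ sampled from $(\LF_{\D,\ell}^{(\alpha,0),(\beta,1)})^\#\times\mathrm{raSLE}_\kappa$, we have $\cA^\gamma_{\phi_0}(\eta([0,\infty)))=0$ a.s.

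This last statement is the only real point, and I see two ways to get it. The first uses independence of the field and the curve. The curve $\eta$ has Lebesgue measure zero, since its dimension is $1+\kappa/8<2$. For a field independent of the curve, Fubini applied to $\E[\cA^\gamma_{\phi_0}(A)]$ gives zero area for any fixed Lebesgue-null set $A$. The expectation may fail to be finite because of the insertions, so we localise to compact subsets away from $0$ and $1$ and use absolute continuity with respect to a free-boundary GFF, whose area measure has intensity absolutely continuous with respect to Lebesgue measure. The second route is Lemma~\ref{lem-simple-unexplored}: the complement $\D\setminus\eta$ is a quantum disk whose area has the same law as the total area $\cA^\gamma_{\phi_0}(\D)$, which also gives the result. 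I would use the first route because it is elementary.

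Finally, a countable union of null events is null, which gives $\cA^\gamma_{\phi^{\delta_k}}(K^{\delta_k}_{\qduration^{\delta_k}})=0$ a.s.
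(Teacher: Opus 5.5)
Your proposal is correct and follows the paper's argument: the hull is a finite union of $\SLE_\kappa$ segments, each independent of the $\gamma$-LQG and hence of zero $\gamma$-LQG area. Your reduction to the law $(\LF^{(\alpha,0),(\beta,1)}_{\D,L})^\#\times\mathrm{raSLE}_\kappa$ from the proof of Lemma~\ref{lem-duration-indep}, followed by Fubini, just fills in details the paper leaves implicit; keep that first route, because the second presupposes that $\cA^\gamma_{\phi_0}(\D)$ and the complementary quantum disk's area agree in law, which the paper instead derives from this lemma in Proposition~\ref{prop-simple-markov}.
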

\begin{proof}
    Let $\kappa = \gamma^2$. Since an independent $\SLE_\kappa$ curve sampled on $\gamma$-LQG has zero $\gamma$-LQG area, and the $\delta_k$-QLE$(\gamma^2, \eta)$ process is defined by iteratively sampling $\SLE_\kappa$ curves, the result follows. 
\end{proof}

To set up for the limiting argument in the proof of Proposition~\ref{prop-simple-markov} below, we now introduce a probability measure on the function space $\cD$ (defined in Section~\ref{subsec-topologies}). For a quantum surface sampled from $\QD(\ell)^\#$, further sample a marked point from the probability measure proportional to its quantum area measure, and let $\QD'(\ell)^\#$ denote the law of the marked quantum surface\footnote{Note that in the literature a measure $\QD_{1,0}(\ell)^\#$ is also defined on the space of marked quantum surfaces; the relationship is that $\QD_{1,0}(\ell)^\#$ is the probability measure obtained by weighting $\QD'(\ell)$ by quantum area.}. Any sample from $\QD'(\ell)$ can be embedded as $(\D, \phi, 0)$ for some field $\phi$, and this field is only uniquely specified up to rotation. Let $P(\ell)$ be the law of $\phi$ when this rotation is chosen uniformly at random. 

Earlier, we defined $(\phi, (K_\cdot)_{[0,\qduration)})$ via a subsequential limit; we will now define the final state $K_\qduration$.
Since the space of compact subsets of $\ol \D$ equipped with the Hausdorff distance is a compact separable metric space, we can pass to a subsequence and recouple using the Skorokhod representation theorem so that $\lim_{k \to \infty} K^{\delta_k}_{\qduration^{\delta_k}}$ exists with respect to the Hausdorff distance. We define $K_\qduration = \lim_{k \to \infty} K^{\delta_k}_{\qduration^{\delta_k}}$.

\begin{proposition}\label{prop-simple-markov}
Passing to a further subsequential limit and recoupling using the Skorokhod representation theorem, we can guarantee the following. First, $K_s \subset K_\qduration$ for all $s < \qduration$ a.s. Second, $\cA^\gamma_\phi(K_\qduration) = 0$ a.s. Third, the conditional law of $(\D \backslash K_\qduration, \phi)/{\sim_\gamma}$ given $\qduration$ is $\QD(L_\qduration)^\#$.
\end{proposition}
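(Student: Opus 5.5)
The plan is to augment the coupled tuple with one extra random variable recording the unexplored quantum disk, take subsequential limits, and read off the three claims from the fact that this variable's law does not depend on $k$.

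\textbf{Setting up the extra variable.} For each $k$ let $D^k = \D \backslash K^{\delta_k}_{\qduration^{\delta_k}}$. Note that $D^k$ contains $0$: since $\beta>-2$ the SLE segments a.s.\ miss $0$ until the final one terminates there, and for $\kappa<4$ the curve does not disconnect. Let $G^k: D^k \to \D$ be the conformal map with $G^k(0)=0$ and $(G^k)'(0)>0$, and set $\psi^k = R_k \bullet_\gamma (G^k \bullet_\gamma \phi^{\delta_k})$, where $R_k$ is an independent uniform rotation.

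\textbf{The law of $\psi^k$.} By Lemma~\ref{lem-simple-unexplored-qle}, given $\qduration^{\delta_k}$ the surface is $\QD(1+2\qduration^{\delta_k})^\#$. The marked point $0$ is not area-sampled, so we first check that $\psi^k$ given $\qduration^{\delta_k}$ has a law independent of $k$. This follows by running the inductive argument of Lemma~\ref{lem-duration-indep} on the whole triple: the law of the three-pointed surface from Lemma~\ref{lem-simple-unexplored}, including the bulk point, comes from Proposition~\ref{prop-radial-weld-simple} and is $\QD_3$ welded, independent of $k$. Hence the joint law of $(\qduration^{\delta_k}, \tilde L^{\delta_k}, \psi^k)$ is $k$-independent. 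Moreover $\psi^k$ has a log singularity of size $Q-1/\gamma$ at $0$, and is absolutely continuous with respect to $P(\ell)$ after reweighting.

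\textbf{Passing to the limit.} Append $\psi^k$ (in $\cD$) and $(G^k)^{-1}$ (in the space of univalent maps $\D\to\D$ fixing $0$, which is compact with the locally uniform topology, allowing degenerate constant limits) to the tuple. Take a further subsequence, together with Hausdorff convergence of $K^{\delta_k}_{\qduration^{\delta_k}}$, and apply Skorokhod so everything converges a.s. The limit $\psi$ has the same conditional law given $\qduration$.

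\textbf{The three claims.}
\begin{enumerate}
\item[(1)] \emph{$K_s\subset K_\qduration$.} Since $K^{\delta_k}_s \subset K^{\delta_k}_{\qduration^{\delta_k}}$, Carathéodory convergence (Proposition~\ref{prop-conv-loc-spacetime}) combined with Lemma~\ref{lem-hausdorff-cara-conv} gives $K_s\subset K_\qduration$. Here Hausdorff limits of points in $K^{\delta_k}_s$ lie in $K_\qduration$, and any point of $K_s$ not in the Carathéodory-complement component is a Hausdorff limit of such points.
\item[(2)] \emph{Nondegeneracy of the limiting map.} We must show $(G^k)^{-1}\to G^{-1}$ with $G^{-1}$ nonconstant. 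Tightness of $\log (G^k)'(0)$ follows exactly as in Lemma~\ref{lem-tight-t}: $\cA_{\psi^k}(B_{1/2})$ has a $k$-independent law and is a.s.\ positive, while $\cA_{\phi^{\delta_k}}(B_{e^{-N}})\to0$ uniformly. The limit map is therefore univalent onto some domain $D$, and $D$ must be the $0$-component of $\D\backslash K_\qduration$ by the Carathéodory kernel theorem and Lemma~\ref{lem-hausdorff-cara-conv}. Arguing as in Lemma~\ref{lem-field-estimate}, $\psi = G\bullet_\gamma\phi$ up to the rotation.
\item[(3)] \emph{Area identities.} Since $\cA_{\phi^{\delta_k}}(D^k)=\cA_{\phi^{\delta_k}}(\D)$ by Lemma~\ref{lem-simple-delta-massless}, we have $\cA_{\psi}(\D)\stackrel d=\cA_\phi(\D)$. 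Lower semicontinuity, via inner regularity on compacts of $D$ exactly as in the proof of \eqref{eq-area-lower}, gives $\cA_\phi(D)\ge \cA_{\psi}(\D)$ along the coupling; actually it gives the reverse inequality $\cA_\phi(D)\le\liminf\cA_{\phi^{\delta_k}}(D^k)$ fails in general, so I would instead use $\cA_\psi(\D)=\cA_\phi(D)\le\cA_\phi(\D)$ together with equality in law and the Laplace transform trick from the space-filling proof to conclude $\cA_\phi(D)=\cA_\phi(\D)$ a.s. This gives $\cA_\phi(K_\qduration)=0$, because $\D\backslash K_\qduration$ minus $D$ consists of other components of zero area by the same equality.
\item[(4)] \emph{Law of the unexplored surface.} $D=\D\backslash K_\qduration$ a.s., since an extra complementary component would have positive area. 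The conditional law of $(D,\phi)/{\sim_\gamma}$ given $\qduration$ is then inherited from $\psi$ and equals $\QD(1+2\qduration)^\# = \QD(L_\qduration)^\#$.
\end{enumerate}

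\textbf{Main obstacle.} I expect the hardest step to be identifying the limit $\psi$ with $G\bullet_\gamma\phi$. In particular, we must rule out additional area or mass escaping near $\partial D$, and show that $D$ is exactly the complement of $K_\qduration$. The Laplace-transform comparison of total areas is the tool I would lean on for this.
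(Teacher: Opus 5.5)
\textbf{The main gap: $0$ is not in $D^k$.} Your construction normalizes $G^k$ at the origin, but $0\notin D^k$. The $\delta_k$-QLE terminates exactly when the last SLE segment reaches $0$ (your own parenthetical says the final segment ``terminates there''), so $0\in K^{\delta_k}_{\qduration^{\delta_k}}$. In Lemma~\ref{lem-simple-unexplored}, the origin is a \emph{boundary} marked point of the slit domain: it is the first marked point of $\QD_3$ before welding, i.e.\ a typical boundary point of the quantum disk. This breaks several steps:
\begin{itemize}
\item $G^k$ is undefined.
\item There is no bulk $(Q-\frac1\gamma)$-singularity in the unexplored surface.
\item The Lemma~\ref{lem-tight-t}-style tightness of $\log (G^k)'(0)$ has nothing to apply to, since the conformal radius of $D^k$ seen from $0$ is zero.
\item The limit domain cannot be ``the $0$-component of $\D\setminus K_\qduration$'', because $0\in K_\qduration$.
\end{itemize}
The missing idea is an interior reference point that a.s.\ lies in $D^k$ and has explicit law relative to the surface. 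Sample $z^k\sim(\cA^\gamma_{\phi^{\delta_k}})^\#$; it lies in $D^k$ a.s.\ by Lemma~\ref{lem-simple-delta-massless}. Let $f^k:\D\to D^k$ satisfy $f^k(0)=z^k$ and $(f^k)'(0)>0$. Lemma~\ref{lem-simple-unexplored-qle} plus an independent uniform rotation then directly gives that $(R_\theta\circ (f^k)^{-1})\bullet_\gamma\phi^{\delta_k}$ has conditional law $P(1+2\qduration^{\delta_k})$ given $\qduration^{\delta_k}$. There is no need to rerun the induction of Lemma~\ref{lem-duration-indep} on a three-pointed surface. Nondegeneracy of $f=\lim f^k$ comes from $\cA^\gamma_\phi$ having no atoms. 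With this replacement, the rest goes through as in the paper: your area comparison ($\cA^\gamma_\phi(f(\D))\le\cA^\gamma_\phi(\D)$ with equality in law) and the Hausdorff/Carath\'eodory identification $f(\D)=\D\setminus K_\qduration$.

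\textbf{A second gap: the proof of $K_s\subset K_\qduration$.} It is false that every point of $K_s$ is a Hausdorff limit of points of $K^{\delta_k}_s$. Along a subsequence where $K^{\delta_k}_s\to H_s$ in Hausdorff distance, Lemma~\ref{lem-hausdorff-cara-conv} shows that $K_s$ consists of $H_s$ together with all components of $\D\setminus H_s$ not containing $0$. These enclosed regions are not a priori contained in $K_\qduration$. The paper closes this in two steps:
\begin{itemize}
\item First it shows $\cA^\gamma_\phi(K_s)=0$ for rational $s$, by the same equality-in-law-plus-inequality argument using Lemma~\ref{lem-duration-indep}.
\item Then it takes a neighborhood $U\subset\D\setminus K_\qduration$ of any point of $\D\setminus K_\qduration$. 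Such $U$ lies in $\D\setminus K^{\delta_k}_s$ for all large $k$. By the Carath\'eodory kernel theorem, either $U\subset\D\setminus K_s$ or $U\subset K_s$, and the latter is excluded because $\cA^\gamma_\phi(U)>0$.
\end{itemize}
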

\begin{proof}
\noindent
    For each $k$, sample a point $z^k \in \D$ from the probability measure $(\cA^\gamma_{\phi^{\delta_k}})^\#$ on $\D$; by Lemma~\ref{lem-simple-delta-massless} we have $z^k \in \D \backslash K^{\delta_k}_{\qduration^{\delta_k}}$ a.s. Let $f^k: \D \to \D \backslash K^{\delta_k}_{\qduration^{\delta_k}}$ be the conformal map such that $f^k(0) = z^k$ and $(f^k)'(0) > 0$. Independently sample $\theta$ uniformly from $[0,2\pi)$. 
    Passing to a subsequence and recoupling the laws via the Skorokhod representation theorem, we can ensure the limit $f = \lim_{k \to \infty} f^k$ a.s.\ exists in the topology of local uniform convergence. Since $\cA^\gamma_\phi$ has no atoms, we have $f'(0) \neq 0$. 
    \medskip 

    \noindent 
    \textbf{Step 1: The quantum surface parametrized by $f(\D)$ is a quantum disk.} 
   Let $R_\theta: \D \to \D$ be the rotation map $R_\theta(w) = e^{i\theta}w$. For each $k$ let $\psi^k = (R_{\theta} \circ (f^k)^{-1}) \bullet_\gamma \phi^{\delta_k}$ (so $(\D, \psi^k) \sim_\gamma (\D \backslash K^{\delta_k}_{\qduration^{\delta_k}}, \phi^{\delta_k})$), and let $\psi = (R_{\theta} \circ f^{-1}) \bullet_\gamma \phi$.

    By Lemma~\ref{lem-simple-unexplored-qle}, the conditional law of $(\D \backslash K^{\delta_k}_{\qduration^{\delta_k}}, \phi^{\delta_k})/{\sim_\gamma}$ given $\qduration^{\delta_k}$ is $\QD(1 + 2 \qduration^{\delta_k})^\#$, so since $\theta$ is independent of everything else, the conditional law of $\psi^k$ given $\qduration^{\delta_k}$ is $P(1 + 2 \qduration^{\delta_k})$ (defined immediately above this proof). 
    By Lemma~\ref{lem-duration-indep} the law of $\qduration^{\delta_k}$ does not depend on $k$, hence the joint law of $(\qduration^{\delta_k}, \psi^k)$ does not depend on $k$. Since $\qduration^{\delta_k} \to \qduration$ and $\psi^k \to \psi$ a.s., we conclude that $(\qduration, \psi) \stackrel d= (\qduration^{\delta_k}, \psi^k)$ for any $k$; in particular, the conditional law of $(\D, \psi)/{\sim_\gamma}$ given $\qduration$ is $\QD(1+2\qduration)^\#$. 

\medskip 
\noindent 
\textbf{Step 2: Showing $\cA^\gamma_\phi(\D \backslash f(\D)) = 0$ a.s.\ and $\cA^\gamma_\phi(K_s) = 0$ for rational $s < \qduration$ a.s.} For any fixed $k$, we have
    \[\cA^\gamma_{\phi^{\delta_k}}(\D) = \cA^\gamma_{\phi^{\delta_k}}(\D \backslash K^{\delta_k}_{\qduration^{\delta_k}}) = \cA^\gamma_{\psi^k}(\D) \stackrel d= \cA^\gamma_\psi(\D) = \cA^\gamma_{\phi}(f(\D)),\]
    where the first equality follows from Lemma~\ref{lem-simple-delta-massless}, the second from the definition of $\psi^k$, the distributional equality follows from $\psi^k \stackrel d= \psi$, and the last equality from the definition of $\psi$. Since $\phi^{\delta_k} \stackrel d= \phi$, we conclude that $\cA^\gamma_\phi(\D) \stackrel d= \cA^\gamma_\phi(f(\D))$, hence $\cA^\gamma_\phi(\D \backslash f(\D)) = 0$ a.s. By an analogous argument with Lemma~\ref{lem-simple-unexplored-qle} replaced by Lemma~\ref{lem-duration-indep}, for fixed $s$ the conditional laws of $\cA_\phi^\gamma(\D)$ and $\cA_\phi^\gamma (\D \backslash K_s)$ given $\{ s < \qduration\}$ agree, so the second claim follows. 

\medskip 
\noindent \textbf{Step 3: Showing $\D \backslash K_\qduration = f(\D)$.}
    %Now, note that a.s.\ the pointed domain $(\D \setminus f(\D), z)$ is the Carath\'eodory limit of $(K_{\qduration^k}^{\delta_k}, z)$. Indeed, 
    As $f^k \to f$ locally uniformly, %the point $z = f(0)$ satisfies $z \in f^k(\D)$ for all sufficiently large $k$, and so 
    the pointed domains $(f^k(\D), z)$ converge in the Carath\'eodory topology to $(f(\D), z)$. Since $f^k(\D) = \D \backslash K_{\qduration^{\delta_k}}^{\delta_k}$ and $\lim_{k \to \infty} K_{\qduration^{\delta_k}}^{\delta_k} = K_\qduration$ with respect to the Hausdorff topology, by Lemma~\ref{lem-hausdorff-cara-conv} the connected component of $\D \backslash K_\qduration$ containing $z$ is $f(\D)$. 
    If $\D \backslash K_\qduration$ contained a connected component other than $f(\D)$, then $\D \backslash f(\D)$ would have nonempty interior, contradicting Step 2 since $\cA^\gamma_\phi$ a.s.\ assigns positive mass to open sets. We conclude that $\D \backslash K_\qduration = f(\D)$.

    Combining Steps 2 and 3 gives the second claim. By Step 3, we have $(\D \backslash K_\qduration, \phi)/{\sim_\gamma} = (f(\D), \phi)/{\sim_\gamma} = (\D, \psi)/{\sim_\gamma}$, and so by Step 1 the third claim holds.

  Finally, we turn to the first claim. By monotonicity it suffices to show that $K_s \subset K_\qduration$ for rational $s < \qduration$. Since $f^k \to f$ uniformly on compact sets and $f(\D) = \D \backslash K_\qduration$, for any point $w \in \D \backslash K_\qduration$ there exists a neighborhood $U \subset \D \backslash K_\qduration$ of $w$ such that $U \subset \D \setminus K_{\qduration^{\delta_k}}^{\delta_k}$ for all $k$ sufficiently large; we conclude that $U \subset \D \backslash \tilde K^{\delta_k}_{t^{\delta_k}(s)}$ for sufficiently large $k$.
    Since $\lim_{k\to\infty} t^{\delta_k}(s) = t(s)$, by Proposition~\ref{prop-conv-loc-spacetime} the Carath\'eodory limit of $\tilde K^{\delta_k}_{t^{\delta_k}(s)}$ is $\tilde K_{t(s)}$. Rewriting $\tilde K_{t(s)}$ as $K_s$, the Carath\'eodory kernel theorem yields that either $U \subset \D \backslash K_s$ or $U \subset K_s$; the latter is impossible since $\cA^\gamma_\phi(K_s) = 0$ by Step 2 but $\cA^\gamma_\phi(U) > 0$, hence $U \subset \D \backslash K_s$. Thus, any point $w$ in $\D \backslash K_\qduration$ is also in $\D \backslash K_s$, i.e., $K_s \subset K_\qduration$. This concludes the proof of the first claim. 
\end{proof}

We now define QLE$(\gamma^2, \eta)$ for this parameter range; the relevant theorems are then immediate. 

\begin{definition}\label{def-qle-simple}
Let $\gamma \in (\sqrt3 - 1, 2)$ and $\eta = \frac{3}{\gamma^2} - \frac12$, and let $(\phi, (K_\cdot)_{[0,\qduration]})$ be the process in Proposition~\ref{prop-simple-markov}. We call $(\phi, (K_\cdot)_{[0,\qduration]})$ a QLE$(\gamma^2, \eta)$ process. 
\end{definition}

\begin{proof}[{Proof of Theorem~\ref{thm-simple-markov}}]
    The claim is immediate from Definition~\ref{def-qle-simple} and Proposition~\ref{prop-simple-markov}.
\end{proof}

\begin{proof}[{Proof of Theorem~\ref{thm-phase} (dilute phase)}]
The claim is immediate from Definition~\ref{def-qle-simple} and Proposition~\ref{prop-simple-markov}.
\end{proof}

\subsection{Swallowing phase}
Fix $\gamma \in (2\sqrt3 - 2,2)$ and $\eta = \frac{3\gamma^2}{16} - \frac12$. In Proposition~\ref{prop-swallowed} below, by passing to a further subsequence we will be able to understand the surfaces cut out by $(\phi, (K_\cdot)_{[0,\qduration)})$. The arguments differ from those for quantum natural time QLE$(8/3,0)$ in  \cite[Section 6]{ms-equivalence} because we set up the subsequential limit differently. Indeed,  \cite{ms-equivalence} first takes a subsequential limit for which they identify the laws of all quantum surfaces swallowed, then defines the time parametrization by counting swallowed surfaces in a way analogous to Definition~\ref{def-quantum-time-swallowing}. We instead choose the subsequence so the capacity time parametrizations $t^{\delta_k}$ converge, and the limit $t$ is used to define the quantum natural time parametrization; our approach has the advantage of applying to the dilute and space-filling ranges, for which there is not a clear way to implement the method of \cite{ms-equivalence}.  

We now state a uniform embedding result. Recall the function space $\cD$ from Section~\ref{subsec-topologies}, recall the definitions of $\QD'(\ell)$ and $P(\ell)$ given above Proposition~\ref{prop-simple-markov}, and for $\alpha \in \R$ let  $R_\alpha: \D \to \D$ be the rotation map $R_\alpha(z) = e^{i\alpha} z$. 
\begin{lemma}\label{lem-rotate-all}
    Fix a decreasing sequence of positive reals $\ell_1 > \ell_2 > \cdots$. Suppose $\psi_1, \psi_2, \dots \in \cD$ are random fields such that the quantum surfaces $(\D, \psi_i, 0)/{\sim_\gamma}$ are independent and each have law $\QD(\ell_i)^\#$. Independently of $(\psi_i)_{i=1}^\infty$, sample $\theta_1, \theta_2, \dots$ independently and uniformly from $[0,2\pi)$. Then the fields $(R_{\theta_i} \bullet_\gamma \psi_i)_{i=1}^\infty$ are independent, and for each $i$ the law of $R_{\theta_i} \bullet_\gamma \psi_i$ is $P(\ell_i)$.
\end{lemma}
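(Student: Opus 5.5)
The proof will reduce the statement to the single-surface case and then use independence. Note that the hypothesis should be read as: the marked quantum surfaces $(\D,\psi_i,0)/{\sim_\gamma}$ are independent with law $\QD'(\ell_i)^\#$, where the marked point is $0$. For a single $i$, set $\mathcal S_i := (\D,\psi_i,0)/{\sim_\gamma}$. Every embedding of $\mathcal S_i$ in $(\D,0)$ has the form $R_\alpha \bullet_\gamma \psi_i$ for some $\alpha \in [0,2\pi)$, because the conformal automorphisms of $\D$ fixing $0$ are exactly the rotations. So $R_{\theta_i}\bullet_\gamma\psi_i$ is an embedding of $\mathcal S_i$ whose rotation is $\theta_i$ composed with whatever rotation $\psi_i$ happened to use. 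The key observation will be that $\theta_i$ is uniform and independent of $\psi_i$, so this composite rotation is uniform given $\mathcal S_i$, regardless of how $\psi_i$ was chosen.

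To make this precise, I would fix a measurable selection $\mathcal S\mapsto \psi^{\mathrm{can}}(\mathcal S)$ of an embedding in $(\D,0)$. Then I would write $\psi_i = R_{A_i}\bullet_\gamma \psi^{\mathrm{can}}(\mathcal S_i)$ for some random angle $A_i$, which is a measurable function of $\psi_i$. If a nontrivial rotation stabilizes the field, the choice of $A_i$ could be ambiguous, but any measurable choice works.

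Since $\theta_i$ is independent of $(\psi_j)_j$, the angle $\theta_i + A_i \pmod{2\pi}$ is uniform and independent of $(A_i,\mathcal S_i)$, and hence independent of $\mathcal S_i$. Therefore $R_{\theta_i}\bullet_\gamma\psi_i = R_{\theta_i+A_i}\bullet_\gamma\psi^{\mathrm{can}}(\mathcal S_i)$ has the law of the canonical embedding of a $\QD'(\ell_i)^\#$ sample rotated by an independent uniform angle. This is exactly $P(\ell_i)$ by its definition, which also shows that $P(\ell_i)$ does not depend on the choice of selection.

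For joint independence, I would condition on the whole sequence $(\psi_j)_j$. Conditionally, the angles $\theta_i + A_i$ are independent uniforms, since each is a shift of the independent uniform $\theta_i$ by a fixed amount. Hence, conditionally on $(\psi_j)_j$, the fields $R_{\theta_i+A_i}\bullet_\gamma\psi^{\mathrm{can}}(\mathcal S_i)$ are independent, with the $i$-th depending only on $(\mathcal S_i,\theta_i+A_i)$. The conditional joint law is therefore a product of kernels in $\mathcal S_i$ alone. Integrating over the independent $(\mathcal S_i)_i$ gives a product measure, which proves that the fields are independent.

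The only delicate step I expect is measurability: the existence of a measurable canonical embedding and of the angle $A_i$ in the space $\cD$. I would handle this with standard arguments. One option is to choose the embedding that makes a fixed measurable functional of the field, such as the argument of a Fourier coefficient of the boundary measure $\cL^\gamma$, vanish, so that $A_i$ is a measurable function of $\psi_i$.
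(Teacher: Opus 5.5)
Your proposal is correct and is essentially the paper's argument: write each $\psi_i$ as a rotation of a reference embedding, absorb the possibly dependent angle into the independent uniform $\theta_i$, and get joint independence because the composite angles are i.i.d.\ uniform and independent of the reference fields. The only difference is the choice of reference: the paper couples with a sequence $(\phi_i)\sim\prod_i P(\ell_i)$ such that $\psi_i = R_{\alpha_i}\bullet_\gamma\phi_i$ and uses rotation invariance of $P(\ell_i)$, which avoids your measurable canonical embedding (and your reading of the hypothesis as $\QD'(\ell_i)^\#$ is the intended one).
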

\begin{proof}
    Let $(\phi_i)_{i=1}^\infty$ be a sequence of fields whose joint law is $\prod_{i=1}^\infty P(\ell_i)$. 
    By definition, we can couple $(\psi_i)_{i=1}^\infty$ with $(\phi_i)_{i=1}^\infty$ such that for each $i$ we have $\psi_i = R_{\alpha_i} \bullet_\gamma \phi_i$ for some random angle $\alpha_i$. We may take $(\theta_i)_{i=1}^\infty$ independent of $(\psi_i, \phi_i, \alpha_i)_{i=1}^\infty$, so $R_{\theta_i} \bullet_\gamma \psi_i  = R_{\theta_i} \bullet_\gamma (R_{\alpha_i} \bullet_\gamma \phi_i) = R_{\theta_i + \alpha_i} \bullet_\gamma \phi_i$ for all $i$. Although $(\alpha_i)_{i=1}^\infty$ is not necessarily independent of $(\phi_i)_{i=1}^\infty$, since the angles $\theta_i$ are uniformly random, the angles $(\theta_i + \alpha_i)_{i=1}^\infty$ (viewed modulo $2\pi$) are uniform and independent, and $(\theta_i + \alpha_i)_{i=1}^\infty$ is independent of $(\phi_i)_{i=1}^\infty$. Since $P(\ell_i)$ is preserved under rotation with any deterministic angle, the law of $(R_{\theta_i + \alpha_i} \bullet_\gamma \phi_i)_{i=1}^\infty$ is $\prod_{i=1}^\infty P(\ell_i)$; we are done since $(R_{\theta_i} \bullet_\gamma \psi_i)_{i=1}^\infty = (R_{\theta_i + \alpha_i} \bullet_\gamma \phi_i)_{i=1}^\infty$.
\end{proof}

\begin{proposition}\label{prop-swallowed}
Passing to a further subsequential limit and recoupling using the Skorokhod representation theorem, we can guarantee the following. The jumps of $(L_\cdot)_{[0,\qduration]}$ are in bijection with the quantum surfaces swallowed by $(\phi, (K_\cdot)_{[0,\qduration)})$, wherein at each jump time the process $(\phi, (K_\cdot)_{[0,\qduration)})$ swallows a quantum surface whose $\gamma$-LQG boundary length is the jump size. Moreover, conditioned on $(L_{\cdot})_{[0,\qduration]}$, the swallowed quantum surfaces are conditionally independent, and the conditional law of the quantum surface corresponding to a jump of size $\ell$ is $\QD(\ell)^\#$. Finally, for fixed $s>0$, conditioned on $\{s < \qduration\}$ the conditional law of $\phi_s$ given $(L_\cdot)_{[0,s]}$ and the quantum surfaces swallowed up to time $s$ is $P_{\alpha, \beta, L_s}$.
\end{proposition}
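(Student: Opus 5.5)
We mimic the proof of Proposition~\ref{prop-simple-markov}, now with countably many swallowed surfaces instead of one unexplored surface. For each $k$, the jumps of $(L^{\delta_k}_\cdot)_{[0,\qduration^{\delta_k})}$ can be listed in decreasing order of size as $(J^k_i, \sigma^k_i)_{i \ge 1}$, with $J^k_i$ the jump size and $\sigma^k_i$ the jump time. Let $U^k_i$ be the domain swallowed at time $\sigma^k_i$. By Lemma~\ref{lem-swallowed-delta-qle} this is well defined, and given $(L^{\delta_k}_\cdot)$ the surfaces $(U^k_i,\phi^{\delta_k})/{\sim_\gamma}$ are independent with laws $\QD(J^k_i)^\#$. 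In each $U^k_i$ we sample $z^k_i$ from $(\cA^\gamma_{\phi^{\delta_k}}|_{U^k_i})^\#$. We let $f^k_i:\D\to U^k_i$ be the conformal map with $f^k_i(0)=z^k_i$ and $(f^k_i)'(0)>0$. We also sample independent uniform angles $\theta_i$ and set $\psi^k_i=(R_{\theta_i}\circ (f^k_i)^{-1})\bullet_\gamma\phi^{\delta_k}$.

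Lemma~\ref{lem-rotate-all} then shows that, conditionally on $\tilde L^{\delta_k}$, the $\psi^k_i$ are independent with laws $P(J^k_i)$. The joint law of $(\tilde L^{\delta_k},\qduration^{\delta_k},(\psi^k_i)_i)$ therefore does not depend on $k$, since the law of $\tilde L^{\delta_k}$ does not by Lemma~\ref{lem-duration-indep}. It is tight, so we pass to a further subsequence and recouple by Skorokhod so that, jointly with the earlier data, $\psi^k_i\to\psi_i$, $f^k_i\to f_i$ locally uniformly, and $\sigma^k_i\to\sigma_i$. We also require $J^k_i\to J_i$. Ordering jumps by size is continuous in the $J_1$ topology when there are no ties, and ties have probability zero. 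Hence $(J_i,\sigma_i)$ is exactly the ordered list of jumps of $L$.

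By construction the limiting law gives conditional independence with $(\D,\psi_i)/{\sim_\gamma}\sim \QD(J_i)^\#$ given $L$. Moreover $\psi_i=(R_{\theta_i}\circ f_i^{-1})\bullet_\gamma\phi$, by the argument of Lemma~\ref{lem-field-estimate}. The remaining task is geometric: show that $f_i(\D)$ are exactly the domains swallowed by $(K_\cdot)$, with $f_i(\D)$ swallowed at time $\sigma_i$, and that there are no others. We proceed as follows.

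First, $f_i'(0)\neq 0$, since $\cA^\gamma_\phi$ has no atoms and $\cA^\gamma_\phi(f_i(\D))$ has the law of the area of a $\QD(J_i)$, which is positive.

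Second, for rational $q<\sigma_i<q'$ we show $f_i(\D)\subset \D\setminus K_q$ and $f_i(\D)\subset K_{q'}$. The first inclusion follows by Carath\'eodory kernel arguments as in Step~3 of Proposition~\ref{prop-simple-markov}. For the second, every compact subset of $f_i(\D)$ lies in $U^k_i\subset K^{\delta_k}_{q'}$ and is separated from $0$. Here the second claim of Proposition~\ref{prop-conv-loc-spacetime} and Lemma~\ref{lem-hausdorff-cara-conv} give that it lies outside the Carath\'eodory limit domain $\D\setminus K_{q'}$. Since $f_i(\D)$ is open, connected, and separated from $0$ by the limiting hull, it is contained in a component of the interior of $K_{\sigma_i}\setminus K_{\sigma_i-}$.

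Third, we rule out extra area. On one side, the $f_i(\D)$ are disjoint; this follows from disjointness of the $U^k_i$ together with open-set limits. On the other side, the area identity
\[\sum_i \cA^\gamma_\phi(f_i(\D)) \stackrel d= \sum_i \cA^\gamma_{\phi^{\delta_k}}(U^k_i)=\cA^\gamma_{\phi^{\delta_k}}(\D)\]
holds. The equality in law uses the joint law invariance. The last equality holds because in the $\delta$-process a.e.\ point is swallowed: SLE$_\kappa$ with $\kappa\in(4,8)$ has zero area, and the process ends at $0$. Combined with $\cA^\gamma_\phi(\D)\stackrel d=\cA^\gamma_{\phi^{\delta_k}}(\D)$ and the inclusion $\bigcup_i f_i(\D)\subset\D$, this forces $\cA^\gamma_\phi(\D\setminus\bigcup_i f_i(\D))=0$, by the same Laplace-transform trick used in the space-filling case.

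Consequently each swallowed component at time $\sigma_i$ must equal $f_i(\D)$; otherwise the difference would be an open set of zero area. Likewise no domain is swallowed at a non-jump time. This gives the bijection, and the boundary length of $f_i(\D)$ is $J_i$ because $\psi_i$ has law $P(J_i)$.

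The final claim follows by rerunning Proposition~\ref{prop-bd-length-consistent-reals}. There we use Lemma~\ref{lem-swallowing-markov} instead of Lemma~\ref{lem-duration-indep}, and include the countably many fields $\psi^k_i\mathbf 1_{\sigma^k_i<q}$ in the $k$-invariant joint law. At rational times $q$ we pass to the limit, using that $\sigma^k_i\to\sigma_i$ and that $\{\sigma_i=q\}$ is null. Then we extend to real $s$ by right-continuity.

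The main obstacle is the second step: showing that the limits $f_i(\D)$ are genuinely swallowed by the limiting hulls at time $\sigma_i$, and not merely separated in the Hausdorff sense. I would attack it by combining Lemma~\ref{lem-hausdorff-cara-conv} with the area-saturation argument.
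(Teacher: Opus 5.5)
There is a genuine gap in your ``Second'' step, namely the inclusion $f_i(\D)\subset \D\setminus K_q$ for rational $q<\sigma_i$. The argument you cite from Proposition~\ref{prop-simple-markov} gives, via the Carath\'eodory kernel theorem, only the dichotomy $f_i(\D)\subset K_q$ or $f_i(\D)\subset \D\setminus K_q$. There, the first alternative was excluded only because $\cA^\gamma_\phi(K_q)=0$ in the dilute phase, and that is false here.

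The first alternative is a real possibility for Carath\'eodory limits. Every compact subset of $f_i(\D)$ stays away from $K^{\delta_k}_q$. But suppose the bottleneck joining $U^k_i$ to $0$ at time $q$ pinches as $k\to\infty$. Then the Hausdorff limit of $K^{\delta_k}_q$ separates $f_i(\D)$ from $0$, and Lemma~\ref{lem-hausdorff-cara-conv} puts $f_i(\D)$ inside $K_q$. So that lemma works against you. Area saturation, $\cA^\gamma_\phi(\D\setminus\bigcup_i f_i(\D))=0$, says nothing about \emph{when} each $f_i(\D)$ is swallowed. Without this step, three things are unproved: that $f_i(\D)$ is swallowed at time $\sigma_i$, the bijection itself, and the identification in the final claim of $\{\psi_i:\sigma_i<q\}$ with the surfaces swallowed by time $q$.

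The missing idea is a monotone law comparison. What you can prove is that each $f_i(\D)$ is swallowed at some time $\sigma_i'\le\sigma_i$. This follows from the dichotomy together with your correct inclusion $f_i(\D)\subset K_{q'}$ for rational $q'>\sigma_i$. You must then rule out $\sigma_i'<\sigma_i$ by comparing laws, as follows.

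\begin{enumerate}
\item Run your final-claim argument \emph{before} the geometric identification; it only uses jump times. The $k$-invariant joint law of $\phi^{\delta_k}_q$, $(L^{\delta_k}_\cdot)_{[0,q]}$ and $\{\psi^k_i:\sigma^k_i<q\}$ passes to the limit. Since SLE segments have zero area, this gives that $\E[e^{-\cA^\gamma_\phi(\D)}\mid q<\qduration]$ equals $\E[\exp(-\sum_{\sigma_i<q}\cA^\gamma_\phi(f_i(\D))-\cA^\gamma_\phi(\D\setminus K_q))\mid q<\qduration]$.
\item Area saturation gives the almost sure identity $\cA^\gamma_\phi(\D)=\sum_{\sigma_i'\le q}\cA^\gamma_\phi(f_i(\D))+\cA^\gamma_\phi(\D\setminus K_q)$.
\item The sum over $\{\sigma_i'\le q\}$ dominates the sum over $\{\sigma_i<q\}$. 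Equality of the two Laplace transforms therefore forces almost sure equality of the sums for each rational $q$. Since each $f_i(\D)$ has positive area, this gives $\sigma_i'=\sigma_i$.
\end{enumerate}

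This is exactly Steps~4--5 of the paper's proof; the rest of your outline matches its Steps~1--3. One small difference: the paper works with jumps of size larger than $\eps$ ordered by time and then lets $\eps\to0$. This avoids your unproved assertion that jump sizes have no ties, which you also need to apply Lemma~\ref{lem-rotate-all} as stated.
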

\begin{proof}

\medskip \noindent
\textbf{Step 1: Coupling the boundary length processes and the swallowed regions.} 
Let $\eps > 0$. 
Let $J_\eps$ be the collection of pairs $(s,x)$ such that $s < \qduration$ and $\lim_{u \uparrow s} L_u - L_s = x > \eps$, so $J_\eps$ describes the jumps of $L$ having size greater than $\eps$. Define $J_\eps^k$ in the same way using $L^{\delta_k}$. Note that $J_\eps$ and $J_\eps^k$ are finite because c\'adl\'ag functions on compact time intervals cannot have infinitely many jumps of size greater than $\eps$. 
%; note that $|J_\eps^k|<\infty$ because $J_\eps^k$ is in bijection with the swallowed quantum surfaces with boundary length greater than $\eps$ (Lemma~\ref{lem-swallowed-delta-qle}), and the infinitude of this collection would contradict that $\cA^\gamma_\phi(\D) < \infty$. 
Since $\lim_{ k \to \infty} \tilde L^{\delta_k} = \tilde L$ in $\mathcal J$ (i.e., in the Skorokhod J$_1$ topology; see Section~\ref{subsec-topologies}) and $\lim_{k \to \infty} \qduration^{\delta_k} = \qduration$, we conclude that for each $(s, x) \in J_\eps$, for sufficiently large $k$ there exists a pair $(s^k, x^k) \in J_\eps^k$ such that $\lim_{k \to \infty} (s^k, x^k) = (s,x)$. In particular $|J_\eps^k| \geq |J_\eps|$ for all large $k$; since $L^{\delta_k} \stackrel d= L$ (Lemma~\ref{lem-qle-field-bdy-procress}) we have $|J_\eps^k| \stackrel d=  |J_\eps|$ for all $k$, so  almost surely $|J_\eps^k| = |J_\eps|$ for sufficiently large $k$, and $J_\eps^k \to J_\eps$ elementwise a.s. 

Let $\mathcal U^k_\eps$ be the set of quantum surfaces swallowed by $(\phi^{\delta_k}, (K^{\delta_k}_\cdot)_{[0,\qduration^{\delta_k}]})$ having $\gamma$-LQG boundary lengths greater than $\eps$. Lemma~\ref{lem-swallowed-delta-qle} yields that $\mathcal U^k_\eps$ is in bijection with $J_\eps^k$, and conditioned on $J_\eps^k$ the elements of $\mathcal U^k_\eps$ are conditionally independent quantum disks having boundary lengths specified by $J_\eps^k$. For the $j$th element of $J_\eps^k$, let $U^k_{\eps,j} \subset \D$ be the region parametrizing the corresponding swallowed quantum surface, and independently sample $z^k_{\eps,j} \sim (\cA^\gamma_{\phi^{\delta_k}}|_{U^k_{\eps,j}})^\#$. Let $f^k_{\eps,j} : \D \to U^k_{\eps,j}$ be the conformal map such that $f^k_{\eps,j}(0) = z^k_{\eps,j}$ and $(f^k_{\eps,j})'(0) > 0$. By passing to a subsequence and recoupling the laws via the Skorokhod representation theorem as needed, we can ensure that the limit $f_{\eps,j} := \lim_{k \to \infty} f^k_{\eps,j}$ a.s.\ exists in the topology of local uniform convergence. Since $\cA^\gamma_\phi$ has no atoms a.s., we have $f_{\eps,j}'(0) \neq 0$ for all $j$. 
Let the $j$th element of $J_\eps$ be $(s_{\eps,j}, x_{\eps,j})$ (ordered by first coordinate), and let $U_{s_{\eps,j}} = f_{\eps,j} (\D)$. Note that the domains $(U_{s_{\eps, j}})_{ j \leq |J_\eps|}$ are disjoint, since for each $k$ the domains $U^k_{\eps, j}$ are disjoint.

We claim that for some $s_{\eps,j}' \leq s_{\eps, j}$ we have $U_{s_{\eps,j}} \subset K_{s_{\eps,j}'} \backslash K_{s_{\eps,j}'-}$, where we write $K_{s-} := \bigcup_{u < s} K_u$. For each $k$, let $(s^k_{\eps,j}, x^k_{\eps,j})$ be the $j$th element of $J_\eps^k$, so $\lim_{k \to \infty} (s^k_{\eps,j}, x^k_{\eps,j}) = (s_{\eps,j}, x_{\eps,j})$ a.s. Recall that for any rational $q$ we have $\lim_{k \to \infty} t^{\delta_k}(q) = t(q)$, so $K^{\delta_k}_q \to K_q$ in the Carath\'eodory topology by Proposition~\ref{prop-conv-loc-spacetime}. For any rational $q > s_{\eps,j}$, we have $q > s^k_{\eps,j}$ for sufficiently large $k$, hence $f^k_{\eps,j}(\D) \subset K^{\delta_k}_{q}$. Since $K^{\delta_k}_q \to K_q$ in the Carath\'eodory topology, we conclude that $U_{s_{\eps,j}} \subset K_q$. Now, for rational $q < s_{\eps,j}$, we have $q < s^k_{\eps,j}$ for sufficiently large $k$, hence $f^k_{\eps,j}(\D) \subset \D\backslash K^{\delta_k}_q$. Since $K^{\delta_k}_q \to K_q$ in the Carath\'eodory topology, by the Carath\'eodory kernel theorem we have either $U_{s_{\eps,j}} \subset K_q$ or $U_{s_{\eps,j}} \subset \D \backslash K_q$. We conclude that there exists a time $s_{\eps,j}' \leq s_{\eps,j}$ such that $U_{s_{\eps,j}} \subset K_{s_{\eps,j}'} \backslash K_{s_{\eps,j}'-}$.

\medskip \noindent
\textbf{Step 2: Identifying the quantum surfaces $(U_s, \phi)/{\sim_\gamma}$.} 
Independently of everything else, sample angles $(\theta_j)_{j=1}^\infty$ independently and uniformly from $[0,2\pi)$. Let $\psi_{\eps,j}^k = (R_{\theta_j} \circ (f_{\eps,j}^k)^{-1}) \bullet_\gamma \phi^{\delta_k}$ for $j \leq |J_\eps^k|$, and let $\psi_{\eps,j} = (R_{\theta_j} \circ f_{\eps,j}^{-1}) \bullet_\gamma \phi$ for $j  \leq |J_\eps|$. By Lemmas~\ref{lem-swallowed-delta-qle} and~\ref{lem-rotate-all}, conditioned on $L^{\delta_k}$, the conditional law of $(\psi_{\eps,j}^k)_{j\leq|J_\eps^k|}$ is $\prod_{j=1}^{|J_\eps^k|}P(x_{\eps,j}^k)$. Since the law of $L^{\delta_k}$ does not depend on $k$, neither does the law of $(L^{\delta_k}, (\psi_{\eps,j}^k)_{j=1}^{|J_\eps^k|})$. Since $(L^{\delta_k}, (\psi_{\eps,j}^k)_{j\leq|J_\eps^k|})$ a.s.\ converges to $(L, (\psi_{\eps,j})_{j\leq|J_\eps|})$, we conclude that $(L^{\delta_k}, (\psi_{\eps,j}^k)_{j\leq|J_\eps^k|}) \stackrel d= (L, (\psi_{\eps,j})_{j\leq|J_\eps|})$. Thus, if $\cD_\eps$ is the collection of disjoint domains $U_{s_{\eps, j}}$, then $\cD_\eps$ is in bijection with the jumps $J_\eps$ of $L$ having size greater than $\eps$; each jump $(s,x) \in J_\eps$ corresponds to a domain $U_s$ such that $U_s \subset K_{s'} \backslash K_{s'-}$ for some $s' \leq s$, and conditioned on $L$, the quantum surfaces $(U_s, \phi)/{\sim_\gamma}$ are conditionally independent quantum disks with boundary lengths given by the sizes of the corresponding jumps of $L$.

By passing to a subsequence and recoupling the laws as needed, we conclude that the above statement holds also when $\eps$ is replaced by zero: There is a collection $\cD$ of disjoint domains in bijection with the jumps $J$ of $L$; each jump $(s, x) \in J$ corresponds to a domain $U_s$ such that $U_s \subset K_{s'} \backslash K_{s'-}$ for some $s' \leq s$, and conditioned on $L$, the quantum surfaces $(U_s, \phi)/{\sim_\gamma}$ are conditionally independent quantum disks with boundary lengths given by the sizes of the corresponding jumps of $L$.

\medskip \noindent \textbf{Step 3: All $\gamma$-LQG area is contained in $\bigcup_{\cD} U_s$.} We claim that 
a.s.
\eqb\label{eq-swallowing-trace-zero}
\cA^\gamma_\phi(\D) = \sum_{U_s \in \cD} \cA^\gamma_\phi(U_s).
\eqe
To see that~\eqref{eq-swallowing-trace-zero} holds, by Lemma~\ref{lem-qle-field-bdy-procress} we have $L \stackrel d= L^{\delta_k}$ for any $k$, so by Lemma~\ref{lem-swallowed-delta-qle} $\sum_{\mathcal D} \cA^\gamma_\phi(U_s)$ agrees in law with the total LQG area of the regions cut out by SLE segments in $\delta$-QLE$(\gamma^2, \eta)$. This equals $\cA^\gamma_{\phi^{\delta_k}}(\D)$ because the SLE segments have zero LQG area. We conclude that $\cA^\gamma_\phi(\D) \stackrel d= \sum_{\mathcal D} \cA^\gamma_\phi(U_s)$. On the other hand %, since $U_s \subset K_s \backslash K_{s-}$ for each $s$,
the domains $U_s$ are disjoint, so $\cA^\gamma_\phi(\D) \geq \sum_{\mathcal D} \cA^\gamma_\phi(U_s)$ a.s. These two sentences imply~\eqref{eq-swallowing-trace-zero}. 

\medskip \noindent
\textbf{Step 4: For rational $q$, the conditional law of $\phi_{q}$ given $\{q < \qduration\}$, $(L_\cdot)_{[0,q]}$, and $\{(U_s, \phi)/{\sim_\gamma} \: : \: s \leq q\}$ is $P_{\alpha, \beta, L_q}$.} By Lemmas~\ref{lem-swallowing-markov} and~\ref{lem-swallowed-delta-qle}, the joint law of $\phi_{q}^{\delta_k}, (L^{\delta_k}_\cdot)_{[0,q]}$ and the quantum surfaces swallowed by time $q$ conditioned on $\{q < \qduration^{\delta_k}\}$ does not depend on $k$. By the construction in Step 1, and the Carath\'eodory convergence of $K^{\delta_k}_q$ to $K_q$, % (via Proposition~\ref{prop-conv-loc-spacetime} and $\lim_{k \to \infty} t^{\delta_k}(q) = t(q)$), 
these converge respectively to the joint law of $\phi_q$, $(L_\cdot)_{[0,q]}$, and $\{(U_s, \phi)/{\sim_\gamma} \: : \: s \leq q\}$ conditioned on $\{q < \qduration\}$, and hence this conditional law agrees with the conditional law for each $k$. Thus, by Lemma~\ref{lem-swallowing-markov} the conditional law of $\phi_{q}$ given $\{q < \qduration\}$, $(L_\cdot)_{[0,q]}$, and $\{(U_s, \phi)/{\sim_\gamma} \: : \: s \leq q\}$ is $P_{\alpha, \beta, L_q}$.

\medskip \noindent \textbf{Step 5: The quantum surface $(U_s, \phi)/{\sim_\gamma}$ is swallowed at time $s$.} 
Let $q$ be rational. On the event $\{ q < \qduration\}$, by~\eqref{eq-swallowing-trace-zero} and Step 3 we have 
\eqb\label{eq-swallowed-too-much}
\E[e^{-\cA^\gamma_{\phi}(\D)} \mid q < \qduration] = \E[ \exp( - \sum_{\substack{(s, x) \in J \\ s' \leq q }} \cA^\gamma_\phi(U_s) - \cA^\gamma_\phi(\D \backslash K_q) ) \mid q < \qduration]
\eqe
where, as before, $s'\leq s$ denotes the time that the region $U_s$ is swallowed. 
Now, let $J^k$ be the set of jumps of $L^{\delta_k}$, and let $\cD^k$ be the set of regions swallowed by the $\delta_k$-QLE$(\gamma^2, \eta)$ process, so $\cD^k$ is in bijection with $J^k$: for each $(s, x) \in J^k$ there is a domain $U_s^k \in \cD^k$ swallowed at time $s$. Then 
\[\E[e^{-\cA^\gamma_{\phi^{\delta_k}}(\D)} \mid q < \qduration^{\delta_k}] = \E[ \exp( - \sum_{\substack{(s, x) \in J^k \\ s \leq q }} \cA^\gamma_{\phi^{\delta_k}}(U_s^k) - \cA^\gamma_{\phi^{\delta_k}}(\D \backslash K^{\delta_k}_q) ) \mid q < \qduration^{\delta_k}].\]
By Step 4, this implies 
\eqb\label{eq-swallowed-right-amount}
\E[e^{-\cA^\gamma_{\phi}(\D)} \mid q < \qduration] = \E[ \exp( - \sum_{\substack{(s, x) \in J \\ s \leq q }} \cA^\gamma_\phi(U_s) - \cA^\gamma_\phi(\D \backslash K_q) ) \mid q < \qduration].
\eqe
The right hand sides of~\eqref{eq-swallowed-too-much} and~\eqref{eq-swallowed-right-amount} agree since their left hand sides agree, but 
\[\sum_{\substack{(s, x) \in J \\ s' \leq q }} \cA^\gamma_\phi(U_s) \geq \sum_{\substack{(s, x) \in J \\ s \leq q }} \cA^\gamma_\phi(U_s)\] since for each jump $(s,x) \in J$ we have $s' \leq s$. We conclude that the random sums agree a.s. In other words, with probability 1 there is no $(s,x) \in J$ such that  $s' \leq q < s$. Since this holds for all rational $q$, we conclude that a.s.\ for all $(s,x) \in J$ we have $s' = s$; that is, $U_s \subset K_s \backslash K_{s-}$ for all $(s,x) \in J$. 

Finally, by~\eqref{eq-swallowing-trace-zero} the interior of $\D \backslash \bigcup_\cD U_s$ has $\cA^\gamma_\phi$-measure zero, hence is empty. Therefore $U_s$ is the interior of $K_s \backslash K_{s-}$. Thus, the quantum surface $(U_s, \phi)/{\sim_\gamma}$ is swallowed at time $s$.

\medskip \noindent \textbf{Conclusion.} 
By Step 3 the interior of $\D \backslash \bigcup_\cD U_s$ is empty (since $\cA^\gamma_\phi$ assigns positive mass to open sets), so the collection of swallowed quantum surfaces is precisely $\{(U_s, \phi)/{\sim_\gamma} \: : \: U_s \in \cD\}$. The first two claims are now immediate consequences of Steps 2 and 5. The final claim for rational times $q$ follows from Steps 4 and 5, and is bootstrapped from rational times to real times by the  argument from the last paragraph of the proof of Proposition~\ref{prop-bd-length-consistent-reals}. 
\end{proof}

We now define QLE$(\gamma^2, \eta)$ for the swallowing parameter range, and conclude. 

\begin{definition}\label{def-qle-swallowing}
Let $\gamma \in (2\sqrt3-2,2)$ and $\eta = \frac{3\gamma^2}{16} - \frac12$, let $(\phi, (K_\cdot)_{[0,\qduration)})$ be as in Proposition~\ref{prop-swallowed}, and let $K_\qduration = \ol{\bigcup_{s < \qduration}K_s}$. We call $(\phi, (K_\cdot)_{[0,\qduration]})$ a QLE$(\gamma^2, \eta)$ process. 
\end{definition}

\begin{proof}[{Proof of Theorem~\ref{thm-swallowing-markov}}]
The claim follows from Definition~\ref{def-qle-swallowing} and Proposition~\ref{prop-swallowed}.
\end{proof}

\begin{proof}[{Proof of Theorem~\ref{thm-phase} (swallowing phase)}]
Consider the collection $J$ of pairs $(s,x)$ describing the jumps of $(L_\cdot)_{[0,\qduration]}$, so $(s, x) \in J$ is equivalent to $L_{s-} - L_s  = x> 0$. For each such pair let $U_s = \mathrm{int}(K_s \backslash K_{s-})$. 
 By Proposition~\ref{prop-swallowed}, conditioned on $(L_\cdot)_{[0,\qduration]}$ the quantum surfaces $\{(U_s, \phi)/{\sim_\gamma} \: : \: (s,x) \in J\}$ are conditionally independent, with the conditional law of $(U_s, \phi)/{\sim_\gamma}$ being $\QD(x)^\#$. Since the jump times are dense and a.s.\ each $(U_s, \phi)/{\sim_\gamma}$ has strictly positive quantum area, we conclude that the function $f: [0, \qduration] \to \R$ given by $f(s) = \cA^\gamma_\phi(K_s)$ is strictly increasing. 
 By~\eqref{eq-swallowing-trace-zero}, $\cA^\gamma_\phi(\D)$ equals the sum of all jump sizes of $f|_{[0,\qduration)}$, so $f' = 0$ almost everywhere, and $\cA^\gamma_\phi(K_\qduration) = f(\qduration) \geq \lim_{s \to \qduration} f(s) = \cA^\gamma_\phi(\D) \geq \cA^\gamma_\phi(K_\qduration)$
 implies $f$ is continuous at $\qduration$. Finally, since $\ol \D \backslash K_\qduration$ is an open set with $\cA^\gamma_\phi(\ol \D \backslash K_\qduration) = 0$, we have $\ol \D \backslash K_\qduration = \emptyset$, or equivalently $K_\qduration = \ol \D$. 
\end{proof}

\section{Open problems}\label{sec-open}
Recall that in this paper, QLE$(\gamma^2, \eta)$ without any further qualifier refers to quantum natural time QLE$(\gamma^2, \eta)$ as constructed in this work, and capacity time QLE$(\gamma^2, \eta)$ refers to the construction of \cite{ms-qle}.

Perhaps the most important (and likely most challenging) question is that of scaling limits. 

\begin{question}\label{question-scaling}
    Prove that diffusion-limited aggregation on a uniform-spanning-tree-weighted random planar map converges in the scaling limit to QLE$(2, 1)$ on $(\gamma = \sqrt2)$-LQG. 
\end{question}
In particular, the DLA process should be parametrized so one particle is attached per unit of time, and its time-parametrization should converge to the quantum natural time parametrization of QLE$(2,1)$. 
More generally, for the parameters $(\gamma, \eta)$ for which we have constructed QLE$(\gamma^2, \eta)$, we conjecture that  $\eta$-DBM on random planar maps in the $\gamma$-LQG universality class should converge in the scaling limit to QLE$(\gamma^2, \eta)$. See \cite{ms-qle} for further discussion.

A fundamental question is whether our construction of QLE$(\gamma^2, \eta)$ agrees with the capacity time construction of \cite{ms-qle}. This is not known even for $(\gamma^2, \eta) = (8/3,0)$.
\begin{question}\label{question-equiv}
Does QLE$(\gamma^2, \eta)$ agree with capacity time QLE$(\gamma^2, \eta)$?
\end{question}
Here is a precise formulation. Consider a QLE$(\gamma^2, \eta)$ process $(\phi, (K_\cdot)_{[0,\qduration]})$. Since $\phi \sim P_{\alpha, \beta, 1}$, the law of $\phi$ is absolutely continuous with respect to that of the field considered in \cite{ms-equivalence} when viewed as distributions modulo additive constant. Thus, their construction of capacity time QLE$(\gamma^2, \eta)$ gives another growth process $(\phi, (K^\mathrm{cap}_\cdot)_{[0,\infty)})$ parametrized by capacity time. The question, then, is whether $(\phi, (K_\cdot)_{[0,\qduration)})$ and $(\phi, (K^\mathrm{cap}_\cdot)_{[0,\infty)})$ agree in law when $(K_\cdot)_{[0,\qduration)}$ and $(K^\mathrm{cap}_\cdot)_{[0,\infty)}$ are viewed modulo monotone reparametrization of time. 

Question~\ref{question-equiv} only makes sense if QLE$(\gamma^2, \eta)$ is uniquely defined:

\begin{question}\label{question-unique}
    The construction of QLE$(\gamma^2, \eta)$ took a limit of $\delta_k$-QLE$(\gamma^2, \eta)$ along some subsequence $(\delta_k)_{k \in \N}$ approaching zero. Does convergence hold without passing to a subsequence? In other words, is the limiting construction unique? 
\end{question}
One can also ask whether additional randomness beyond the random environment is necessary to construct QLE$(\gamma^2, \eta)$. 
\begin{question}\label{question-measurability}
    For the QLE$(\gamma^2, \eta)$ process $(\phi, (K_\cdot)_{[0,\qduration]})$, is $(K_\cdot)_{[0,\qduration]}$ measurable with respect to $\phi$?
\end{question}
For QLE$(8/3, 0)$, Questions~\ref{question-unique} and~\ref{question-measurability} have been solved in the affirmative in \cite{ms-equivalence} in their construction of the $\sqrt{8/3}$-LQG metric. 

In the context of Question~\ref{question-equiv}, the following question is natural. 
\begin{question}\label{question-recover-time}
    Given a sample of QLE$(\gamma^2, \eta)$ modulo monotone reparametrization of time, can one recover its time parametrization? 
\end{question}
This recovery is possible in the space-filling and swallowing parameter ranges. Indeed, in the former case one can measure the $\gamma$-LQG area of the explored region to determine the time (Theorem~\ref{thm-phase}), and in the latter case one can count the number of swallowed regions with boundary length between $\eps$ and $2\eps$, renormalize, and send $\eps \to 0$ (Proposition~\ref{prop-swallowed}). In the dilute parameter range, however, the approach of measuring boundary lengths and comparing against $(L_\cdot)_{[0,\qduration]} = (1 + 2s)_{0 \leq s \leq \qduration}$ runs into the obstruction that we only have the identity $\cL^\gamma_{\phi_s}(\partial \D) = 1+2s$ for rational quantum times $s$ (Corollary~\ref{cor-length-process}), and we have not shown the monotonicity of boundary lengths (or even the existence of the $\gamma$-LQG boundary length for all times). This leads to the next question:

\begin{question}\label{question-lengths}
    Does $\cL^\gamma_{\phi_s}(\partial \D) = L_s$ simultaneously for all $s \in [0,\qduration)$ a.s.?
\end{question}

The next question is about whether QLE$(\gamma^2, \eta)$ grows continuously when viewed from 0. 
\begin{question}\label{question-continuous}
    Does QLE$(\gamma^2, \eta)$ grow continuously with respect to capacity a.s.? In other words, is the capacity reparametrization function $t(s)$ defined below Lemma~\ref{lem-coupling} continuous a.s.? 
\end{question}
For $(\gamma^2, \eta) = (8/3,0)$, the answer to Question~\ref{question-continuous} is yes because QLE$(8/3,0)$ describes metric ball growth with respect to the $\gamma = \sqrt{8/3}$ LQG metric, and this metric induces the Euclidean topology \cite{ms-equiavlence-2}. It remains open for all other values of $(\gamma^2, \eta)$. 

An answer to Question~\ref{question-continuous} would allow us to define a QLE$(\gamma^2, \eta)$ process $(\phi, (K_\cdot)_{[0,\qduration]})$ when $\phi$ is only locally absolutely continuous with respect to a sample from $P_{\alpha, \beta, 1}$. Indeed, if $\phi$ is a field such that the law of $\phi|_{\D \backslash B_\eps(0)}$ is absolutely continuous with respect to that of $\tilde\phi|_{\D \backslash B_\eps(0)}$ where $\tilde\phi \sim P_{\alpha, \beta, 1}$, then for fixed $\eps>0$ the QLE process for $\phi$ can be defined up until the time it hits $\partial B_{\eps}(0)$ by reweighting the corresponding law of the process for $\tilde \phi$ by a suitable Radon-Nikodym derivative; one can then send $\eps \to 0$ to construct the whole process. 

\begin{question}\label{question-length-process}
    Identify the law of the boundary length process $(L_\cdot)_{[0, \qduration]}$ for QLE$(\gamma^2, \eta)$ in the swallowing phase.
\end{question}
One approach to Question~\ref{question-length-process} would be to study the L\'evy process definition of generalized quantum disks, since $(L_\cdot)_{[0,\qduration]}$ agrees in law with the boundary length process in Proposition~\ref{prop-radial-mot-swallow-finite}, which is itself closely related to $\GQD$ by Proposition~\ref{prop-radial-weld-ns}. For the special case $(\gamma^2, \eta) = (8/3,0)$, \cite[Theorem 1.2]{ms-finite-mating} resolves Question~\ref{question-length-process} in the spherical setting, and the result for the disk setting then follows from the discussion in Appendix~\ref{sec-other-QLEs}.

\appendix

\section{Equivalence of QLE$(8/3,0)$ with the construction of \cite{ms-equivalence}}\label{sec-other-QLEs}

Fix $\gamma = \sqrt{8/3}$ and $\eta = 0$.  
In \cite{ms-equivalence} the authors construct quantum natural time QLE$(8/3, 0)$ in the sphere setting as a growth process on a $\sqrt{8/3}$-LQG surface from one marked point to another, by first constructing a $\delta$-QLE$(8/3,0)$ process on the sphere then passing to a subsequential limit. We will explain that their $\delta$-QLE$(8/3,0)$ construction, after any integer number of size $\delta$ steps, proceeds exactly as our $\delta$-QLE$(8/3,0)$ construction in Definition~\ref{def-delta-QLE-finite}, but with random initial boundary length rather than boundary length one. We then discuss why the two notions of QLE$(8/3,0)$ on the disk agree. 

Following \cite[Section 4.1]{ms-equivalence}, let $\mathsf{M}^2_\mathrm{SPH}$ denote the law of the \emph{quantum sphere with two marked points}, so a sample from $\mathsf{M}^2_\mathrm{SPH}$ is a $\gamma$-LQG surface with the sphere topology and two marked points. Alternatively see \cite[Section 4.5]{DMS14} for a definition (where $\mathsf{M}^2_\mathrm{SPH}$ was called $\mathcal N_{4/3}$). 
Let $\widehat {\mathbb C} = \mathbb C  \cup \{ \infty\}$ be the Riemann sphere. 

For fixed $\delta > 0$, \cite{ms-equivalence} constructs $\delta$-QLE$(8/3,0)$ as follows. Start with an embedding $(\hat \bbC, h, \infty, 0)$  of a quantum sphere sampled from $\mathsf M^2_\mathrm{SPH}$. Independently sample a whole-plane SLE$_6$ curve from $\infty$ to $0$, and explore the initial $\delta$-segment of the curve; let $D_1$ be the complementary connected component containing $0$. Iteratively, given $D_n$, sample a point $p_n$ on $\partial D_n$ according to the probability measure proportional to the $\gamma$-LQG boundary length measure, sample a radial SLE$_6$ curve in $D_n$ from $p_n$ to $0$, and explore it for time $\delta$. This cuts $D_n$ into countably many pieces; let $D_{n+1}$ be the connected component containing $0$. This is iterated until the time the SLE$_6$ curve hits 0 within time $\delta$, at which time the $\delta$-QLE$(8/3,0)$ process stops. 

We now explain how the $\delta$-QLE$(8/3,0)$ process on the sphere reduces to our $\delta$-QLE$(8/3,0)$ process on the disk after any fixed number of steps. %The key point is that after any fixed number of size $\delta$ steps, the conditional law of the unexplored region given its boundary length is exactly that of a quantum disk with one marked bulk point and one marked boundary point. This means the future evolution of the sphere process is governed by the same measures as our disk construction.
Let $s = k\delta>0$, and condition on the event that the $\delta$-QLE$(8/3, 0)$ has not terminated by time $s$. Let $U_s$ be the origin-containing complementary connected component of the growth process at time $s$, and let $x_s \in \partial U_s$ be the curve tip at time $s$. 
By \cite[Section 6.1]{ms-equivalence}, 
further conditioning on the $\gamma$-LQG boundary length $L$ of $(U_s, h)/{\sim_\gamma}$, the conditional law of $(U_s, h, 0, x_s)/{\sim_\gamma}$ is $\QD_{1,1}(L)^\#$, where $\QD_{1,1}$ denotes the law of the $\gamma$-LQG disk with one marked bulk point and one marked boundary point. By \cite[Proposition 3.9]{ars-fzz}, for a sample from $\QD_{1,1}(\ell)^\#$ embedded in $(\D, 0, 1)$, the law of the field is $(\LF_{\D, \ell}^{(\gamma, 0), (\gamma, 1)})^\#$. Thus, embedding $(U_s, h, 0, x_s)/{\sim_\gamma}$ in $(\D, 0, 1)$, since $(\alpha, \beta) = (\gamma, \gamma)$ when $\gamma =\sqrt{8/3}$, the field is as in Definition~\ref{def-delta-QLE-finite} but with random boundary length $L$ (rather than boundary length one). 
Combining, this implies that on the event that the $\delta$-QLE$(8/3,0)$ process has not terminated by time $s = k\delta$, it subsequently evolves as $\delta$-QLE$(8/3,0)$ on the disk as in Definition~\ref{def-delta-QLE-finite} with random initial boundary length. 

Thus, conditioning on survival up to time $1$, the sphere construction of \cite{ms-equivalence} yields a disk QLE$(8/3,0)$ process by viewing the evolution from time $1$ onward, and this disk QLE$(8/3,0)$ is a subsequential limit of the same $\delta$-QLE$(8/3,0)$ processes as in this work but with random initial boundary length.

\cite{ms-equivalence} defines  QLE$(8/3,0)$ by taking a subsequential limit of $\delta$-QLE$(8/3,0)$ as $\delta \to 0$ under which certain quantities converge, but the convergence is established via compactness and tightness arguments. The same is true for our subsequential limit. Consequently, whichever subsequence is taken in \cite{ms-equivalence} to construct QLE$(8/3,0)$, we can pass to a further subsequence to obtain our construction of QLE$(8/3,0)$. In this case, the two growth processes agree modulo time parametrization since they have the same driving measure. The time parametrizations also agree, since both time parametrizations can be defined as renormalized counts of small swallowed regions; see  \cite[Remark 6.4]{ms-equivalence}, Definition~\ref{def-quantum-time-swallowing} and Lemma~\ref{lem-qle-field-bdy-procress}. Similarly, given any subsequence used to construct QLE$(8/3,0)$ in our approach, we can pass to a further subsequence to implement the construction from \cite{ms-equivalence}. In this sense, the construction of QLE$(8/3,0)$ in  \cite{ms-equivalence} is equivalent to our construction of QLE$(8/3,0)$.

\bibliographystyle{plain}
\bibliography{citations}

\end{document}